\documentclass[10pt]{article}
\usepackage[a4paper]{geometry}
\usepackage{amsmath, amssymb, amsthm}

\usepackage{appendix}
\usepackage{multicol} 
\usepackage{makeidx}
\usepackage{cite}
\usepackage{captdef}
\usepackage{paralist}
\usepackage{ dsfont }

\usepackage{mathrsfs}

\usepackage[colorlinks=true]{hyperref}
\hypersetup{urlcolor=blue, citecolor=red}

\usepackage{psfrag}
\usepackage{graphicx}
\usepackage{graphics}
\usepackage[dvipsnames]{xcolor}
\usepackage{comment}
\usepackage[showonlyrefs]{mathtools}
\mathtoolsset{showonlyrefs=true}

\usepackage[cm]{fullpage}

\usepackage{subcaption}

    \newtheorem{theo}{Theorem}[section]
    \newtheorem{coro}[theo]{Corollary}
    \newtheorem{lemma}[theo]{Lemma}
    \newtheorem{prop}[theo]{Proposition}
    \theoremstyle{definition}
    \newtheorem{defi}[theo]{Definition}
    \newtheorem{remark}[theo]{Remark}

\newcommand{\R}{\mathbb{R}}

\renewcommand{\dim}{{\rm dim}\,}
\newcommand{\eps}{\varepsilon}

\newcommand{\abs}[1]{\left|#1\right|}
\newcommand{\norm}[1]{\left \| #1\right \|}

\DeclareMathOperator{\sign}{sign}

\renewcommand{\epsilon}{\varepsilon}

\newcommand{\sgn}{\operatorname{sign}}

\numberwithin{equation}{section}

\def\sideremark#1{\ifvmode\leavevmode\fi\vadjust{\vbox to0pt{\vss
 \hbox to 0pt{\hskip\hsize\hskip1em
 \vbox{\hsize2.1cm\tiny\raggedright\pretolerance10000
  \noindent #1\hfill}\hss}\vbox to15pt{\vfil}\vss}}}%

\title{On Neumann $p$-Laplacian Lane-Emden equations and their asymptotic relationship with relative isoperimetric problems}
\author{Sean McCurdy, Alberto Saldaña, \& Delia Schiera}

\date{}

\begin{document}
\maketitle

\begin{abstract}

We consider a family of pure Neumann $p$-Laplacian problems, including eigenvalue problems, Lane--Emden type equations, and extremal cases such as sign nonlinearities and the $1$-Laplacian. Using variational methods, we develop a unified framework that establishes existence of solutions and characterizes their asymptotic behavior as the parameters vary. This approach reveals a natural asymptotic connection between pure Neumann $p$-Laplacian equations and a relative isoperimetric problem known as the Neumann-Cheeger problem. We describe the shape of minimizers in domains with different geometries and obtain results on regularity, uniqueness, multiplicity, symmetry, and symmetry breaking phenomena.

\medskip

\noindent \textbf{Mathematics Subject Classification:} 
(primary) 
35P30, 
35J20, 
35J25, 
35J70, 
49Q10, 
(secondary) 
35B40, 
49J52, 
35B06. 

\medskip

\noindent \textbf{Keywords:} 
Neumann-Cheeger constant,
Neumann boundary conditions, 
$p$-Laplacian,
1-Laplacian,
relative isoperimetric problems, asymptotic analysis.
\end{abstract}

\section{Introduction}

Consider the following pure Neumann $p$-Laplacian nonlinear eigenvalue problem
\begin{align}\label{pLap}
    -\Delta_p u = \lambda_{p,q}\,  f_q(u)\quad \text{ in }\Omega,\qquad \partial_\nu u = 0\quad \text{ on }\partial \Omega, 
\end{align}
where $\Omega\subset \mathbb R^N$ is a bounded Lipschitz domain, $\nu$ denotes the exterior normal vector field on $\partial \Omega$, 
\begin{align*}
f_q(t):= \begin{cases}
    |t|^{q-2} t,  & \text{ if } q >1,\\
    \sign(t), & \text{ if } q=1,
\end{cases}    
\end{align*}
the operator $-\Delta_p u=-\operatorname{div}(|\nabla u|^{p-2}\nabla u)$ is the well-known $p$-Laplacian (see \cite{lindqvist} for a survey), and the exponents $p$ and $q$ are subcritical in the sense that
\begin{align*}
1\leq p<N\quad \text{ and }\quad 1\leq q<p^*:=\frac{Np}{N-p}. 
\end{align*}
The extremal case $p=1$ corresponds to the $1$-Laplacian operator, formally,
\[
\Delta_1 u:= \text{div} \left(\frac{Du}{|Du|} \right),
\]
which is a nonlinear and highly degenerate operator whose study often involves tools from nonsmooth analysis and geometric measure theory.

The scalar $\lambda_{p,q}$ is called a nonlinear eigenvalue and it admits different variational characterizations depending on the values of $p$ and $q$. The limiting case $\lambda_{1,1}$ plays a central role in our analysis. In particular, our variational framework allows us to relate asymptotically minimizers of $\lambda_{p,q}$ with minimizers of a relative isoperimetric problem known as the \emph{Neumann-Cheeger problem}. This framework reveals a natural relationship between the two problems and exploits the rich geometric structure of the Neumann-Cheeger problem to describe the asymptotic shape of minimizers in domains with different characteristics, as well as to obtain results on regularity, uniqueness, multiplicity, symmetry, and symmetry breaking phenomena.

As far as we know, problem \eqref{pLap} has not been previously studied in this generality, but some particular cases and some closely related problems are available in the literature. For example, the eigenvalue problem is studied in \cite{DL26,RS16, SdLSdL, SdLSdL2, SdLSdLEJDE,gg,g01}, the Poisson and Dirichlet (linear) problems in \cite{MRS13}, and other Neumann problems that allow positive solutions can be found in \cite{BCN20}. 

We emphasize that all nontrivial solutions of \eqref{pLap} are \emph{sign changing}. Indeed, by integration by parts, if $u$ is a solution of \eqref{pLap}, then
\begin{align}\label{comp:cond}
    \int_{\Omega} \lambda_{p,q} |u|^{q-2}u \, d\mathcal{H}^N= \int_{\Omega}-\Delta_p u \, d\mathcal{H}^N
    =\int_{\partial\Omega} \partial_\nu u |\nabla u|^{p-2} \, d\mathcal{H}^{N-1}= 0,
\end{align}
and therefore any nontrivial solution $u$ must change sign. Here and in the following, $\mathcal{H}^N$ denotes the $N$-dimensional Hausdorff measure. 

Let us begin by discussing the variational characterization of the nonlinear eigenvalues $\lambda_{p,q}$ with $1<p<N$. In this case, we let 
\begin{align}\label{lambda pq intro}
\lambda_{p,q}:= \begin{cases}
\inf \left\{ \int_\Omega |D u|^p\,d\mathcal{H}^N: \, u \in W^{1,p}(\Omega), \, \int_\Omega |u|^{q-2}u \, d\mathcal{H}^N=0, \, \norm{u}_q=1 \right\},  &\hspace{-0.5cm} \text{ if } 
1< q<p^*,\vspace{0.2cm}\\
    \inf \left\{ \int_\Omega |D u|^p\, d\mathcal{H}^N: \, u \in W^{1,p}(\Omega), \, ||\{u >0 \}|-|\{ u<0 \}|| \le |\{u=0\}|, \, \norm{u}_1=1 \right\}, & \text{ if } q=1. 
\end{cases}
\end{align}
Here $W^{1,p}(\Omega)$ denotes the usual Sobolev space and $\norm{\cdot}_q$ denotes the norm in $L^q(\Omega)$. Note that, for $q=1$, one does not minimize under the condition $\int_\Omega \sign(u)d \mathcal H^N=0$, since this is not an appropriate restriction from a variational point of view. Instead, inspired by \cite{PW}, we consider the weaker condition
\begin{align}\label{M cond intro} 
\Big||\{u >0 \}|-|\{ u<0 \}|\Big| \le |\{u=0\}|.
\end{align}
Here, for $A\subset\R^N$, we use $|A|:=\mathcal{H}^N(A)$ to denote its Lebesgue measure and $\{u>0\}:=\{x\in \Omega\::\: u(x)>0\}$ denotes the corresponding super level set. The sets $\{ u<0 \}$ and $\{u=0\}$ are defined similarly.

Our first result shows that, for $1<p<N$, the nonlinear eigenvalue $\lambda_{p,q}$ is positive, it is achieved, and that the minimizers are (weak) solutions of \eqref{pLap}.

\begin{theo}\label{theo p>1 intro}
Let $1<p<N$, $q \in [1, \frac{pN}{N-p})$, and let $\lambda_{p,q}$ be given by \eqref{lambda pq intro}. 
    Then $\lambda_{p,q}>0$ is attained and any minimizer of $\lambda_{p,q}$ is a solution of \eqref{pLap} in the sense that 
    \[ \int_\Omega |Du|^{p-2} Du Dv \, d\mathcal{H}^N = \lambda_{p,q} \int_\Omega f_q(u) v \, d\mathcal{H}^N \quad \text{ for all } v \in W^{1,p}(\Omega). \]
\end{theo}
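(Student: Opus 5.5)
We use the direct method, and then derive the Euler--Lagrange equation by a Lagrange multiplier argument; the case $q=1$ needs a separate treatment of the constraint \eqref{M cond intro}.

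\emph{Step 1: existence and positivity.} Let $(u_n)$ be a minimizing sequence for $\lambda_{p,q}$. For $q>1$ the constraint $\int_\Omega |u|^{q-2}u=0$ forces $u_n$ to change sign, and the same holds for $q=1$ because of \eqref{M cond intro}. First we show that $(u_n)$ is bounded in $W^{1,p}(\Omega)$. Write $u_n = c_n + w_n$ with $\int_\Omega w_n=0$. By Poincaré--Wirtinger, $\|w_n\|_{W^{1,p}}\le C\|Du_n\|_p$. Since $\|u_n\|_q=1$, the constants $c_n$ stay bounded: Sobolev embedding gives $\|w_n\|_q\lesssim \|Du_n\|_p$, hence $|c_n|\,|\Omega|^{1/q}\le 1+C\|Du_n\|_p$. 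By the compact embedding $W^{1,p}\hookrightarrow L^q$ (valid for Lipschitz $\Omega$ and $q<p^*$), we may pass to a subsequence with $u_n\rightharpoonup u$ weakly in $W^{1,p}$ and strongly in $L^q$ and a.e.

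For $q>1$, the map $u\mapsto |u|^{q-2}u$ is continuous from $L^q$ to $L^{q'}$, so both constraints pass to the limit. For $q=1$ we use a.e.\ convergence and Fatou-type arguments on level sets:
\[
|\{u>0\}|\le\liminf_n |\{u_n>0\}| \quad\text{and}\quad |\{u<0\}|\le \liminf_n|\{u_n<0\}|.
\]
Combining these with $|\{u>0\}|+|\{u<0\}|+|\{u=0\}|=|\Omega|$ shows that \eqref{M cond intro} is closed under this convergence. Concretely, the condition is equivalent to $|\{u>0\}|\le|\Omega|/2$ and $|\{u<0\}|\le |\Omega|/2$, and each of these is preserved by lower semicontinuity. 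Weak lower semicontinuity of $\int|Du|^p$ then shows that $u$ is a minimizer.

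If $\lambda_{p,q}=0$, then $Du=0$, so $u$ is a constant with $\|u\|_1$ or $\|u\|_q$ equal to $1$. Such a nonzero constant violates the constraint, a contradiction. Hence $\lambda_{p,q}>0$.

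\emph{Step 2: Euler--Lagrange equation, $q>1$.} The functional $G(u)=\int_\Omega|u|^{q-2}u$ is $C^1$ on $W^{1,p}$ only when $q\ge2$; for $1<q<2$ its derivative $(q-1)\int|u|^{q-2}v$ is singular. To avoid this we test with variations $u+t v + s u$ and use homogeneity. Alternatively, since $u$ changes sign, we perturb by $u+tv - \sigma(t)\,\phi$, where $\phi$ is a smooth bump chosen to correct the constraint.

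A cleaner route is the following. The equation tested with $v\equiv1$ gives $\int f_q(u)=0$, which is exactly the constraint. So it suffices to show that $\int|Du|^{p-2}DuDv = \lambda\int f_q(u)v + \mu\int |u|^{q-2}v$ for some $\mu$, and then prove $\mu=0$. Testing with $v=1$ yields $\mu\int|u|^{q-2}=0$, hence $\mu=0$. Testing with $v=u$ identifies the multiplier of the norm constraint as $\lambda_{p,q}$.

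Justifying the multiplier rule for $1<q<2$ is the technical point. Here we exploit that constants are admissible directions: $s\mapsto \int|u+s|^{q-2}(u+s)$ is strictly increasing and differentiable, since its derivative is $(q-1)\int |u+s|^{q-2}$, which is finite because $|u+s|^{q-2}\in L^1$ for a.e.\ $s$. Hence for each small $t$ there is a unique $s(t)$ restoring the constraint, and we normalize afterwards. Differentiating the ratio $\int|D(u+tv+s(t))|^p/\|u+tv+s(t)\|_q^p$ at $t=0$ gives the equation.

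\emph{Step 3: the case $q=1$; main obstacle.} The constraint is not differentiable, so we argue by subdifferentials. For $v\in W^{1,p}$ and small $t$, the level sets of $u_t=u+tv+s(t)$ move, and choosing the constant shift $s(t)$ keeps \eqref{M cond intro}. Comparing $\int|Du_t|^p$ with $\lambda_{p,1}\|u_t\|_1^p$ and using the one-sided derivative of the $L^1$ norm,
\[
\int_{\{u\ne0\}}\sgn(u)v+\int_{\{u=0\}}|v|,
\]
we obtain a variational inequality. From it we deduce the existence of $z\in L^\infty$ with $z=\sgn u$ on $\{u\neq0\}$, $|z|\le1$, and $\int z=0$, satisfying the weak equation with $f_1(u)$ interpreted as $z$. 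This is the sense that must be meant, since $\sign(0)$ is multivalued. A Hahn--Banach/minimax argument on the convex set of such $z$ produces one that works for all $v$ simultaneously. This selection step is where the main difficulty lies; we would first try to reduce it to minimizing the convex functional $\int|Du|^p/p-\lambda\int z u$ over admissible $z$.
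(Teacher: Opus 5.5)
Your Step~1 is correct. Your reformulation of \eqref{M cond intro} as $|\{u>0\}|\le|\Omega|/2$ and $|\{u<0\}|\le|\Omega|/2$, both closed under a.e.\ convergence by Fatou, is simpler than the paper's route through Lemma~\ref{c BV}(i) and Remark~\ref{rmk: c w12}. The problems are in the Euler--Lagrange part.

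\emph{Step~2, $1<q<2$.} For $q\ge2$ your implicit-function argument is fine. For $1<q<2$ the implicit function theorem needs $\partial_sF(0,0)=(q-1)\int_\Omega|u|^{q-2}$ to be finite at $s=0$ itself. Integrability for a.e.\ $s$ says nothing about $s=0$. In fact $\int_\Omega|u|^{q-2}=+\infty$ whenever $|\{u=0\}|>0$, which you cannot exclude, and $\partial_tF=(q-1)\int_\Omega|u|^{q-2}v$ need not exist either. So the differentiability of $s(t)$, and with it ``differentiating the ratio at $t=0$'', is unjustified. Your multiplier ansatz with $\mu\int_\Omega|u|^{q-2}v$ has the same defect.

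\emph{Step~3.} This is not yet a proof. The expansion you quote is that of $\|u+tv\|_1$, while the admissible competitor is $u+tv+s(t)$; nothing in your argument controls how the shift changes the $L^1$ norm, and the selection of $z$ is left open. Moreover, even a completed Step~3 would prove less than the theorem. Here $f_1=\sign$ is single-valued with $\sign(0)=0$. The paper (Proposition~\ref{prop:sol1}) shows that the representing function vanishes a.e.\ on $\{u=0\}$: $Du=0$ a.e.\ there, and by \cite{Lou}, $\Delta_pu=0$ a.e.\ on the critical set $\{Du=0\}$. Your proposal has no substitute for this step.

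The missing idea is that the shift never has to be differentiated or even estimated, because the constraint says exactly that some subgradient of the norm at $u$ has zero mean. Normalize $\|u\|_q=1$. For $q>1$, $g:=|u|^{q-2}u$ is the gradient of the convex function $\|\cdot\|_q$ at $u$, and $\int_\Omega g=0$. For $q=1$, Lemma~\ref{lemma g*} gives a subgradient $g$ of $\|\cdot\|_1$ at $u$ with $g=\sgn(u)$ on $\{u\neq0\}$, $g\equiv\alpha^*$ on $\{u=0\}$, and $\int_\Omega g=0$. Hence, for every constant $s$,
\[
\|u+tv+s\|_q\ \ge\ 1+t\int_\Omega g\,v\,d\mathcal{H}^N .
\]
Choose $s$ so that $u+tv+s$ is admissible (Lemma~\ref{lemma c pw}, resp.\ Lemma~\ref{c BV}(ii)) and use $D(u+tv+s)=D(u+tv)$. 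Minimality then gives
\[
\lambda_{p,q}^{1/p}\Bigl(1+t\int_\Omega g v\Bigr)\le\lambda_{p,q}^{1/p}+t\lambda_{p,q}^{1/p-1}\int_\Omega|Du|^{p-2}DuDv+o(t).
\]
Letting $t\downarrow0$ and replacing $v$ by $-v$ gives $\int_\Omega|Du|^{p-2}DuDv=\lambda_{p,q}\int_\Omega gv$ for all $v$. No multiplier rule and no Hahn--Banach or minimax step is needed.

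This is the mechanism behind the paper's proofs. Proposition~\ref{prop:sol} uses $\|u+c\|_q\ge\|u\|_q$ (Lemma~\ref{lemma c pw}), together with the $C^1$ character of $v\mapsto\|Dv\|_p/\|v\|_q$ and only the continuity of $c_q$. Proposition~\ref{prop:sol1} uses one-sided variations $v\ge0$, the monotonicity $\underline c(u+tv)\le\underline c(u)\le0$, and the bound $\|u+c+tv\|_1\ge1+t\int_\Omega\sgn_-(u)v$ for $c\le0$. For $q=1$ you must then still invoke \cite{Lou} to conclude $g=\sign(u)$.
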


Next we turn our attention to the case $p=1$, namely, to the 1-Laplacian operator.  In this setting, the natural functional space for the variational perspective is the space of functions of bounded variation $BV(\Omega)$, since $W^{1,1}(\Omega)$ lacks the necessary compactness properties. To be more precise, let 
\begin{align*}
\lambda_{1,q}:= 
\begin{cases}
\inf \left\{|D u|(\Omega): \, u \in BV(\Omega), \, \int_\Omega |u|^{q-2}u\, d\mathcal{H}^N=0, \, \norm{u}_q=1 \right\},  & \text{ if } q \in (1, \frac{N}{N-1}),\vspace{0.2cm}\\
\inf \left\{ |Du|(\Omega): u \in BV(\Omega), \, \Big| |\{ u>0\}|-|\{u<0\}| \Big| \le |\{u=0\}|, \; \norm{u}_1=1 \right\}, & \text{ if } q=1,
\end{cases}
\end{align*}
where $|D u|$ is the total variation measure of $u$, namely, for any open set $U\subset \Omega$,
\begin{align}\label{tv:def}
|D u|(U):=\sup \left \{ \int_\Omega u \, \operatorname{div} \varphi \, d\mathcal{H}^N: \, \varphi \in C_c^\infty(U, \R^N), \, \norm{\varphi}_\infty \le 1 \right\}.    
\end{align}

In general, the minimizers of $\lambda_{1,1}(\Omega)$ do not belong to $W^{1,1}(\Omega)$, see Remark~\ref{rmk:notattained} and Corollary~\ref{head cor}, and, in Theorem~\ref{thm:head intro} below, we show the existence of a domain $\Omega$ for which all minimizers $u$ of $\lambda_{1,1}$ satisfy that $\int_\Omega \sign(u)d \mathcal H^N \ne 0$. This shows that the space $BV(\Omega)$ and the condition \eqref{M cond intro} are not just for technical convenience, but they are, in fact, essential.  

We mention that, in \cite{Chang}, a sequence of eigenvalues of $\Delta_1$ with Neumann boundary conditions is found via Ljusternik–Schnirelmann theory for nonsmooth functionals, where critical points are to be interpreted as functions with zero strong slope. Our approach  avoids the use of weak/strong slopes, and it simplifies the asymptotic study as the parameters $p$ and $q$ vary. 

Our next result is the analogue of Theorem~\ref{theo p>1 intro} for the case $p=1$. We postpone to Section~\ref{sec:p=1} the definition of weak solutions in the setting of the 1-Laplacian, since this is a bit technical at first glance.
\begin{theo}\label{theo p=1 intro}
Let $q \in [1, \frac{N}{N-1})$, then $\lambda_{1, q}>0$ is attained in $BV(\Omega)$ and
    \begin{itemize}
    \item[(i)] if $q>1$, any minimizer of $\lambda_{1, 1}$ is a solution of 
    \[ -div \left( \frac{Du}{|Du|} \right) = \lambda_{1, q} |u|^{q-2}u \quad \text{ in }\Omega, \qquad \partial_\nu \left( \frac{Du}{|Du|} \right)= 0 \text{ on }\partial \Omega,
    \]
    in the sense of Definition~\ref{def 1}. 
    \item[(ii)] if $q=1$ and $u$ is a minimizer of $\lambda_{1, 1}$, then $u$ is a solution of
    \begin{align}\label{eq lambda 11 intro}
    -div \left( \frac{Du}{|Du|} \right) = \lambda_{1, 1} s \quad \text{ in }\Omega, \qquad \partial_\nu \left( \frac{Du}{|Du|} \right)= 0 \text{ on }\partial \Omega,
    \end{align}
    in the sense of Definition~\ref{def 1}, where 
        \begin{equation}\label{selection} s(x):= \begin{cases}
        1, & \text{ if } u(x) >0,\\
 \alpha, & \text{ if } u(x) =0, \\
        -1, & \text{ if } u(x) <0, 
    \end{cases}\end{equation}
    for some $\alpha \in [-1, 1]$.
    \end{itemize}
\end{theo}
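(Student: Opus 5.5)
The argument has two parts: existence and positivity of $\lambda_{1,q}$ by the direct method in $BV(\Omega)$, and then the Euler--Lagrange equation in the sense of Definition~\ref{def 1}, obtained by convex duality (Anzellotti pairings).

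\emph{Existence and positivity.} Take a minimizing sequence $(u_n)\subset BV(\Omega)$. The constraints give $\|u_n\|_q=1$ and $|Du_n|(\Omega)$ bounded, hence a bound in $BV(\Omega)$. Since $\Omega$ is a bounded Lipschitz domain and $q<N/(N-1)$, the embedding $BV(\Omega)\hookrightarrow L^q(\Omega)$ is compact. We may therefore assume $u_n\to u$ in $L^q$ and a.e., and lower semicontinuity of the total variation gives $|Du|(\Omega)\le\liminf|Du_n|(\Omega)$.

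For $q>1$ both constraints pass to the limit: $\|u\|_q=1$ is immediate, and $|u_n|^{q-2}u_n\to|u|^{q-2}u$ in $L^{q'}$, so $\int_\Omega|u|^{q-2}u=0$.

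For $q=1$ the constraint \eqref{M cond intro} is not obviously closed under a.e.\ convergence, because mass can migrate into $\{u=0\}$. I would rewrite it as: $|\{u>0\}|\le|\Omega|/2$ and $|\{u<0\}|\le|\Omega|/2$. This is equivalent to \eqref{M cond intro}, since $|\{u>0\}|-|\{u<0\}|\le|\{u=0\}|$ is the same as $2|\{u>0\}|\le|\Omega|$, and symmetrically for $\{u<0\}$. Pointwise, $\chi_{\{u>0\}}\le\liminf\chi_{\{u_n>0\}}$, so Fatou gives $|\{u>0\}|\le\liminf|\{u_n>0\}|\le|\Omega|/2$, and likewise for $\{u<0\}$. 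Hence $u$ is admissible.

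Positivity follows by contradiction. If $\lambda_{1,q}=0$, the minimizer has $|Du|(\Omega)=0$, so $u\equiv c$ on the connected set $\Omega$ and $|c|>0$ by the normalization. This contradicts both $\int|u|^{q-2}u=0$ and the condition $|\{u>0\}|,|\{u<0\}|\le|\Omega|/2$.

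\emph{Euler--Lagrange equation.} For $q>1$, the functional $G(u)=\int_\Omega|u|^q$ is $C^1$ and the constraint $\int_\Omega|u|^{q-2}u=0$ is a second smooth constraint; the total variation, however, is convex but not differentiable. My plan is as follows.
\begin{enumerate}
\item Reduce to a convex minimization problem in which $u$ is a minimizer of $|Dv|(\Omega)-\lambda_{1,q}\int_\Omega|u|^{q-2}u\,v$ over a suitable class of $v$. To justify this, use the $1$-homogeneity of the total variation to compute directional derivatives of the Rayleigh-type quotient along variations $u+tv$, correcting the $t$-dependence of the constraints by adding multiples of a fixed function and then normalizing.
\item Apply the characterization of the subdifferential of the total variation with natural Neumann condition (Andreu--Caselles--Maz\'on). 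This yields a vector field $z\in L^\infty(\Omega,\R^N)$ with $\|z\|_\infty\le1$, $-\operatorname{div}z=\lambda_{1,q}|u|^{q-2}u+\mu$, $(z,Du)=|Du|$ as measures, and $[z,\nu]=0$ on $\partial\Omega$.
\item Integrate $-\operatorname{div}z$ against $1$: since $[z,\nu]=0$, this integral vanishes, and with $\int|u|^{q-2}u=0$ it forces the Lagrange multiplier $\mu$ of the orthogonality constraint to vanish.
\end{enumerate}

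For $q=1$, I would first show that a minimizer is a minimizer of $|Dv|(\Omega)-\lambda_{1,1}\int_\Omega s\,v$ for some measurable selection $s\in\operatorname{sign}(u)$, that is, $s=\pm1$ where $u\gtrless0$ and $s\in[-1,1]$ on $\{u=0\}$. The compatibility condition $\int_\Omega s=0$ is where \eqref{M cond intro} comes in: it is exactly the condition guaranteeing that a constant $\alpha\in[-1,1]$ on $\{u=0\}$ can be chosen so that $\int_\Omega s=0$. I would then obtain $z$ as above, via a Lagrange-multiplier or Hahn--Banach separation argument applied to the convex set of admissible perturbations.

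\emph{Main obstacle.} The hardest step is justifying the multiplier rule with a nonsmooth objective and, for $q=1$, a nonsmooth and nonconvex constraint. I would try a penalization/approximation route: approximate by $\lambda_{p,q}$ with $p\downarrow1$, use Theorem~\ref{theo p>1 intro} to obtain solutions with fields $z_p=|Du_p|^{p-2}Du_p$, which are bounded in $L^{r}$ for every $r$ (from the bound on $\int|Du_p|^p$), and pass to the limit to get $\|z\|_\infty\le1$ and $-\operatorname{div}z=\lambda_{1,q}s$. The identity $(z,Du)=|Du|$ would then follow from energy convergence together with the minimality of $u$.
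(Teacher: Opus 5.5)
Your existence and positivity arguments are fine. Rewriting $\mathfrak N(\Omega)$ as $|\{u>0\}|,|\{u<0\}|\le|\Omega|/2$ and applying Fatou is clean. Deducing $\lambda_{1,q}>0$ from the existence of a minimizer on the connected set $\Omega$ is also simpler than the paper's Poincar\'e-inequality route.

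\textbf{The gap is in the Euler--Lagrange part.} The one inequality everything rests on is never proved, and each mechanism you propose for it fails.
\begin{itemize}
\item $\int_\Omega|v|^{q-2}v=0$ is \emph{not} a smooth constraint. Here $q<N/(N-1)\le 2$, so $t\mapsto|t|^{q-2}t$ is not $C^1$ at $0$, and the formal derivative $(q-1)|u|^{q-2}$ is infinite on $\{u=0\}$. No Lagrange multiplier rule is available. In any case a multiplier would multiply $|u|^{q-2}$, not be a constant $\mu$.
\item For $q=1$ the set $\mathfrak N(\Omega)$ is not convex, so there is no convex set of admissible perturbations to separate.
\item The fallback $p\downarrow 1$ only gives the equation for those minimizers of $\lambda_{1,q}$ that arise as limits of minimizers of $\lambda_{p,q}$. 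The theorem is about \emph{every} minimizer, and $\lambda_{1,q}$ typically has many (Corollaries~\ref{coro convex} and~\ref{head cor}).
\item Moreover, for $q=1$ the weak-$*$ limit of $\sgn(u_p)$ on $\{u=0\}$ is just some $[-1,1]$-valued function. It need not be the constant $\alpha$ required in (ii).
\end{itemize}

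\textbf{The missing idea is to remove the nonconvex constraint by shifting with constants.} A constant shift leaves both $|Dv|(\Omega)$ and the pairing with a zero-mean function unchanged. Your phrase about ``adding multiples of a fixed function'' is the right instinct if that function is $1$, but then the argument is global and needs neither directional derivatives nor multipliers.

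Put $g:=|u|^{q-2}u$ if $q>1$. If $q=1$, put $g:=s$ with $\alpha$ chosen so that $\int_\Omega s=0$; this is possible precisely because $u\in\mathfrak N(\Omega)$, which is Lemma~\ref{lemma g*}. Then $\int_\Omega g=0$ and $\int_\Omega gu=\|u\|_q^q=1$. Also $\int_\Omega gw\le\|w\|_q$ for all $w$, by H\"older (resp.\ $|s|\le 1$).

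Given any $v\in BV(\Omega)$, pick $c$ with $v+c$ admissible: $c=c_q(v)$ from Lemma~\ref{lemma c pw}, resp.\ Lemma~\ref{c BV}(ii). Minimality of $u$ gives
\[
|Dv|(\Omega)=|D(v+c)|(\Omega)\ge\lambda_{1,q}\|v+c\|_q\ge\lambda_{1,q}\int_\Omega g\,(v+c)=\lambda_{1,q}\int_\Omega g\,v .
\]
So $f:=\lambda_{1,q}g$ satisfies $\int_\Omega fv\le|Dv|(\Omega)$ for all $v$, with no constraint and no multiplier.

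Since $\int_\Omega f=0$, the map $Dv\mapsto\int_\Omega fv$ is well defined on $\{Dv:\ v\in W^{1,1}(\Omega)\}\subset L^1(\Omega;\R^N)$, with norm at most $1$. A Hahn--Banach extension gives $z$ with $\|z\|_\infty\le 1$ and $\int_\Omega z\cdot D\varphi=\int_\Omega f\varphi$. You do not need the Andreu--Caselles--Maz\'on characterization at all. Condition (iii) of Definition~\ref{def solution} is then just $\int_\Omega fu=\lambda_{1,q}=|Du|(\Omega)$.

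This is exactly what the paper does in dual language. Proposition~\ref{min subdiff} (homogeneity plus a subgradient of the norm) gives $f\in\partial E(u)$. Step~2 of Proposition~\ref{subdiff} uses the same constant shift $c_q(u)$, resp.\ $c(u)$, to show $E^*=I_{M^*}$ on zero-mean functions, whence $f\in M^*$.
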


The proof of Theorem~\ref{theo p=1 intro} is again variational and it is in part inspired by \cite{KS}; however, additional technical difficulties and structural differences arise when dealing with \eqref{M cond intro} in the space $BV(\Omega)$. For instance, as detailed in Section~\ref{sec:prelim}, if $u \in BV(\Omega)$ satisfies \eqref{M cond intro}, then one can add suitable constants to $u$ and still keep the validity of \eqref{M cond intro}, see Remark~\ref{rmk: c w12}. This simple fact produces multiplicity of solutions and several technical complications. 
 
 For the 1-Laplacian, \eqref{eq lambda 11 intro} can be seen as the corresponding  homogeneous eigenvalue problem, and it also appears in \cite{SdLSdL, SdLSdL2} using Ljusternik–Schnirelman theory. A function $s$ as in \eqref{selection} is usually called a \textit{selection} of $\sign(u)$.

Our next result shows the convergence of minimizers of $\lambda_{p, q}$ as $p, q$ vary in the admissible regimes. This  complements \cite{SdLSdL, SdLSdL2}, in which minimizers of $\lambda_{p,q}$ are shown to converge to solutions to \eqref{eq lambda 11 intro} as $p=q \to 1$. Here, we 
provide a variational characterization of the limit of $\lambda_{p,q}$ and show that limiting solutions are minimizers of $\lambda_{1,1}$. 
\begin{theo}\label{theo conv intro}
Let $p_n, p \in [1, N)$, $q_n \in [1, p_nN/(N-p_n))$, and  $q \in [1, pN/(N-p))$.  Then, up to a subsequence,  
\begin{align}\label{pnqnclaim}
\lambda_{p_n,q_n} \to \lambda_{p,q} \quad \text{ if } \quad (p_n, q_n) \to (p, q). 
\end{align}
Moreover, given a minimizer $u_n$ of $\lambda_{p_n, q_n}$, there exists a minimizer $u$ of $\lambda_{p, q}$ such that, up to a subsequence,  $u_n \to u$ in $W^{1, p}(\Omega)$ if $p>1$, whereas, if $p=1$,
\begin{align}\label{pnqnclaim2}
u_n \to u \text{ in } L^r(\Omega)\quad \text{ for every } r \in [1, N/(N-1))\qquad  \text{ and }\qquad  |D u_n|(\Omega) \to |Du|(\Omega). 
\end{align}
\end{theo}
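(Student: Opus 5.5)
The plan is a $\Gamma$-convergence style argument: a uniform upper bound (limsup) via recovery sequences, and a lower bound (liminf) via compactness and lower semicontinuity. Throughout we use Theorems~\ref{theo p>1 intro} and \ref{theo p=1 intro}, which say that all the infima are attained and positive.

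\emph{Step 1 (upper bound).} Fix a minimizer $u$ of $\lambda_{p,q}$. If $p>1$, we first approximate $u$ in $W^{1,p}$ by a function $w$ that is Lipschitz on $\overline\Omega$ (using density, since $\Omega$ is Lipschitz). If $p=1$, we use strict (area) approximation in $BV$ by smooth functions, which gives $w_k\to u$ in $L^r$ for $r<N/(N-1)$ and $|Dw_k|(\Omega)\to |Du|(\Omega)$. We then correct $w$ so that it becomes admissible for $(p_n,q_n)$.

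\begin{itemize}
\item For $q_n>1$, choose a constant $c_n$ with $\int_\Omega |w-c_n|^{q_n-2}(w-c_n)=0$. Such a $c_n$ exists by monotonicity and continuity in $c$, and it is unique, so $c_n\to c$, where $c$ is the corresponding constant for the limit problem. Then normalize in $L^{q_n}$.
\item For $q_n=1$ (or $q=1$), pick $c_n$ to be a median of $w$. This makes condition \eqref{M cond intro} hold.
\end{itemize}

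Since the correction only adds constants, it does not change $Dw$. Moreover $\int|Dw|^{p_n}\to\int|Dw|^p$ because $Dw$ is bounded, and the normalizations converge by dominated convergence. This gives $\limsup_n\lambda_{p_n,q_n}\le\lambda_{p,q}+\varepsilon$ for every $\varepsilon>0$.

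The delicate point is consistency of the constraint in the limit. When $q_n\to 1$ with $q_n>1$, the zero-mean condition for $|u|^{q_n-2}u$ passes in the limit to \eqref{M cond intro}. Conversely, if $u$ satisfies \eqref{M cond intro}, one can take $c_n\to 0$: the function $c\mapsto\int|u-c|^{q_n-2}(u-c)$ converges to $c\mapsto\int \sign(u-c)$, whose "zero" at $c=0$ is encoded exactly by \eqref{M cond intro}.

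\emph{Step 2 (compactness and lower bound).} By Step 1, $\int|Du_n|^{p_n}$ is bounded. Fix $s<p$ (or $s=1$ if $p=1$). By Hölder, $u_n$ is bounded in $W^{1,s}$ modulo constants. The constants are controlled by $\|u_n\|_{q_n}=1$ together with the Poincaré inequality, applied to $u_n$ minus its mean or median. The Sobolev embedding with the subcritical gap $q<p^*$ yields $u_n\to u$ strongly in $L^r$ for the relevant $r$, in particular $r$ slightly above $q$, uniformly along the sequence. For $p>1$ we additionally get $u\in W^{1,p}$ and $\int|Du|^p\le\liminf\int|Du_n|^{p_n}$; this follows from weak lower semicontinuity in $W^{1,s}$, then Hölder, and then letting $s\uparrow p$. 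For $p=1$ we use lower semicontinuity of the total variation.

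Next, $\|u\|_q=1$ by strong convergence. The constraint passes to the limit as follows.
\begin{itemize}
\item If $q>1$, continuity of $u\mapsto |u|^{q-2}u$ (Nemytskii) gives the zero-mean condition.
\item If $q=1$, we must show \eqref{M cond intro}. We have $|\{u>0\}|\le\liminf|\{u_n>0\}|$ in a suitable sense. We argue by contradiction: if $|\{u>0\}|>|\{u\le0\}|$, then for large $n$ the positive part dominates, which contradicts either the zero-mean constraint (for $q_n>1$, using $|u_n|^{q_n-2}u_n\to\sign(u)$ a.e. on $\{u\neq0\}$) or \eqref{M cond intro} for $u_n$.
\end{itemize}
Hence $u$ is admissible, and $\lambda_{p,q}\le|Du|\le\liminf\lambda_{p_n,q_n}$.

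\emph{Step 3 (strong convergence).} Combining the two bounds, $u$ is a minimizer and $\int|Du_n|^{p_n}\to\int|Du|^p$. For $p=1$ this gives exactly \eqref{pnqnclaim2}. For $p>1$ we also have $u_n\rightharpoonup u$, together with convergence of the norms, which must be adjusted for the varying exponent. For that, compare $\|Du_n\|_{p}$ with $\|Du_n\|_{p_n}$ using the fact that $u_n$ are minimizers. Higher integrability of $Du_n$, from Meyers/regularity, or the observation that $\|Du_n\|_{s}$ for $s\to p$ is controlled, lets us pass the exponent. Uniform convexity of $L^p$ then gives strong convergence in $W^{1,p}$.

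I expect the main obstacles to be two. The first is the constraint passage when $q_n\to1$, in both the lower bound and the recovery construction. The second is strong $W^{1,p}$ convergence when $p_n\neq p$, where the exponent varies and $Du_n$ might not lie in $L^p$ when $p_n<p$. For the latter, I would first try to use the Euler–Lagrange equation and local regularity to obtain uniform higher integrability.
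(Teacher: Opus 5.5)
Your overall architecture matches the paper's. You get a $\limsup$ bound from fixed smooth competitors shifted by additive constants, a $\liminf$ bound from compactness and lower semicontinuity, and strong convergence from convergence of the gradient norms. Your direct verification that the limit $u$ is admissible is correct, and slightly more direct than the paper, which passes $\norm{u_n+c}_{q_n}\ge\norm{u_n}_{q_n}$ to the limit and then shifts $u$ by a constant. To make it rigorous, use Fatou's lemma with the lower semicontinuous $\sgn_-$ and upper semicontinuous $\sgn_+$ when $q_n=1$. When $q_n>1$, use $|u_n|^{q_n-2}u_n\to\sign(u)$ a.e.\ on $\{u\neq0\}$ together with the domination $|u_n|^{q_n-1}\le 1+|u_n|$. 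Two steps, however, do not work as written.

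\emph{First issue: the recovery step for $q=1<q_n$ rests on a false claim.} For $q_n>1$ the correcting constant is uniquely determined, and a median is not admissible for $\lambda_{p_n,q_n}$. This constant need not tend to $0$ even when $u\in\mathfrak N(\Omega)$. Take $u=\chi_E$ with $|E|=|\Omega|/2$: the unique $c$ with $\int_\Omega|u-c|^{q_n-2}(u-c)\,d\mathcal H^N=0$ is $c=1/2$ for every $q_n>1$. Moreover $\int_\Omega\sign(u-c)\,d\mathcal H^N=0$ for every $c\in(0,1)$, so limit points of $c_n$ can lie anywhere in the median interval.

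What you actually need is the variational characterization of the constraint, not the location of the constant. By Lemma~\ref{lemma c pw}, $\norm{w-c_n}_{q_n}=\min_c\norm{w-c}_{q_n}$. Hence, for any limit point $k$ of $c_n$, Lemma~\ref{c BV}$(i)$ gives $\lim_n\norm{w-c_n}_{q_n}=\norm{w-k}_1\ge\norm{u-k}_1-\norm{u-w}_1\ge\norm{u}_1-\norm{u-w}_1$. This is exactly how the paper argues.

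A smaller point for $p=1$: your strict approximants must be smooth up to $\partial\Omega$ so that $Dw$ is bounded. Interior mollification only gives $C^\infty(\Omega)$ functions, so you need an extra diagonal $W^{1,1}$-approximation by $C^1(\overline\Omega)$ functions, as the paper does.

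\emph{Second issue: strong $W^{1,p}$ convergence when $p_n<p$ is not established.} Controlling $\norm{Du_n}_s$ via H\"older only works for $s\le p_n$, so it never reaches $s=p$. In fact nothing in your plan shows that $u_n\in W^{1,p}(\Omega)$ at all. What is missing is something like a global Meyers/Gehring higher-integrability estimate for the Neumann $p_n$-Laplace problem on a Lipschitz domain, uniform in $n$; you only name it.

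For $p_n\ge p$ your argument is complete: H\"older gives $\limsup_n\norm{Du_n}_p\le\lim_n\norm{Du_n}_{p_n}=\norm{Du}_p$, and weak convergence plus uniform convexity finish. The paper's Step 2 has the same limitation. It lets $\varepsilon\to0$ in $\norm{Du_n}_{p-\varepsilon}\le C\norm{Du_n}_{p_n}$ at fixed $n$, which is legitimate only when $p_n\ge p$.

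The rest of your plan goes through:
\begin{itemize}
\item convergence of the eigenvalues;
\item minimality of the limit;
\item weak convergence in $W^{1,s}$ for every $s<p$;
\item the $p=1$ conclusion, via $\norm{Du_n}_1\le|\Omega|^{1-1/p_n}\norm{Du_n}_{p_n}$ and lower semicontinuity of the total variation.
\end{itemize}
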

We refer to \cite{ST} for the case $p=2$, and to \cite{SST, AST, PRT, PRT2} for the study of some related nonlinear eigenvalue problems with the $p$-bilaplacian and $1$-bilaplacian operators.

In the rest of the paper, we focus our attention on the eigenvalue $\lambda_{1,1}$ and its relationship to the Cheeger-Neumann problem.

We say that a set $\omega\subset \Omega$ has finite relative perimeter in $\Omega$ if $\chi_\omega\in BV(\Omega)$, where $\chi_\omega$ denotes the characteristic function of the set $\omega$. In this case, the relative perimeter of $\omega$ in $\Omega$ is given by 
\begin{align*}
P(\omega;\Omega):=|D \chi_\omega|(\Omega).
\end{align*}
Essentially, $P(\omega;\Omega)$ is the length of the portion of the perimeter of $\omega$ that is \emph{contained in the interior of $\Omega$}.

Let 
\begin{align}\label{hdef}
h_{\mathcal N}(\Omega)&:=\inf \left\{ \frac{P(\omega; \Omega)}{|\omega|}: \, \omega\subset\Omega,\ 0 < |\omega| \le \frac{|\Omega|}{2} \right\}. 
\end{align}
The number $h_{\mathcal N}(\Omega)$ is sometimes called the \emph{Neumann-Cheeger constant} \cite{J15,B80} or the \emph{the shortest fence} \cite{BF25}. It has been used to obtain lower bounds for eigenvalues of the Neumann Laplacian. The next result establishes that $\lambda_{1,1}=h_{\mathcal N}(\Omega)$ and characterizes its minimizers. We use $u^+:=\max\{0,u\}$ and $u^-:=\max\{-u,0\}$ to denote the positive and negative parts of $u$ respectively.
\begin{theo}\label{theo charact lambda intro}
Let  $\Omega\subset \mathbb R^N$ be a bounded Lipschitz domain, then $\lambda_{1,1} = h_{\mathcal N}(\Omega).$ Moreover, 
    \begin{itemize}
        \item [$(i)$]if $u$ is a minimizer of $\lambda_{1,1}$, then $\omega_t^\pm:=\{x\in \Omega\::\: u^\pm(x)>t\}$ is a minimizer for $h_{\mathcal N}(\Omega)$ for almost every $t\in (0,\|u^\pm\|_\infty)$, whenever $\|u^\pm\|_\infty\neq 0$.
        \item [$(ii)$] if $A$ achieves $h_{\mathcal N}(\Omega)$, then $u:=\chi_{A}$ is a minimizer for $\lambda_{1,1}$. Furthermore, if $B$ also achieves $h_{\mathcal N}(\Omega)$ and $A\cap B=\emptyset$, then $u:=\alpha \chi_{A}-\beta \chi_{B}$ is a minimizer for $\lambda_{1,1}$ for any $\alpha, \beta \in \R$ such that $\alpha, \beta \ne 0$ and $\alpha\beta \ge 0$. 
    \end{itemize}
\end{theo}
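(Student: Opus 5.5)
The plan is to prove the two inequalities $\lambda_{1,1}\le h_{\mathcal N}(\Omega)$ and $\lambda_{1,1}\ge h_{\mathcal N}(\Omega)$ separately. Part $(ii)$ will come out of the first, part $(i)$ out of the equality case in the second. Throughout we use the coarea formula in $BV(\Omega)$,
\[
|Du|(\Omega)=\int_{-\infty}^{\infty}P(\{u>t\};\Omega)\,dt,
\]
and the layer-cake formula $\|u^\pm\|_1=\int_0^\infty|\{u^\pm>t\}|\,dt$.

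\emph{Upper bound and part $(ii)$.} Let $\omega\subset\Omega$ with $0<|\omega|\le|\Omega|/2$ and finite relative perimeter, and set $u=\chi_\omega/|\omega|$. Then $|\{u>0\}|=|\omega|$, $|\{u<0\}|=0$ and $|\{u=0\}|=|\Omega|-|\omega|\ge|\omega|$, so \eqref{M cond intro} holds and $\|u\|_1=1$. Hence $\lambda_{1,1}\le P(\omega;\Omega)/|\omega|$, and taking the infimum gives $\lambda_{1,1}\le h_{\mathcal N}(\Omega)$.

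Now let $A,B$ be disjoint minimizers of $h_{\mathcal N}(\Omega)$ and $u=\alpha\chi_A-\beta\chi_B$ with $\alpha,\beta>0$; the case $\alpha,\beta<0$ follows by replacing $u$ with $-u$, and $\alpha\beta\ge0$ with $\alpha,\beta\neq0$ forces one of these. Since $|A|,|B|\le|\Omega|/2$, the condition \eqref{M cond intro} reads $\big||A|-|B|\big|\le|\Omega|-|A|-|B|$, which reduces to $2\max(|A|,|B|)\le|\Omega|$ and so holds. By coarea, $|Du|(\Omega)=\alpha P(A;\Omega)+\beta P(B;\Omega)$, which equals $h_{\mathcal N}(\Omega)(\alpha|A|+\beta|B|)=h_{\mathcal N}(\Omega)\|u\|_1$. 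So $u/\|u\|_1$ attains $h_{\mathcal N}(\Omega)$, and therefore attains $\lambda_{1,1}$ once equality is known. The case $u=\chi_A$ is the same computation.

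\emph{Lower bound and part $(i)$.} Let $u$ be admissible for $\lambda_{1,1}$. The key observation is that \eqref{M cond intro} implies $|\{u>0\}|\le|\Omega|/2$ and $|\{u<0\}|\le|\Omega|/2$. Indeed, $|\{u>0\}|-|\{u<0\}|\le|\{u=0\}|$ gives $2|\{u>0\}|\le|\{u>0\}|+|\{u<0\}|+|\{u=0\}|=|\Omega|$, and symmetrically for $\{u<0\}$. Consequently, for every $t>0$ both $\omega_t^+=\{u>t\}$ and $\omega_t^-=\{u<-t\}$ have measure at most $|\Omega|/2$. They are therefore admissible for $h_{\mathcal N}(\Omega)$ whenever they have positive measure; when $|\omega_t^\pm|=0$ the bound $P\ge h_{\mathcal N}|\omega_t^\pm|$ is trivial. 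For $t<0$ the relative perimeter of $\{u>t\}$ coincides with that of its complement $\{u\le t\}$. Splitting the coarea integral at $0$ and using the layer-cake formula then gives
\[
|Du|(\Omega)=\int_0^\infty P(\omega_t^+;\Omega)\,dt+\int_0^\infty P(\omega_t^-;\Omega)\,dt\ \ge\ h_{\mathcal N}(\Omega)\big(\|u^+\|_1+\|u^-\|_1\big)=h_{\mathcal N}(\Omega).
\]
Hence $\lambda_{1,1}\ge h_{\mathcal N}(\Omega)$, and with the upper bound, equality holds.

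If $u$ is a minimizer, every inequality above is an equality. This forces $P(\omega_t^\pm;\Omega)=h_{\mathcal N}(\Omega)|\omega_t^\pm|$ for a.e.\ $t$, and for a.e.\ $t\in(0,\|u^\pm\|_\infty)$ we have $|\omega_t^\pm|>0$. Those level sets are therefore minimizers of $h_{\mathcal N}(\Omega)$, which is part $(i)$.

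\emph{Main obstacle.} The delicate step is the bookkeeping at level $0$ and for $t<0$. One must check that the coarea contribution of $\{u>t\}$ for $t<0$ equals $P(\{u<-s\};\Omega)$ up to a null set of levels; this holds since $P(E;\Omega)=P(\Omega\setminus E;\Omega)$ and $\{u\le t\}$ differs from $\{u<t\}$ only for countably many $t$. One must also confirm that the possibly large zero set is harmless, which is exactly what \eqref{M cond intro} guarantees.
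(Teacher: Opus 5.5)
Your proof is correct, and it is more direct than the paper's.

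The paper routes everything through the auxiliary two-set problem $\widetilde h_{\mathcal N}(\Omega)$ with constraint \eqref{Nomega}:
\begin{itemize}
\item coarea shows that $(\omega_t^+,\omega_t^-)$ is an admissible pair, which gives $\lambda_{1,1}\ge\widetilde h_{\mathcal N}$;
\item the reverse inequality comes from a minimizing sequence of pairs, using BV compactness, lower semicontinuity, and a Fatou argument to keep the limit in $\mathfrak N(\Omega)$;
\item the mediant inequality \eqref{algeb ineq} (Lemma~\ref{lemma charac2}) is then needed to identify $\widetilde h_{\mathcal N}=h_{\mathcal N}$, to split minimizing pairs into single minimizers for (i), and to handle $\alpha\chi_A-\beta\chi_B$ in (ii).
\end{itemize}

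Your key observation is that \eqref{M cond intro} alone gives $|\{u>0\}|,|\{u<0\}|\le|\Omega|/2$. This makes each $\omega_t^\pm$ individually admissible for $h_{\mathcal N}$, with the following consequences:
\begin{itemize}
\item the lower bound is a termwise comparison inside the coarea integral, and its equality case yields (i) sign by sign;
\item the upper bound is a one-line test with $\chi_\omega/|\omega|$ and needs no compactness;
\item (ii) is the exact identity $|D(\alpha\chi_A-\beta\chi_B)|(\Omega)=\alpha P(A;\Omega)+\beta P(B;\Omega)$, which holds even when $A$ and $B$ share boundary.
\end{itemize}

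What the paper's detour buys is the two-phase interpretation, namely joint minimality of $(\omega_t^+,\omega_t^-)$ for a problem whose constraint \eqref{rem} mirrors \eqref{M cond intro}. It also gives attainment of $h_{\mathcal N}$ as a byproduct. However, attainment also follows from your (i) combined with Proposition~\ref{lambda 11 att}. The later uses, such as Corollary~\ref{coro convex}, only need the single-set conclusion you obtain.

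Splitting the coarea formula for $u$ itself at level $0$, as you do, is the right justification of \eqref{absu2}. The paper's intermediate identity $|Du|(\Omega)=|D|u||(\Omega)$ in \eqref{absuco} fails for BV functions that jump across $0$. For example, if $\Omega$ is a ball and $u=\chi_A-\chi_B$ with $A,B$ complementary half-balls, then $|D|u||(\Omega)=0<|Du|(\Omega)$. The identity \eqref{absu2} itself is nonetheless true.
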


To the best of our knowledge, this is the first work to establish a rigorous and robust connection between Neumann $p$-Laplacian problems and the Neumann-Cheeger constant $h_{\mathcal N}(\Omega)$ in the limit as $p \to 1$, even in the homogeneous case. This connection relies essentially on our variational characterization of $\lambda_{1,1}$ based on condition \eqref{M cond intro}, which, as far as we know, has not been previously explored in this context. We refer to Remark~\ref{ggremark} for a discussion of earlier contributions and some of their limitations.

The proof of Theorem \ref{theo charact lambda intro} is based on a standard use of the coarea formula, see, for instance, \cite{KF,L97,BF25}. We include a proof for completeness and because our definition of $\lambda_{1,1}$ differs from those considered in previous works. To be more precise, we use first an auxiliary isoperimetric problem among subsets  $\omega_1,\omega_2\subset \Omega$ satisfying
\begin{align}\label{Nomega}
\omega_1\cup \omega_2\neq \emptyset,\qquad \omega_1 \cap \omega_2=\emptyset,\qquad 
\max\{|\omega_1|,|\omega_2|\}\leq \frac{|\Omega|}{2}.    
\end{align}
In this setting, consider 
\begin{align}\label{auxprob}
\widetilde h_{\mathcal N}(\Omega):=\inf\left\{\frac{P(\omega_1;\Omega) + P(\omega_2; \Omega)}{|\omega_1 \cup \omega_2|}\::\: \omega_1,\omega_2\subset \Omega\text{ satisfy \eqref{Nomega}}\right\}.
\end{align}
Using the coarea formula, the level sets $\omega_t^\pm:=\{u^\pm>t\}$ of a minimizer $u$ of $\lambda_{1,1},$ and the variational characterization of $\lambda_{1,1},$  we show that $(\omega_t^+,\omega_t^-)$ is a minimizer of \eqref{auxprob} for almost every $t \in (0, \norm{u}_\infty)$.  Observe that \eqref{Nomega} is equivalent to 
\begin{align}\label{rem}
\omega_1\cup \omega_2\neq \emptyset,\qquad \omega_1 \cap \omega_2=\emptyset,\qquad   \Big||\omega_1|-|\omega_2|\Big|\le |\Omega\backslash (\omega_1\cup \omega_2)|,
\end{align}
and that \eqref{rem} is reminiscent of \eqref{M cond intro}. Finally, we show that $\widetilde h_{\mathcal N}(\Omega)=h_{\mathcal N}(\Omega)$ and that, if $(D^+,D^-)$ is a minimizer of $\widetilde h_{\mathcal N}(\Omega)$ such that $D^+, D^- \ne \emptyset$, then  $D^+$ and $D^-$ are minimizers for $h_{\mathcal N}(\Omega)$, see Lemma~\ref{lemma charac2}.

Our results are inspired by previous work on the Dirichlet eigenvalue problem for the 1-Laplacian, in particular \cite{KS, KF}. In that setting, the eigenvalues are directly related to the \emph{Cheeger constant}, and the corresponding minimizers are characteristic functions of \emph{Cheeger sets}, i.e., subsets that minimize the ratio between (total) perimeter and volume.  We refer to \cite{p2011} for an introduction in this topic. From this perspective, the key difference between the Dirichlet and Neumann settings is that, in the latter, only the relative perimeter (the portion of the perimeter lying in the interior of the domain) is taken into account under a volume constraint which does not fix it a priori.

To close this paper, we now characterize the minimizers of \eqref{hdef} for domains $\Omega$ with different geometric properties such as convexity or regularity. First, we show that the volume constraint in \eqref{hdef} gets saturated in smooth strictly convex domains and in cubes in any dimension. 
\begin{theo}\label{thm convex intro}
    Let $\Omega \subset \R^N$ be an open, bounded, strictly convex set with $C^{2, \alpha}$ boundary, or the cube $[0,1]^N$. 
    Then, 
\begin{align}\label{h convex intro} h_{\mathcal N}(\Omega) = \frac{2}{|\Omega|} \inf\left \{ P(E; \Omega): \, E \subset \Omega, \, |E|= \frac{|\Omega|}{2}\right \}. \end{align}
Moreover, $E$ is a minimizer of the right hand side if and only if $E$  is a minimizer for $h_{\mathcal N}(\Omega)$. 
\end{theo}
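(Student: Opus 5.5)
Introduce the relative isoperimetric profile $I(v):=\inf\{P(E;\Omega): E\subset\Omega,\ |E|=v\}$ for $v\in(0,|\Omega|)$. Then $h_{\mathcal N}(\Omega)=\inf_{0<v\le |\Omega|/2} I(v)/v$, and \eqref{h convex intro} says that $v\mapsto I(v)/v$ attains its infimum over $(0,|\Omega|/2]$ at $v=|\Omega|/2$. Moreover, any minimizer of $h_{\mathcal N}(\Omega)$ must have volume $|\Omega|/2$. The plan is to prove that $I(v)/v$ is \emph{strictly} decreasing on $(0,|\Omega|/2]$. Both claims then follow. The inequality ``$\le$'' in \eqref{h convex intro} is trivial. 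A minimizer $\omega$ of $h_{\mathcal N}$ with $|\omega|=v<|\Omega|/2$ would give $I(v)/v\le P(\omega;\Omega)/v=h_{\mathcal N}\le I(|\Omega|/2)/(|\Omega|/2)<I(v)/v$, a contradiction. So minimizers of $h_{\mathcal N}$ have half volume and minimize the right-hand side. Conversely, a half-volume minimizer $E$ of the right-hand side satisfies $P(E;\Omega)/|E|=h_{\mathcal N}$. Existence of minimizers is not needed here; it comes from compactness in $BV$ as in Theorem~\ref{theo charact lambda intro}.

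For the smooth strictly convex case, I will use the classical concavity result for the isoperimetric profile of convex bodies (Sternberg--Zumbrun, Kuwert, Bayle--Rosales). For $\Omega$ convex, $I^{N/(N-1)}$ is concave on $(0,|\Omega|)$. Since $I(v)\to0$ as $v\to0$, $I^{N/(N-1)}$ extends continuously by $0$ at $v=0$. Concavity then gives that $I(v)^{N/(N-1)}/v$ is nonincreasing. Hence
\[
\frac{I(v)}{v}=\Big(\frac{I(v)^{N/(N-1)}}{v}\Big)^{(N-1)/N}v^{-1/N},
\]
which is a product of a nonincreasing factor and a strictly decreasing positive factor. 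So $I(v)/v$ is strictly decreasing, as required. The $C^{2,\alpha}$ hypothesis enters through the regularity theory for relative isoperimetric minimizers (free boundary meeting $\partial\Omega$ orthogonally, with a small singular set). This regularity is what justifies the second-variation argument behind concavity, using the variation by parallel surfaces and the convexity of $\partial\Omega$ to control the boundary term. I would quote this result rather than reprove it.

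For the cube $Q=[0,1]^N$, the relative isoperimetric problem is known explicitly (Hadwiger; Brezis--Bruckner in general dimension, and the results on orthants/cubes). Minimizers are, depending on the volume, corner-centred balls $B_r(\text{vertex})\cap Q$, coordinate-edge cylinders, and slabs $\{x_1<t\}$. Alternatively, the concavity of $I^{N/(N-1)}$ also holds for convex polytopes by approximation: approximate $Q$ by smooth strictly convex domains $\Omega_k$ in Hausdorff sense. Then $I_{\Omega_k}\to I_Q$ pointwise, by $\Gamma$-convergence of relative perimeters together with lower bounds from the uniform relative isoperimetric inequality, and concavity passes to the limit. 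The argument then proceeds exactly as above. As a sanity check, at $v=1/2$ the slab gives $I/v=2$, while a corner-ball of volume $v$ gives $I/v=c_N v^{-1/N}>2$ for the relevant $v$.

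The main obstacle is the strictness and limit-passing issues, not the final monotonicity argument. For the cube one must ensure that $I_{\Omega_k}\to I_Q$; the $\liminf$ direction is the delicate one, needing equi-coercive compactness of minimizers in $BV$ with volumes preserved. Strictness then still comes for free from the factor $v^{-1/N}$. I would handle this first by citing the concavity result in the form stated for arbitrary convex bodies (Milman / Bayle--Rosales), which avoids the approximation entirely.
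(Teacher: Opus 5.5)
Your argument is correct. For the smooth strictly convex case it is essentially the paper's proof. The paper uses concavity of $\mathcal I$ (Theorem~\ref{concave}) to show that $s\mapsto \mathcal I(s)/s$ is nonincreasing on $(0,|\Omega|/2]$. It then appeals to the concavity of $\mathcal I^{N/(N-1)}$ (Remark~\ref{strict}) for the ``moreover'' part. Your factorization $I(v)/v=\big(I(v)^{N/(N-1)}/v\big)^{(N-1)/N}v^{-1/N}$ gets strict monotonicity more transparently, because it never needs strict concavity of $I$ itself.

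You genuinely differ from the paper on the cube. The paper (Theorem~\ref{cube}) does not use concavity there. It uses the explicit lower bound $\mathcal I_N(t)\ge 4t(1-t)$ for $t\le 1/2$, which gives $\mathcal I_N(s)/s\ge 4(1-s)\ge 2$, with equality only at $s=1/2$. The slab $\{x_1>1/2\}$ attains the value $2$. So $h_{\mathcal N}(C_N)=2$ is computed explicitly, and the half-volume property of minimizers follows by an elementary comparison. Hadwiger's theorem is used only for uniqueness of the slab, which the present statement does not require.

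Your route treats the cube as a nonsmooth convex body. It relies on concavity of $I^{N/(N-1)}$ for arbitrary convex bodies, obtained either by approximation with Hausdorff-continuity of the profile or by citing it directly (it is available in the literature, e.g.\ Ritor\'e--Vernadakis). This is a heavier citation than the paper's. In exchange, it handles both cases uniformly and actually proves the statement for every bounded convex domain.

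One caution concerns your first alternative for the cube. You claim that the relative isoperimetric regions of $[0,1]^N$ are known for all volumes to be corner balls, edge cylinders and slabs. This is not established in general dimension, so do not lean on it. The concavity route, or the paper's explicit lower bound, is what actually carries the proof.
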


The importance of Theorem~\ref{thm convex intro} is that it transforms \eqref{hdef} into a relative isoperimetric problem \emph{with fixed volume constraint}, which has been widely studied before in the literature.  The case of two-dimensional bounded convex set is studied in \cite[Theorem 3]{C89} and the case of balls in \cite{C892}. We also mention \cite{EFKNT}, where it is shown that among convex planar domains of prescribed area, the disk uniquely maximizes the minimal relative perimeter required to split the domain into two sets of equal measure; as a consequence, among convex planar domains of prescribed area, the disk uniquely \emph{maximizes} the Neumann-Cheeger constant.

From the point of view of \eqref{pLap}, Theorem \ref{thm convex intro} can be interpreted as a symmetry breaking result (since minimizers are not radially symmetric when $\Omega$ is a ball). We also prove that this is the case for $N$-dimensional annuli and we give a complete characterization of radially symmetric minimizers, see Lemma~\ref{rad annulus}.  

Another important consequence of Theorem~\ref{thm convex intro} is that, in that setting, any minimizer of $\lambda_{1,1}$ is of the form $u= \alpha \chi_{\omega_1} - \beta \chi_{\omega_2}$ for $\alpha, \beta \ge 0$, $(\alpha, \beta) \ne (0,0)$, and $\omega_1, \omega_2$ minimize \eqref{h convex intro}, see Corollary~\ref{coro convex}. In particular, any sign-changing minimizer $u$ of $\lambda_{1, 1}$ satisfies
    $\int_\Omega \sign(u) d \mathcal H^N=0$. 
    However, there also exist one-sign minimizers of $\lambda_{1,1}$, showing that there are minimizers with $\int_\Omega \sign(u)\ne0$ and satisfying \eqref{eq lambda 11 intro} with $s \ne \sign(u)$, see also Remark~\ref{rmk:not sign}.

A natural question is whether the volume constraint in \eqref{hdef} is \emph{always} saturated; namely, if \eqref{h convex intro} \emph{always} holds. Our next result shows, via an explicit example, that this is not true. 
\begin{theo}\label{thm:head intro}
    There exists a domain $\Omega\subset \R^2$ such that
    \begin{itemize}
        \item [$(i)$] $h_{\mathcal N}(\Omega)$ has a unique minimizer $\omega$ and $|\omega|<|\Omega|/2$.
        \item [$(ii)$] \emph{all} the minimizers $u$ of $\lambda_{1,1}(\Omega)$ do not change sign and satisfy that $|\{ u=0\}| \ne 0.$ In particular, $\int_\Omega  \sign(u) \, d\mathcal{H}^N\ne 0$ and the unique continuation principle does not hold for \eqref{eq lambda 11 intro}.
    \end{itemize}    
\end{theo}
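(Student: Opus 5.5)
The plan is to build an explicit planar "head" domain (the name suggested by Corollary~\ref{head cor}). Take a disk $B$ of radius $R$ (the head) attached through a very thin neck of width $\delta$ to a large region $L$ (the body). The body is chosen so that $|L|$ is much bigger than $|B|$, and so that it is hard to cut: for example a large disk, whose relative isoperimetric profile $I_L(v)=\inf\{P(E;L):|E|=v\}$ grows like $\sqrt v$ for small $v$ and is of order $\mathrm{diam}(L)$ for $v$ comparable to $|L|$. The candidate minimizer is $\omega=B$ (head together with the neck), with $P(\omega;\Omega)\approx\delta$. Then
\[
\frac{P(\omega;\Omega)}{|\omega|}\approx \frac{\delta}{\pi R^2},
\]
which can be made smaller than any competitor ratio by sending $\delta\to0$. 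Since $|\omega|\approx\pi R^2<|\Omega|/2$, the volume constraint is not saturated.

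To prove (i) I would compare an arbitrary admissible set $E$ with $0<|E|\le|\Omega|/2$ against $\omega$. Write $E=E_B\cup E_L$, split along the neck. If $E_L$ has positive measure, the relative isoperimetric inequality in $L$ (a Lipschitz domain) gives
\[
P(E_L;L)\ge c\min\{|E_L|,|L\setminus E_L|\}^{1/2}.
\]
Since $|E_L|\le|\Omega|/2$ and $|L|\gg|B|$, the complement $L\setminus E_L$ is not small, so $P(E;\Omega)/|E|$ is bounded below by a constant $c_0>0$ independent of $\delta$. Choosing $\delta$ small then makes $\delta/|\omega|<c_0$. If instead $E\subset B$ up to null sets, then either $E=B$, or $E$ has relative perimeter inside $B$. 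In the latter case, combining the isoperimetric inequality in the disk with the cost $\delta$ of the neck gives a ratio strictly larger than $\delta/|B|$; here $E$ not meeting the neck costs its own boundary instead. Uniqueness follows from strict inequalities in each case. The neck must be shaped (e.g.\ a thin segment of width $\delta$ and length $\ell$) so that cutting at different positions along it costs at least $\delta$, and strictly more unless the cut is at the junction with $L$. This ensures the unique minimizer is "head plus neck" rather than a one-parameter family of cuts. A neck that flares out toward $L$ makes the cut position unique, and I would choose that geometry.

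For (ii), take any minimizer $u$ of $\lambda_{1,1}$. By Theorem~\ref{theo charact lambda intro}(i), for a.e.\ $t$ the level sets $\{u^\pm>t\}$ minimize $h_{\mathcal N}(\Omega)$, so by uniqueness they all equal $\omega$. Two disjoint minimizers would both have to equal $\omega$, which is impossible, so $u$ cannot have both $u^+\ne0$ and $u^-\ne0$. Hence $u$ does not change sign. Say $u\ge0$: every superlevel set is $\omega$, so $u=\alpha\chi_\omega$ and $|\{u=0\}|=|\Omega\setminus\omega|>0$. It follows that $\int_\Omega\sign(u)\,d\mathcal H^N=|\omega|-|\Omega\setminus\omega|\ne0$. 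Moreover, $u$ vanishes on an open set without vanishing identically, so unique continuation fails for \eqref{eq lambda 11 intro}.

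The main obstacle is the rigorous lower bound in (i), uniformly in $\delta$, for sets that intersect both the head and the body, or that cut the neck partially. I would handle this first by showing that any minimizing $E$ may be assumed to meet the neck in a full cross-section: replacing a partial cut by a straight transversal cut does not increase perimeter. Then I would apply the relative isoperimetric inequality separately in $B$ and in $L$.
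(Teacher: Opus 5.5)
There is a genuine gap in the uniqueness part of (i). Your candidate $\omega$ (head plus neck, cut straight across the junction $\Gamma$ with the body $L$) is not a minimizer at all, so no chain of strict inequalities against $P(\omega;\Omega)/|\omega|$ can hold.

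To see this, push $\Gamma$ into $L$ by a normal displacement $\phi\ge 0$, $\phi\not\equiv 0$, vanishing at the two corners of the junction. The first variation of $P(\cdot\,;\Omega)/|\cdot|$ is $|\omega|^{-1}\int_\Gamma (H_\omega - P(\omega;\Omega)/|\omega|)\,\phi$. Here $H_\omega\le 0$ is the curvature of $\Gamma$ as seen from $\omega$: it is $0$ for a straight junction and negative when $L$ is a convex disk. So this first variation is strictly negative. Concretely, for a straight junction, adding to $\omega$ the circular cap of height $s$ over $\Gamma$ costs $O(s^2/\delta)$ in perimeter but gains $\tfrac23\delta s+o(s)$ in area.

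These same competitors also refute your first case. They have $|E\cap L|>0$ but ratio $O(\delta)$, because $P(E_L;L)\ge c|E_L|^{1/2}$ gives no $\delta$-independent bound once $E$ contains the head and $|E_L|\to 0$. The isoperimetric inequality in $L$ only excludes sets with $|E\cap L|$ bounded below. That suffices to get $|D|<|\Omega|/2$, but says nothing about the shape of $D$.

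A flared neck does not rescue the argument. The optimal cut is then a curved arc at an a priori unknown position. Neither your transversal-cut reduction (which pushes toward the wrong, straight candidate) nor isoperimetric inequalities in $B$ and $L$ can locate that arc, let alone show it is unique.

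The missing ingredient is a rigidity result for minimizers, which is what the paper relies on. By Theorem~\ref{regn2}, a minimizer $D$ with $|D|<|\Omega|/2$ has $\partial D\cap\Omega$ made of arcs of circles of radius $1/h_{\mathcal N}(\Omega)$, meeting the smooth parts of $\partial\Omega$ orthogonally.

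In the paper's domain of three squares joined by bridges, comparing with a bridge cut gives $h_{\mathcal N}(\Omega)<\varepsilon$ and $P(D;\Omega)\le 1/4$. Hence each boundary arc is short and nearly straight. Orthogonality then forces each arc to start at a corner of a bridge. This leaves finitely many configurations: an arc through both corners at one end of a bridge, or an arc from a corner to the opposite face. Comparing them yields the unique minimizer $E_2=B_r((\rho,0))\cap\Omega$. Its cut is an arc through the corners bulging slightly into the central square, not a straight cut.

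Your deduction of (ii) from (i) via Theorem~\ref{theo charact lambda intro} is correct and matches Corollary~\ref{head cor}.
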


Observe that Theorem \ref{thm:head intro} $(ii)$, together with Theorem \ref{theo conv intro}, implies that minimizers of $\lambda_{p,q}$ with $q>1$\textemdash which are always sign-changing\textemdash necessarily become nonnegative (or nonpositive) in the limit as $(p,q)\to (1,1)$, whenever $\Omega$ is the two-dimensional domain constructed in Theorem~\ref{thm:head intro} (see Figure~\ref{hwe}).  This domain $\Omega$ consists of three squares of different sizes connected by a thin tube. The underlying mechanism is that a minimizer $\omega$ of $h_{\mathcal N}(\Omega)$ (shown as the shaded region in Figure~\ref{hwe}) exploits the thin tube by concentrating the interior portion of its perimeter there while still enclosing a comparatively large area.

\begin{figure}[h!]
    \centering
    \includegraphics[width=0.95\linewidth]{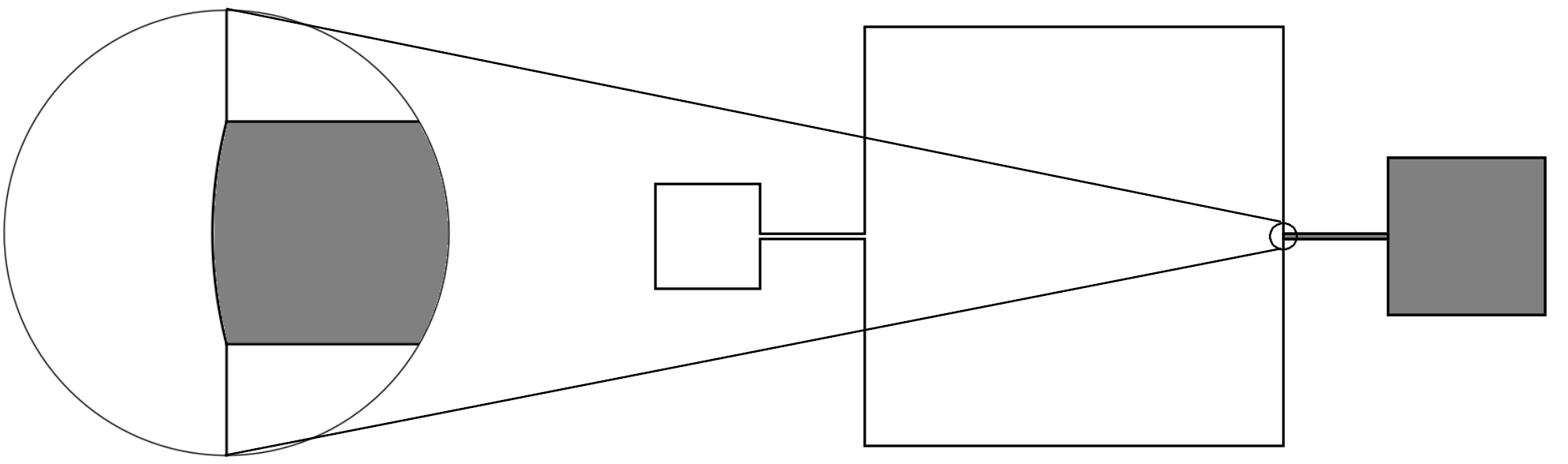}
    \caption{A domain (right) for which $h_{\mathcal N}(\Omega)$ has a unique minimizer given by the shaded region.}
    \label{hwe}
\end{figure}

The proof of Theorem~\ref{thm:head intro} relies heavily on regularity and rigidity results for minimizers of $h_{\mathcal N}(\Omega)$, that we show using some geometric measure theory tools.  To be more precise, we show the following.
\begin{theo}\label{regn2}
Let $\Omega\subset \mathbb R^2$ be a Lipschitz bounded domain. Let $D$ be a minimizer for 
\begin{align*}
h_{\mathcal N}(\Omega)&:= \inf \left\{ \frac{P(\omega; \Omega)}{|\omega|}:\,\omega\subset \Omega, \  0 <|\omega| \le \frac{|\Omega|}{2} \right\}
\end{align*}
with $|D|<|\Omega|/2$. Then any portion of the boundary $\partial D$ is part of a circle with radius $r:=\frac{1}{\lambda_{1,1}}>0$. Moreover, if $x_0\in \partial D\cap \partial \Omega$ and $\partial  \Omega$ is locally $C^2$ around $x_0$, then $\partial D$ meets $\partial \Omega$ orthogonally at $x_0$.
\end{theo}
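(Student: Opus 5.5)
Since $|D|<|\Omega|/2$, the volume constraint is inactive, so $D$ is a local minimizer of the unconstrained ratio $F(\omega)=P(\omega;\Omega)/|\omega|$ under all perturbations $\omega$ with $|\omega\,\triangle\, D|$ small. The plan is to use this fact to show that $D$ is a volume-constrained relative perimeter minimizer, invoke the classical regularity theory in dimension two, and then read off the Euler--Lagrange equation and the boundary condition from first variations.

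\emph{Step 1: $D$ minimizes relative perimeter at fixed volume, locally.} Let $E\subset\Omega$ satisfy $|E|=|D|$. Then $E$ is admissible for $h_{\mathcal N}(\Omega)$, so $P(E;\Omega)\ge h_{\mathcal N}|E|=h_{\mathcal N}|D|=P(D;\Omega)$. Hence $D$ minimizes $P(\cdot;\Omega)$ among sets of the same volume, and therefore it is a $(\Lambda,r_0)$-minimizer of relative perimeter in $\Omega$. The standard volume-fixing variations argument (Maggi's book) applies, using two small balls in the interior where $D$ has density strictly between $0$ and $1$.

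\emph{Step 2: interior regularity.} The regularity theory for almost-minimizers of perimeter in the plane gives that $\partial D\cap\Omega$ is a $C^{1,\gamma}$ curve, with no singular set since $N=2<8$. Next I take the first variation of $F$ along a compactly supported vector field $X$ in $\Omega$. Writing $\omega_t=\Phi_t(D)$, the first variation is
\[
\frac{d}{dt}\Big|_{t=0}F(\omega_t)=\frac{1}{|D|}\Big(\int_{\partial D}H\,X\cdot\nu\,d\mathcal H^1-h_{\mathcal N}\int_{\partial D}X\cdot\nu\,d\mathcal H^1\Big)=0,
\]
first in the weak sense of the distributional curvature. This gives $H\equiv h_{\mathcal N}=\lambda_{1,1}$, the last equality by Theorem~\ref{theo charact lambda intro}. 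By elliptic bootstrapping each component of $\partial D\cap\Omega$ is smooth with constant curvature $\lambda_{1,1}$, hence an arc of a circle of radius $1/\lambda_{1,1}$. Here $\lambda_{1,1}>0$ by Theorem~\ref{theo p=1 intro}.

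\emph{Step 3: orthogonality at $\partial\Omega$.} Near a point $x_0\in\partial D\cap\partial\Omega$ where $\partial\Omega$ is $C^2$, I use vector fields $X$ tangent to $\partial\Omega$, so that the flow preserves $\Omega$. The first variation now produces a boundary term $X\cdot\eta$ at the endpoint, where $\eta$ is the outer conormal of the arc. Since the curvature term vanishes by Step 2, this boundary term must vanish for every tangential $X$. This forces $\eta$ to be normal to $\partial\Omega$, so the arc meets $\partial\Omega$ orthogonally.

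\emph{Main obstacle.} The hard part is justifying regularity of $\partial D$ up to $\partial\Omega$, i.e.\ that near $x_0$ the set $\partial D$ is a single $C^1$ arc reaching $\partial\Omega$ transversally, so that the endpoint term makes sense. I would first try the boundary regularity results for relative almost-minimizers (Grüter's free-boundary regularity, or the planar results in the capillarity literature). The fallback is an elementary two-dimensional argument: blow up at $x_0$, and show the tangent cone is a half-line that is a minimizing relative cone in a half-plane, hence orthogonal to the boundary line.
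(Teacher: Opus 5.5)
Your Steps 1 and 2 follow the paper's route. The paper obtains them from Theorem~\ref{thm reg}: volume-constrained minimality gives $(\Lambda,r_0)$-minimality and hence regularity, and the two-sided first variation of the ratio when $|D|<|\Omega|/2$ identifies the curvature as $h_{\mathcal N}(\Omega)=\lambda_{1,1}$.

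You differ in the orthogonality step. The paper takes no first variation at $\partial\Omega$. It normalizes so that the arc lies on the circle of radius $r$ centred at the origin, $x_0=\Gamma(0)=r(\cos\theta_0,\sin\theta_0)$, and $T_{x_0}\partial\Omega$ is horizontal. It then compares $D$ with an explicit competitor $D_\theta$, obtained by removing (or adding) the thin region cut off by the vertical segment $\{x=r\cos(\theta_0+\theta)\}$. This saves perimeter $\approx r\theta(1-|\cos\theta_0|)$ at an area cost $o(\theta)$. Since $|D|<|\Omega|/2$, $D_\theta$ is admissible, and minimality forces $|\cos\theta_0|=1$. That argument is elementary and purely planar. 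It needs only that near $x_0$ the set $\partial D\cap\Omega$ is a single circular arc ending at $x_0$. The paper takes this for granted, so your ``main obstacle'' is present there too, just left implicit.

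Your tangential-variation argument is the standard GMT one and works in any dimension. However, it needs the same local structure to give meaning to the endpoint term. With two arcs ending at $x_0$, it would only show that the sum of their conormals is normal to $\partial\Omega$. If you import that structure from Gr\"uter-type free-boundary regularity, orthogonality usually comes as part of the cited theorem, so your Step 3 effectively becomes a citation.

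In the plane you do not need that machinery. By Step 2, the components of $\partial D\cap\Omega$ are circular arcs of radius $r$ with finite total length and endpoints on $\partial\Omega$. Accumulation of arcs at $x_0$, or two arcs sharing the endpoint $x_0$, can then be excluded by comparisons of the same elementary kind as the paper's. For example, a short arc of length $\ell$ with both endpoints near the $C^2$ point $x_0$ cuts off a cap of area $O(\ell^3)$, and removing or adding that cap lowers the ratio.
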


Theorem \ref{regn2} is known to experts and it is essentially contained, for instance, in \cite[Theorem 2]{C89}. We include a different proof in Section \ref{reg:app} for completeness. 

Using these structural properties, we perform a case-by-case analysis and conclude that the configuration shown in Figure~\ref{hwe} is the only possibility for the minimizer. In Section~\ref{counterex}, we also characterize the case where the two squares on the sides (see Figure \ref{hwe}) have the same length (which leads to multiplicity of minimizers) and, in Section~\ref{reg:app}, we also prove a finer regularity result for minimizers of $h_{\mathcal N}(\Omega)$ that considers any dimension $N\geq 2$. We show that, if $\omega$ is a minimizer of $h_{\mathcal N}(\Omega)$, then $\partial\omega\cap \Omega$ does not develop singularities in dimensions $2\leq N\leq 7$ (i.e., $\partial\omega\cap \Omega$ is smooth), whereas for $N\geq 8$ singularities can appear and we establish a bound on the Hausdorff dimension of the singular set, see Theorem~\ref{thm reg}. In the proof we show that minimizers fit into the framework of volume-constrained perimeter minimizers, so that standard geometric measure theory regularity results apply; the only additional subtlety arises in the half-volume case, where the variational argument is necessarily one-sided.

\medskip

This paper is organized as follows. In Section~\ref{sec:prelim}, we establish our variational framework and establish existence results, treating both the cases $p>1$ and $p=1$. Section~\ref{sec convergence} is devoted to the asymptotic behavior of $\lambda_{p,q}$ and the corresponding minimizers as the parameters vary. In Section~\ref{geom char}, we provide a geometric characterization of $\lambda_{1,1}$ in terms of a relative isoperimetric problem and prove some regularity and rigidity results for minimizers. Finally, in Section \ref{sec:minimizers} we study the exact shape of minimizers in convex, symmetric, and asymmetric domains. 

\subsection{Notation}\label{ss:notation}

Let $\Omega$ be an open, bounded, Lipschitz domain. The space of functions of bounded variation is denoted by $BV(\Omega)$, namely, $u \in BV(\Omega)$ if $u \in L^1(\Omega)$ and the total variation $|Du|(\Omega)< + \infty$ (see \eqref{tv:def}).  

We endow $BV(\Omega)$ with the norm
\begin{align*}
    \|u\|_{BV(\Omega)}:=\|u\|_{1}+|Du|(\Omega)
\end{align*}
With this norm, $BV(\Omega)$ is a Banach space. Furthermore, if $(u_n)\subset BV(\Omega)$ is a sequence such that 
\begin{align}\label{comp:em}
\sup_{n\in \mathbb N}\|u_n\|_{BV(\Omega)}<\infty,\text{ then, up to a subsequence, $u_n\to u$ in $L^q(\Omega)$ for $q \in [1, N/(N-1))$ and $u\in BV(\Omega)$.\quad }
\end{align}
The embedding $BV(\Omega) \hookrightarrow L^q(\Omega)$ is compact for $q \in [1, N/(N-1))$ and $BV(\Omega) \hookrightarrow L^\frac{N}{N-1}(\Omega)$ is continuous, see \cite[Corollary 3.49]{AFP} and \cite[Section 5.2.3]{evansgariepy}.

Next, let us recall the notion of \emph{normal trace}, which allows us to have a generalized integration by parts formula. We use 
$\mathcal{H}^{N}, \mathcal{H}^{N-1}$ to denote the $N$- and $(N-1)$-dimensional Hausdorff measure, respectively.

\begin{lemma}[Proposition 6.6 in \cite{KS}]\label{anzellotti}
    Let $\Omega \subset \R^N$ be a bounded, Lipschitz domain.  For any $z \in L^\infty(\Omega, \R^N)$ such that $\operatorname{div}(z) \in L^1(\Omega)$, there exists a function $[z, \nu] \in L^\infty(\partial \Omega)$ such that 
    \begin{align*}
        \int_\Omega \varphi \operatorname{div}(z) \,d\mathcal{H}^N+ \int_\Omega z \cdot D \varphi \,d\mathcal{H}^N= \int_{\partial \Omega} [z, \nu] \varphi \, d\mathcal{H}^{N-1} \quad \text{ for all }\varphi \in C^\infty(\overline \Omega),
    \end{align*}
    where $\nu$ is the unit inner normal to $\Omega$. The function $[z, \nu]$ is called the normal trace of $z$.
\end{lemma}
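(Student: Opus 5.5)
The plan is to construct $[z,\nu]$ by duality, following Anzellotti. For $\varphi\in C^\infty(\overline\Omega)$ define
\[
L(\varphi):=\int_\Omega \varphi \operatorname{div}(z)\,d\mathcal H^N+\int_\Omega z\cdot D\varphi\,d\mathcal H^N .
\]
We show that $L(\varphi)$ depends only on $\varphi|_{\partial\Omega}$ and satisfies
\[
|L(\varphi)|\le C\|z\|_\infty\|\varphi\|_{L^1(\partial\Omega)},
\]
with $C$ depending only on $\Omega$. Since traces of $C^\infty(\overline\Omega)$ functions are dense in $L^1(\partial\Omega)$, $L$ then extends uniquely to a bounded linear functional on $L^1(\partial\Omega)$. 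The Riesz representation theorem gives $[z,\nu]\in L^\infty(\partial\Omega)$ with $L(\varphi)=\int_{\partial\Omega}[z,\nu]\varphi\,d\mathcal H^{N-1}$, which is exactly the claimed formula.

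\textbf{Step 1: extend $L$ and check it vanishes for zero trace.} By the definition of distributional divergence, $L(\psi)=0$ for every $\psi\in C_c^\infty(\Omega)$. We extend $L$ to $W^{1,1}(\Omega)\cap L^\infty(\Omega)$ by the same formula; this is well defined because $z\in L^\infty$ and $\operatorname{div}z\in L^1$. Let $w\in W^{1,1}(\Omega)\cap L^\infty(\Omega)$ have zero trace. Approximate $w$ by $\psi_k\in C_c^\infty(\Omega)$ with $\psi_k\to w$ in $W^{1,1}$, a.e., and $\|\psi_k\|_\infty\le \|w\|_\infty+1$. Dominated convergence in the term $\int \psi_k\operatorname{div}z$ and $L^1$-convergence of the gradients then give $L(w)=0$. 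Consequently $L(\varphi)=L(w)$ whenever $w\in W^{1,1}\cap L^\infty$ and $\operatorname{tr} w=\varphi|_{\partial\Omega}$, so $L(\varphi)$ depends only on the trace.

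\textbf{Step 2: the boundary estimate.} Fix $\varphi\in C^\infty(\overline\Omega)$ and $\varepsilon>0$. Using a Gagliardo-type extension on the Lipschitz domain (flatten locally with bi-Lipschitz charts and use a partition of unity), we construct $w_\varepsilon\in W^{1,1}(\Omega)$ with the following properties:
\begin{itemize}
\item $\operatorname{tr} w_\varepsilon=\varphi|_{\partial\Omega}$;
\item $w_\varepsilon$ is supported in the strip $\{x\in\Omega:\operatorname{dist}(x,\partial\Omega)<\varepsilon\}$;
\item $|w_\varepsilon|\le\|\varphi\|_\infty$;
\item $\|Dw_\varepsilon\|_{1}\le C\|\varphi\|_{L^1(\partial\Omega)}+\varepsilon$.
\end{itemize}
Concretely, one can take $w_\varepsilon(x)=\varphi(x)\,\eta_\varepsilon(\operatorname{dist}(x,\partial\Omega))$, where $\eta_\varepsilon$ is a linear cutoff from $1$ to $0$ on $[0,\varepsilon]$. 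The coarea formula and the Lipschitz structure give $\int|\varphi||\eta_\varepsilon'|\to C'\int_{\partial\Omega}|\varphi|$, while the term $\int \eta_\varepsilon|D\varphi|\to0$.

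By Step 1,
\[
|L(\varphi)|=|L(w_\varepsilon)|\le \|\varphi\|_\infty\int_{\{\operatorname{dist}<\varepsilon\}}|\operatorname{div}z|+\|z\|_\infty\bigl(C\|\varphi\|_{L^1(\partial\Omega)}+\varepsilon\bigr).
\]
The first term tends to $0$ as $\varepsilon\to0$ by absolute continuity of the integral, since $\operatorname{div}z\in L^1$. Letting $\varepsilon\to0$ yields the desired bound.

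The main obstacle is Step 2, and specifically the uniform control of $\int|\varphi||D\eta_\varepsilon(\operatorname{dist})|$ by $\int_{\partial\Omega}|\varphi|$ on a merely Lipschitz boundary. Here I would work in local graph charts, where the distance function is comparable to the vertical distance, and accept a constant $C$ depending on the Lipschitz character of $\Omega$. Obtaining the sharp bound $\|[z,\nu]\|_\infty\le\|z\|_\infty$ would require a finer construction, but it is not needed for the statement, which only asserts $[z,\nu]\in L^\infty(\partial\Omega)$.
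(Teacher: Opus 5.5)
Your proof is correct and follows the standard Anzellotti duality argument. The paper does not prove this lemma; it imports it from \cite{KS}, where the proof follows the same scheme: the functional depends only on the trace, it is bounded in $L^1(\partial\Omega)$ by means of a cutoff in a thin layer near the boundary, and it is then represented by an $L^\infty(\partial\Omega)$ density. The one difference is that the cited version uses Gagliardo's extension lemma and obtains the sharp bound $\|[z,\nu]\|_{L^\infty(\partial\Omega)}\le\|z\|_\infty$, whereas your distance-function cutoff gives a constant $C=C(\Omega)$; this is enough for the statement as written, provided you phrase the gradient estimate as $\limsup_{\varepsilon\to0}\|Dw_\varepsilon\|_1\le C\|\varphi\|_{L^1(\partial\Omega)}$ rather than claiming convergence to $C'\int_{\partial\Omega}|\varphi|$.
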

If $z \in H^2(\Omega)$, then $\partial_\nu z=[z,\nu]$.

\section{Variational framework and existence results}\label{sec:prelim}

\subsection{Preliminaries}

Every solution of \eqref{pLap} satisfies \eqref{comp:cond}. Therefore, as a first step in our analysis we recall some results from \cite{PW} regarding \eqref{comp:cond} and how this extends in the BV setting. 
\begin{lemma}[Lemma 2.1 in \cite{PW}]\label{lemma c pw}
Let $q>1$. For every $u \in L^1(\Omega)$ there exists only one value $c_q(u) \in \R$ such that $\int_\Omega |u+c_q(u)|^{q-2} (u+c_q(u))=0$. Moreover the map $c_q: L^1(\Omega) \to \R$ is continuous, and
\[ \inf_{c \in \R} \norm{u+c}_q = \norm{u+c_q(u)}_q. \]
\end{lemma}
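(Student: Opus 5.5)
The plan is to study, for fixed $u\in L^1(\Omega)$ (and assuming $u\in L^{q}(\Omega)$ wherever $\norm{u+c}_q$ is involved), the scalar function
\[
F_u(c):=\frac1q\int_\Omega |u+c|^q\,d\mathcal H^N ,
\]
or, if $u\notin L^{q-1}(\Omega)$ so that $F_u$ is not finite, the map $g_u(c):=\int_\Omega |u+c|^{q-2}(u+c)\,d\mathcal H^N$ directly. Since $t\mapsto|t|^{q-2}t$ is continuous and strictly increasing on $\R$, the map $c\mapsto g_u(c)$ is strictly increasing. Its continuity and finiteness follow from dominated convergence, using $|u+c|^{q-1}\le C(|u|^{q-1}+|c|^{q-1})$ for $c$ in a bounded set. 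At this point the statement implicitly requires $u\in L^{q-1}(\Omega)$; for $q\le 2$ this is automatic from $u\in L^1$ on a bounded domain, and otherwise I would read the lemma with $u$ in the appropriate space, as in \cite{PW}.

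The key steps are as follows.

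\textbf{Existence and uniqueness.} By monotone convergence, $g_u(c)\to+\infty$ as $c\to+\infty$ and $g_u(c)\to-\infty$ as $c\to-\infty$, because the integrand diverges pointwise to $\pm\infty$ monotonically in $c$. The intermediate value theorem then gives a zero, and strict monotonicity makes it unique; call it $c_q(u)$.

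\textbf{Minimality.} The function $F_u$ is strictly convex and $C^1$ in $c$, with $F_u'(c)=g_u(c)$; differentiation under the integral sign is justified by the same domination. Hence the unique critical point $c_q(u)$ is the global minimizer of $F_u$, and therefore of $c\mapsto\norm{u+c}_q$.

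\textbf{Continuity of $c_q$.} Take $u_n\to u$ in the relevant norm.
\begin{itemize}
\item First, $(c_q(u_n))$ is bounded. If, say, $c_q(u_n)\to+\infty$ along a subsequence, then by monotonicity $g_{u_n}(M)\le g_{u_n}(c_q(u_n))=0$ for every fixed $M$ and all large $n$. Passing to the limit, which is continuous in $u$ for fixed $M$ by Vitali or dominated convergence, gives $g_u(M)\le 0$ for all $M$. This contradicts $g_u(M)\to\infty$.
\item Then any subsequential limit $\bar c$ satisfies $g_u(\bar c)=0$. This uses the joint continuity of $(u,c)\mapsto g_u(c)$, which follows from the elementary inequality $\big||a|^{q-2}a-|b|^{q-2}b\big|\le C(|a|+|b|)^{q-2}|a-b|$ for $q\ge2$, or from Hölder continuity with exponent $q-1$ for $q<2$.
\item By uniqueness, $\bar c=c_q(u)$, so the whole sequence converges.
\end{itemize}

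The main obstacle is the continuity step, specifically the joint continuity of $g$ in $(u,c)$ and the uniform boundedness of $c_q(u_n)$. For $1<q\le2$ the map $u\mapsto |u|^{q-2}u$ sends $L^1$ to $L^{1/(q-1)}\subset L^1$ continuously, which settles it. For $q>2$ one needs convergence in $L^{q-1}$, and I would note that the paper only applies the lemma to $u$ in spaces with this integrability.
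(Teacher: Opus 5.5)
Your proof is correct and is the standard argument: strict monotonicity, continuity and the limits $\pm\infty$ of $c\mapsto\int_\Omega|u+c|^{q-2}(u+c)\,d\mathcal{H}^N$, convexity of $c\mapsto\norm{u+c}_q^q$ with this function as its derivative, and boundedness plus uniqueness to get continuity of $c_q$. The paper gives no proof of its own and only cites \cite{PW}, and for $1<q\le 2$ your argument works verbatim with $L^1$ convergence.

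Your caveat for $q>2$ is genuinely needed, not cosmetic. If $0\le u\in L^1(\Omega)\setminus L^{q-1}(\Omega)$, the integral equals $+\infty$ for every $c$. Moreover, $u_n:=n^a\chi_{A_n}$ with $|A_n|=n^{-b}$ and $0<a<b<a(q-1)$ tends to $0$ in $L^1(\Omega)$ while $c_q(u_n)\to-\infty$. So the correct statement is on $L^{q-1}(\Omega)$ with continuity in that norm, and this is all the paper actually uses.
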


This Lemma is not true in general if $q=1$. In this case, we consider the set 
\begin{equation}\label{N omega}
\mathfrak {N}(\Omega):= \left\{ u \in BV(\Omega), \, \Big| |\{ u>0\}|-|\{u<0\}| \Big| \le |\{u=0\}| \right\}. 
\end{equation}

Notice that the condition in $\mathfrak {N}(\Omega)$ can also be written as follows
\begin{equation}\label{charact M}  
\int_\Omega \sgn_-(u)\,d\mathcal{H}^N \le 0 \le \int_\Omega \sgn_+ (u)\,d\mathcal{H}^N, 
\end{equation}
where
\begin{align}\label{sgnpm}
    \sgn_-(t):=\mathds{1}_{t > 0} - \mathds{1}_{t \le 0}\qquad \text{ and }\qquad \sgn_+(t):=\mathds{1}_{t \ge 0} - \mathds{1}_{t < 0}. 
\end{align} 

The set $\mathfrak N (\Omega)$ has been considered in \cite{PW} for functions in $W^{1, 2}(\Omega)$. When taking into account $BV$ functions, deep differences arise; in particular, one loses uniqueness.
\begin{lemma}\label{c BV}
\begin{itemize}
\item[(i)]    If $u \in \mathfrak {N}(\Omega)$, then
    \[ \int_\Omega |u+c|\,d\mathcal{H}^N \ge \int_\Omega |u|\,d\mathcal{H}^N \quad \text{ for every $c \in \R$. }\]
\item[(ii)] For every $u \in BV(\Omega)$ there exists $c(u) \in \R$ such that $u+ c(u) \in \mathfrak N(\Omega)$. 
\item[(iii)] Define
\[ \underline c(u):=\inf\{c: u+ c\in \mathfrak {N}(\Omega)\}, \quad \overline c(u):=\sup\{c: u+ c\in \mathfrak {N}(\Omega)\}. \]
Let $u \le v$. Then, $\underline c (u) \ge \underline c (v)$ and $\overline c (u) \ge \overline c(v)$. 
\item[(iv)] The map in $BV(\Omega)$ defined as $u \mapsto \overline c(u)$ is upper semicontinuous, whereas $u \mapsto \underline c(u)$ is lower semicontinuous. 
\end{itemize}
\end{lemma}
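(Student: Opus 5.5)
The plan is to reduce everything to two elementary facts about the convex function $g_u(c):=\int_\Omega |u+c|\,d\mathcal H^N$, and to rewrite membership in $\mathfrak N(\Omega)$ as two one-sided volume constraints.

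\emph{Reformulating $\mathfrak N(\Omega)$.} Using $|\{u>0\}|+|\{u<0\}|+|\{u=0\}|=|\Omega|$, the condition in \eqref{N omega} is equivalent to
\begin{align*}
|\{u>0\}|\le \tfrac{|\Omega|}{2}\qquad\text{and}\qquad |\{u<0\}|\le\tfrac{|\Omega|}{2}.
\end{align*}
Hence $u+c\in\mathfrak N(\Omega)$ if and only if $|\{u>-c\}|\le|\Omega|/2$ and $|\{u<-c\}|\le|\Omega|/2$. The first condition is monotone in $c$: it holds for all $c$ below a threshold, and it is closed by right continuity of the distribution function. The second condition holds for all $c$ above a threshold. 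So the admissible set of constants is a (possibly degenerate) closed interval $[\underline c(u),\overline c(u)]$. Moreover, $\overline c(u)$ is determined only by the positive-set constraint, and $\underline c(u)$ only by the negative-set constraint.

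\emph{Items (i) and (ii).} The function $g_u$ is convex and finite, and $g_u(c)\to\infty$ as $|c|\to\infty$. Differentiating under the integral, its one-sided derivatives at $c$ are
\begin{align*}
g_u'(c^+)=\int_\Omega \sgn_+(u+c)\,d\mathcal H^N,\qquad g_u'(c^-)=\int_\Omega\sgn_-(u+c)\,d\mathcal H^N.
\end{align*}
By \eqref{charact M}, $u+c\in\mathfrak N(\Omega)$ is exactly the statement $0\in[g_u'(c^-),g_u'(c^+)]$, i.e.\ $0\in\partial g_u(c)$. For (i), if $u\in\mathfrak N(\Omega)$ then $c=0$ is a minimizer of the convex function $g_u$, which is the claimed inequality. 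For (ii), coercivity gives a minimizer $c^*$ of $g_u$. There $0\in\partial g_u(c^*)$, so $u+c^*\in\mathfrak N(\Omega)$. Equivalently, $-c^*$ is a median of $u$.

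\emph{Item (iii).} Let $u\le v$. Then $\{u+c>0\}\subset\{v+c>0\}$, so every $c$ satisfying the positive-set constraint for $v$ also satisfies it for $u$. This gives $\overline c(u)\ge\overline c(v)$. Similarly $\{v+c<0\}\subset\{u+c<0\}$, so every $c$ satisfying the negative-set constraint for $u$ satisfies it for $v$. This gives $\underline c(v)\le\underline c(u)$.

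\emph{Item (iv).} This is the main step. Let $u_n\to u$ in $BV(\Omega)$; in fact only $L^1$ convergence will be used. Set $c_n:=\overline c(u_n)$ and pass to a subsequence realizing $\limsup c_n$.

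First, $(c_n)$ is bounded. If $c_n\to+\infty$, then $L^1$ convergence and Chebyshev's inequality give $|\{u_n+c_n>0\}|\to|\Omega|$, contradicting the positive-set constraint. If $c_n\to-\infty$, then $|\{u_n+c_n<0\}|\to|\Omega|$, contradicting the negative-set constraint, which $c_n$ also satisfies since $c_n$ is itself admissible.

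So, along a further subsequence, $c_n\to c$ and $u_n+c_n\to u+c$ almost everywhere. Lower semicontinuity of the indicator of an open half-line gives $\chi_{\{u+c>0\}}\le\liminf_n\chi_{\{u_n+c_n>0\}}$ a.e. Fatou's lemma then yields $|\{u+c>0\}|\le\liminf_n |\{u_n+c_n>0\}|\le|\Omega|/2$. Since $\overline c(u)$ is the largest constant satisfying the positive-set constraint, $\overline c(u)\ge c=\limsup_n\overline c(u_n)$, which is upper semicontinuity.

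Lower semicontinuity of $\underline c$ follows from the identity $\underline c(u)=-\overline c(-u)$. This holds because $\mathfrak N(\Omega)$ is invariant under $w\mapsto -w$. The only delicate points are the boundedness of $c_n$ and the correct one-sided use of Fatou's lemma on strict superlevel sets.
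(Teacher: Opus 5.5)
Your proof is correct, and it follows a genuinely different route from the paper's.

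\textbf{The paper's route.} It proves (i) by an explicit level-set decomposition into $\Omega_0,\dots,\Omega_4$, citing \cite{PW} for the case $|\{0<u<|c|\}|>0$, and it obtains (ii) by citing \cite{PW}. It treats (iii) and (iv) directly in the two-sided form \eqref{charact M}. For (iii) it first shows $u+\underline c(u)\in\mathfrak N(\Omega)$ and then argues by contradiction. For (iv) it bounds $\overline c(u_n)$ and passes to the limit in both $\sgn_\pm$ integrals.

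\textbf{Your route.} You split $\mathfrak N(\Omega)$ into the one-sided constraints $|\{u>0\}|\le|\Omega|/2$ and $|\{u<0\}|\le|\Omega|/2$. You then recognize $u+c\in\mathfrak N(\Omega)$ as the optimality condition $0\in\partial g_u(c)$ for the convex function $g_u(c)=\|u+c\|_1$, so that $-c$ is a median of $u$. This buys several things:
\begin{itemize}
\item (i) and (ii) become immediate and self-contained, and only $u\in L^1(\Omega)$ is needed.
\item The admissible constants are identified as the closed interval $[\underline c(u),\overline c(u)]$ of minimizers of $g_u$, which also gives Remark~\ref{rmk equality c} for free.
\item (iii) reduces to two set inclusions.
\item (iv) reduces to a one-sided Fatou argument on a single constraint, with the identity $\underline c(u)=-\overline c(-u)$ handling the other half.
\end{itemize}

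Your approach also tightens a small looseness in the paper. Since $\sgn_\pm$ are discontinuous at $0$, the pointwise convergence behind the ``dominated convergence'' invoked in (iii) and (iv) can fail on $\{u+c=0\}$. The correct justification is exactly the semicontinuity-plus-Fatou step you use.

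\textbf{What the paper's route buys.} It stays in the $\sgn_\pm$ language of \eqref{charact M}, which is reused later, e.g.\ in Propositions~\ref{prop:sol1} and~\ref{lambda 11 att}. Its explicit computation in (i) also feeds directly into the equality analysis of Remark~\ref{remark c BV}.
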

\begin{proof}
(i) 
Let  $u \in \mathfrak {N}(\Omega)$ and consider first $c<0$. If $u \le 0,$ then $\int_\Omega |u+c|\,d\mathcal{H}^N > \int_\Omega |u|\,d\mathcal{H}^N$. Suppose now $u^+ \ne 0$. If $|\{ 0 < u < |c|\}|>0$ then we can proceed as in \cite[Lemma 5.1]{PW} to prove that $\int_\Omega |u+c| \,d\mathcal{H}^N> \int_\Omega |u|\,d\mathcal{H}^N$. Thus we are left with the case
\begin{align*}
|\{ 0 < u < |c|\}| =0.
\end{align*}
Let
\begin{align*}
\Omega_0:=\{ 0 < u < |c|\},\qquad
\Omega_1:=\{ u < 0\},\quad 
\Omega_2:=\{ u = 0\},\quad 
\Omega_3:=\{ u = |c| \},\quad 
\Omega_4:=\{ u > |c|\}.
\end{align*}
Then $\Omega = \bigcup\limits_{i=0}^4 \Omega_i$.  Since $u \in \mathfrak{N}(\Omega)$ and $|\Omega_0|=0$, by \eqref{N omega} we deduce that 
\begin{equation}\label{ineq sets} 
\abs{ |\Omega_3|+|\Omega_4|-|\Omega_1|} \le |\Omega_2|. 
\end{equation}
Moreover, since we are assuming that $c<0$,
\[ \int_\Omega |u|\,d\mathcal{H}^N= - \int_{\Omega_1} u\,d\mathcal{H}^N + \int_{\Omega_4} u\,d\mathcal{H}^N -c |\Omega_3|. \]
On the other hand,
\[ u+c= \begin{cases}
    u+c & \text{ in } \Omega_1\cup \Omega_4,\\
    c & \text{ in } \Omega_2,\\
    0 & \text{ in } \Omega_3.
\end{cases}\]
Hence, by \eqref{ineq sets}, we have that $|\Omega_2| + |\Omega_1| \ge |\Omega_3|+|\Omega_4|$, and therefore
\begin{align*}
\int_\Omega |u+c|\,d\mathcal{H}^N
& = -c|\Omega_2| + \int_{\Omega_4} u\,d\mathcal{H}^N - \int_{\Omega_1} u\,d\mathcal{H}^N + c |\Omega_4| - c |\Omega_1|\\
&= (-c)(|\Omega_2|-|\Omega_4|+|\Omega_1|-|\Omega_3|)+ \int_{\Omega} |u|\,d\mathcal{H}^N
\geq \int_\Omega |u|\,d\mathcal{H}^N.
\end{align*}

The case $c >0$ can be treated similarly. 

\medskip

(ii) The proof follows as in \cite[Lemma 5.1(iv)]{PW}. 

\medskip

(iii) We first show that $u + \underline c(u) \in \mathfrak {N}(\Omega)$. Indeed, let $c_n$ be such that $u+c_n \in \mathfrak {N}(\Omega)$ and $c_n \to \underline c(u)$. By \eqref{charact M}, $\int_\Omega \sgn_-(u+ c_n)\,d\mathcal{H}^N \le 0 \le \int_\Omega \sgn_+ (u+c_n)\,d\mathcal{H}^N. $ By dominated convergence, 
\begin{align}\label{s1}
\int_\Omega \sgn_-(u+ \underline c(u))\,d\mathcal{H}^N \le 0 \le \int_\Omega \sgn_+ (u+\underline c(u))\,d\mathcal{H}^N;    
\end{align}
namely, $u+ \underline c(u) \in \mathfrak {N}(\Omega)$.

Let $u \le v$ and assume by contradiction that $\underline c(u) < \underline c(v)$. Using \eqref{sgnpm} and \eqref{s1}, we have
\begin{align*}
\int_\Omega \sgn_-(v+\underline c(u))\,d\mathcal{H}^N \le \int_\Omega \sgn_-(v +\underline c(v))\,d\mathcal{H}^N \le 0
\end{align*}
and
\begin{align*}
\int_\Omega \sgn_+(v +\underline c(u ))\,d\mathcal{H}^N \ge  \int_\Omega \sgn_+(u+\underline c(u))\,d\mathcal{H}^N \ge 0. 
\end{align*}
Thus, $v+ \underline c(u) \in \mathfrak {N}(\Omega)$ and $\underline c (u) \ge \underline c(v)$ by definition of $\underline c(v)$, a contradiction. 

The claim for $\overline c(u)$ can be argued similarly.

\medskip

(iv) Let $u_n \to u$ in $BV(\Omega)$. Up to a subsequence, we can assume that $u_n \to u$ pointwise. 
Let us consider the map $\overline c(u)$ (the other case being similar). We first show that $\overline c(u_n)$ is bounded. Assume for the sake of contradiction that, passing to a subsequence, $\overline c(u_n) \to +\infty$. Then, by \eqref{charact M}, 
\[ 0 \ge \int_\Omega \sgn_-(u_n + \overline c (u_n))\,d\mathcal{H}^N \to |\Omega|, \]
which yields a contradiction. Similarly, if $\overline c(u_n) \to -\infty$, then $0 \le \int_\Omega \sgn_+(u_n + \overline c (u_n))\,d\mathcal{H}^N \to -|\Omega|,$ which again yields a contradiction. 

Since $\overline c(u_n)$ is bounded, it converges, up to a subsequence, to some $c \in \R$. By \eqref{charact M},
\[ \int_\Omega \sgn_-(u_n+ \overline c(u_n))\,d\mathcal{H}^N \le 0 \le \int_\Omega \sgn_+ (u_n+ \overline c(u_n))\,d\mathcal{H}^N, \]
and, using that $u_n \to u$ a.e. in $\Omega$, $\overline c(u_n) \to c$, and dominated convergence, we obtain that
\[ 
\int_\Omega \sgn_-(u+c)\,d\mathcal{H}^N \le 0 \le \int_\Omega \sgn_+ (u+c)\,d\mathcal{H}^N.
\]
This implies that $u+c \in \mathfrak {N}(\Omega)$. Therefore, $c \le \overline c(u)$, as claimed.
\end{proof}

\begin{remark}\label{rmk: c w12}
    If $u \in W^{1, 2}(\Omega)$, then $\overline{c}(u)=\underline c (u)$ and it is continuous, see \cite[Lemma 5.1]{PW}. 
\end{remark}
\begin{remark}\label{remark c BV}
  We cannot exclude that for some $c \in \R$ one has
  \begin{equation}\label{equality c} \int_\Omega |u+c|\,d\mathcal{H}^N= \int_\Omega |u|\,d\mathcal{H}^N, \end{equation}
and, in particular, we do not have uniqueness of $c(u)$ in item $(ii)$ of Lemma~\ref{c BV}. This contrasts with the case of functions $W^{1,2}(\Omega)$, see \cite[Lemma 5.1]{PW}.
For instance, let $\Omega :=(0, 1)$ and $u:=\mathds{1}_{t > \frac{1}{2}}$. Then $u \in BV(\Omega)$ and 
\[ \int_0^1 |u+c|\,d\mathcal{H}^N= \int_0^1 |u|\,d\mathcal{H}^N \quad \text{ if and only if } c \in [-1,0 ]. \]
Hence, $\underline c(u) =-1$ and $\overline c (u) =0$.

Notice that, if $u \in \mathfrak {N}(\Omega)$ is such that \eqref{equality c} happens for some $c<0$, then, from the proof of Lemma~\ref{c BV},  necessarily 
\[ |\{ 0 < u < |c|\}|=0\]
and equality must hold in \eqref{ineq sets}. Then, if $|\{ u >0 \}| \ge |\{ u< 0\}|$, we have that
\[ |\{u \le 0\}| = |\{u=0\}| + |\{ u <0\}| = |\{ u= |c|\} |+| \{ u > |c|\}|= |\{ u > 0\}|.\]
On the other hand, if $|\{ u >0 \}| \le  |\{ u< 0\}|$,
\[ |\{u <0 \}|= |\{u \ge 0\}|. \]
In particular, the condition in $\mathfrak {N}(\Omega)$ is satisfied with equality. 

Similarly, if $u \in \mathfrak {N}(\Omega)$ is such that \eqref{equality c} happens for some $c>0$, then $|\{ -c < u < 0\}|=0$ and $||\{u < 0\}|-|\{u>0\}||=|\{u=0\}|.$
\end{remark}
\begin{remark}\label{rmk equality c}
    If $c_1, c_2$ are such that $u+c_1$ and $u+c_2$ are both in $\mathfrak {N}(\Omega)$, then $\norm{u+c_1}_1=\norm{u+c_2}_1.$
\end{remark}

\subsection{The case \texorpdfstring{$p>1$}{p>1}}\label{sec:p>1}
For $p \in (1, N)$ and $q \in [1, \frac{pN}{N-p})$, let
\begin{equation}\label{lambda pq def} 
\lambda_{p,q}:= 
\begin{cases} \displaystyle \inf \left\{ \int_\Omega |D u|^p\,d\mathcal{H}^N: \, u \in W^{1,p}(\Omega), \, \int_\Omega |u|^{q-2}u\,d\mathcal{H}^N=0, \, \norm{u}_q=1 \right\}, & \text{ if } q>1,\\
\displaystyle \inf \left\{ \int_\Omega |D u|^p\,d\mathcal{H}^N: \, u \in W^{1,p}(\Omega), \, \abs{|\{ u>0\}|-|\{u < 0\}|}\le |\{u=0\}|, \, \norm{u}_1=1 \right\}, & \text{ if } q=1. 
\end{cases} \end{equation}

\begin{lemma}\label{lambda att pq}
The quantity $\lambda_{p,q}>0$ is attained. 
\end{lemma}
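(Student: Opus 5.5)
We argue by the direct method of the calculus of variations, handling the cases $q>1$ and $q=1$ in parallel. First we check that the admissible class is nonempty, so that $\lambda_{p,q}<\infty$. Take any nonconstant $v\in W^{1,p}(\Omega)$ (e.g.\ a coordinate function). If $q>1$, Lemma~\ref{lemma c pw} gives $v+c_q(v)$ with $\int_\Omega|v+c_q(v)|^{q-2}(v+c_q(v))=0$, and we normalize it in $L^q$. If $q=1$, Lemma~\ref{c BV}(ii) gives $v+c(v)\in\mathfrak N(\Omega)$, and we normalize it in $L^1$. Both constraints are invariant under multiplication by positive scalars, and the normalization is possible because $v+c$ is not identically zero.

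Next, take a minimizing sequence $(u_n)$. Then $\|Du_n\|_p$ is bounded and $\|u_n\|_q=1$. Since $\Omega$ is bounded, the $L^q$ norm controls the $L^1$ norm, and by the Poincaré--Wirtinger inequality $\|u_n\|_{W^{1,p}}$ is bounded. By reflexivity and the Rellich--Kondrachov theorem (compact embedding into $L^r$ for $r<p^*$, valid on Lipschitz domains), we get, up to a subsequence, $u_n\rightharpoonup u$ weakly in $W^{1,p}$, $u_n\to u$ in $L^q$, and $u_n\to u$ a.e. Weak lower semicontinuity of the norm gives $\int|Du|^p\le\liminf\int|Du_n|^p=\lambda_{p,q}$, and $\|u\|_q=1$.

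It remains to check that the constraint passes to the limit. For $q>1$, the map $t\mapsto|t|^{q-2}t$ sends $L^q$ continuously into $L^{q'}$, so $\int|u|^{q-2}u=\lim\int|u_n|^{q-2}u_n=0$. For $q=1$, the constraint is equivalent to \eqref{charact M}, namely $\int\sgn_-(u_n)\le0\le\int\sgn_+(u_n)$. The function $\sgn_-$ is lower semicontinuous and $\sgn_+$ is upper semicontinuous. Hence, by a.e.\ convergence and Fatou's lemma (all quantities are bounded by $1$ on a bounded domain),
\begin{align*}
\int_\Omega\sgn_-(u)\le\liminf_n\int_\Omega\sgn_-(u_n)\le0\le\limsup_n\int_\Omega\sgn_+(u_n)\le\int_\Omega\sgn_+(u).
\end{align*}
Thus $u\in\mathfrak N(\Omega)$ and $u$ is a minimizer. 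This semicontinuity step for $q=1$ is the only delicate part, since the map $u\mapsto\sign(u)$ is discontinuous. The one-sided inequalities in \eqref{charact M} are exactly what makes the limit work, just as in the proof of Lemma~\ref{c BV}(iii)--(iv).

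Finally, we prove positivity. Suppose $\lambda_{p,q}=0$. Then the minimizer $u$ satisfies $Du=0$, so $u\equiv c$ is constant on the connected domain $\Omega$, with $|c|\,|\Omega|^{1/q}=1$, so $c\neq0$. For $q>1$ the constraint gives $|c|^{q-2}c\,|\Omega|=0$, a contradiction. For $q=1$, a nonzero constant gives $\big||\{u>0\}|-|\{u<0\}|\big|=|\Omega|>0=|\{u=0\}|$, again a contradiction. Hence $\lambda_{p,q}>0$.

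For $q=1$ one can alternatively derive a Poincaré-type inequality from Lemma~\ref{c BV}(i): for $u\in\mathfrak N(\Omega)$ we have $\|u\|_1\le\|u-\bar u\|_1\le C\|Du\|_p$, where $\bar u$ is the mean of $u$. This gives a quantitative lower bound, but it is not needed for the argument above.
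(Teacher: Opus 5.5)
Your proof is correct. For $q>1$ it matches the paper's argument. For $q=1$ you pass the constraint to the limit by a different mechanism.

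\emph{The paper's route.} The paper first lets $n\to\infty$ in Lemma~\ref{c BV}(i) to get $\int_\Omega|u+c|\ge\int_\Omega|u|$ for all $c\in\R$. It then picks $\tilde c$ with $u+\tilde c\in\mathfrak N(\Omega)$ and deduces $\|u+\tilde c\|_1=\|u\|_1$. Finally it concludes $u\in\mathfrak N(\Omega)$ from the uniqueness of the admissible constant shift for Sobolev functions (Remark~\ref{rmk: c w12}, taken from \cite{PW}). So it relies on the limit being a Sobolev function. Strictly speaking, it also needs a $W^{1,p}$ version of a remark stated only for $W^{1,2}$ when $p<2$.

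\emph{Your route.} You work directly with the one-sided characterization \eqref{charact M}, using lower semicontinuity of $\sgn_-$, upper semicontinuity of $\sgn_+$, and Fatou's lemma. This needs only a.e.\ convergence, so it works verbatim in $BV(\Omega)$. It is the same mechanism the paper uses for $p=1$ in Proposition~\ref{lambda 11 att} and in Step~2 of Theorem~\ref{thm cheeger}.

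\emph{How the two relate.} The one-sided derivatives of $c\mapsto\|u+c\|_1$ at $c=0$ are exactly $\int_\Omega\sgn_\pm(u)$. Hence the paper's limiting inequality $\int_\Omega|u+c|\ge\int_\Omega|u|$ is already equivalent to \eqref{charact M}, and your semicontinuity argument is the "derivative version" of the paper's. Your version is shorter and avoids the Sobolev-specific uniqueness of the shift. The paper's version emphasizes reading $\mathfrak N(\Omega)$ as the set of functions for which $c=0$ is an $L^1$-optimal shift, which is the viewpoint reused later (Proposition~\ref{prop:sol1} and Section~\ref{sec convergence}).

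You also make explicit two points the paper's proof leaves implicit: that the admissible class is nonempty, and that $\lambda_{p,q}>0$, since a zero-energy minimizer would be a nonzero constant violating the constraint.
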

\begin{proof}
Let  $p \in (1, N)$, $q \in [1, \frac{pN}{N-p})$, and let $u_n\in W^{1, p}(\Omega)$ be a minimizing sequence for $\lambda_{p,q}$. By Sobolev embeddings, up to a subsequence, there exists $u \in W^{1, p}(\Omega)$ such that $u_n \to u$ in $L^q(\Omega)$. If $q>1$, we immediately deduce $\int_\Omega |u|^{q-2}u d \mathcal H^N=0$, and $u$ is a minimizer for $\lambda_{p, q}$. So, assume that $q=1$. We adapt some arguments in \cite[Lemma 5.3]{PW}.
Since $u_n \in \mathfrak {N}(\Omega)$, by Lemma~\ref{c BV}, we have
\[ \int_\Omega |u+c|\,d\mathcal{H}^N = \lim_{n \to \infty} \int_\Omega |u_n + c|\,d\mathcal{H}^N \ge \lim_{n \to \infty} \int_\Omega |u_n|\,d\mathcal{H}^N = \int_\Omega |u| \,d\mathcal{H}^N \quad \text{ for all } c \in \R. \]
Let $\tilde c$ be such that $u+ \tilde c \in \mathfrak {N}(\Omega)$. By Lemma~\ref{c BV}, $\int_\Omega |u +\tilde c +c|\,d\mathcal{H}^N \ge \int_\Omega |u +\tilde c |\,d\mathcal{H}^N \ge \int_\Omega |u|\,d\mathcal{H}^N$ for any $c \in \R$. Choose $c= - \tilde c$. Then, $\int_\Omega |u |\,d\mathcal{H}^N \ge  \int_\Omega |u +\tilde c |\,d\mathcal{H}^N \ge \int_\Omega |u |\,d\mathcal{H}^N,$ which implies that $\int_\Omega |u +\tilde c |\,d\mathcal{H}^N =\int_\Omega |u|\,d\mathcal{H}^N.$ Since $u  \in W^{1, p}(\Omega)$, Remark~\ref{rmk: c w12} implies that $u \in \mathfrak{N} (\Omega)$, and the claim follows. 
\end{proof}

\begin{prop}\label{prop:sol}
Let $p \in (1, N)$ and $q \in (1, \frac{pN}{N-p})$. Any minimizer of $\lambda_{p, q}$ is a solution of \eqref{pLap} in the sense that
 \begin{align}\label{wf}
     \int_\Omega |Du|^{p-2} Du Dv \,d\mathcal{H}^N= \lambda_{p, q} \int_\Omega |u|^{q-2} u v\,d\mathcal{H}^N \quad \text{ for all } v \in W^{1,p}(\Omega). 
     \end{align}
\end{prop}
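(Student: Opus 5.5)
The plan is to reformulate $\lambda_{p,q}$ as an unconstrained Rayleigh-type quotient in which the constraint $\int_\Omega |u|^{q-2}u\,d\mathcal H^N=0$ is absorbed through the shift $c_q$ of Lemma~\ref{lemma c pw}. For $u\in W^{1,p}(\Omega)$ with $u$ nonconstant, consider
\[
J(u):=\frac{\int_\Omega |Du|^p\,d\mathcal{H}^N}{\inf_{c\in\R}\norm{u+c}_q^p}=\frac{\int_\Omega |Du|^p\,d\mathcal{H}^N}{\norm{u+c_q(u)}_q^p}.
\]
Since $D(u+c)=Du$, and every admissible $u$ for $\lambda_{p,q}$ has $c_q(u)=0$ by uniqueness in Lemma~\ref{lemma c pw}, we get $\lambda_{p,q}=\inf J$. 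Moreover, a minimizer $u$ of $\lambda_{p,q}$ (which exists by Lemma~\ref{lambda att pq}) minimizes $J$ over all nonconstant functions.

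Fix $v\in W^{1,p}(\Omega)$ and set $g(t):=J(u+tv)$ for small $|t|$. The function $t\mapsto u+tv$ is nonconstant for small $t$. The map $t\mapsto G(t):=\inf_c \norm{u+tv+c}_q^q$ is differentiable at $t=0$. Indeed, $F(t,c):=\norm{u+tv+c}_q^q$ is $C^1$ (as $q>1$), and $c_q(u+tv)\to c_q(u)=0$ by continuity of $c_q$ in $L^1$. An envelope (Danskin) argument then gives $G'(0)=\partial_tF(0,0)=q\int_\Omega |u|^{q-2}uv\,d\mathcal H^N$. To justify this I would sandwich:
\[
F(t,0)-F(0,0)\ \ge\ G(t)-G(0)\ \ge\ F(t,c_t)-F(0,c_t),
\]
with $c_t:=c_q(u+tv)$. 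I would then divide by $t$ and use the mean value theorem together with the continuity of $\partial_tF$ in $(t,c)$ and $c_t\to0$. This envelope step is the main technical point, since $c_q$ is only known to be continuous, not differentiable.

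The numerator is differentiable with derivative $p\int_\Omega|Du|^{p-2}Du\cdot Dv\,d\mathcal H^N$. Since $g$ has a minimum at $t=0$ and $\norm{u}_q=1$, we obtain
\[
0=g'(0)=p\int_\Omega|Du|^{p-2}Du\cdot Dv\,d\mathcal H^N-\lambda_{p,q}\,\frac pq\, G'(0),
\]
which yields \eqref{wf} after cancelling $p$.

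An alternative is to use Lagrange multipliers with the two constraints $\norm{u}_q=1$ and $\int|u|^{q-2}u=0$. For $q\ge2$ the second constraint is $C^1$. One then has to show that the multiplier of the mean-zero constraint vanishes, which follows by testing with $v\equiv1$, giving $0=\mu\int(q-1)|u|^{q-2}+0$. This route fails to be $C^1$ for $1<q<2$, which is why I prefer the envelope approach above.
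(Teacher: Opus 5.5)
Your argument is correct. It reaches the conclusion by a route that is organized differently from the paper's, though both rest on the same two facts: the continuity of $c_q$ (Lemma~\ref{lemma c pw}) and the minimality $\|u+c\|_q\ge\|u\|_q=1$.

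The paper never introduces your shift-invariant quotient. It works with the unshifted $C^1$ functional $\varphi(w)=\|Dw\|_p/\|w\|_q$ and argues by contradiction. If $\varphi'(u)v<0$, continuity of $\varphi'$ gives a uniform descent estimate $\varphi(u+w+tv)\le\varphi(u+w)-\varepsilon t$ for all small $w$. This is applied with the constant $w=c_q(u+tv)$. Since $\|u+c_q(u+tv)\|_q\ge 1$, the function $u+tv+c_q(u+tv)$ becomes an admissible competitor with quotient strictly below $\lambda_{p,q}^{1/p}$. Replacing $v$ by $-v$ then gives $\varphi'(u)=0$.

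You instead prove directly, via the Danskin sandwich, that $t\mapsto\inf_c\|u+tv+c\|_q^q$ is differentiable at $t=0$ with the frozen-constant derivative $q\int_\Omega|u|^{q-2}uv\,d\mathcal{H}^N$, and then apply Fermat's rule. The joint continuity of $\partial_tF$ at $(0,0)$, which holds by dominated convergence since $u,v\in L^q(\Omega)$, plays exactly the role of the paper's uniformity in $w$.

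Your version yields a slightly stronger intermediate statement: Gateaux differentiability of the reduced functional at $u$, so the weak equation comes out in one step. The paper's contradiction template, on the other hand, is what it reuses in Proposition~\ref{prop:sol1} for $q=1$. There $\|\cdot\|_1$ is not differentiable and the admissible shift is not unique, so only one-sided inequalities (through $\underline c$ and its monotonicity for $v\ge 0$) are available. Your two-sided envelope computation would have to be redone in a one-sided form for that case.
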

\begin{proof}
We follow \cite[Lemma 2.2]{PW}.  Notice that the functional $\varphi:W^{1, p}(\Omega)\backslash\{0\}\to \mathbb R$ given by $\varphi(v)= \frac{\norm{Dv}_p}{\norm{v}_q}$ is of class $C^1$ and 
\begin{align}\label{derv}
\varphi'(u)v
=
\frac{
\|u\|_{q}^{q}
\int_\Omega |Du|^{p-2} Du Dv \,d\mathcal{H}^N
-
\|Du\|_{p}^{p}
\int_\Omega |u|^{q-2} u v \,d\mathcal{H}^N
}{
\|u\|_{q}^{q+1}\,\|Du\|_{p}^{p-1}
},\qquad 
u,v\in W^{1, p}(\Omega)\backslash\{0\}.
\end{align}
Let $u \in W^{1, p}(\Omega)$ be the minimizer of $\lambda_{p, q}$ with $\norm{u}_q=1$; in particular, $\lambda_{p, q}=\norm{Du}^p_p$ and $\int_\Omega |u|^{q-2}u\,d\mathcal{H}^N=0$. If $\varphi'(u)=0$, then \eqref{derv} implies \eqref{wf}.

Assume, by contradiction, that there is $v \in W^{1, p}(\Omega)$ such that $\varphi'(u)v <0$. Then there exists $\varepsilon>0$ such that, for every $w \in W^{1,p}(\Omega)$ with $\norm{w}_{W^{1,p}} < \varepsilon$ and every $t \in (0, \varepsilon)$,
\begin{align}\label{ineq1}
    \varphi(u+w+tv)\le \varphi(u+w)-\varepsilon t.
\end{align}

By the continuity of $c_q(\cdot)$ (see Lemma~\ref{lemma c pw}) and using that $c_q(u)=0$, there exists $t>0$ such that the constant function $w:= c_q(u+tv)$ satisfies $\norm{w}_{W^{1,p}} < \varepsilon$. Then, by \eqref{ineq1},
\[ \frac{\left(\int_\Omega |D (u+tv+ c_q(u+tv))|^p \,d\mathcal{H}^N\right)^{1/p}}{(\int_\Omega |u+tv+ c_q(u+tv)|^q\,d\mathcal{H}^N)^{1/q}} \le \frac{\norm{Du}_p }{\norm{u+ c_q(u+tv)}_q} - \varepsilon t.  \]
By  Lemma~\ref{lemma c pw}, $\norm{u+c}_q \ge \norm{u}_q=1$ for all $c \in \R.$ Therefore,
\[ \frac{\left(\int_\Omega |D (u+tv+ c_q(u+tv))|^p \,d\mathcal{H}^N\right)^{1/p}}{(\int_\Omega |u+tv+ c_q(u+tv)|^q\,d\mathcal{H}^N)^{1/q}} \le \lambda_{p, q}^{\frac 1p} - \varepsilon t < \lambda_{p, q}^{\frac 1p},  \]
contradicting the optimality of $\lambda_{p, q}$. Thus, 
$ \varphi'(u)=0$ and this ends the proof. 
\end{proof}

\begin{remark}
Note that Proposition~\ref{prop:sol} does not follow from the Lagrange multiplier rule, because the restriction in \eqref{lambda pq def}  may not be a $C^1$-manifold.
\end{remark}

\begin{prop}\label{prop:sol1}
Let $p\in (1, N)$. 
    Any minimizer of $\lambda_{p, 1}$ is a solution of 
    \begin{align*}
        -\Delta_p u= \lambda_{p, 1} \sgn(u) \quad \text{ in }\Omega,\qquad 
        \partial_\nu u=0 \quad \text{ on }\partial \Omega, 
    \end{align*}
    in the sense that
    \[ \int_\Omega |Du|^{p-2} Du Dv \,d\mathcal{H}^N= \lambda_{p, 1} \int_\Omega \sgn(u) v\,d\mathcal{H}^N \quad \text{ for all } v \in W^{1,p}(\Omega). \]
\end{prop}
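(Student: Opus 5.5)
The plan is to encode the constraint $\mathfrak N(\Omega)$ through the convex functional $\Phi(w):=\inf_{c\in\R}\|w+c\|_1$ and to read the equation off a first-order condition at the minimizer. By Lemma~\ref{c BV}(i)--(ii), for every $w\in W^{1,p}(\Omega)$ we have $\Phi(w)=\|w+c(w)\|_1$ with $w+c(w)\in\mathfrak N(\Omega)$, and $\Phi(w)=\|w\|_1$ whenever $w\in\mathfrak N(\Omega)$. Adding a constant does not change $\|Dw\|_p$. Hence
\[
\lambda_{p,1}^{1/p}=\inf\Big\{\|Dw\|_p/\Phi(w):\ w\in W^{1,p}(\Omega),\ \Phi(w)>0\Big\},
\]
and a minimizer $u$ (with $\|u\|_1=\Phi(u)=1$ and $\|Du\|_p^p=\lambda_{p,1}$) minimizes this ratio without any constraint. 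For $v\in W^{1,p}(\Omega)$ and small $t>0$, comparing $u+tv$ with $u$ and differentiating the numerator gives
\[
A(v):=\int_\Omega |Du|^{p-2}Du\,Dv\,d\mathcal H^N\ \ge\ \lambda_{p,1}\,\Phi'(u;v),
\]
where $\Phi'(u;v)$ is the one-sided directional derivative; it exists because $\Phi$ is convex and Lipschitz on $L^1$.

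Since $A$ is linear and $\Phi'(u;\cdot)$ is sublinear, this inequality for all $v$ says exactly that $A/\lambda_{p,1}$ lies in the subdifferential $\partial\Phi(u)$ (as functionals on $W^{1,p}$). Now $\Phi$ is the infimal convolution of $\|\cdot\|_1$ with the indicator of the constants, and the infimum is attained at $c=0$. So
\[
\partial\Phi(u)=\Big\{s\in L^\infty:\ s=\sgn(u)\text{ on }\{u\neq0\},\ |s|\le1\text{ on }\{u=0\},\ \int_\Omega s\,d\mathcal H^N=0\Big\}.
\]
I would verify this directly: $\Phi'(u;v)=\min_{d\in\R}\big[\int_{\{u\ne0\}}\sgn(u)(v+d)+\int_{\{u=0\}}|v+d|\big]$ by joint convexity of $(t,c)\mapsto\|u+tv+c\|_1$, and then apply Hahn--Banach/duality. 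This yields a selection $s$ with $A(v)=\lambda_{p,1}\int_\Omega s v$ for all $v\in W^{1,p}(\Omega)$. In the case $|\{u=0\}|=0$ we are already done: the min over $d$ is trivial because $|\{u>0\}|=|\{u<0\}|$ by \eqref{M cond intro}, so $\Phi'(u;\cdot)$ is linear and $s=\sgn(u)$.

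The main obstacle is to show that $s=0$ a.e.\ on $\{u=0\}$ when this set has positive measure, so that the right-hand side is really $\sgn(u)$. The key facts are that $Du=0$ a.e.\ on $\{u=0\}$ (Stampacchia), so the field $a:=|Du|^{p-2}Du$ vanishes there, and that $-\operatorname{div}a=\lambda_{p,1}s\in L^\infty$ in the weak sense.

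I would first try truncation tests. Take $v=\varphi\,w_\varepsilon(u)$ with $\varphi\ge0$ smooth and $w_\varepsilon(u)=\min\{u^+/\varepsilon,1\}$, and also the analogous negative-side truncation and $\varphi(1-|T_\varepsilon u|/\varepsilon)$. Then let $\varepsilon\to0$ and use the sign of the extra term $\varepsilon^{-1}\int_{\{0<u<\varepsilon\}}\varphi|Du|^p$ to trap $\int_{\{u=0\}}s\varphi$ from both sides. If these only give one-sided bounds, the fallback is the general fact that for a Sobolev-type field $a\in L^{p'}$ with $\operatorname{div}a\in L^1$, one has $\operatorname{div}a=0$ a.e.\ on $\{a=0\}$. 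I would obtain this from the Lebesgue-point/Stampacchia argument applied to $a$, or from the quasilinear-equation version of this statement.
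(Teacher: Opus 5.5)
Your derivation of the Euler--Lagrange relation is correct, and it takes a different route from the paper. The paper stays inside $\mathfrak N(\Omega)$. It perturbs $u$ by $tv$ with $v\ge 0$ and returns to the constraint set via the shift $\underline c(u+tv)$. It then combines the monotonicity $\underline c(u+tv)\le \underline c(u)\le 0$ from Lemma~\ref{c BV}(iii) with the bound $\|u+c+tv\|_1\ge 1+t\int_\Omega \sgn_-(u)v\,d\mathcal{H}^N$ for $c\le 0$. This gives $\int_\Omega\sgn_-(u)v\le \lambda_{p,1}^{-1}A(v)\le \int_\Omega \sgn_+(u)v$ for $v\ge0$, hence some $\eta\in L^\infty(\Omega)$ with $\sgn_-(u)\le\eta\le\sgn_+(u)$.

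You instead remove the constraint through the quotient seminorm $\Phi$, which needs only Lemma~\ref{c BV}(i)--(ii), and read the selection off $\partial\Phi(u)$; this gives $\int_\Omega s=0$ for free. Your framework also yields more than you use:
\begin{itemize}
\item Applying $A\ge \lambda_{p,1}\Phi'(u;\cdot)$ to $\pm v$ and using sublinearity gives $A=\lambda_{p,1}\Phi'(u;\cdot)$ on $W^{1,p}(\Omega)$.
\item By density and the $1$-Lipschitz bound of $\Phi$ on $L^1(\Omega)$, $\Phi$ is then G\^ateaux differentiable at $u$, so $\partial\Phi(u)$ is a singleton.
\item By your description of $\partial\Phi(u)$, this forces $|\{u=0\}|=\big||\{u>0\}|-|\{u<0\}|\big|$. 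Thus, if $|\{u=0\}|>0$, then $s$ is a constant $\pm1$ there.
\end{itemize}

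The last step, however, cannot come from your truncations. For $\varphi\ge 0$ put $L^\pm:=\lim_{\eps\to 0}\eps^{-1}\int_{\{0<\pm u<\eps\}}\varphi|Du|^p\,d\mathcal{H}^N\ge 0$; these limits exist by the identities just described. Testing with $\varphi\min\{u^\pm/\eps,1\}$ and with $\varphi$, and using $a=0$ a.e.\ on $\{u=0\}$, one gets only $\lambda_{p,1}\int_{\{u=0\}}s\varphi\,d\mathcal{H}^N=L^- - L^+$. Your third test function is a linear combination of these. So the two boundary terms always enter with opposite signs, and nothing is trapped.

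Your \emph{general fact} reduces to Stampacchia's theorem only if $a=|Du|^{p-2}Du\in W^{1,1}_{\mathrm{loc}}$. That is a genuine second-order regularity statement for the $p$-Laplacian, and it does not follow from $a\in L^{p'}$, $\operatorname{div}a\in L^1$.

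What closes the proof is the quasilinear statement you mention last: for a local weak solution of $-\Delta_p u=f$ with $f$ bounded, $f=0$ a.e.\ on $\{Du=0\}$. This is the result of Lou that the paper cites at exactly this point. With it, $s=0$ a.e.\ on $\{u=0\}$. Combined with the observation above, you even get $|\{u=0\}|=0$.
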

\begin{proof}
Let $u \in \mathfrak N(\Omega)$ be a minimizer of $\lambda_{p, 1}$. We follow \cite{PW} (see also \cite{SST}). We claim that 
\begin{equation}\label{claim min is a solution} 
\int_\Omega \sgn_-(u) v\,d\mathcal{H}^N \le \norm{D u}_p^{-p} \int_\Omega |D u |^{p-2} D u D v \,d\mathcal{H}^N \le \int_\Omega \sgn_+ (u) v \,d\mathcal{H}^N\quad \text{ for any } v \in W^{1,p}(\Omega), \, v \ge 0. 
\end{equation}
We only show the first inequality, since the second one can be proved similarly. By contradiction, assume there exists $v \in W^{1, p}(\Omega)$, $v \ge 0$, such that 
\begin{align*}
\int_\Omega \sgn_-(u) v\,d\mathcal{H}^N > \norm{D u}_p^{-p} \int_\Omega |D u |^{p-2} D u D v\,d\mathcal{H}^N. 
\end{align*}
As in \cite{PW}, for any $c \le 0$ and $t \ge 0$,
\[
\norm{u+c+tv}_1 \ge 1 +t \int_\Omega \sgn_-(u) v\,d\mathcal{H}^N.  
\]
Moreover, recalling Lemma~\ref{c BV}, one has that $\underline c(u+tv) \le \underline c (u) \le 0$ if $v \ge 0$.  Thus, 
\begin{align*}
    &\frac{\left(\int_\Omega |D (u+tv+ \underline c(u+tv))|^p \,d\mathcal{H}^N\right)^{1/p}}{\int_\Omega |u+tv+ \underline c(u+tv)|\,d\mathcal{H}^N} \le \frac{\left(\int_\Omega |D (u+tv)|^p\,d\mathcal{H}^N\right)^{1/p} }{ 1 +t \int_\Omega \sgn_-(u) v\,d\mathcal{H}^N} \\
    &\qquad =\lambda_{p, 1}^{1/p} + t\left( \left(\int_\Omega |D u|^p\,d\mathcal{H}^N\right)^{1/p-1} \int_\Omega |D u|^{p-2} D u D v\,d\mathcal{H}^N - \norm{D u}_p  \int_\Omega \sgn_-(u) v \,d\mathcal{H}^N\right) + o(t) \\
    & \qquad =\lambda_{p, 1}^{1/p}+ t \norm{D u}_p\left(\norm{D u}_p^{-p}\int_\Omega |D u|^{p-2} D u D v \,d\mathcal{H}^N- \int_\Omega \sgn_-(u) v\,d\mathcal{H}^N \right) + o(t).  
\end{align*}
The second term in the last line is negative by the contradiction assumption. Therefore,
\[ 
\frac{\left(\int_\Omega |D (u+tv+ \underline c(u+tv))|^p \,d\mathcal{H}^N\right)^{1/p}}{\int_\Omega |u+tv+ \underline c(u+tv)|\,d\mathcal{H}^N} < \lambda_{p, 1}^{1/p}
\qquad \text{for $t$ is small enough.}
\]
 This yields a contradiction, and therefore \eqref{claim min is a solution} holds. Then, 
\begin{align*}
\lambda_{1,p}^{-1} \, \int_\Omega |D u |^{p-2} D u D v \,d\mathcal{H}^N \le \int_\Omega \left[ \sgn_+(u) v^+ - \text{ sgn}_-(u) v^-\right] \,d\mathcal{H}^N \le \norm{v}_1\qquad \text{for any $v \in W^{1, p}(\Omega)$}. 
\end{align*}
As a consequence, the function $f(u): C_c^\infty(\Omega) \to \R$ given by $f(u) v= \lambda_{1,p}^{-1}\int_\Omega |D u |^{p-2} D u D v\,d\mathcal{H}^N$
is continuous in the $L^1(\Omega)$ norm. Thus, it can be uniquely extended to an element of $(L^1(\Omega))'$, and there exists $\eta \in L^\infty(\Omega)$ such that 
\[ f(u) v= \int_\Omega \eta v\,d\mathcal{H}^N \quad \text{ for all } v \in L^1(\Omega). \]
By \eqref{claim min is a solution}, $\sgn_-(u) \le  \eta \le \sgn_+(u)$ a.e. in $\Omega.$ Notice that, in the set $\{ u=0\},$ one has $D u=0$ and, by \cite{Lou}, this implies that $\eta=0$ a.e. in $\{ u=0\}$.
Therefore, $\eta = \sgn(u)$ a.e. in $\Omega$ and the claim follows.
\end{proof}

We are now ready to give the  proof of Theorem~\ref{theo p>1 intro}. 
\begin{proof}[Proof of Theorem~\ref{theo p>1 intro}]
    The claim follows from Lemma~\ref{lambda att pq}, Proposition~\ref{prop:sol}, and Proposition~\ref{prop:sol1}. 
\end{proof}

\begin{remark}

    In this paper, we focus on weak solutions, but let us briefly recall some regularity results in this setting. Weak solutions are known to be bounded via a Moser iteration argument (see \cite{MarinoWinkert}). As a consequence, by \cite{tolksdorf}, they belong to $C^{1,\alpha}(\Omega)$, and this regularity extends up to the boundary provided that $\partial\Omega$ is of class $C^{1,\alpha}$, see \cite{lieberman}. We also emphasize that unique continuation for the $p$-Laplacian remains a difficult and largely open problem, see \cite{GM}.
\end{remark}

 \subsection{The case \texorpdfstring{$p=1$}{p=1}}\label{sec:p=1}

We first give some definitions. 
\begin{defi}\label{def 1}
Let $t \ge 1$, $f \in L^t(\Omega)$, $g \in L^   \infty(\partial \Omega)$, and $z \in L^\infty(\Omega, \R^N)$. 
We say that $z$ is a solution of 
    \begin{align*}
    -\operatorname{div} z= f\quad  \text{ in } \Omega,\qquad \partial_\nu z=g\quad \text{ on } \partial \Omega, 
    \end{align*}
if  
    \[ \int_\Omega z \cdot D \varphi\,d\mathcal{H}^N - \int_{\partial \Omega} g \varphi\, d\mathcal{H}^{N-1} = \int_\Omega f \varphi \,d\mathcal{H}^N \quad \text{ for all } \varphi \in C^\infty(\overline \Omega).\]
\end{defi}
\begin{remark}
    This means that $g=[z, \nu]$ in the sense of Lemma~\ref{anzellotti}.
\end{remark}

\begin{defi}\label{def solution}
Let $f \in L^{N}(\Omega)$ be such that $\int_\Omega f\,d\mathcal{H}^N=0$.  We say that $u \in BV(\Omega)$ is a solution of
\begin{equation}\label{def sol}
- \operatorname{div}\left( \frac{D u}{|Du|} \right) = f \quad  \text{ in } \Omega, \qquad
\partial_\nu \left( \frac{D u}{|Du|} \right)=0 \quad  \text{ on } \partial \Omega,
\end{equation}
if there exists $z \in L^\infty(\Omega, \R^N)$  such that 
\begin{itemize}
    \item[(i)] $\norm{z}_\infty \le 1$.
    \item[(ii)] $z$ is a solution of $-\operatorname{div} z= f$ in $\Omega$ with $\partial_\nu z=0$ on $\partial \Omega,$ in the sense of Definition~\ref{def 1}; namely,
    \begin{align}\label{def:eq}
    \int_\Omega z \cdot D \varphi \,d\mathcal{H}^N= \int_\Omega f \varphi \,d\mathcal{H}^N \quad \text{ for any } \varphi \in C^\infty(\overline \Omega).    
    \end{align}
    \item[(iii)] 
    $ |D u|(\Omega) = - \int_\Omega \operatorname{div} z \, u \,d\mathcal{H}^N= \int_\Omega f u \,d\mathcal{H}^N$.
\end{itemize}
\end{defi}

\subsubsection{Case \texorpdfstring{$q>1$}{q>1}}

Let $q \in (1, N/(N-1))$, and set 
\begin{align} \nonumber \lambda_{1, q}:=& \inf \left \{ \frac{|D u|(\Omega)}{\norm{u}_q} : u \in BV(\Omega) \setminus \{ 0\}, \, \int_\Omega |u|^{q-2}u\,d\mathcal{H}^N=0 \right \} \\
\label{lambda 1 q}= & \inf \left \{ |D u|(\Omega) : u \in BV(\Omega), \, \int_\Omega |u|^{q-2}u\,d\mathcal{H}^N=0, \, \norm{u}_q=1 \right \}. \end{align}

\begin{lemma}\label{att}
    Let  $q \in (1, N/(N-1))$. The quantity $\lambda_{1, q}$ is attained. 
\end{lemma}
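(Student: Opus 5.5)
We use the direct method in $BV(\Omega)$, in the form of \eqref{lambda 1 q}. Let $(u_n)\subset BV(\Omega)$ be a minimizing sequence, so that $\int_\Omega |u_n|^{q-2}u_n\,d\mathcal{H}^N=0$, $\norm{u_n}_q=1$ and $|Du_n|(\Omega)\to\lambda_{1,q}$. The plan has four steps: a uniform $BV$ bound, compactness, passage to the limit in the constraints, and lower semicontinuity of the total variation. We also check that $\lambda_{1,q}>0$.

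\emph{$BV$ bound.} Since $q>1$ and $\Omega$ is bounded, Hölder's inequality gives $\norm{u_n}_1\le |\Omega|^{1-1/q}\norm{u_n}_q=|\Omega|^{1-1/q}$. Together with the bound on $|Du_n|(\Omega)$, this yields $\sup_n\norm{u_n}_{BV(\Omega)}<\infty$.

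\emph{Compactness and constraints.} By \eqref{comp:em}, and because $q<N/(N-1)$, there is $u\in BV(\Omega)$ with $u_n\to u$ in $L^q(\Omega)$ along a subsequence. Then $\norm{u}_q=1$. Moreover, the map $t\mapsto|t|^{q-2}t$ satisfies $\big||a|^{q-2}a-|b|^{q-2}b\big|\le C|a-b|^{q-1}$ when $1<q\le 2$, and $q<N/(N-1)\le 2$. Hence
\[
\int_\Omega\big||u_n|^{q-2}u_n-|u|^{q-2}u\big|\,d\mathcal{H}^N\le C\norm{u_n-u}_{q-1}^{q-1}\le C'\norm{u_n-u}_q^{q-1}\to0,
\]
so $\int_\Omega|u|^{q-2}u\,d\mathcal{H}^N=0$. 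Alternatively, one can pass to an a.e.\ convergent subsequence and use Vitali's theorem, since $|u_n|^{q-1}$ is bounded in $L^{q/(q-1)}$ and therefore uniformly integrable. Either way, $u$ is admissible.

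\emph{Lower semicontinuity.} The total variation is lower semicontinuous with respect to $L^1$ convergence, directly from the definition \eqref{tv:def} as a supremum of continuous linear functionals. Thus $|Du|(\Omega)\le\liminf_n|Du_n|(\Omega)=\lambda_{1,q}$, and $u$ attains $\lambda_{1,q}$.

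\emph{Positivity.} If $\lambda_{1,q}=0$, the minimizer would satisfy $|Du|(\Omega)=0$. Since $\Omega$ is connected, $u$ would then be a constant $c$, and $\norm{u}_q=1$ forces $c\neq0$. This contradicts $\int_\Omega|c|^{q-2}c\,d\mathcal{H}^N=|\Omega|\,|c|^{q-2}c=0$.

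The only delicate point is the passage to the limit in the nonlinear constraint. It is handled by the $L^q$ compactness, which holds precisely because $q<N/(N-1)$; there is no difficulty of the kind that arises for $q=1$, where the constraint $\mathfrak N(\Omega)$ is not closed under the relevant convergence in an obvious way.
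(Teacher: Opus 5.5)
Your proposal is correct and follows the same direct-method argument as the paper: take a minimizing sequence, use the compact embedding \eqref{comp:em} into $L^q(\Omega)$, and pass to the limit in the constraints. You make explicit the steps the paper leaves implicit: the uniform $BV$ bound via H\"older, the continuity of $t\mapsto |t|^{q-2}t$ that preserves the constraint, and the lower semicontinuity of the total variation. Your positivity argument is also correct, although the lemma as stated does not require it.
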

\begin{proof}
Let $u_n \in BV(\Omega)$ be a minimizing sequence for $\lambda_{1, q}$ with $\norm{u_n}_q=1$. By \eqref{comp:em}, there is $u \in BV(\Omega)$ and a subsequence $u_j$ such that $u_j \to u$ in $L^q(\Omega)$. Thus, $\int_\Omega |u|^{q-2}u\,d\mathcal{H}^N=0$, $\norm{u}_q=1$, and $\lambda_{1, q}$ is attained.
\end{proof}

\begin{lemma}\label{lambda1qlem}
For  $q \in (1, N/(N-1))$,
\begin{equation}\label{equivalent lambda}
\lambda_{1, q}= \inf \left\{ \int_\Omega |D u|\,d\mathcal{H}^N: \, u \in W^{1,1}(\Omega), \int_\Omega |u|^{q-2}u\,d\mathcal{H}^N=0, \norm{u}_q=1 \right\}. \end{equation}
\end{lemma}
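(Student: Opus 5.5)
Denote by $\mu$ the infimum on the right-hand side of \eqref{equivalent lambda}. Since $W^{1,1}(\Omega)\subset BV(\Omega)$ and $|Du|(\Omega)=\int_\Omega |Du|\,d\mathcal{H}^N$ for $u\in W^{1,1}(\Omega)$, the admissible class for $\mu$ is contained in the one for $\lambda_{1,q}$ in \eqref{lambda 1 q}. Hence $\lambda_{1,q}\le \mu$ trivially, and the whole argument reduces to proving $\mu\le\lambda_{1,q}$ by approximation.

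For the reverse inequality, fix $\varepsilon>0$ and take a minimizer $u\in BV(\Omega)$ of $\lambda_{1,q}$, which exists by Lemma~\ref{att}; any admissible $u$ with energy close to $\lambda_{1,q}$ would also do. Since $\Omega$ is a bounded Lipschitz domain, I would use the standard strict approximation result (Anzellotti--Giaquinta, see e.g. \cite[Theorem 3.9]{AFP}). It gives $u_n\in C^\infty(\Omega)\cap W^{1,1}(\Omega)$ with $u_n\to u$ in $L^1(\Omega)$ and $\int_\Omega|Du_n|\,d\mathcal{H}^N\to |Du|(\Omega)$. Strict convergence gives $\sup_n\|u_n\|_{BV(\Omega)}<\infty$. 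Then the compactness statement \eqref{comp:em}, combined with uniqueness of the $L^1$ limit, yields $u_n\to u$ in $L^q(\Omega)$ for the whole sequence, because $q<N/(N-1)$. In particular $\|u_n\|_q\to 1$.

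The approximants need not satisfy the constraint, so I would correct them with the constants from Lemma~\ref{lemma c pw}. Set $v_n:=u_n+c_q(u_n)$. By continuity of $c_q$ on $L^1(\Omega)$ and the fact that $c_q(u)=0$ (since $\int_\Omega|u|^{q-2}u=0$), we get $c_q(u_n)\to 0$. Therefore $v_n\to u$ in $L^q(\Omega)$, so $\|v_n\|_q\to1$ and in particular $v_n\neq0$ for large $n$. Moreover $\int_\Omega|v_n|^{q-2}v_n\,d\mathcal{H}^N=0$ and $Dv_n=Du_n$. Normalizing, $w_n:=v_n/\|v_n\|_q\in W^{1,1}(\Omega)$ is admissible for $\mu$, because the constraint is homogeneous. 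We then obtain
\begin{align*}
\mu\le \frac{\int_\Omega|Du_n|\,d\mathcal{H}^N}{\|v_n\|_q}\longrightarrow |Du|(\Omega)=\lambda_{1,q},
\end{align*}
which proves $\mu\le\lambda_{1,q}$ and hence \eqref{equivalent lambda}.

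The only delicate step is the upgrade from $L^1$ to $L^q$ convergence of the strict approximants, which is what the subcritical condition $q<N/(N-1)$ is needed for. If one prefers to avoid compactness, an alternative is to note that the mollification-based construction gives convergence in every $L^r$ in which $u$ lies, and $u\in L^{N/(N-1)}(\Omega)$ by the continuous embedding. Everything else is direct: continuity of $c_q$ handles the constraint, and homogeneity handles the normalization.
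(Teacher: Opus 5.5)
Your proposal is correct and follows essentially the same route as the paper: mollifier (strict) approximation by functions in $C^\infty(\Omega)\cap W^{1,1}(\Omega)$, correction by the constants $c_q(u_n)\to c_q(u)=0$ from Lemma~\ref{lemma c pw}, and the compact embedding \eqref{comp:em} to upgrade $L^1$ convergence to $L^q$ convergence. The only cosmetic difference is that you approximate a minimizer and get convergence of the whole sequence by uniqueness of limits, whereas the paper approximates an arbitrary admissible $u$, passes to a subsequence, and then takes the infimum.
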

\begin{proof}
Let $\tilde \lambda_{1, q}$ be the quantity on the right hand side in \eqref{equivalent lambda}. Observe that $\lambda_{1, q} \le \tilde \lambda_{1, q},$ because $W^{1,1}(\Omega) \subset BV(\Omega)$. Now, let $u \in BV(\Omega)\backslash\{0\}$ be such that $\int_\Omega |u|^{q-2}u\,d\mathcal{H}^N=0$. Using an approximation by mollification (see \cite[Theorem~1.17]{Giusti}), there is  a sequence $u_n \in C^\infty(\Omega) \cap W^{1,1}(\Omega)$ such that 
\[ 
\int_\Omega |D u_n|\,d\mathcal{H}^N \to |Du|(\Omega)
\quad \text{ and } \quad \int_\Omega |u_n- u|\,d\mathcal{H}^N \to 0. 
\]

Let $\tilde u_n:=u_n + c_q(u_n),$ where $c_q(u_n)$ is the constant in Lemma~\ref{lemma c pw}. Then, $\int_\Omega |\tilde u_n|^{q - 2} \tilde u_n\,d\mathcal{H}^N=0$ and, by continuity (see Lemma~\ref{lemma c pw}), $c_q(u_n) \to c_q(u)=0$.
Moreover, since $u_n$ is bounded in $W^{1,1}(\Omega)$, we have that,  up to a subsequence, $u_n\to u$ in $L^q(\Omega)$. 
Hence, 
\[ 
\frac{|Du|(\Omega)}{\norm{u}_q} = \lim_{n\to\infty} \frac{\int_\Omega |D\tilde u_n|\,d\mathcal{H}^N}{\norm{\tilde u_n}_q} \ge \tilde \lambda_{1, q}.
\]
Taking the infimum over all $u \in BV(\Omega)\backslash\{0\}$ such that $\int_\Omega |u|^{q-2} u\,d\mathcal{H}^N=0$, yields \eqref{equivalent lambda}.
\end{proof}

\begin{theo}\label{main theo}
Let  $q \in (1, N/(N-1))$. Then, any minimizer $u\in BV(\Omega)$ of $\lambda_{1, q}$ satisfies
\begin{align*}
- \operatorname{div}\left( \frac{D u}{|Du|} \right) = \lambda_{1, q} |u|^{q-2} u\quad \text{ in } \Omega, \qquad 
\partial_\nu \left( \frac{D u}{|Du|} \right)=0 \quad \text{ on } \partial \Omega,
\end{align*}
in the sense of Definition~\ref{def solution}.
\end{theo}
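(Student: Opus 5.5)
The natural route is to approximate the minimizer $u$ of $\lambda_{1,q}$ by minimizers of $\lambda_{p,q}$ as $p\to 1^+$ and pass to the limit in the weak formulation. Fix $q\in(1,N/(N-1))$; for $p>1$ close to $1$ we have $q<p^*$. Let $u_p$ be a minimizer of $\lambda_{p,q}$, which exists by Lemma~\ref{lambda att pq}. By Proposition~\ref{prop:sol}, $u_p$ satisfies \eqref{wf}. Set $z_p:=\lambda_{p,q}^{-1}|Du_p|^{p-2}Du_p$, so that
\[
\int_\Omega z_p\cdot D\varphi\,d\mathcal H^N=\int_\Omega |u_p|^{q-2}u_p\,\varphi\,d\mathcal H^N\quad\text{ for all }\varphi\in W^{1,p}(\Omega).
\]

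\textbf{Step 1: uniform bounds and a weak limit $z$.} A competitor argument shows $\limsup_{p\to1}\lambda_{p,q}\le\lambda_{1,q}$. Take $v\in W^{1,\infty}$ (via Lemma~\ref{lambda1qlem} and density, then corrected by $c_q$ using Lemma~\ref{lemma c pw}) with $\int|Dv|/\|v\|_q$ close to $\lambda_{1,q}$, and use $\int|Dv|^p\to\int|Dv|$. With Hölder this also gives $\liminf\lambda_{p,q}\ge\lambda_{1,q}$ along a subsequence, since $\|Du_p\|_1\le|\Omega|^{1-1/p}\lambda_{p,q}^{1/p}$ and lower semicontinuity in $BV$ apply once the limit function is shown admissible. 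Next, for each $r<\infty$ and $p<r'$... more precisely, we want $\|z_p\|_{r}$ bounded: $\int |z_p|^{r}\le \lambda_{p,q}^{-r}\int|Du_p|^{(p-1)r}\le C|\Omega|^{1-(p-1)r/p}\lambda_{p,q}^{(p-1)r/p-r}$. Since $\lambda_{p,q}^{1/p}$ stays bounded above and below, this yields $\limsup_p\|z_p\|_r\le \lambda_{1,q}^{-1}\cdot$(something)$\cdot|\Omega|^{1/r}$. After the normalization $\|u_p\|_q=1$ and $\int|Du_p|^p=\lambda_{p,q}$, the natural scaling gives $\limsup\|z_p\|_r\le|\Omega|^{1/r}$. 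A diagonal argument then gives $z_p\rightharpoonup z$ weakly in every $L^r$, with $\|z\|_r\le|\Omega|^{1/r}$ for all $r$, hence $\|z\|_\infty\le1$. Getting the normalization exactly right, so that the limit bound is $1$ and not $\lambda_{1,q}^{-1}$, is the delicate bookkeeping point, and I would check it first.

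\textbf{Step 2: identify the limit equation.} From the bounds, $u_p$ is bounded in $BV$, so up to a subsequence $u_p\to w$ in $L^q$ by \eqref{comp:em}. Passing to the limit gives $\int|w|^{q-2}w=0$, $\|w\|_q=1$ and $|Dw|(\Omega)\le\liminf\|Du_p\|_1\le\lambda_{1,q}$, so $w$ is a minimizer. Passing to the limit in the weak formulation with $\varphi\in C^\infty(\overline\Omega)$ gives \eqref{def:eq} with $f=|w|^{q-2}w$.

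\textbf{Step 3: transfer to an arbitrary minimizer $u$ (the main obstacle).} The statement is for every minimizer $u$, not just the limit $w$. My first attempt would be to show that the same $z$ works for every minimizer. Test \eqref{def:eq} formally with $\varphi=u$, using Anzellotti pairing and approximation by smooth functions (\cite[Theorem~1.17]{Giusti}) justified by Lemma~\ref{anzellotti}. This gives $\int|w|^{q-2}wu=-\int u\operatorname{div}z\le|Du|(\Omega)=\lambda_{1,q}$, but we need equality, and it is not clear that the particular $w$ yields it.

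The alternative, which I would actually pursue, is to build $z$ directly for the given $u$ using the perturbed functional $\frac{\|Dv\|_p}{\|v\|_q}+\|v-u\|_q^{q}$ or a similar penalization that forces $u_p\to u$. Alternatively, use a convex duality argument. Since $u$ minimizes $|Dv|(\Omega)-\lambda_{1,q}\int|u|^{q-2}uv$ over $v\in BV$, by $1$-homogeneity and the first-order condition for the Rayleigh quotient (as in Proposition~\ref{prop:sol}, using $c_q$ and Lemma~\ref{lemma c pw}), the linear functional $\ell(v)=\lambda_{1,q}\int|u|^{q-2}uv$ satisfies $\ell(v)\le|Dv|(\Omega)$ for all $v\in W^{1,1}$. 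Here $\ell(1)=0$, and equality holds at $v=u$ by $\|u\|_q=1$. By Hahn–Banach applied to $Dv\mapsto\ell(v)$ on the subspace $\{Dv\}\subset L^1(\Omega;\R^N)$, which is well-defined since $\ell$ kills constants, $\ell$ extends with norm $\le1$ to all of $L^1$. This gives $z\in L^\infty$ with $\|z\|_\infty\le1$ and $\int z\cdot D\varphi=\ell(\varphi)$, i.e. (i) and (ii). Then (iii) follows from $|Du|(\Omega)=\lambda_{1,q}=\ell(u)$, together with the Anzellotti integration by parts to write $\ell(u)=-\int\operatorname{div}z\,u$ when $u\in BV$. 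The key inequality $\ell(v)\le|Dv|$ comes from convexity of $v\mapsto|Dv|(\Omega)$ and the derivative of $\|v+c_q(v)\|_q$ at $u$. I expect verifying this directional inequality, with the nondifferentiable total variation handled via one-sided derivatives, to be the crux.
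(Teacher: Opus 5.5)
Your final argument, the Hahn--Banach construction in Step~3, is correct and takes a different route from the paper's. Steps~1--2 can be dropped: as you note, the limit $p\to1$ only reaches the particular limit minimizer $w$. Your normalization there is also off by a factor $\lambda_{1,q}^{-1}$; one should divide $|Du_p|^{p-2}Du_p$ by $\|Du_p\|_p^{p-1}$, not by $\lambda_{p,q}$.

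The inequality you single out as the crux needs no one-sided derivatives. For nonconstant $v\in W^{1,1}(\Omega)$, the function $v+c_q(v)$ is admissible for $\lambda_{1,q}$. Using Hölder with $\||u|^{q-2}u\|_{q/(q-1)}=\|u\|_q^{q-1}=1$ and $\int_\Omega|u|^{q-2}u=0$, you get
\[
|Dv|(\Omega)=|D(v+c_q(v))|(\Omega)\ge\lambda_{1,q}\|v+c_q(v)\|_q\ge\lambda_{1,q}\int_\Omega|u|^{q-2}u\,(v+c_q(v))=\ell(v).
\]
The constant case is trivial. This is exactly the content of the paper's Proposition~\ref{min subdiff}, namely $\lambda_{1,q}|u|^{q-2}u\in\partial E(u)$.

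From there the paper proceeds differently. In Proposition~\ref{subdiff} it introduces the closed convex set $M^*$ of Neumann divergences $-\operatorname{div}z$ with $\|z\|_\infty\le1$. It shows $I_{M^*}^*=E$ on the constraint set, deduces $E^*=I_{M^*}$ on mean-zero functions via Fenchel--Moreau, and extracts $z$ from the Fenchel--Young equality.

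Your route extends $Dv\mapsto\ell(v)$ from the subspace $\{Dv\}\subset L^1(\Omega;\R^N)$ and uses $(L^1(\Omega;\R^N))^*=L^\infty(\Omega;\R^N)$. The functional is well defined because $\Omega$ is connected and $\ell$ vanishes on constants, and it has norm at most $1$ by applying the inequality to $\pm v$. This produces $z$ directly and avoids conjugate functions and the closedness argument for $M^*$. Condition (iii) is then immediate: (ii) gives $-\operatorname{div}z=f$ a.e., so $-\int_\Omega u\operatorname{div}z=\lambda_{1,q}\|u\|_q^q=\lambda_{1,q}=|Du|(\Omega)$ without any Anzellotti pairing. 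Also $f\in L^{q/(q-1)}\subset L^N$ because $q<N/(N-1)$.

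What the paper's route buys is a full if-and-only-if description of the mean-zero elements of $\partial E(u)$, which it reuses for $q=1$. Your argument adapts to $q=1$ just as easily, with the selection $g^*$ of Lemma~\ref{lemma g*} and $|g^*|\le1$ replacing Hölder.
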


To prove this result, consider the problem
\[ \inf \{ E(u): \, \norm{u}_q=1\}, \]
where $E : L^{q}(\Omega) \to \overline \R$  is given by
\begin{align}\label{E:Def}
E(u):= \begin{cases}
|D u|(\Omega), & \text{ if } u \in BV(\Omega) \text{ and } \int_\Omega |u|^{q-2} u\,d\mathcal{H}^N= 0,\\
+\infty, & \text{ otherwise. }
\end{cases}
\end{align}
Notice that $E$ is not convex.

For $t>1$ and $f: L^t(\Omega) \to \overline \R$, the Legendre-Fenchel transform $f^*: L^{\frac{t}{t-1}}(\Omega) \to  \overline \R$ of $f$ is given by
\begin{align}\label{lf:def}
f^*(u^*):= \sup_{u \in L^t(\Omega)}\left \{ \int_\Omega u u^*\,d\mathcal{H}^N - f(u) \right \}. 
\end{align}
Moreover, the subdifferential of $E$ at $u \in L^q(\Omega)$ is 
\[
\partial E(u):= \left \{ g \in L^{\frac{q}{q-1}}(\Omega):\, E(\varphi) \ge E(u) +\int_\Omega g(\varphi-u)\,d\mathcal{H}^N\, \text{ for any } \varphi \in L^{q}(\Omega) \right\}. 
\]
Since $E$ is not convex, the subdifferential may be empty. Indeed, $\partial E(u) = \emptyset$ if $E(u) = + \infty$, see \cite[Equation (5.2) at p.21]{Ekeland}.  
The next proof is inspired by \cite[Proposition 4.23]{KS}.

\begin{prop}\label{subdiff}
    Let $u \in BV(\Omega)$ be such that $\int_\Omega |u|^{q-2} u\,d\mathcal{H}^N=0$ and let $f \in L^{\frac{q}{q-1}}(\Omega)$ be such that $\int_\Omega f\,d\mathcal{H}^N=0$. Then, $f \in \partial E(u)$ if and only if 
    $u$ is a solution to
    \begin{align*}
        -\operatorname{div}\left( \frac{Du}{|Du|} \right) = f\quad \text{ in }\Omega,\qquad
        \partial_\nu \left( \frac{Du}{|Du|} \right) =0 \quad\text{ on }\partial \Omega,    
    \end{align*}
in the sense of Definition~\ref{def solution}.    
\end{prop}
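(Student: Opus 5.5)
The plan is to prove the two implications separately. The key observation is that, although $E$ is not convex, the constraint $\int_\Omega |u|^{q-2}u\,d\mathcal{H}^N=0$ is invariant under positive dilations $u\mapsto tu$, and it can always be reached by adding the constant $c_q(\cdot)$ from Lemma~\ref{lemma c pw}. Since $\int_\Omega f\,d\mathcal{H}^N=0$, adding constants changes neither $\int_\Omega f\varphi\,d\mathcal{H}^N$ nor $|D\varphi|(\Omega)$. Both directions therefore reduce to the two conditions
\begin{align*}
\int_\Omega f\varphi\,d\mathcal{H}^N\le |D\varphi|(\Omega)\quad\text{for all }\varphi\in BV(\Omega),\qquad\text{and}\qquad |Du|(\Omega)=\int_\Omega fu\,d\mathcal{H}^N.
\end{align*}

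\emph{($\Rightarrow$)} Assume $f\in\partial E(u)$. First test the subgradient inequality with $\varphi=tu$, $t>0$, which is admissible by homogeneity. This gives $(t-1)\bigl(|Du|(\Omega)-\int_\Omega fu\,d\mathcal{H}^N\bigr)\ge 0$ for all $t>0$. Taking $t>1$ and $t<1$ yields $|Du|(\Omega)=\int_\Omega fu\,d\mathcal{H}^N$, which is (iii). Substituting this back, we get $\int_\Omega f\varphi\,d\mathcal{H}^N\le|D\varphi|(\Omega)$ for every admissible $\varphi$. For an arbitrary $\varphi\in BV(\Omega)$, we apply this to $\varphi+c_q(\varphi)$ and use $\int_\Omega f\,d\mathcal{H}^N=0$, so the inequality holds on all of $BV(\Omega)$.

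To produce $z$, define a linear functional $\ell$ on the subspace $\{D\psi:\psi\in W^{1,1}(\Omega)\}\subset L^1(\Omega,\R^N)$ by $\ell(D\psi):=\int_\Omega f\psi\,d\mathcal{H}^N$.
\begin{itemize}
\item $\ell$ is well defined: if $D\psi=0$ then $\psi$ is constant, since $\Omega$ is connected, and $\int_\Omega f=0$.
\item $\ell$ has norm at most $1$ with respect to $\int_\Omega|w|\,d\mathcal{H}^N$, where $|\cdot|$ is the Euclidean norm, by the inequality above.
\item $\ell$ is continuous: $f\in L^{q/(q-1)}$ with $q/(q-1)>N$, so $\int_\Omega f\psi$ makes sense for $\psi\in W^{1,1}\subset L^{N/(N-1)}$.
\end{itemize}
By Hahn--Banach, $\ell$ extends to $L^1(\Omega,\R^N)$ with norm $\le1$. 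Hence it is represented by some $z\in L^\infty(\Omega,\R^N)$ with $\operatorname{ess\,sup}|z|\le 1$, because the dual of $L^1$ with Euclidean pointwise norm is $L^\infty$ with Euclidean pointwise norm. Restricting to $\varphi\in C^\infty(\overline\Omega)$ gives \eqref{def:eq}. Together with (iii), $u$ is a solution in the sense of Definition~\ref{def solution}.

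\emph{($\Leftarrow$)} Let $z$ be as in Definition~\ref{def solution}. For $\varphi\in C^\infty(\overline\Omega)$, \eqref{def:eq} and $|z|\le1$ give $\int_\Omega f\varphi\,d\mathcal{H}^N=\int_\Omega z\cdot D\varphi\,d\mathcal{H}^N\le\int_\Omega|D\varphi|\,d\mathcal{H}^N$. For general $\varphi\in BV(\Omega)$, we take smooth $\varphi_n$ with $\varphi_n\to\varphi$ in $L^1$ and $\int_\Omega|D\varphi_n|\to|D\varphi|(\Omega)$, as in the proof of Lemma~\ref{lambda1qlem}. By \eqref{comp:em}, $\varphi_n\to\varphi$ in $L^q$ along a subsequence, so we can pass to the limit on the left, since $f\in L^{q/(q-1)}$. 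Thus $\int_\Omega f\varphi\le|D\varphi|(\Omega)$. Now take $\varphi$ admissible for $E$ and use (iii):
\begin{align*}
|Du|(\Omega)+\int_\Omega f(\varphi-u)\,d\mathcal{H}^N=\int_\Omega f\varphi\,d\mathcal{H}^N\le|D\varphi|(\Omega)=E(\varphi).
\end{align*}
If $E(\varphi)=+\infty$ the inequality is trivial, so $f\in\partial E(u)$.

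I expect the main technical points to be the following. In ($\Rightarrow$), one must check that Hahn--Banach with the Euclidean pointwise norm gives $|z|\le1$ pointwise rather than componentwise, and that the test class $W^{1,1}$ suffices to obtain \eqref{def:eq} for all $\varphi\in C^\infty(\overline\Omega)$. In ($\Leftarrow$), the strict approximation must converge in $L^q$ and not merely in $L^1$, so that the pairing with $f\in L^{q/(q-1)}$ passes to the limit.
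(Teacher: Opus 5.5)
Your proof is correct, but it takes a different route from the paper's.

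\textbf{The paper's route.} The paper argues by convex duality. It introduces the set $M^*$ of all $-\operatorname{div} z$ with $\|z\|_\infty\le 1$ and zero normal trace, and proves that $M^*$ is convex and closed in $L^{q/(q-1)}(\Omega)$ via Banach--Alaoglu. It then shows that the support function $I^*_{M^*}$ coincides with $E$ on the constraint set. Using the invariance $I^*_{M^*}(u+c_q(u))=I^*_{M^*}(u)$ together with Fenchel--Moreau, it obtains $E^*=I_{M^*}$ on zero-mean functions. It concludes through the Fenchel--Young characterization: $u^*\in\partial E(u)$ if and only if $\int_\Omega uu^*\,d\mathcal{H}^N=E(u)+E^*(u^*)$.

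\textbf{Your route.} You use the positive $1$-homogeneity of $E$ and of the constraint directly. Testing with $tu$ gives the Euler identity (iii). Adding $c_q(\varphi)$ removes the nonconvex constraint, so the subgradient condition becomes the linear inequality $\int_\Omega f\varphi\,d\mathcal{H}^N\le|D\varphi|(\Omega)$ on all of $BV(\Omega)$. A single Hahn--Banach extension on the gradient subspace of $L^1(\Omega,\R^N)$ then produces $z$.

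\textbf{Comparison.} The two arguments share the same core: your Hahn--Banach extension plus the Riesz representation of $(L^1)^*$ plays the role of the paper's closedness of $M^*$ plus Fenchel--Moreau. Yours is shorter and more elementary, and avoids conjugate functions altogether. It also adapts directly to Proposition~\ref{subdiff q1}, since $\mathfrak N(\Omega)$ is likewise invariant under positive dilations and reachable by adding constants (Lemma~\ref{c BV}(ii)). The paper's version, in exchange, yields the structural identity $E^*=I_{M^*}$ as a by-product. You are also more careful than the paper's Step~1.1 about passing to the limit in $\int_\Omega f\varphi_n\,d\mathcal{H}^N$: this needs the $L^q$ convergence supplied by \eqref{comp:em}, not just $L^1$ convergence.

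\textbf{One small addition in $(\Leftarrow)$.} The mollified approximants used in Lemma~\ref{lambda1qlem} lie in $C^\infty(\Omega)\cap W^{1,1}(\Omega)$, not in $C^\infty(\overline\Omega)$. So you should first extend \eqref{def:eq} to all $\varphi\in W^{1,1}(\Omega)$. This follows from the density of $C^\infty(\overline\Omega)$ in $W^{1,1}(\Omega)$ for Lipschitz $\Omega$, together with the continuity of both sides in $W^{1,1}$. The left side is continuous because $z\in L^\infty$. The right side is continuous because $f\in L^N(\Omega)$ and $W^{1,1}(\Omega)\hookrightarrow L^{N/(N-1)}(\Omega)$. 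The paper uses the same step implicitly. By contrast, the point you flagged in $(\Rightarrow)$ is immediate, since $C^\infty(\overline\Omega)\subset W^{1,1}(\Omega)$.
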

\begin{proof}
Let
\begin{align}\label{M:def}
M^*:=\left \{ v^* \in L^{\frac{q}{q-1}}(\Omega): \exists z \in L^\infty(\Omega, \R^N), \, \norm{z}_\infty \le 1, \, -\operatorname{div}z= v^*\text{ in }\Omega, \, \partial_\nu z=0\text{ on }\partial \Omega \right\},
\end{align}
where $-\operatorname{div}z= v^*\text{ in }\Omega$ and $\partial_\nu z=0\text{ on }\partial \Omega$ hold in the sense of Definition~\ref{def solution}, namely, \eqref{def:eq} holds with $f=v^*$.

In particular, any element $v^* \in M^*$ satisfies that $\int_\Omega v^* \varphi \,d\mathcal{H}^N= \int_\Omega z \cdot D \varphi\,d\mathcal{H}^N$ for all $\varphi \in C^\infty(\overline \Omega)$ and $\int_\Omega v^*\,d\mathcal{H}^N=0$. Let us show that $M^*$ is closed. Indeed, let $v_n^* \in M^*$ be such that $v_n^* \to v^*$ in $L^{\frac{q}{q-1}}(\Omega)$. Then there exist $z_n \in L^\infty(\Omega, \R^N)$ such that $\norm{z_n}_\infty \le 1$ and 
\begin{align}\label{zneqn}
\int_\Omega z_n \cdot D \varphi \,d\mathcal{H}^N= \int_\Omega v_n^* \varphi\,d\mathcal{H}^N \quad \text{ for all } \varphi \in C^\infty(\overline \Omega).    
\end{align}
Using that $L^\infty(\Omega;\mathbb{R}^N)
\cong
\left(L^1(\Omega;\mathbb{R}^N)\right)^*$ and the Banach-Alaoglu theorem, there exists $z \in L^\infty(\Omega, \R^N)$ such that, up to a subsequence, $z_n \rightharpoonup z$ weakly* in $L^\infty(\Omega, \R^N)$. Using this and \eqref{zneqn}, we obtain that $\int_\Omega z \cdot D\varphi\,d\mathcal{H}^N= \int_\Omega v^* \varphi\,d\mathcal{H}^N$ for all $\varphi \in C^\infty(\overline \Omega).$ This implies that $v^* \in M^*$ and therefore $M^*$ is closed.

Moreover, it is easy to see that $M^*$ is convex. Let 
\[ I_{M^*}(v^*):= \begin{cases}
0, & \text{ if } v^* \in M^*,\\
+\infty, & \text{ otherwise. }
    \end{cases} \]
Then, using \eqref{lf:def},
\[ 
I_{M^*}^*(v)
=\sup_{u \in L^q(\Omega)}\left \{ \int_\Omega u v\,d\mathcal{H}^N - I_{M^*}(u)
\right \}
= \sup_{v^* \in M^*} \int_\Omega v^* v\,d\mathcal{H}^N \quad \text{ with }v \in L^{q}(\Omega). 
\]

\underline{Step 1.1} Recall the definition of $E$ in \eqref{E:Def}. Next, we show that 
\begin{equation}\label{step 1.1} I_{M^*}^*(v) \le E(v) \quad \text{ for all } v \in L^{q}(\Omega). \end{equation}
Let $v \in  BV(\Omega)$ and $v^* \in M^*$. By definition of $M^*$,
\[
\int_\Omega v^* \psi\,d\mathcal{H}^N = - \int_\Omega \operatorname{div} z \cdot \psi \,d\mathcal{H}^N \quad \text{ for all } \psi \in C^\infty(\overline \Omega)\text{ and for some $z \in L^\infty(\Omega, \R^N)$.}
\]

By mollification (see \cite[Theorem~1.17]{Giusti}), there is $v_n \in C^\infty(\Omega) \cap W^{1,1}(\Omega)$ such that 
$\int_\Omega |D v_n| \,d\mathcal{H}^N \to |Dv|(\Omega)$ and $\int_\Omega |v_n-v|\,d\mathcal{H}^N \to 0$ as $n \to\infty$. Then,
\[ \int_\Omega v^* v\,d\mathcal{H}^N = \lim_{n\to\infty}\int_\Omega v^* v_n \,d\mathcal{H}^N= \lim_{n\to\infty}\int_\Omega z \cdot Dv_n \,d\mathcal{H}^N \le \lim_{n\to\infty}\int_\Omega |Dv_n|\,d\mathcal{H}^N = |Dv|(\Omega) \le E(v).  \]
Inequality \eqref{step 1.1} follows recalling that $E(v)=+\infty$ if $v \not \in BV(\Omega)$. 

\medskip

\underline{Step 1.2} We now prove that
\[
I_{M^*}^*(v) \ge E(v) \quad \text{ for all } v \in L^{q}(\Omega) \text{ such that } \int_\Omega |v|^{q-2}v\,d\mathcal{H}^N=0. 
\]

Let $v \in L^{q}(\Omega)$ be such that $\int_\Omega |v|^{q-2}v\,d\mathcal{H}^N=0$, then, by definition of $M^*$,
\begin{align*}
    I_{M^*}^*(v)&= \sup_{v^* \in M^*} \int_\Omega v^* v\,d\mathcal{H}^N\\
    &= \sup \left \{ -\int_\Omega v\,  \operatorname{div} z \,d\mathcal{H}^N:\,  z \in L^\infty(\Omega, \R^N), \,  \norm{z}_\infty \le 1, \, \operatorname{div}z \in L^{\frac{q}{q-1}}(\Omega), \, \partial_\nu z=0\text{ on }\partial \Omega \right \} \\
    & \ge \sup \left \{ - \int_\Omega v \, \operatorname{div} z\,d\mathcal{H}^N:\,  z \in C_0^\infty(\Omega, \R^N), \, \norm{z}_\infty \le 1 \right \}.
\end{align*}
By definition, the last term is the total variation of $v$. If $v \in BV(\Omega),$ then it is finite and coincides with $E(v)$ (recall that, by assumption, $\int_\Omega |v|^{q-2} v\,d\mathcal{H}^N=0$). If $v \in L^{q}(\Omega) \setminus BV(\Omega)$ then it is infinite, and again coincides with $E(v)$, showing the claim. 

Combining Steps 1.1 and 1.2 we get
\begin{equation}\label{step 1} I_{M^*}^*(v)=E(v) \quad \text{ for all } 
v \in L^{q}(\Omega) \text{ with } \int_\Omega |v|^{q-2}v\,d\mathcal{H}^N=0. \end{equation}

\underline{Step 2} Let $u^* \in L^{\frac{q}{q-1}}(\Omega)$ be such that $\int_\Omega u^*\,d\mathcal{H}^N=0$ and, for any $u \in L^q(\Omega)$, let $c_q(u)$ be the constant in Lemma~\ref{lemma c pw}. 
Since any $v^* \in M^*$ satisfies $\int_\Omega v^*=0$, one has that
\begin{align*}
    I_{M^*}^*(u+c_q(u))= \sup_{v^* \in M^*} \int_\Omega v^*(u+c_q(u))\,d\mathcal{H}^N=\sup_{v^* \in M^*} \int_\Omega v^* u\,d\mathcal{H}^N =I_{M^*}^*(u).
\end{align*}
Then,
\begin{equation}\label{dis I} 
\int_\Omega u u^*\,d\mathcal{H}^N - I_{M^*}^*(u)= \int_\Omega (u+c_q(u)) u^*\,d\mathcal{H}^N - I^*_{M^*}(u+c_q(u)). 
\end{equation} 

Since $E(u) =+\infty$ if $\int_\Omega |u|^{q-2}u\,d\mathcal{H}^N \ne 0$, we have that 
\begin{equation}\label{sup E} 
E^*(u^*)= \sup_{u \in L^q(\Omega)} \left\{ \int_\Omega u u^*\,d\mathcal{H}^N - E(u) \right \} = \sup_{\substack{u \in L^q(\Omega),\\ \int_\Omega |u|^{q-2}u=0}} \left\{ \int_\Omega u u^*\,d\mathcal{H}^N - E(u) \right \}. 
\end{equation}

Hence, using \eqref{step 1}, \eqref{dis I}, and \eqref{sup E}, for any $u^* \in L^{\frac{q}{q-1}}(\Omega)$ such that $\int_\Omega u^*\,d\mathcal{H}^N=0$ one has that
\begin{align*}
    E^*(u^*)&= \sup_{u \in L^q(\Omega)} \left\{ \int_\Omega u u^*\,d\mathcal{H}^N - E(u) \right \}
    = \sup_{\substack{u \in L^q(\Omega),\\ \int_\Omega |u|^{q-2}u\,d\mathcal{H}^N=0}} \left\{ \int_\Omega u u^*\,d\mathcal{H}^N - E(u) \right \}\\
    &= \sup_{\substack{u \in L^q(\Omega),\\ \int_\Omega |u|^{q-2}u\,d\mathcal{H}^N=0}} \left\{ \int_\Omega u u^*\,d\mathcal{H}^N - I_{M^*}^*(u) \right \}
    =  \sup_{u \in L^q(\Omega)} \left\{ \int_\Omega u u^*\,d\mathcal{H}^N - I_{M^*}^*(u) \right \}= (I_{M^*}^*)^*(u^*).
\end{align*}
Since $M^*$ is convex, the Fenchel–Moreau Theorem~implies that $(I_{M^*}^*)^*(u^*)=I_{M^*}(u^*)$ for any $u^* \in L^{\frac{q}{q-1}}(\Omega)$.  As a consequence, 
\begin{equation}\label{step 2} E^*(u^*) = I_{M^*}(u^*) \quad \text{ for any $u^* \in L^{\frac{q}{q-1}}(\Omega)$ such that $\int_\Omega u^*\,d\mathcal{H}^N=0$. } \end{equation}

\underline{Step 3} By \cite[Chapter 1, Proposition 5.1]{Ekeland}, given $u\in L^{q}(\Omega)$, $u^* \in L^{\frac{q}{q-1}}(\Omega)$ belongs to $\partial E(u)$ if and only if
\[ \int_\Omega u u^*\,d\mathcal{H}^N = E(u) + E^*(u^*). \]
By \eqref{step 2}, if $\int_\Omega u^*\,d\mathcal{H}^N =0$, then  $E(u) + E^*(u^*) = E(u) + I_{M^*}(u^*)$ and $u^* \in \partial E(u)$ if and only if $u^* \in M^*$ and $E(u)=\int_\Omega u u^*\,d\mathcal{H}^N$. This ends the proof, because $u^* \in \partial E(u)$ implies the existence of $z \in L^\infty(\Omega, \R^N)$ such that $\norm{z}_\infty \le 1$ and $\int_\Omega z \cdot D \varphi\,d\mathcal{H}^N = \int_\Omega u^* \varphi\,d\mathcal{H}^N$ for any $\varphi \in C^\infty(\overline \Omega).$ Moreover, if $E(u)=\int_\Omega u u^*\,d\mathcal{H}^N$, then $|Du|(\Omega)=\int_\Omega u u^*\,d\mathcal{H}^N$.
\end{proof}

\begin{prop}\label{min subdiff}
    Let $u \in BV(\Omega) $ be such that $\norm{u}_q=1$ and $E(u)=\min_{\norm{v}_q=1} E(v) = \lambda_{1, q}.$ Then there exists $f \in \partial E(u)$ such that 
\[ |u|^{q-2} u =\frac{1}{\lambda_{1, q}} f. \]
\end{prop}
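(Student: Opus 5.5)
The plan is to verify directly that $f:=\lambda_{1,q}|u|^{q-2}u$ belongs to $\partial E(u)$, using the definition of the subdifferential together with two facts: $E$ is positively $1$-homogeneous, and $\lambda_{1,q}$ is the optimal constant in $E(\varphi)\ge \lambda_{1,q}\norm{\varphi}_q$. No Lagrange multiplier argument is needed.

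First, $f$ is admissible. Since $u\in L^q(\Omega)$, we have $|u|^{q-1}\in L^{\frac{q}{q-1}}(\Omega)$, hence $f\in L^{\frac{q}{q-1}}(\Omega)$. Moreover $\int_\Omega f\,d\mathcal{H}^N=\lambda_{1,q}\int_\Omega |u|^{q-2}u\,d\mathcal{H}^N=0$, because $E(u)<\infty$ forces $\int_\Omega|u|^{q-2}u\,d\mathcal{H}^N=0$. Clearly $f=\lambda_{1,q}|u|^{q-2}u$, as required.

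It remains to check that, for every $\varphi\in L^q(\Omega)$,
\begin{align*}
E(\varphi)\ge E(u)+\int_\Omega f(\varphi-u)\,d\mathcal{H}^N .
\end{align*}
Using $E(u)=\lambda_{1,q}$, $\norm{u}_q=1$ and $\int_\Omega |u|^{q-2}u\cdot u\,d\mathcal{H}^N=\norm{u}_q^q=1$, the right-hand side simplifies to $\lambda_{1,q}\int_\Omega |u|^{q-2}u\,\varphi\,d\mathcal{H}^N$. By Hölder's inequality with exponents $\frac{q}{q-1}$ and $q$,
\begin{align*}
\lambda_{1,q}\int_\Omega |u|^{q-2}u\,\varphi\,d\mathcal{H}^N\le \lambda_{1,q}\norm{u}_q^{q-1}\norm{\varphi}_q=\lambda_{1,q}\norm{\varphi}_q .
\end{align*}
So it suffices to show $E(\varphi)\ge\lambda_{1,q}\norm{\varphi}_q$ for all $\varphi\in L^q(\Omega)$. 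There are three cases:
\begin{itemize}
\item If $E(\varphi)=+\infty$, the inequality is trivial.
\item If $\varphi=0$, both sides vanish.
\item Otherwise $\varphi\in BV(\Omega)\setminus\{0\}$ with $\int_\Omega|\varphi|^{q-2}\varphi\,d\mathcal{H}^N=0$. Then $\varphi/\norm{\varphi}_q$ is still admissible, since the constraint is $(q-1)$-homogeneous. By the definition \eqref{lambda 1 q} of $\lambda_{1,q}$ and the $1$-homogeneity of the total variation, $|D\varphi|(\Omega)\ge \lambda_{1,q}\norm{\varphi}_q$.
\end{itemize}
This proves $f\in\partial E(u)$.

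There is no serious obstacle. The only point needing care is that the constraint $\int_\Omega |\varphi|^{q-2}\varphi\,d\mathcal{H}^N=0$ is preserved under positive scaling, so the normalization in \eqref{lambda 1 q} can be removed. Combined with Proposition~\ref{subdiff}, this yields Theorem~\ref{main theo}.
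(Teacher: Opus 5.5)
Your proof is correct and follows the same route as the paper. The paper combines $E(v)\ge \lambda_{1,q}\norm{v}_q$, which comes from minimality and homogeneity, with the subgradient inequality $\norm{v}_q\ge 1+\int_\Omega |u|^{q-2}u\,(v-u)\,d\mathcal{H}^N$ for the $L^q$ norm at $u$; that inequality is exactly your H\"older step, and your explicit check of $\varphi=0$ (the paper argues only for $v\neq 0$) is a small welcome addition.
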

\begin{proof}
By minimality of $u$, $E\left( \frac{v}{\norm{v}_q} \right) \ge E(u)$ for all $v \in L^q(\Omega)\backslash\{0\}$. Denote $G(v)=\norm{v}_q$ and take $g^* \in \partial G(u)$, namely $g^*=  |u|^{q-2} u.$ Then,
\[ G(v) \ge G(u) + \int_\Omega g^* (v-u)\,d\mathcal{H}^N=1+ \int_\Omega g^* (v-u)\,d\mathcal{H}^N \quad \text{ for any } v \in L^{q}(\Omega), \, v \ne 0. \]
Also, notice that $E\left( \frac{v}{\norm{v}_q} \right) = \frac{1}{\norm{v}_q} E(v).$ Therefore,
\[
    E(v) \ge G(v) E(u) \ge E(u) + E(u) \int_\Omega g^* (v-u)\,d\mathcal{H}^N \quad \text{ for all } v \in L^q(\Omega),\, v \ne 0, 
\]
which implies that $E(u) g^* \in \partial E(u)$. As a consequence, there exists $f \in \partial E(u)$ such that $|u|^{q-2} u = \frac{1}{E(u)} f.$ This implies the claim with $t= (E(u))^{-1}= \frac{1}{\lambda_{1,q}}.$
\end{proof}

\begin{proof}[Proof of Theorem~\ref{main theo}]
Let $u \in BV(\Omega)$ be such that $\int_\Omega |u|^{q-2} u \,d\mathcal{H}^N =0$ and $\norm{u}_q=1$ which attains 
\[ \lambda_{1, q}=E(u)=\min_{\norm{v}_q=1} E(v). \]
Recall that a minimizer for $\lambda_{1, q}$ exists due to Lemma~\ref{att}. 
Then, by Proposition~\ref{min subdiff}, there is $f \in \partial E(u)$ such that  $|u|^{q-2}u= \frac{1}{\lambda_{1, q}} f$. Moreover, since $\int_\Omega |u|^{q-2} u \,d\mathcal{H}^N =0$, one has that $\int_\Omega f\,d\mathcal{H}^N=0$. Then $f$ satisfies the hypotheses of Proposition~\ref{subdiff} and therefore  $u$ is a solution of 
\[
-\operatorname{div}\left( \frac{Du}{|Du|} \right) = \lambda_{1, q} |u|^{q-2} u \quad \text{ in } \Omega,\qquad 
    \partial_\nu \left( \frac{Du}{|Du|} \right) = 0 \quad \text{ on } \partial \Omega. \qedhere
\]
\end{proof}

\subsubsection{Case \texorpdfstring{$q=1$}{q=1}}
In the case $q=1$, the corresponding formal eigenvalue problem is 
\[
-\operatorname{div} \left( \frac{D u}{|Du|} \right) = \lambda \, \sgn(u) \qquad  \text{ in } \Omega. 
\]
To give this formula a rigorous interpretation, let us define the set-valued sign function 
\[ \sgn(t):=\begin{cases}
    1, & \text{ if } t >0, \\
    [-1, 1], & \text{ if } t=0,\\
    -1, & \text{ if } t <0. 
\end{cases} \]

If $u:\Omega\to\R$ is a function, then $s\in \sgn(u)$ is a function $s:\Omega\to\R$ such that $s(x)=1$ if $u(x)>0$, $s(x)=-1$ if $u(x)<0$, and $s(x)\in[-1,1]$ if $u(x)=0$.  We call $s$ a selection of $\sgn(u).$

In the Dirichlet case (see \cite{KS}),
\[ 
\lambda_D := \inf\left\{ |Du|(\Omega) + \int_{\partial \Omega}|u|: u \in BV(\Omega), \norm{u}_1=1 \right\} 
\]
is attained, and for any minimizer $u \in BV(\Omega)$ there exists $z \in L^\infty(\Omega, \R^N)$ and $s \in \sgn(u)$ such that 
\[ \norm{z}_{L^\infty} \le 1, \quad -\int_\Omega \operatorname{div}(z) u \,d\mathcal{H}^N= |Du|(\Omega) \]
and 
\begin{equation}\label{limit eq dir} -\operatorname{div}(z)=\lambda_D \, s \quad \text{a.e. on }\Omega.  \end{equation}

Actually, in \cite{KS} it is shown that, for any selection $s \in \sgn(u),$ there exists a solution in the above sense. As expected, there are solutions to \eqref{limit eq dir} which are not minimizers for $\lambda_D$, see \cite{Chang}. 
We also recall that $\lambda_D$ and the corresponding eigenfunction are related to the Cheeger constant, see \cite{KF}.

In \cite{Chang}, a sequence of eigenvalues of $\Delta_1$ is found via Ljusternik–Schnirelmann theory for nonsmooth functionals, where critical points are to be interpreted as functions with zero slope and this approach also applies to the Neumann case. Here, we give a different variational characterization with respect to \cite{Chang}, inspired by \cite{PW}. Apart from its intrinsic interest, this approach allows us to avoid the use of weak/strong slopes, and it turns out to be more suitable to prove the convergence results in Section~\ref{sec convergence}, as well as to provide a  geometrical characterization of $\lambda_{1,1}$, see Section~\ref{geom char}. 
We also mention that in \cite{gg,g01} a different formulation of $\lambda_{1,1}$ is given as
\[ \inf\left\{ |Du|(\Omega): \, u \in BV(\Omega) \setminus \{ 0 \}, \, \int_\Omega \sgn(u)\,d\mathcal{H}^N=0 \right\} .\] 
However, we show in Remark \ref{ggremark} that this infimum is, in general, not attained.

We give the following definition of eigenfunctions and eigenvalues of $\Delta_1$ with Neumann conditions. 
\begin{defi}\label{def eigenpair}
    We say that $(\lambda, u) \in \R \times (BV(\Omega) \setminus \{ 0 \})$ is an eigenpair for $\Delta_1$ with Neumann boundary conditions if there exists a selection $s \in \sgn(u)$ with $\int_\Omega s=0$ and 
   such that 
$u$ is a solution of 
\begin{align}\label{selection:prob}
        -\operatorname{div} \left( \frac{Du}{|Du|} \right) = \lambda s \quad \text{ in }\Omega,
        \qquad 
    \partial_\nu u=0 \quad \text{ on }\partial \Omega,
\end{align}
  in the sense of Definition~\ref{def solution}.  
\end{defi}
This definition also appears in \cite{SdLSdL, SdLSdL2}. However, in \cite{SdLSdL, SdLSdL2} no variational characterization of the first eigenvalue is provided.

Recall the definition of $\mathfrak{N}(\Omega)$ given in \eqref{N omega} and let
\begin{align}
\label{lambda 1 1} \lambda_{1, 1}:= \inf \left\{ \frac{|Du|(\Omega)}{\norm{u}_1}: u \in \mathfrak{N}(\Omega) \setminus \{0\} \right\} 
 = \inf \left\{|Du|(\Omega): u \in \mathfrak{N}(\Omega), \; \norm{u}_1=1 \right\}. \end{align}

The following is the analogue of Lemma~\ref{lambda1qlem} in the case $q=1.$

\begin{lemma}
One has
\begin{equation}\label{equivalent lambda 1}
\lambda_{1, 1}= \inf \left\{|D u|(\Omega): \, u \in W^{1,1}(\Omega), \, ||\{ u >0 \}|- |\{ u < 0 \}|| \le |\{u=0\}|, \,\norm{u}_1=1 \right\}. 
\end{equation}
\end{lemma}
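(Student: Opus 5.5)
We prove the two inequalities separately. The inequality ``$\le$'' in \eqref{equivalent lambda 1} is immediate: $W^{1,1}(\Omega)\subset BV(\Omega)$, and for $u\in W^{1,1}(\Omega)$ we have $|Du|(\Omega)=\int_\Omega|Du|\,d\mathcal H^N$. Hence the right-hand side is an infimum over a subset of the admissible class in \eqref{lambda 1 1}.

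For ``$\ge$'', fix $u\in\mathfrak N(\Omega)$ with $\norm{u}_1=1$ and argue as in Lemma~\ref{lambda1qlem}.
\begin{itemize}
\item[(a)] By mollification \cite[Theorem~1.17]{Giusti}, take $u_n\in C^\infty(\Omega)\cap W^{1,1}(\Omega)$ with $\int_\Omega|Du_n|\,d\mathcal H^N\to|Du|(\Omega)$ and $\|u_n-u\|_1\to0$.
\item[(b)] The constraint must be restored. Unlike the case $q>1$, no continuity of the shifting constant is available (Remark~\ref{remark c BV}), so we avoid it. By Lemma~\ref{c BV}(ii), choose $c_n\in\R$ with $\tilde u_n:=u_n+c_n\in\mathfrak N(\Omega)$; clearly $\tilde u_n\in W^{1,1}(\Omega)$ and $D\tilde u_n=Du_n$.
\end{itemize}
The key observation is that Lemma~\ref{c BV}(i), applied to $\tilde u_n$, gives
\[
\|\tilde u_n\|_1=\inf_{c\in\R}\|u_n+c\|_1 .
\]
Applied to $u$, it gives $\inf_{c}\|u+c\|_1=\|u\|_1=1$.

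Since $\bigl|\|u_n+c\|_1-\|u+c\|_1\bigr|\le\|u_n-u\|_1$ uniformly in $c$, the infima converge:
\[
\|\tilde u_n\|_1=\inf_c\|u_n+c\|_1\ \ge\ \inf_c\|u+c\|_1-\|u_n-u\|_1=1-\|u_n-u\|_1\to 1 .
\]
The same estimate gives $\|\tilde u_n\|_1\le\|u_n\|_1\to1$, so in fact $\|\tilde u_n\|_1\to1$. Therefore
\[
\frac{\int_\Omega|D\tilde u_n|\,d\mathcal H^N}{\|\tilde u_n\|_1}\longrightarrow |Du|(\Omega).
\]

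The constraint in $\mathfrak N(\Omega)$ is invariant under multiplication by positive constants. So $v_n:=\tilde u_n/\|\tilde u_n\|_1$ is admissible for the right-hand side of \eqref{equivalent lambda 1}, and its energy tends to $|Du|(\Omega)$. Taking the infimum over $u$ gives ``$\ge$''.

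The only delicate point is step (b), namely the lack of continuity (and uniqueness) of the shifting constant in $BV$. It is bypassed by the variational identity $\|\tilde u_n\|_1=\min_c\|u_n+c\|_1$ from Lemma~\ref{c BV}(i), together with the uniform-in-$c$ $L^1$ estimate.
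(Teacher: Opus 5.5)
Your proof is correct. It shares the paper's skeleton (mollify, then shift by a constant to restore the constraint), but it handles the key step, the lower bound on $\norm{\tilde u_n}_1$, by a different and more elementary mechanism.

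\textbf{The paper's route.} The paper fixes the specific shift $\underline c(u_n)$. It then invokes the lower semicontinuity of $\underline c$ from Lemma~\ref{c BV}(iv), Fatou's lemma, and Remark~\ref{rmk equality c} to obtain $\liminf_n \norm{\tilde u_n}_1 \ge \norm{u+\underline c(u)}_1=\norm{u}_1$. Implicitly this needs:
\begin{itemize}
\item boundedness of $\underline c(u_n)$;
\item passage to a subsequence;
\item (iv) applied to a mollified sequence that converges only strictly, not in the $BV$ norm. This is harmless, since the proof of (iv) only uses a.e.\ convergence.
\end{itemize}

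\textbf{Your route.} You allow any admissible shift and read Lemma~\ref{c BV}(i) as the identity $\norm{\tilde u_n}_1=\min_c\norm{u_n+c}_1$. Admissible shifts are exactly $c=-m$ with $m$ a median of $u_n$, so this is the classical $L^1$-minimality of medians. You combine it with the fact that $w\mapsto\inf_c\norm{w+c}_1$ is $1$-Lipschitz in $L^1$. This needs no subsequences and no semicontinuity of $\underline c$. It also gives the quantitative bound $1-\norm{u_n-u}_1\le\norm{\tilde u_n}_1\le\norm{u_n}_1$, so you get a true limit rather than a $\liminf$.

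Your argument also shows that Lemma~\ref{c BV}(i) is the real mechanism in the paper's proof as well. Fatou only yields $\norm{u+c}_1\le\liminf_n\norm{\tilde u_n}_1$ for a subsequential limit $c$ of the shifts. One still needs $\norm{u+c}_1\ge\norm{u}_1$, which is (i) again.

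What the paper's route buys is consistency: it reuses the $\underline c$-semicontinuity machinery that it also employs in Step~1.2 of the proof of Theorem~\ref{theo conv intro}.
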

\begin{proof}
Let $\tilde \lambda_{1, 1}$ denote the right hand side in \eqref{equivalent lambda 1}. 
Since $W^{1,1}(\Omega) \subset BV(\Omega)$ then $\lambda_{1, 1} \le \tilde \lambda_{1, 1}$. Let now $u \in \mathfrak {N}(\Omega)$ be such that $u\ne 0$. By \cite[Theorem~1.17]{Giusti}, there exists a sequence $u_n \in C^\infty(\Omega) \cap  W^{1, 1}(\Omega)$ such that 
\[ 
\int_\Omega |D u_n|\,d\mathcal{H}^N \to |Du|(\Omega), \quad \int_\Omega |u_n- u|\,d\mathcal{H}^N \to 0. 
\]
Using the notation in Lemma~\ref{c BV}, let $\tilde u_n:=u_n + \underline c(u_n)\in \mathfrak {N}(\Omega)$. By Lemma~\ref{c BV}, $\liminf\limits_{n\to \infty} \underline c(u_n) \ge \underline c(u)$. Hence, by Fatou's Lemma and Remark~\ref{rmk equality c}, 
\[ \frac{|Du|(\Omega)}{\norm{u}_1}= \frac{|Du|(\Omega)}{\norm{u+ \underline c (u)}_1} \ge \lim_{n\to\infty}\frac{\int_\Omega |D\tilde u_n|\,d\mathcal{H}^N}{\norm{\tilde u_n}_1} \ge \tilde \lambda_{1, 1}. \]
Taking the infimum over $u\in \mathfrak{N}(\Omega) \setminus \{0\}$ yields \eqref{equivalent lambda 1}.
\end{proof}
\begin{prop}\label{lambda 11 att}
One has that $\lambda_{1,1}$ is attained and $\lambda_{1,1} >0$. 
\end{prop}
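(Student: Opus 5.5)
The plan is to run the direct method in $BV(\Omega)$, mirroring the proof of Lemma~\ref{lambda att pq} for $q=1$. The only difference is that Remark~\ref{rmk: c w12} is no longer available, so the constraint must be restored by a constant shift.

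\emph{Existence.} Let $u_n\in\mathfrak N(\Omega)$ with $\norm{u_n}_1=1$ and $|Du_n|(\Omega)\to\lambda_{1,1}$. Then $\norm{u_n}_{BV(\Omega)}$ is bounded, so by \eqref{comp:em}, up to a subsequence, $u_n\to u$ in $L^1(\Omega)$ with $u\in BV(\Omega)$. We get $\norm{u}_1=1$, and by lower semicontinuity of the total variation under $L^1$ convergence, $|Du|(\Omega)\le\lambda_{1,1}$. By Lemma~\ref{c BV}(i), for every $c\in\R$,
\[
\int_\Omega|u+c|\,d\mathcal H^N=\lim_{n\to\infty}\int_\Omega|u_n+c|\,d\mathcal H^N\ge\lim_{n\to\infty}\int_\Omega|u_n|\,d\mathcal H^N=1 .
\]
By Lemma~\ref{c BV}(ii) choose $\tilde c$ with $\tilde u:=u+\tilde c\in\mathfrak N(\Omega)$. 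Lemma~\ref{c BV}(i) applied to $\tilde u$ with the shift $-\tilde c$ gives $\norm{u}_1\ge\norm{\tilde u}_1$. The display above gives $\norm{\tilde u}_1\ge 1=\norm{u}_1$. Hence $\norm{\tilde u}_1=1$. Since $|D\tilde u|(\Omega)=|Du|(\Omega)\le\lambda_{1,1}$, the function $\tilde u$ is admissible and attains $\lambda_{1,1}$. Note that $u$ itself need not lie in $\mathfrak N(\Omega)$, but the shifted function does, and that suffices.

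\emph{Positivity.} Suppose, for contradiction, that $\lambda_{1,1}=0$. Then the minimizer $\tilde u$ satisfies $|D\tilde u|(\Omega)=0$. Since $\Omega$ is connected, $\tilde u$ equals some constant $k$ a.e., and $\norm{\tilde u}_1=1$ forces $k\neq0$. For such a constant, one of $|\{\tilde u>0\}|$, $|\{\tilde u<0\}|$ equals $|\Omega|$ and the other is $0$, while $|\{\tilde u=0\}|=0$. Thus $\big||\{\tilde u>0\}|-|\{\tilde u<0\}|\big|=|\Omega|>0=|\{\tilde u=0\}|$, so $\tilde u\notin\mathfrak N(\Omega)$, a contradiction. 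Hence $\lambda_{1,1}>0$.

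The main subtlety is in the existence step. The constraint defining $\mathfrak N(\Omega)$ is not closed under $L^1$ convergence, since mass can accumulate on or leave the zero set. This is why we pass to the limit only in the $L^1$-minimality property from Lemma~\ref{c BV}(i), which is stable under $L^1$ convergence, and then recover admissibility by the shift $\tilde c$ without increasing the $L^1$ norm. Positivity is a routine consequence of connectedness of $\Omega$.
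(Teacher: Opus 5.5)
Your proof is correct, but both halves take a different route from the paper's.

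\textbf{Existence.} The paper checks that the $L^1$-limit $u$ of the minimizing sequence already belongs to $\mathfrak N(\Omega)$, by passing to the limit in \eqref{charact M}. Since $\sgn_-$ is lower semicontinuous and $\sgn_+$ is upper semicontinuous, Fatou's lemma along an a.e.\ convergent subsequence preserves $\int_\Omega\sgn_-(u_n)\le 0\le\int_\Omega\sgn_+(u_n)$.

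Your claim that $\mathfrak N(\Omega)$ is not closed under $L^1$ convergence is therefore false. Membership $u\in\mathfrak N(\Omega)$ is equivalent to $|\{u>0\}|\le|\Omega|/2$ and $|\{u<0\}|\le|\Omega|/2$. A.e.\ convergence gives $|\{u>0\}|\le\liminf_n|\{u_n>0\}|$, and likewise for $\{u<0\}$: in the limit, mass can only move into the zero set, which the constraint tolerates.

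Your shift is thus harmless but unnecessary; one may take $\tilde c=0$. What you wrote is the argument of Lemma~\ref{lambda att pq}, with the appeal to Remark~\ref{rmk: c w12} replaced by working directly with $\tilde u$, and it is valid.

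\textbf{Positivity.} The paper proves a quantitative Poincar\'e-type bound. With a median $t(u)$, the Fleming--Rishel inequality and Lemma~\ref{c BV}(i) give $1=\|u\|_1\le\|u-t(u)\|_1\le C_2|Du|(\Omega)$ for every admissible $u$, without using that a minimizer exists.

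You instead use the minimizer just constructed, the connectedness of $\Omega$, and the fact that nonzero constants violate the constraint. Your route is more elementary and self-contained; it is essentially the compactness argument underlying the Poincar\'e inequality anyway. The paper's route gives an explicit lower bound $\lambda_{1,1}\ge 1/C_2$ in terms of a Sobolev--Poincar\'e constant.
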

\begin{proof}
Let $u_n \in \mathfrak {N}(\Omega)$ be a minimizing sequence for $\lambda_{1, 1}$ such that $\norm{u_n}_1=1$. By \eqref{comp:em}, there exists a function $u \in BV(\Omega)$ such that $u_n \to u$ in $L^1(\Omega)$.  In particular, $\norm{u}_1=1$. 
We now prove that  $u \in \mathfrak {N}(\Omega)$. 
By \eqref{charact M}, 
\[ \int_\Omega \sgn_-(u_n)\,d\mathcal{H}^N \le 0 \le \int_\Omega \sgn_+(u_n) \,d\mathcal{H}^N\]
and, passing to the limit, $\int_\Omega \sgn_-(u)\,d\mathcal{H}^N \le 0 \le \int_\Omega \sgn_+(u)\,d\mathcal{H}^N,$ which implies that $u \in \mathfrak {N}(\Omega)$. This shows that $\lambda_{1,1}$ is attained.

To see that $\lambda_{1,1} >0$, let
\[ t(u):=\sup \{ t\in \R\::\: |\Omega_t| \ge |\Omega \setminus \Omega_t|\}, \quad \text{ where } \Omega_t:=\{x \in \Omega: \, u(x) >t\}. \]
Let us show that $0\leq  t(u) \le C$. Using that $u \in \mathfrak {N}(\Omega)$, for any $t >0$,
\begin{equation}\label{dis t(u)} 
|\Omega_t|=|\{u > t\}| = |\{u>0\}|-|\{0 < u \le t\}| \le |\{u \le 0\}| + |\{0 < u \le t\}| = |\{ u \le t \}|=|\Omega\backslash\Omega_t|. 
\end{equation}
Moreover, the inequality is strict if $|\{0 < u \le t\}| \ne 0$. Therefore, if $t(u) = +\infty$, then there exists a sequence $t_n \to + \infty$ such that $|\{0 < u \le t_n\}|=0$ for any $t_n$. However, in this case, $\{u\le 0\}=\Omega$, and $t(u) \le 0$, a contradiction. 
On the other hand, we notice that, for any $t<0$,
\[ 
|\Omega\backslash\Omega_t|=|\{u \le t\}| = |\{ u <0 \}|- |\{ t < u < 0\}|\le |\{u \ge 0\}| + |\{ t < u < 0\}|= |\{u > t\}|=|\Omega_t|, 
\]
thus $t(u) \ge 0$.

With this definition of $t(u)$, \cite[Theorem~II]{flemingrishel} implies that $\norm{u-t(u)}_{\frac{N}{N-1}} \le C |Du|(\Omega)$ for some $C>0$.  Then, using Lemma~\ref{c BV},
\[ 
1=\norm{u}_1\le \norm{u-t(u)}_1 \le C_1 \norm{u-t(u)}_{1^*} \le C_2 |Du|(\Omega)
\]
for some $C_1, C_2>0$. This implies that $\lambda_{1,1 }>0$ and ends the proof. 
\end{proof}

We now follow the same ideas of Theorem~\ref{main theo}. 
We define the functional $\tilde E : L^{1}(\Omega) \to \overline \R$  such that 
\begin{align}\label{tildeEdef}
\tilde E(u):= \begin{cases}|D u|(\Omega), & \text{ if } u \in \mathfrak {N}(\Omega),\\
+\infty, & \text{ otherwise. }
\end{cases}
\end{align}
\begin{prop}\label{subdiff q1}
    Let $u \in \mathfrak {N}(\Omega)$. Then, $f \in L^\infty(\Omega)$ such that $\int_\Omega f\,d\mathcal{H}^N=0$ satisfies $f \in \partial \tilde E(u)$ if and only if
    $u$ is a solution to 
    \begin{align*}
        - \operatorname{div} \left( \frac{Du}{|Du|} \right) = f \quad  \text{ in }\Omega, \qquad
        \partial_\nu \left( \frac{Du}{|Du|} \right) =0 \quad \text{ on }\partial \Omega,
    \end{align*}
    in the sense of Definition~\ref{def solution}.
\end{prop}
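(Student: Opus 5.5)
I will follow the scheme of Proposition~\ref{subdiff}, replacing $L^q$ by $L^1$ (so the dual space is $L^\infty$) and the constraint $\int_\Omega |u|^{q-2}u=0$ by membership in $\mathfrak N(\Omega)$. Let
\[
M^*:=\left\{ v^*\in L^\infty(\Omega): \exists z\in L^\infty(\Omega,\R^N),\ \norm{z}_\infty\le 1,\ -\operatorname{div} z=v^* \text{ in }\Omega,\ \partial_\nu z=0 \text{ on }\partial\Omega\right\},
\]
with the equation understood in the sense of Definition~\ref{def 1}. Every $v^*\in M^*$ satisfies $\int_\Omega v^*=0$, and $M^*$ is clearly convex. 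I will check closedness in the weak* topology of $L^\infty$, since that is the topology paired with $L^1$: if $v_n^*\rightharpoonup v^*$ weakly*, then the associated fields $z_n$ have a weak* convergent subsequence by Banach--Alaoglu, and passing to the limit in \eqref{zneqn} gives $v^*\in M^*$. I then compute
\[
I_{M^*}^*(v)=\sup_{v^*\in M^*}\int_\Omega v^*v \quad\text{for } v\in L^1(\Omega).
\]

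\emph{Step 1.} I will show $I_{M^*}^*(v)=|Dv|(\Omega)$ for $v\in BV(\Omega)$ and $I_{M^*}^*(v)=+\infty$ otherwise. The upper bound repeats Step 1.1 of Proposition~\ref{subdiff}, approximating $v$ by smooth $v_n$ via \cite[Theorem~1.17]{Giusti}; here $v^*\in L^\infty$ and $v_n\to v$ in $L^1$, so the limit is justified. For the lower bound, fields $z\in C_c^\infty(\Omega,\R^N)$ with $\norm{z}_\infty\le1$ produce $v^*=-\operatorname{div} z\in M^*$, and the supremum over these is exactly $|Dv|(\Omega)$. Consequently $I_{M^*}^*=\tilde E$ on $\mathfrak N(\Omega)$.

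\emph{Step 2.} This is the main obstacle. I need $\tilde E^*(u^*)=I_{M^*}(u^*)$ whenever $\int_\Omega u^*=0$. By \eqref{tildeEdef}, $\tilde E^*(u^*)$ is a supremum over $u\in\mathfrak N(\Omega)$ of $\int_\Omega uu^*-I^*_{M^*}(u)$. To enlarge it to a supremum over all of $L^1$, take $u\in L^1$ not in $BV$: then $I_{M^*}^*(u)=+\infty$ and $u$ contributes $-\infty$. For $u\in BV(\Omega)$, Lemma~\ref{c BV}(ii) gives a constant $c(u)$ with $u+c(u)\in\mathfrak N(\Omega)$. Since both $u^*$ and every $v^*\in M^*$ have zero mean, replacing $u$ by $u+c(u)$ leaves $\int_\Omega uu^*-I^*_{M^*}(u)$ unchanged. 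This is the replacement for the continuous map $c_q$ used earlier. The two suprema therefore coincide, so $\tilde E^*(u^*)=(I_{M^*}^*)^*(u^*)$.

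To apply Fenchel--Moreau in the pairing $(L^1,L^\infty)$, I need $I_{M^*}$ to be convex and weak* lower semicontinuous, which follows from the weak* closedness of $M^*$. This gives $(I_{M^*}^*)^*=I_{M^*}$ on $L^\infty$. The definitions of conjugate and subdifferential must be taken with respect to this pairing, and I would verify this carefully.

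\emph{Step 3.} By \cite[Chapter 1, Proposition 5.1]{Ekeland}, $f\in\partial\tilde E(u)$ if and only if $\int_\Omega uf=\tilde E(u)+\tilde E^*(f)$. Since $\int_\Omega f=0$, Step 2 turns this into the two conditions $f\in M^*$ and $|Du|(\Omega)=\int_\Omega fu$. These are precisely conditions (i)--(iii) of Definition~\ref{def solution}. Note that $f\in L^\infty\subset L^N$, so the definition applies.
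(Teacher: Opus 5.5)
Your proposal is correct and follows the paper's own proof. The paper likewise reruns Steps 1--3 of Proposition~\ref{subdiff} with the same set $M^*$, identifies $I_{M^*}^*=\tilde E$ on $\mathfrak N(\Omega)$, and replaces $c_q(u)$ by any constant $c(u)$ from Lemma~\ref{c BV}(ii) to obtain \eqref{step 2}. Your explicit use of the $(L^1,L^\infty)$ pairing and of weak* closedness of $M^*$ fills in a point the paper leaves implicit; that closedness argument should be run with nets, or via Krein--Smulian using the separability of $L^1$, rather than with sequences alone.
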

\begin{proof}
Let $M^*$ be given by \eqref{M:def}. Reasoning as in Step 1 of the proof of Proposition~\ref{subdiff}, in place of \eqref{step 1} we now have that
\[ I_{M^*}^*(v)=\tilde E(v) \quad \text{ for all } 
v \in L^{1}(\Omega), \; \abs{ |\{ v >0\}|-|\{ v <0\}|} \le |\{v=0\}|. \]
In Step 2 we used the constant $c_q(u)$ given by Lemma~\ref{lemma c pw}. Here, instead we use any $c(u)$ such that $u+ c(u) \in \mathfrak {N}(\Omega)$ (provided by Lemma~\ref{c BV}), to obtain \eqref{step 2}. 
Step 3 follows in the same way, and the proof is complete. 
\end{proof}
The proof of the next Proposition is completely analogous to Proposition~\ref{min subdiff}, we omit the proof. 
\begin{prop}\label{min subdiff q1}
    Let $u \in BV(\Omega) $ be such that $\tilde E(u)=\min_{\norm{v}_1=1} \tilde E(v) \in (0, \infty).$  Let $G(v):=\norm{v}_1$. 
Then, for any $g^* \in \partial G(u)$, there exists $f \in \partial \tilde E(u)$ such that 
\[ g^* =\frac{1}{\lambda_{1,1}} f. \]
\end{prop}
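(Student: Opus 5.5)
We follow the proof of Proposition~\ref{min subdiff} verbatim, replacing $E$ by $\tilde E$ and the $L^q$ norm by $G(v)=\norm{v}_1$. Let $u$ attain $\lambda_{1,1}=\tilde E(u)=\min_{\norm{v}_1=1}\tilde E(v)$. We may take $\norm{u}_1=1$: the functional $\tilde E$ is positively homogeneous and $\mathfrak N(\Omega)$ is a cone under multiplication by positive scalars, so the minimum over the unit sphere is attained at a normalized function. The first step is the homogeneity identity
\[
\tilde E\Big(\frac{v}{\norm{v}_1}\Big)=\frac{1}{\norm{v}_1}\tilde E(v)\qquad\text{for every } v\in L^1(\Omega)\setminus\{0\}.
\]
For $v\in\mathfrak N(\Omega)$ this holds because the sign sets of $v$ and $v/\norm{v}_1$ coincide and $|D(\cdot)|(\Omega)$ is $1$-homogeneous. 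For $v\notin\mathfrak N(\Omega)$ both sides equal $+\infty$.

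Minimality then gives $\tilde E(v)\ge \norm{v}_1\,\tilde E(u)$ for all $v\neq0$. Now take any $g^*\in\partial G(u)$. Since $G$ is convex and $1$-homogeneous, $\int_\Omega g^*u\,d\mathcal H^N=\norm{u}_1=1$ and $\norm{g^*}_\infty\le1$; concretely, $g^*$ is a selection of $\sgn(u)$. The subgradient inequality reads
\[
\norm{v}_1\ge 1+\int_\Omega g^*(v-u)\,d\mathcal H^N .
\]
Since $\tilde E(u)=\lambda_{1,1}>0$, multiplying by $\lambda_{1,1}$ yields
\[
\tilde E(v)\ge \lambda_{1,1}\norm{v}_1\ge \tilde E(u)+\int_\Omega \lambda_{1,1}g^*(v-u)\,d\mathcal H^N\qquad\text{for all } v\neq0 .
\]

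It remains to check the case $v=0$, which the paper's analogous argument glosses over. Here $0\in\mathfrak N(\Omega)$, so $\tilde E(0)=0$. The right-hand side is
\[
\lambda_{1,1}-\lambda_{1,1}\int_\Omega g^*u\,d\mathcal H^N=\lambda_{1,1}-\lambda_{1,1}=0,
\]
so the inequality holds with equality. Hence $f:=\lambda_{1,1}g^*\in L^\infty(\Omega)$ belongs to $\partial\tilde E(u)$, which is the claim $g^*=f/\lambda_{1,1}$.

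The only point requiring care is that the subdifferential is taken in the duality $L^1$–$L^\infty$ (as in Proposition~\ref{subdiff q1}), so that $g^*\in L^\infty$ is admissible. The hypothesis $\tilde E(u)\in(0,\infty)$ is exactly what makes the division by $\lambda_{1,1}$ legitimate. I expect no real obstacle beyond these bookkeeping remarks. Note that, unlike in Proposition~\ref{subdiff q1}, no condition $\int_\Omega f=0$ is needed here: that condition is only used later, when one selects $g^*$ with zero mean in order to apply Proposition~\ref{subdiff q1}.
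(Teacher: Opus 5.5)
Your argument is correct and follows the route the paper intends: the paper omits the proof as analogous to Proposition~\ref{min subdiff}, where homogeneity of $\tilde E$ and minimality give $\tilde E(v)\ge\lambda_{1,1}\norm{v}_1$, and combining this with the subgradient inequality for $G$ yields $\lambda_{1,1}g^*\in\partial\tilde E(u)$; your explicit check of $v=0$ fills a small omission in the paper's written analogue. One minor point is that $\norm{u}_1=1$ is really part of the hypothesis, not something you can arrange by homogeneity, because your own $v=0$ computation shows that $\lambda_{1,1}g^*\notin\partial\tilde E(u)$ whenever $\tilde E(u)=\lambda_{1,1}$ but $\norm{u}_1<1$.
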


\begin{lemma}\label{lemma g*}
    Let $G(u):=\norm{u}_1$ and $u \in \mathfrak {N}(\Omega)$. There exists $\alpha^* \in [-1, 1]$ such that 
    \[ 
    g^*(x):= \begin{cases}
        1, & \text{ if } u(x) >0, \\
        \alpha^*, & \text{ if } u(x) =0, \\
        -1, & \text{ if } u(x) <0,
    \end{cases}
    \]
   satisfies 
    $g^* \in \partial G(u)$ and $\int_\Omega g^*\,d\mathcal{H}^N =0$.
\end{lemma}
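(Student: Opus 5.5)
The plan is to first describe $\partial G(u)$ for $G(v)=\norm{v}_1$ on $L^1(\Omega)$, and then choose the free value $\alpha^*$ on $\{u=0\}$ so that the integral vanishes.

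\emph{Step 1: the subdifferential.} The function $G$ is convex and positively homogeneous. Hence $g^*\in L^\infty(\Omega)$ belongs to $\partial G(u)$ if and only if $\norm{g^*}_\infty\le 1$ and $\int_\Omega g^* u\,d\mathcal{H}^N=\norm{u}_1$. Indeed, if both hold, then for any $v$
\[
G(u)+\int_\Omega g^*(v-u)\,d\mathcal{H}^N=\int_\Omega g^* v\,d\mathcal{H}^N\le \norm{v}_1 .
\]
It therefore suffices to check that the proposed $g^*$ has $|g^*|\le 1$ (true because $\alpha^*\in[-1,1]$) and $g^*u=|u|$ a.e. This last identity is immediate: $g^*=\sgn(u)$ where $u\neq0$, and $u=0$ on the remaining set.

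\emph{Step 2: choice of $\alpha^*$.} Write $a:=|\{u>0\}|$, $b:=|\{u<0\}|$ and $z:=|\{u=0\}|$. Then
\[
\int_\Omega g^*\,d\mathcal{H}^N=a-b+\alpha^* z .
\]
If $z>0$, set $\alpha^*:=(b-a)/z$. Since $u\in\mathfrak N(\Omega)$, we have $|a-b|\le z$, so $\alpha^*\in[-1,1]$ and the integral vanishes. If $z=0$, the membership condition forces $a=b$. In that case any $\alpha^*$ works, for instance $\alpha^*=0$, because $\{u=0\}$ is a null set.

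The only point needing care is the characterization of $\partial G$ in Step 1. This is standard, and the sufficiency direction shown above is all we need; the rest is bookkeeping using the defining inequality of $\mathfrak N(\Omega)$ in \eqref{N omega}.
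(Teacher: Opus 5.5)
Your proof is correct and follows essentially the same route as the paper. The paper checks $g^*\in\partial G(u)$ by integrating the pointwise subgradient inequality $|v(x)|-|u(x)|\ge g^*(x)\,(v(x)-u(x))$, which is equivalent to your check via $|g^*|\le 1$ and $g^*u=|u|$. It then makes the same choice $\alpha^*=(|\{u<0\}|-|\{u>0\}|)/|\{u=0\}|$, with $\alpha^*$ arbitrary (effectively $g^*=\sgn(u)$) when $|\{u=0\}|=0$.
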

\begin{proof}
Recall that  
\begin{equation}\label{subgradient} \partial (|y_0|)= \begin{cases}
    1, & \text{ if } y_0 >0, \\
    [-1, 1], & \text{ if } y_0 =0, \\
    -1 & \text{ if } y_0 <0, 
\end{cases}\end{equation}
and, by definition of subdifferential, for any $y_0 \in \R$, 
\begin{equation}\label{dis subg} 
\ell_{y_0} \in \partial (|y_0|)
\quad \text{ if and only if }\quad 
|y| -|y_0| \ge \ell_{y_0} (y-y_0) \quad \text{ for any $y\in \R $}.
\end{equation}
For any $\alpha \in [-1, 1]$ and $x \in \Omega$, let us define 
\[ g_\alpha(x):= \begin{cases}
1, & \text{ if } u(x)>0,\\
\alpha,  & \text{ if } u(x)= 0,\\
- 1, & \text{ if } u(x)<0. 
\end{cases}\]
Then, by \eqref{subgradient}, for any fixed $x \in \Omega$, $g_\alpha(x)$ belongs to $\partial (|y_0|)$, with $y_0=u(x)$. In particular, for any fixed $x \in \Omega$, and for any $v \ne 0$, consider $y_0:=u(x)$ and $y:=v(x)$. Then \eqref{dis subg} gives 
\[ 
|v(x)| -|u(x)| \ge g_{\alpha} (v(x)-u(x)).  
\] 
Integrating,
\[ \int_\Omega |v(x)|\,d\mathcal{H}^N \ge \int_\Omega |u(x)|\,d\mathcal{H}^N + \int_\Omega g_\alpha(x) (v(x)-u(x)) \,d\mathcal{H}^N\quad \text{ for all $v \in L^1(\Omega)$, \, $v \ne 0$}. \]
Namely, $g_\alpha \in \partial G(u)$.  Now, observe that 
\[ \int_\Omega g_\alpha \,d\mathcal{H}^N= |\{ u>0\}| - |\{ u <0 \}| + \alpha |\{ u =0 \}|. \]
Let us distinguish two situations. 
If $|\{ u=0\}| \ne 0$, then we choose $g^*= g_{\alpha^*}$ with 
\[
\alpha^*:= \frac{|\{ u<0\}| - |\{ u >0 \}|}{|\{ u =0 \}|}, 
\]
and observe that $\alpha^*\in[-1, 1]$, because $u \in \mathfrak {N}(\Omega)$. 
On the other hand, if $|\{ u=0\}| = 0$,  then we choose $g^*=\sgn(u)$, and notice that
\[ \abs{\int_\Omega \sgn(u)\,d\mathcal{H}^N}= \abs{|\{ u>0\}| - |\{ u<0\}|} \le |\{u=0\}| =0, \]
again because $u \in \mathfrak {N}(\Omega)$.
\end{proof}

\begin{theo}\label{main theo 2}
$\lambda_{1,1 } $ is attained. If $u \in BV(\Omega)$ is a minimizer of $\lambda_{1,1}$, then $(\lambda_{1,1}, u)$ is an eigenpair of $\Delta_1$ in the sense of Definition~\ref{def eigenpair} with 
\[ s(x):=\begin{cases}
 1, & \text{ if } u(x) >0, \\
        \alpha^*, & \text{ if } u(x) =0, \\
        -1, & \text{ if } u(x) <0, 
\end{cases}
\]
where 
\[ \alpha^*= \begin{cases}
    0, & \text{ if } |\{u=0\}|=0, \\
    \frac{|\{ u<0\}| - |\{ u >0 \}|}{|\{ u =0 \}|}, & \text{ otherwise. }
\end{cases}\]
Moreover, $\lambda_{1,1}$ is the smallest positive value such that there exists an eigenfunction of $\Delta_1$ in the sense of Definition~\ref{def eigenpair}. 
\end{theo}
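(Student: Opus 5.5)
Attainment is Proposition~\ref{lambda 11 att}, so the plan concerns the eigenpair property and then minimality among eigenvalues.

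\emph{Step 1: a minimizer is an eigenpair.} Let $u\in\mathfrak N(\Omega)$ be a minimizer with $\norm{u}_1=1$, so $\tilde E(u)=\lambda_{1,1}\in(0,\infty)$. Lemma~\ref{lemma g*} provides $g^*\in\partial G(u)$, $G=\norm{\cdot}_1$, with $\int_\Omega g^*=0$ and $g^*$ exactly the function $s$ of the statement. When $|\{u=0\}|=0$, the value $\alpha^*=0$ is harmless, since it is only taken on a null set. Proposition~\ref{min subdiff q1} then yields $f\in\partial\tilde E(u)$ with $f=\lambda_{1,1}s$. This $f$ lies in $L^\infty(\Omega)$ and has zero mean, so Proposition~\ref{subdiff q1} shows that $u$ solves $-\operatorname{div}(Du/|Du|)=\lambda_{1,1}s$ with zero Neumann datum, in the sense of Definition~\ref{def solution}. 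Since $s$ is a selection of $\sgn(u)$ and $\int_\Omega s=0$, the pair $(\lambda_{1,1},u)$ is an eigenpair in the sense of Definition~\ref{def eigenpair}.

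\emph{Step 2: $\lambda_{1,1}$ is the smallest positive eigenvalue.} Let $(\lambda,v)$ be an eigenpair with $\lambda>0$, selection $s\in\sgn(v)$ with $\int_\Omega s=0$, and field $z$. Item (iii) of Definition~\ref{def solution} gives
\[
|Dv|(\Omega)=\lambda\int_\Omega s\,v=\lambda\norm{v}_1,
\]
because $s v=|v|$ pointwise. Next we check $v\in\mathfrak N(\Omega)$. From $\int_\Omega s=0$ we get $|\{v>0\}|-|\{v<0\}|=-\int_{\{v=0\}}s$, and the absolute value of the right-hand side is at most $|\{v=0\}|$. Hence $v\in\mathfrak N(\Omega)\setminus\{0\}$, and the definition of $\lambda_{1,1}$ gives
\[
\lambda_{1,1}\le \frac{|Dv|(\Omega)}{\norm{v}_1}=\lambda.
\]

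\emph{Main obstacle.} The step needing the most care is the legitimacy of applying Proposition~\ref{min subdiff q1} to the particular zero-mean subgradient $g^*$. That proposition is stated for every $g^*\in\partial G(u)$, and its proof mirrors Proposition~\ref{min subdiff}:
\[
\tilde E(w)\ge \norm{w}_1\,\tilde E(u)\ge \tilde E(u)+\lambda_{1,1}\int_\Omega g^*(w-u)
\]
for $w\neq0$. The first inequality uses the homogeneity of $\tilde E$ together with the fact that $\mathfrak N(\Omega)$ is a cone. The case $w=0$ also needs checking: it requires $0\ge \lambda_{1,1}-\lambda_{1,1}\int_\Omega g^*u=0$, which holds. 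The zero-mean condition on $f$ required by Proposition~\ref{subdiff q1} is then inherited directly from $\int_\Omega g^*=0$. This is exactly why Lemma~\ref{lemma g*} chooses $\alpha^*$ as it does.
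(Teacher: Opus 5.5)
Your proposal is correct and follows the paper's proof essentially step by step. You get attainment from Proposition~\ref{lambda 11 att}, take the zero-mean subgradient of Lemma~\ref{lemma g*}, pass it through Propositions~\ref{min subdiff q1} and~\ref{subdiff q1}, and obtain minimality from item (iii) of Definition~\ref{def solution} together with the check $v\in\mathfrak N(\Omega)$. Your extra verification of the case $w=0$ in the subgradient inequality is a harmless detail that the paper's proof of Proposition~\ref{min subdiff} leaves implicit.
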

\begin{proof}
By Proposition~\ref{lambda 11 att}, we know that there exists $u \in \mathfrak {N}(\Omega)$ such that $\norm{u}_1=1$, which attains $\lambda_{1, 1}$. Then, by \eqref{tildeEdef},
\[ 
\tilde E(u)=\min\{\tilde E(v)\::\: v\in \mathfrak {N}(\Omega),\ \norm{v}_1=1  \} \in (0, +\infty). 
\]
Let $G(u):=\norm{u}_1$. By Proposition~\ref{min subdiff q1}, for any $g^* \in \partial G(u)$ there exists $f \in \partial \tilde E(u)$ such that  $g^*= \frac{1}{\lambda_{1,1}} f$. 
We choose, by Lemma~\ref{lemma g*}, $g^* \in \partial G(u)$ such that $\int_\Omega g^*\,d\mathcal{H}^N=0$, thus $\int_\Omega f\,d\mathcal{H}^N=0$.
Therefore, $f$ satisfies the hypotheses of Proposition~\ref{subdiff q1}, which gives the first conclusion.

Next we prove that $\lambda_{1,1}$ is the smallest eigenvalue of $\Delta_1$.  Let $\lambda>0$ be such that there is $v \in BV(\Omega) \setminus \{ 0 \}$ and a selection $s \in \sgn(v)$ such that 
\begin{align*}
    -\operatorname{div} \left( \frac{Dv}{|Dv|} \right)= \lambda s \quad\text{ in } \Omega,\qquad 
    \partial_\nu v=0 \quad \text{ on } \partial \Omega, 
\end{align*}
then $|Dv|(\Omega)=\lambda \int_\Omega s v\,d\mathcal{H}^N=\lambda \int_\Omega |v|\,d\mathcal{H}^N.$ Moreover, $0=\int_\Omega s\,d\mathcal{H}^N= |\{ v >0 \}| - |\{ v <0 \}| + \int_{\{ v=0 \}} s \,d\mathcal{H}^N$. Thus, $||\{ v >0 \}| - |\{ v <0 \}|| \le  |\{ v=0\}|,$ and, by definition,
\[
\lambda_{1,1} \le \frac{|Dv|(\Omega)}{\int_\Omega |v|\,d\mathcal{H}^N}=\lambda. \qedhere 
\]
\end{proof}

\begin{remark}
    In Section~\ref{counterex} we show an example in which any minimizer of $\lambda_{1,1}$ is such that $|\{ u=0\}| \ne 0$ and $\int_\Omega \sgn(u)\,d\mathcal{H}^N\ne 0$, in particular, $s \ne \sign(u)$ in general. 
\end{remark}

We are ready to prove Theorem~\ref{theo p=1 intro}.

\begin{proof}[Proof of Theorem~\ref{theo p=1 intro}]
Claim $(i)$ follows from Lemma~\ref{att} and Theorem~\ref{main theo}. Claim $(ii)$ follows from Theorem~\ref{main theo 2}.
\end{proof}

\section{Asymptotic analysis}\label{sec convergence}

Recall the definitions of $\lambda_{p, q}$ in \eqref{lambda pq def}, \eqref{lambda 1 q}, and \eqref{lambda 1 1}.  In this section, we prove Theorem~\ref{theo conv intro}.

We begin by showing some uniform bounds. Recall the constant $c_q(\cdot)$ from Lemma~\ref{lemma c pw}. 
\begin{lemma}\label{lem bdd seq}
Using the notation of Theorem~\ref{theo conv intro}, let $u_n$ be a minimizer of $\lambda_n:=\lambda_{p_n, q_n}$ with $\norm{u_n}_{q_n}=1$. Then, up to a subsequence, there exists a positive constant $C$ such that
\begin{equation}\label{unif bounds} \norm{D u_n}_{p_n} \le C \quad \text{ if } p_n >1, \qquad |D u_n|(\Omega) \le C \quad \text{ if } p_n=1\qquad \text{ for all }n\in \mathbb N.
\end{equation}
\end{lemma}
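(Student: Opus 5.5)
Since $u_n$ is a minimizer with $\norm{u_n}_{q_n}=1$, we have $\norm{Du_n}_{p_n}^{p_n}=\lambda_{p_n,q_n}$ when $p_n>1$ and $|Du_n|(\Omega)=\lambda_{p_n,q_n}$ when $p_n=1$. The bound \eqref{unif bounds} therefore reduces to a uniform upper bound on $\lambda_{p_n,q_n}$. Because $\lambda_{p,q}$ is an infimum, it suffices to build one fixed, smooth, admissible competitor for every pair $(p_n,q_n)$ and to control its energy uniformly.

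The competitor is built from a single test function. Fix two disjoint balls $B_1,B_2\subset\Omega$ of equal radius and a fixed nonnegative $\varphi\in C_c^\infty(B_1)$, $\varphi\not\equiv 0$. Let $\varphi_2$ be the translate of $\varphi$ supported in $B_2$, and set
\[
w:=\varphi-\varphi_2 .
\]
By construction, $|w|$ has the same distribution on $\{w>0\}$ and on $\{w<0\}$. Hence $\int_\Omega |w|^{q-2}w\,d\mathcal H^N=0$ for every $q>1$. For $q=1$ we have $|\{w>0\}|=|\{w<0\}|$, so $w\in\mathfrak N(\Omega)$. Since admissibility is preserved under scaling, $w/\norm{w}_{q_n}$ is admissible for $\lambda_{p_n,q_n}$ for every $n$, whatever $q_n$ is. This gives
\[
\lambda_{p_n,q_n}\le \frac{\norm{Dw}_{p_n}^{p_n}}{\norm{w}_{q_n}^{p_n}}\quad(p_n>1),\qquad \lambda_{1,q_n}\le \frac{\norm{Dw}_{1}}{\norm{w}_{q_n}}\quad(p_n=1).
\]
Here we use that $\lambda_{1,q}$ may equivalently be computed over $W^{1,1}$ by Lemma~\ref{lambda1qlem} and \eqref{equivalent lambda 1}; in any case $w\in W^{1,1}\subset BV$.

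It remains to bound these right-hand sides uniformly in $n$. Since $(p_n,q_n)\to(p,q)$, the exponents stay in a compact range, say $p_n\le P<N$ and $q_n\le Q$. The numerator satisfies $\norm{Dw}_{p_n}^{p_n}=\int|Dw|^{p_n}\le |\Omega|\max\{1,\norm{Dw}_\infty\}^{P}$. For the denominator, $t\mapsto \norm{w}_t$ is continuous and positive in $t$, so $\norm{w}_{q_n}\ge m>0$ for all $n$, uniformly on the compact range of exponents. This gives $\lambda_{p_n,q_n}\le C$, hence $\norm{Du_n}_{p_n}=\lambda_{p_n,q_n}^{1/p_n}\le \max\{1,C\}$, and likewise in the case $p_n=1$.

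The only delicate point is verifying admissibility for all $q_n$ simultaneously, including the switch between the $q>1$ constraint and $\mathfrak N(\Omega)$ when $q_n=1$. The odd-symmetric competitor is chosen precisely to handle this. The phrase ``up to a subsequence'' is only needed when the sequence mixes the cases $p_n=1$ and $p_n>1$; we may pass to a subsequence lying entirely in one of the two regimes.
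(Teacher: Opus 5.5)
Your proof is correct, and it takes a genuinely different and simpler route than the paper.

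The paper fixes a smooth $\psi$ that is admissible at the limiting exponent $q$, namely $\int_\Omega|\psi|^{q-2}\psi\,d\mathcal H^N=0$, or $\psi\in\mathfrak N(\Omega)$ if $q=1$. It makes $\psi$ admissible for each $q_n$ by adding the constant $c_n:=c_{q_n}(\psi)$ from Lemma~\ref{lemma c pw}. It then has to show that $(c_n)$ is bounded and extract $c_n\to k$. Next it uses the minimality property of $c_q$, together with Lemma~\ref{c BV}(i) when $q=1$, to get $\norm{\psi+c_n}_{q_n}\to\norm{\psi}_q$. Along the way it splits into the cases $q>1$, $q=1$ with $q_n>1$, and $q_n\equiv1$.

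Your competitor $w=\varphi-\varphi_2$ has equimeasurable positive and negative parts, so it is admissible for every exponent at once. The correcting constants, the case distinctions and the subsequence extraction therefore all disappear. In fact your constant is uniform over both regimes $p_n=1$ and $p_n>1$, so you do not need a subsequence at all.

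You also correctly use $\norm{Du_n}_{p_n}^{p_n}=\lambda_{p_n,q_n}$ before taking the $p_n$-th root; the paper writes $\norm{Du_n}_{p_n}=\lambda_n$, a harmless slip. The one detail worth writing out is the uniform lower bound $\norm{w}_{q_n}^{p_n}\ge\min\{1,m\}^{P}$.

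What the paper's route buys is finer information. Its bound controls $\limsup_n\lambda_{p_n,q_n}$ by the Rayleigh quotient at $(p,q)$ of any smooth $\psi$ admissible at the limit. This is exactly the mechanism reused, with $v_j$ and $c_{q_n}(v_j)$, to prove the upper-semicontinuity half of \eqref{pnqnclaim} in Theorem~\ref{theo conv intro}. Your cruder bound gives none of that, but it is fully sufficient for the lemma as stated.
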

\begin{proof}
\underline{\textit{Step 1.} Case $q>1$:}  Let $\psi \in C^\infty(\overline \Omega)$ be such that $\int_\Omega |\psi|^{q-2}\psi\,d\mathcal{H}^N=0$; in particular, $c_q(\psi)=0$. We can assume that $q_n >1$ passing to a subsequence. 
Let $c_n:=c_{q_n} (\psi)$. Note that $c_n \le C$ independent of $n$. Indeed, if $c_n \to \pm \infty$ as $n \to \infty$, then
$0= \int_\Omega |\psi+ c_n|^{q_n-2} (\psi+ c_n)\,d\mathcal{H}^N \to \pm \infty,$ a contradiction. Thus, up to a subsequence, $c_n \to k$ for some $k$. Therefore, by Lemma~\ref{lemma c pw}, 
\begin{equation}\label{limit norm}
\norm{\psi}_q= \lim_{n\to\infty}\norm{\psi}_{q_n} \ge \lim_{n\to\infty}\norm{\psi + c_n}_{q_n} = \norm{\psi + k}_q \ge \norm{\psi}_q. 
\end{equation}

Then, if $p_n >1$, 
\[ \norm{D u_n}_{p_n}= \lambda_n \le \frac{\norm{D (\psi + c_n)}_{p_n}}{\norm{\psi + c_n}_{q_n}}  \to \frac{\norm{D \psi}_p }{\norm{\psi}_q}.  \]
Similar arguments show the bound in \eqref{unif bounds} if $p_n=1$. 

\medskip

\underline{\textit{Step 2.} Case $q=1$:} Let $\psi \in C^\infty(\overline \Omega) \cap \mathfrak {N}(\Omega)$ and assume that $p_n >1$ (the case $p_n=1$ is similar). Up to a subsequence, we can assume $q_n >1$ for any $n$, or $q_n \equiv 1$. Let us consider the first case, and let $c_n:=c_{q_n}(\psi)$. As in Step 1, we observe that $c_n$ is uniformly bounded, thus $c_n \to k$ up to a subsequence. Moreover, by Lemma~\ref{lemma c pw} and Lemma~\ref{c BV}, \eqref{limit norm} holds with $q=1$. 
Then, 
\[ \norm{D u_n}_{p_n} = \lambda_n \le \frac{\norm{D(\psi + c_n)}_{p_n}}{\norm{\psi + c_n}_{q_n}} \to \frac{\norm{D \psi}_p}{\norm{\psi}_1}. \]
The case $q_n \equiv 1$ is easier and it follows by taking $\psi$ as a competitor for $\lambda_n$. 
\end{proof}

We are now ready to prove Theorem~\ref{theo conv intro}.
\begin{proof}[Proof of Theorem~\ref{theo conv intro}]
We follow the proof of \cite[Proposition 2.22]{AST}.  Let $p_n, p \in [1, N)$, $q_n \in [1, p_nN/(N-p_n))$, and  $q \in [1, pN/(N-p))$ such that $(p_n,q_n)\to (p,q)$ as $n\to\infty$. Let $u_n$ be a minimizer for $\lambda_{p_n, q_n}$. 

\medskip

\underline{\textit{Step 1.} Case $p=1$. }     Let $p_n >1$ for any $n$, thus $u_n \in W^{1, p_n}(\Omega)$. The case $p_n \equiv 1$ can be treated similarly. By \eqref{unif bounds},
    \begin{equation} \label{estimate norm p_n} 
    \norm{D u_n}_1 \le |\Omega|^{\frac{p_n-1}{p_n}} \norm{D u_n}_{p_n} \le C;
    \end{equation}
    hence, $u_n$ is uniformly bounded in $BV(\Omega)$. By \eqref{comp:em}, passing to a subsequence, there exists $u \in BV(\Omega)$ such that $u_n \to u$ strongly in $L^r(\Omega)$ for any $r \in [1, N/(N-1))$.
    Since $p=1$, then $q < N/(N-1)$. Choosing $\bar t:=1/2(q+N/(N-1))$ we conclude that $u_n \to u$ strongly in $L^{\bar t}(\Omega)$, and there exists $h \in L^{\bar t}(\Omega)$ such that, up to a subsequence, $|u_n| \le h$ a.e. in $\Omega$ (see \cite[Lemma A.1]{willem}). Then, if $n$ is large enough so that $q_n < \bar t$, 
    by dominated convergence, 
        $1= \norm{u_n}_{q_n} \to \norm{u}_q$ and $ u \ne 0$. 
    Moreover,    recalling \eqref{estimate norm p_n}, 
    \[
    |Du|(\Omega) \le \liminf_{n\to\infty}\int_\Omega |D  u_n|\,d\mathcal{H}^N \le \liminf_{n\to\infty}\norm{D u_n}_{p_n}. 
    \]
We now distinguish two subcases.

\medskip

\underline{\textit{Step 1.1.} Case $q>1$:} In this case, we can assume without loss of generality that $q_n >1$ as well. By Lemma~\ref{lemma c pw}  and since $\norm{u_n}_{q_n} \to \norm{u}_q$, 
\begin{align}\label{pnqneq10}
\norm{u + c_q(u)}_q = \lim_{n\to\infty}\norm{u_n + c_q(u)}_{q_n} \ge \lim_{n\to\infty}\norm{u_n}_{q_n} = \norm{u}_q \ge \norm{u + c_q(u)}_q. 
\end{align}
 Then,
 \begin{align}\label{pnqneq1}
 \lambda_{1, q} \le \frac{|D(u+c_q(u))|(\Omega)}{\norm{u+ c_q(u)}_q} = \frac{|Du|(\Omega)}{\norm{u+ c_q(u)}_q} \le \liminf_{n\to\infty}\frac{\norm{D u_n}_{p_n}}{\norm{u_n}_{q_n}}=  \liminf_{n\to\infty}\lambda_{p_n, q_n}. 
  \end{align} 

By \eqref{equivalent lambda}, for any fixed $\varepsilon>0$ there exists $v \in W^{1,1}(\Omega) \setminus \{0\}$ such that $\int_\Omega |v|^{q-2}v\,d\mathcal{H}^N=0$ and 
\[ \frac{\int_\Omega |D v|\,d\mathcal{H}^N}{\norm{v}_q} \le \lambda_{1, q} + \varepsilon. \]
We can also find a sequence $v_j \in C^1(\overline \Omega) \subset W^{1, p_n}(\Omega)$ such that $D v_j \to D v $ in $L^1(\Omega)$ and $v_j \to v$ in $L^q(\Omega)$.
Moreover, for any fixed $j$, $c_{q_n}(v_j)$ is bounded (see the proof of Lemma~\ref{lem bdd seq}) and converging up to a subsequence to some value $k$ as $n \to \infty$, thus arguing as in \eqref{limit norm},
\[ \lim_{n\to\infty}\norm{v_j + c_{q_n}(v_j)}_{q_n} = \norm{v_j + k}_q. \]
Moreover, 
\[ \norm{v_j + c_q(v_j)}_q= \lim_{n\to\infty}\norm{v_j + c_q(v_j)}_{q_n} \ge \lim_{n\to\infty}\norm{v_j + c_{q_n}(v_j)}_{q_n} = \norm{v_j + k}_q \ge \norm{v_j + c_q(v_j)}_q.  \]
Hence, by dominated convergence and using the continuity of $c_q(\cdot)$, 
\[ \lambda_{1, q} + \varepsilon \ge \frac{\int_\Omega |D v|\,d\mathcal{H}^N}{\norm{v}_q} = \lim_{j\to\infty} \frac{\int_\Omega |D v_j|\,d\mathcal{H}^N}{\norm{ v_j + c_q(v_j)}_q} = \lim_{j\to\infty} \lim_{n\to\infty}\frac{\norm{D v_j}_{p_n}}{\norm{ v_j + c_{q_n}(v_j)}_{q_n}} \ge \lim_{n\to\infty}\lambda_{p_n, q_n}.  \]

Letting $\varepsilon \to 0$ we obtain, together with \eqref{pnqneq1}, that \eqref{pnqnclaim} holds, namely, that $\lambda_{p_n,q_n} \to \lambda_{1,q}$ as $(p_n, q_n) \to (1, q)$. Now, by \eqref{pnqneq10} and \eqref{pnqneq1}, we obtain that $\lim_{n\to\infty}\norm{D u_n}_{p_n}=|Du|(\Omega)$ and $\lim_{n\to\infty}\norm{u_n}_{q_n} = \norm{u}_q$. Now, \eqref{pnqnclaim2} follows by \eqref{comp:em}.

\medskip

\underline{\textit{Step 1.2.} Case $q=1$:} We can assume $q_n >1$, the case $q_n \equiv 1$ is similar. As in Step 1.1, up to substituting $c_q(u)$ with any constant $c$ such that $u+c \in \mathfrak {N}(\Omega)$ (see Lemma~\ref{c BV}), we have that
\[ \lambda_{1, 1} \le \liminf_{n\to\infty}\lambda_{p_n, q_n}. \]

By \eqref{equivalent lambda 1}, there is $v \in W^{1,1}(\Omega) \setminus \{ 0 \}$ such that $v \in \mathfrak {N}(\Omega)$ and
\[ \frac{\int_\Omega |D v|\,d\mathcal{H}^N}{\norm{v}_1} \le \lambda_{1,1} + \varepsilon. \]
We find a sequence $v_j \in C^1(\overline \Omega) \subset W^{1, p_n}(\Omega)$ such that $D v_j \to D v $ in $L^1(\Omega)$ and $v_j \to v$ in $L^1(\Omega)$.  Hence, again by similar arguments as for \eqref{limit norm}, if $c_{q_n} \to k$, 
\[ \norm{v_j + \underline c(v_j)}_1 \le \norm{v_j + k}_1 = \lim_{n \to\infty} \norm{v_j + c_{q_n}(v_j)}_{q_n}. \]

Recalling Remark~\ref{rmk equality c} and using that $\liminf\limits_{j\to\infty} \underline c (v_j) \ge \underline c(v)$ (by Lemma~\ref{c BV}),
\[\lambda_{1,1} + \varepsilon \ge \frac{\int_\Omega |D v|\,d\mathcal{H}^N}{\norm{v}_1} = \frac{\int_\Omega |D v|\,d\mathcal{H}^N}{\norm{v+ \underline c(v)}_1} \ge  \lim_{j\to\infty} \frac{\int_\Omega |D v_j|\,d\mathcal{H}^N}{\norm{v_j + \underline c(v_j)}_1} \ge \lim_{j\to\infty} \lim_{n\to\infty}\frac{\norm{D v_j}_{p_n}}{ \norm{v_j + c_{q_n}(v_j)}_{q_n}} \ge \lambda_{p_n, q_n}, \]
and we conclude as in Step 1.1.
\medskip

\underline{\textit{Step 2.} Case $p>1$:} For any $\varepsilon>0$,
\begin{equation}\label{bound p 1} \norm{Du_n}_{p - \varepsilon} \le |\Omega|^{\frac{p_n - p +\varepsilon}{p_n}} \norm{Du_n}_{p_n} \le C \quad \text{ with $n$ large enough. } \end{equation}
Thus there exists $u_\varepsilon$ such that $u_n \rightharpoonup u_\varepsilon$ in $W^{1, p- \varepsilon}(\Omega)$. Notice that since $u_n \to u_\varepsilon$ a.e. for every $\varepsilon$, then actually $u_\varepsilon=u$ is independent of $\varepsilon$. 
This allows one to pass to the limit as $\varepsilon \to 0$ in \eqref{bound p 1}, and get
\[ \norm{Du_n}_p \le |\Omega|^{\frac{p_n- p}{p_n} }\norm{Du_n}_{p_n} \le C \quad \text{ for $n$ large enough } \]
and
\[ \norm{Du}_p \le \liminf_{n\to\infty} \norm{Du_n}_p \le \liminf_{n\to\infty}\norm{Du_n}_{p_n}.  \]
The proof now proceeds exactly as in the case $p=1$ (Steps 1.1 and 1.2), up to substituting the total variation with the $L^p$ norm of the gradient.  We omit the details.
\end{proof}

\section{Geometric characterization and isoperimetric structure}\label{geom char}

As before, let $\Omega\subset \mathbb R^N$ be a bounded Lipschitz domain. Recall the condition \eqref{Nomega} and the equivalent condition \eqref{rem} stated in the introduction.  As defined in the introduction, we have that $P(\omega; \Omega) := |D \chi_\omega|(\Omega)$ for $\omega \subset \Omega$ and
\begin{align*}
\widetilde h_{\mathcal N}(\Omega)&:= \inf\left\{\frac{P(\omega_1; \Omega) + P(\omega_2; \Omega)}{|\omega_1 \cup \omega_2|}\::\: \omega_1,\omega_2\subset \Omega\text{ satisfy \eqref{Nomega}}\right\},\\
h_{\mathcal N}(\Omega)&:= \inf \left\{ \frac{P(\omega; \Omega)}{|\omega|}:\, \omega\subset \Omega, \ 0 <|\omega| \le \frac{|\Omega|}{2} \right\}.
\end{align*}

The following result shows that $\widetilde h_{\mathcal N}(\Omega)$ and $h_{\mathcal N}(\Omega)$ are equal.
\begin{lemma}\label{lemma charac2}
 If $(D^+,D^-)$ is a minimizer for $\widetilde h_{\mathcal N}(\Omega)$ such that $D^+, D^- \ne \emptyset$, then  $(D^+, \emptyset)$ and $(D^-, \emptyset)$ are also minimizers for $\widetilde h_{\mathcal N}(\Omega)$ and
  \begin{equation}\label{charact 2}
  \widetilde h_{\mathcal N}(\Omega) = 
  h_{\mathcal N}(\Omega)
  =\frac{P(D^\pm; \Omega)}{|D^\pm|}.
 \end{equation}
\end{lemma}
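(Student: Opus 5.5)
The argument is elementary: compare the two problems directly, using the mediant inequality for ratios.

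\emph{Step 1: $\widetilde h_{\mathcal N}(\Omega)\le h_{\mathcal N}(\Omega)$.} Any $\omega\subset\Omega$ with $0<|\omega|\le|\Omega|/2$ gives an admissible pair $(\omega,\emptyset)$ for \eqref{Nomega}, since $P(\emptyset;\Omega)=0$. Its ratio equals $P(\omega;\Omega)/|\omega|$, and taking the infimum gives the inequality.

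\emph{Step 2: $\widetilde h_{\mathcal N}(\Omega)\ge h_{\mathcal N}(\Omega)$.} Let $(\omega_1,\omega_2)$ satisfy \eqref{Nomega}. If one set is null, say $|\omega_2|=0$, then $\omega_2$ is equivalent to $\emptyset$ in $BV$ and the ratio is $P(\omega_1;\Omega)/|\omega_1|\ge h_{\mathcal N}(\Omega)$, because $0<|\omega_1|\le|\Omega|/2$. If both have positive measure, each is admissible for $h_{\mathcal N}(\Omega)$, so $P(\omega_i;\Omega)\ge h_{\mathcal N}(\Omega)|\omega_i|$. Since the sets are disjoint, $|\omega_1\cup\omega_2|=|\omega_1|+|\omega_2|$, and adding the two inequalities yields
\[
\frac{P(\omega_1;\Omega)+P(\omega_2;\Omega)}{|\omega_1\cup\omega_2|}\ge h_{\mathcal N}(\Omega).
\]
Hence $\widetilde h_{\mathcal N}(\Omega)=h_{\mathcal N}(\Omega)$.

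\emph{Step 3: the minimizer $(D^+,D^-)$.} One point needs care: ``$D^\pm\neq\emptyset$'' should be read as having positive measure, since sets are identified up to null sets. If, say, $|D^-|=0$, then $P(D^-;\Omega)=0$ and the pair reduces to $(D^+,\emptyset)$, which is the trivial case. So assume $|D^\pm|>0$ and write $a_\pm=P(D^\pm;\Omega)$, $b_\pm=|D^\pm|$, $h=h_{\mathcal N}(\Omega)$. By Step 2, $a_\pm\ge h b_\pm$, while minimality together with Steps 1--2 gives $a_++a_-=h(b_++b_-)$. Subtracting, $(a_+-hb_+)+(a_--hb_-)=0$ is a sum of two nonnegative terms, so both vanish: $a_\pm=hb_\pm$. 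This proves \eqref{charact 2}.

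Finally, each $(D^\pm,\emptyset)$ is admissible, because $|D^\pm|\le|\Omega|/2$, and its ratio $a_\pm/b_\pm=h=\widetilde h_{\mathcal N}(\Omega)$, so it is also a minimizer. The argument presents no real obstacle; the only point needing attention is the null-set convention.
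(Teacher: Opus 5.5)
Your proof is correct and is essentially the paper's argument. The paper gets both $\widetilde h_{\mathcal N}(\Omega)=h_{\mathcal N}(\Omega)$ and the equality of the two ratios from the mediant inequality $\min\{a/c,b/d\}\le (a+b)/(c+d)\le\max\{a/c,b/d\}$ and its equality case $ad=bc$; your step showing that a sum of two nonnegative excesses $a_\pm-hb_\pm$ vanishes is the additive form of the same thing. Your remark that $D^\pm\neq\emptyset$ must be read as $|D^\pm|>0$ (otherwise $P(D^\pm;\Omega)/|D^\pm|$ is $0/0$) is a useful clarification that the paper leaves implicit.
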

\begin{proof}
    Note that, 
    \begin{equation}\label{algeb ineq} 
    \min\left\{ \frac{a}{c}, \frac b d \right\} \le \frac{a+b}{c+d} \le \max \left\{ \frac{a}{c}, \frac b d \right\}\qquad \text{for $a, b, c, d > 0$} 
    \end{equation}
    and equality holds if and only if $ad=bc$.
   Moreover, for any $\omega_1, \omega_2$ satisfying \eqref{Nomega} one has $|\omega_1|, |\omega_2| \le |\Omega|/2$. 
   This gives the equality \eqref{charact 2}. 
   Furthermore, if $(D^+, D^-)$ is a minimizer for $\widetilde h_{\mathcal N}(\Omega)$ such that $D^+, D^- \ne \emptyset$, then $(D^+, \emptyset)$ and $(D^-, \emptyset)$ satisfy \eqref{Nomega} and equalities hold in \eqref{algeb ineq}, whence
   \[ \widetilde h_{\mathcal N}(\Omega)= \frac{P(D^+; \Omega)+ P(D^-; \Omega)}{|D^+ \cup D^-|} = \frac{P(D^+; \Omega)}{|D^+|} = \frac{P(D^-; \Omega)}{|D^-|}. \qedhere\]
\end{proof}
The proof of the next result is based on the coarea formula, in the spirit of \cite{KF,L97,BF25}.
\begin{theo}\label{thm cheeger}
    \begin{itemize}
    \item[(i)]$\widetilde h_{\mathcal N}(\Omega)$ and $h_{\mathcal N}(\Omega)$ are attained and $\lambda_{1, 1}= \widetilde h_{\mathcal N}(\Omega)=h_{\mathcal N}(\Omega).$
    \item[(ii)] If $u$ is a minimizer for $\lambda_{1,1}$ and
    \begin{align}\label{omegats}
    \omega_t^\pm:=\{ x \in \Omega: \, u^\pm (x) >t\} \qquad  \text{ for } t \in \R,         
    \end{align}
    then, for almost every $t \in (0, \norm{u}_\infty)$, 
    $(\omega_t^+, \omega_t^-)$ is a minimizer for $\widetilde h_{\mathcal N}(\Omega)$ and $\omega_t^\pm$ is a minimizer for $h_{\mathcal N}(\Omega)$ whenever $\omega_t^\pm\ne \emptyset$. 
    \item[(iii)] If $(D^+, D^-)$ is a minimizer for $\widetilde h_{\mathcal N}(\Omega)$ and $v:=\alpha \mathds{1}_{D^+}+\beta \mathds{1}_{D^-}$ for $\alpha, \beta \in \R$ such that $\alpha, \beta \ne 0$ and $\alpha\beta \le 0$, then $v$ is a minimizer for $\lambda_{1,1}$.
    \end{itemize}
\end{theo}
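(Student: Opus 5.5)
The plan is to prove two inequalities between $\lambda_{1,1}$ and $\widetilde h_{\mathcal N}(\Omega)$, and then read off (ii) and (iii) from the equality cases; Lemma~\ref{lemma charac2} will then give $\widetilde h_{\mathcal N}=h_{\mathcal N}$.

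\emph{Upper bound $\lambda_{1,1}\le \widetilde h_{\mathcal N}(\Omega)$.} Take $\omega_1,\omega_2$ satisfying \eqref{Nomega} and set $v:=\alpha\chi_{\omega_1}-\beta\chi_{\omega_2}$ with $\alpha,\beta\ge 0$ not both zero. By the equivalent form \eqref{rem}, $v\in\mathfrak N(\Omega)$ (if one coefficient vanishes, the constraint only improves, since $|\omega_i|\le|\Omega|/2$). Next:
\begin{itemize}
\item subadditivity of the total variation gives $|Dv|(\Omega)\le \alpha P(\omega_1;\Omega)+\beta P(\omega_2;\Omega)$, and clearly $\|v\|_1=\alpha|\omega_1|+\beta|\omega_2|$;
\item taking $\alpha=\beta=1$ gives $\lambda_{1,1}\le \widetilde h_{\mathcal N}$, and the same computation with general $\alpha,\beta$ will serve for (iii).
\end{itemize}

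\emph{Lower bound $\lambda_{1,1}\ge\widetilde h_{\mathcal N}(\Omega)$.} Let $u$ be a minimizer from Proposition~\ref{lambda 11 att}. I will apply the coarea formula separately to $u^+$ and $u^-$. Since $|Du|=|Du^+|+|Du^-|$ as measures (they are concentrated on disjoint sets up to the jump part, where the contributions add), we get
\[
|Du|(\Omega)=\int_0^\infty \big(P(\omega_t^+;\Omega)+P(\omega_t^-;\Omega)\big)\,dt,\qquad \|u\|_1=\int_0^\infty |\omega_t^+\cup\omega_t^-|\,dt .
\]
For every $t>0$, $(\omega_t^+,\omega_t^-)$ satisfies \eqref{Nomega} whenever it is nonempty. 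This is exactly the estimate \eqref{dis t(u)}, namely $|\{u>t\}|\le|\{u\le t\}|$, so $|\omega_t^+|\le|\Omega|/2$, and symmetrically for $\omega_t^-$. Hence the integrand satisfies $P(\omega_t^+;\Omega)+P(\omega_t^-;\Omega)\ge \widetilde h_{\mathcal N}\,|\omega_t^+\cup\omega_t^-|$. Integrating gives $\lambda_{1,1}\ge\widetilde h_{\mathcal N}$.

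\emph{Attainment and (ii).} Combining the two bounds, equality holds in the integrated inequality, so the pointwise inequality is an equality for a.e.\ $t$ with $\omega_t^+\cup\omega_t^-\ne\emptyset$, i.e.\ a.e.\ $t\in(0,\|u\|_\infty)$. Thus $(\omega_t^+,\omega_t^-)$ minimizes $\widetilde h_{\mathcal N}$, and in particular $\widetilde h_{\mathcal N}$ is attained. To pass to $h_{\mathcal N}$:
\begin{itemize}
\item if both sets are nonempty, Lemma~\ref{lemma charac2} shows each is a minimizer of $h_{\mathcal N}$;
\item if only one is nonempty, it is directly an admissible competitor for $h_{\mathcal N}$ with ratio $\widetilde h_{\mathcal N}$;
\item in general $h_{\mathcal N}\ge\widetilde h_{\mathcal N}$ (use $(\omega,\emptyset)$) and $h_{\mathcal N}\le \widetilde h_{\mathcal N}$ by \eqref{algeb ineq}, so $h_{\mathcal N}=\widetilde h_{\mathcal N}=\lambda_{1,1}$ and both are attained.
\end{itemize}

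\emph{Part (iii).} For a minimizing pair $(D^+,D^-)$ and $v=\alpha\chi_{D^+}+\beta\chi_{D^-}$ with $\alpha\beta\le 0$, the function $v$ has opposite signs on the two sets and lies in $\mathfrak N(\Omega)$. Its ratio is bounded by
\[
\frac{|\alpha|P(D^+;\Omega)+|\beta|P(D^-;\Omega)}{|\alpha||D^+|+|\beta||D^-|}.
\]
By Lemma~\ref{lemma charac2}, each of $D^\pm$ (when nonempty) has ratio $h_{\mathcal N}$, so this quotient equals $h_{\mathcal N}=\lambda_{1,1}$. (The statement requires $\alpha,\beta\ne0$; if one of $D^\pm$ is empty, the term simply drops.)

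\emph{Main obstacle.} The delicate step is the coarea splitting $|Du|(\Omega)=|Du^+|(\Omega)+|Du^-|(\Omega)$ for BV functions that jump across $0$. I plan to handle it by applying the coarea formula to $u$ itself over $t\in\mathbb R$: for $t<0$ one has $P(\{u>t\};\Omega)=P(\{u\le t\};\Omega)=P(\omega_{-t}^-;\Omega)$. This avoids the measure-splitting argument entirely. Membership of $(\omega_t^+,\omega_t^-)$ in \eqref{Nomega} for all $t>0$ is then just \eqref{dis t(u)}.
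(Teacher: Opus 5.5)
Your proposal is correct and has the same architecture as the paper's proof: coarea for $\lambda_{1,1}\ge\widetilde h_{\mathcal N}(\Omega)$, a.e.\ equality in the level-set integral for (ii), and Lemma~\ref{lemma charac2} with \eqref{algeb ineq} for $h_{\mathcal N}=\widetilde h_{\mathcal N}$ and for (iii). Two steps are carried out differently.

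\textbf{Upper bound.} You test $\lambda_{1,1}$ directly with $\chi_{\omega_1}-\chi_{\omega_2}$ for every admissible pair. The paper instead takes a minimizing sequence of pairs, extracts an $L^1$-limit, and checks via Fatou that the limit stays in $\mathfrak N(\Omega)$. Since each $v_n$ there is already admissible, that compactness step is unnecessary, so your version is simpler.

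\textbf{Lower bound.} The paper writes $|Du|(\Omega)=|D|u||(\Omega)=\int_0^\infty P(\{|u|>t\};\Omega)\,dt$ and then splits $P(\{|u|>t\};\Omega)$ as $P(\omega_t^+;\Omega)+P(\omega_t^-;\Omega)$. Both identities fail when $u$ jumps across $0$. For example, take $u=\chi_A-\chi_{\Omega\setminus A}$ with $|A|=|\Omega|/2$ and $P(A;\Omega)>0$: then $|D|u||(\Omega)=0$ while $|Du|(\Omega)=2P(A;\Omega)$. The two errors cancel, so the resulting formula \eqref{absu2} is nonetheless true.

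Your route, applying coarea to $u$ over all of $\R$ and using $P(\{u>t\};\Omega)=P(\{u\le t\};\Omega)=P(\omega^-_{-t};\Omega)$ for $t<0$, proves \eqref{absu2} correctly. The last equality holds only for a.e.\ $t<0$: the sets $\{u\le t\}$ and $\omega^-_{-t}$ differ by $\{u=t\}$, which is null for all but countably many $t$. That is all you need under the integral.

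Your check of \eqref{Nomega} for the level sets via \eqref{dis t(u)} is equivalent to the paper's direct verification of \eqref{rem}.
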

\begin{proof}
Let $u$ be a minimizer for $\lambda_{1,1}.$

\medskip

\underline{\textit{Step 1.}} Let us show that $\lambda_{1,1} \ge \widetilde h_{\mathcal N}(\Omega)$.  By the coarea formula \cite[Theorem~3.40]{AFP}, 
\begin{align}\label{absuco}
|Du|(\Omega)
=|D|u||(\Omega)
= \int_{-\infty}^{+\infty} P(\{|u|>t\}; \Omega) \, dt
= \int_0^{+\infty} P(\{|u|>t\}; \Omega) \, dt,
\end{align}
where we used that $P\big(\{|u|>t\};\Omega\big)=P(\Omega;\Omega)=0$ for $t<0.$ Then, using \eqref{omegats} and \eqref{absuco},
\begin{align} \label{absu2}
|Du|(\Omega) &= \int_{0}^{+\infty} P(\omega_t^+; \Omega)\,dt + \int_{0}^{+\infty} P(\omega_t^-; \Omega)\, dt=\int_0^{+\infty} \frac{P(\omega_t^+; \Omega) + P(\omega_t^-; \Omega)}{|\omega_t^+ \cup \omega_t^-|} |\omega_t^+ \cup \omega_t^-|\,dt. 
\end{align}
Note that $|\omega_t^+|= |\{ u >0 \}|- |\{0 < u \le t\}|$, $|\omega_t^-|=|\{ u<0\}| - |\{ -t \le  u <0\}|,$ and $|\Omega \setminus (\omega_t^+ \cup \omega_t^-)|= |\{ |u| \le t\}|.$ Hence, 
\begin{align*} ||\omega_t^+|-|\omega_t^-|| &= \big||\{ u>0\}|-|\{0 < u \le t \}| - |\{u<0\}| + |\{-t \le u <0\}|\big|\\
& \le |\{ u =0\}| + |\{0 < u \le t \}| +  |\{-t \le u <0\}|
= |\{ |u| \le t\}|= |\Omega \setminus (\omega_t^+ \cup \omega_t^-)|.
\end{align*}
This implies that $(\omega_t^+, \omega_t^-)$ satisfy \eqref{Nomega}. Then, by \eqref{absu2} and the definition of $\widetilde h_{\mathcal N}(\Omega)$,
\begin{align*}
|Du|(\Omega)&\ge \widetilde h_{\mathcal N}(\Omega) \int_{0}^{+\infty} |\omega_t^+ \cup \omega_t^-|\,dt= \widetilde h_{\mathcal N}(\Omega) \int_\Omega |u|\,d\mathcal{H}^N;
\end{align*}
namely, $\lambda_{1,1} \ge \widetilde h_{\mathcal N}(\Omega)$.

\medskip

\underline{\textit{Step 2.}} Let us show that $\lambda_{1,1 } \le \widetilde h_{\mathcal N}(\Omega)$: Let $D_n^+, D_n^-$ be a minimizing sequence for $\widetilde h_{\mathcal N}(\Omega)$, namely, $D_n^+, D_n^-$ are disjoint sets such that 
\begin{equation}\label{cond D} 
||D_n^+|-|D_n^-|| \le |\Omega \setminus (D_n^+ \cup D_n^-)| 
\quad \text{ and }\quad 
\frac{P(D_n^+, \Omega)+P(D_n^-, \Omega)}{|D_n^+ \cup D_n^-|} \to \widetilde h_{\mathcal N}(\Omega).
\end{equation}
Let $v_n:=\frac{\mathds{1}_{D_n^+} - \mathds{1}_{D_n^-}}{|D_n^+ \cup D_n^-|},$ then $|Dv_n|(\Omega)= \frac{P(D_n^-; \Omega) +P(D_n^+; \Omega)}{|D_n^+ \cup D_n^-|}.$ Thus $v_n$ is bounded in $BV(\Omega)$ and there exists $v \in BV(\Omega)$ such that, passing to a subsequence, $v_n \to v$ in $L^1(\Omega)$. Therefore,
\[
 |Dv|(\Omega) \le \lim_{n\to\infty}|D v_n|(\Omega) = \lim_{n\to\infty}\frac{P(D_n^+;\Omega) + P(D_n^-; \Omega)}{|D_n^+ \cup D_n^-|},
\qquad  
\int_\Omega |v|\,d\mathcal{H}^N = \lim_{n\to\infty}\int_\Omega |v_n|\,d\mathcal{H}^N=1,
\]
and $v_n \in \mathfrak{N}(\Omega)$, by \eqref{cond D}. Then, by \eqref{charact M} and Fatou's Lemma, 
\[
\int_\Omega \sgn_-(v)\,d\mathcal{H}^N \le \liminf_{n\to\infty}\int_\Omega \sgn_-(v_n)\,d\mathcal{H}^N \le 0 \le \limsup_n\int_\Omega \sgn_+(v_n)\,d\mathcal{H}^N \le \int_\Omega \sgn_+(v)\,d\mathcal{H}^N. 
\]
Therefore, $v \in \mathfrak{N}(\Omega)$ and $\lambda_{1,1 } \le |Dv|(\Omega) \le  \lim_{n\to\infty}|Dv_n|(\Omega)=\widetilde h_{\mathcal N}(\Omega),$ as claimed. 

\medskip

\underline{\textit{Step 3.}} Conclusion: By the previous steps and Lemma~\ref{lemma charac2}, $\lambda_{1,1}=\widetilde h_{\mathcal N}(\Omega)=h_{\mathcal N}(\Omega)$. The fact that $(\omega_t^+, \omega_t^-)$ is a minimizer for $\widetilde h_{\mathcal N}(\Omega)$ for almost every $t \in (0, \norm{u}_\infty)$ can be deduced from \eqref{absu2}; indeed, if this does not hold for all $t\in {\mathcal J}\subset (0,\|u\|_\infty)$ and ${\mathcal J}$ has positive measure, then 
\begin{align*}
\widetilde h_{\mathcal N}(\Omega)=\lambda_{1,1}=|Du|(\Omega) =\int_0^{+\infty} \frac{P(\omega_t^+; \Omega) + P(\omega_t^-; \Omega)}{|\omega_t^+ \cup \omega_t^-|} |\omega_t^+ \cup \omega_t^-|\,dt
>
\widetilde h_{\mathcal N}(\Omega)\int_\Omega|u|\,d\mathcal{H}^N=\widetilde h_{\mathcal N}(\Omega), 
\end{align*}
a contradiction. Hence $(ii)$ holds and $\widetilde h_{\mathcal N}(\Omega)$ is attained as a consequence of  Proposition~\ref{lambda 11 att}. Finally, to see $(iii)$, let $(D^+, D^-)$ be a minimizer of $\widetilde h_{\mathcal N}(\Omega)$ and let $v:=\alpha \mathds{1}_{D^+}+\beta \mathds{1}_{D^-}$ for some $\alpha, \beta \in \R$ such that $\alpha, \beta \ne 0$ and $\alpha\beta \le 0$. Since $\alpha\beta \le 0$ and $(D^+, D^-)$ satisfies \eqref{Nomega}, we have that $v \in \mathfrak {N}(\Omega)$.
We may assume that $D^+$ and $D^-$ are nonempty (otherwise the claim follows from the definition of $\widetilde h_{\mathcal N}(\Omega)$).  Then, by \eqref{algeb ineq},
\begin{align}
\min&\left\{\frac{P(D^+;\Omega)}{|D^+|},\frac{P(D^-;\Omega)}{|D^-|} \right\}
\leq 
\frac{P(D^+;\Omega)+P(D^-;\Omega)}{|D^+|+|D^-|}=
\widetilde h_{\mathcal N}(\Omega)\notag\\
&=\lambda_{1,1}
\leq \frac{|D v|(\Omega)}{\int_{\Omega}|v|\,d\mathcal{H}^N}=\frac{|\alpha|P(D^+;\Omega)+|\beta|P(D^-;\Omega)}{|\alpha||D^+|+|\beta||D^-|} 
\leq \max\left\{\frac{P(D^+;\Omega)}{|D^+|},\frac{P(D^-;\Omega)}{|D^-|} \right\}.\label{iiiclaim}
\end{align}
Then, by Lemma~\ref{lemma charac2}, $\frac{P(D^\pm;\Omega)}{|D^\pm|}=\widetilde h_{\mathcal N}(\Omega)=h_{\mathcal N}(\Omega)$. Hence, we have equalities in \eqref{iiiclaim} and $v$ is a minimizer for $\lambda_{1,1},$ as claimed in $(iii)$. 
\end{proof}

We are ready to show Theorem~\ref{theo charact lambda intro}.
\begin{proof}[Proof of Theorem~\ref{theo charact lambda intro}]
It follows from Lemma~\ref{lemma charac2} and Theorem~\ref{thm cheeger}.
\end{proof}

\subsection{Regularity of minimizers: a geometric measure theory perspective}\label{reg:app}

We first introduce some notation. By the Riesz representation theorem, if $u \in BV(\Omega)$ then there exists a Radon measure $D u=(D_1 u , \dots, D_N u)$ such that
\begin{align*}
    \int_\Omega u \frac{\partial \varphi}{\partial x_i} = -\int_\Omega \varphi \, dD_iu \quad \text{ for all } \varphi \in C_c^\infty(\Omega) \text{ and } i=1, \dots, N,
\end{align*}
see \cite[Proposition 3.6]{AFP}.  The vector-valued measure $Du$ may be written as $Du = \sigma |Du|$ for a $|Du|$-measurable function $\sigma:\Omega \rightarrow \mathbb{R}^N$  satisfying $|\sigma|=1$ $|Du|$-almost everywhere, see \cite[Theorem~5.1]{evansgariepy}.  Moreover, $D_i u = \sigma_i |Du|$.  In the case that $u = \chi_E \in BV(\Omega)$ for some $E \subset \mathbb{R}^N$, we say that $E$ is a set of finite perimeter in $\Omega$. We use the notation $D\chi_E = \sigma_E|D\chi_E|$, where now $\sigma_E$ may be thought of as the measure-theoretic outer normal to $E$.  The measure $|D\chi_E|$ is carried by the \textit{reduced boundary} of $E$, written $\partial^*E$, in the sense that $|D\chi_{E}|(\Omega \setminus \partial^*E) = 0$ (see the Remark after \cite[Definition 5.4]{evansgariepy}), where 
\begin{align*}
    \partial^*E:= \left\{x \in \Omega\::\: |D\chi_E|(B_r(x) \cap \Omega)>0\text{ for all }r>0, \lim_{r\rightarrow 0^+}\frac{D\chi_E(B_r(x))}{|D\chi_E|(B_r(x))}= \sigma_E(x), \text{ and } |\sigma_E(x)|=1\right\}.
\end{align*}
That is, the reduced boundary $\partial^*E$ represents, in some sense, Lebesgue points of $\sigma_E$ with respect to the measure $|D\chi_E|$.  Furthermore, for any open $U\subset \Omega$, $|D\chi_E|(U) = \mathcal{H}^{N-1}(U \cap \partial^*E)$, see \cite[Theorem~5.15]{evansgariepy}.

For any set $E \subset \mathbb{R}^n$, we define the measure-theoretic boundary $\partial^ME$, also called the \emph{essential boundary}, as follows.
\begin{align*}
    \partial^ME:= \left\{x \in \mathbb{R}^N: \lim_{r \rightarrow 0^+}\frac{|E \cap B_r(x)|}{|B_1(0)| r^N} \not= 0,1\right\}. 
\end{align*}
That is, $\partial^ME$ includes all the points where the density limit does not exist and all the points where the limit exists, but it is not in $\{0,1\}$. For sets of locally finite perimeter, $\mathcal{H}^{N-1}(\partial^ME \setminus \partial^*E) = 0$ by Federer's theorem (see \cite[Theorem 16.2]{Maggi12}).

Recall that, if $\Omega\subset \mathbb{R}^N$ is a Lipschitz domain, then, by Rademacher's Theorem~\cite[Theorem~3.2]{evansgariepy}, the boundary $\partial \Omega$ is differentiable $\mathcal{H}^{N-1}$-almost everywhere.  For $x \in \partial \Omega$ such that $\partial \Omega$ is differentiable, we use the notation $T_{x}\partial \Omega$ to denote the affine linear hyperplane tangent to $\partial \Omega$ at $x\in \partial\Omega \cap U$.

In the following theorem~we show that a minimizer of $h_{\mathcal N}(\Omega)$ has an analytic, constant-mean-curvature boundary outside a negligible singular set, which is absent for $N \leq 7$ and of Hausdorff dimension (denoted by $\dim_{\mathcal{H}}$) at most $N-8$ otherwise. 
     
\begin{theo}\label{thm reg} 
    Let $N\geq 2$ and let $A\subset \Omega$ be a minimizer for
    \begin{align*}
    h_{\mathcal N}(\Omega)&:= \inf \left\{ \frac{P(\omega; \Omega)}{|\omega|}:\, \omega\subset \Omega, \ 0 <|\omega| \le \frac{|\Omega|}{2} \right\}= \frac{P(A; \Omega)}{|A|}. 
    \end{align*}
    Then, the following statements hold.
    \begin{enumerate}
        \item The reduced boundary $\partial^*A$ is locally the graph of an analytic function $u:\mathbb{R}^{N-1}\rightarrow \mathbb{R}$ which satisfies the constant mean curvature equation. In particular, for every $x \in\partial^*A$ there exists a neighborhood $0<r_x$ and a function $u:\mathbb{R}^{N-1}\rightarrow \mathbb{R}$ such that, after translating $x \mapsto 0$ and rotating so that our outer unit normal satisfies $\sigma_A(0) = -\vec{e}_N$, then $\partial A \cap B_{r_x}^{N-1}(0) \times (-r_x, r_x) = \text{graph}_{B_{r_x}(0)}(u)$ and $u$ satisfies $u(0)=0$, $\nabla u(0)=0$, and 
            \begin{align}\label{mce1}
                \operatorname{div}\left(\frac{\nabla u}{\sqrt{1+|\nabla u|^2}}\right) &= h_{\mathcal N}(\Omega) \quad \text{in }B_{r_x}^{N-1}(0),\qquad \text{if $|A|<\frac{|\Omega|}{2}$},
\end{align}
and
\begin{align*}               
        \left|\operatorname{div}\left(\frac{\nabla u}{\sqrt{1+|\nabla u|^2}}\right)\right| &\le h_{\mathcal N}(\Omega) \quad \text{in }B_{r_x}^{N-1}(0),\qquad \text{if $|A|=\frac{|\Omega|}{2}$}.
            \end{align*}
        \item The singular set $\Sigma(A;\Omega):= \partial^MA \setminus \partial^*A$ satisfies that
            \begin{enumerate}
                \item if $N\in \{2,3,4,5,6,7\}$, then $\Sigma(A;\Omega)=\emptyset$.
                \item if $N \ge 8$, then $\dim_{\mathcal{H}}(\Sigma(A;\Omega)) \le N-8.$ When $N=8$ we have, additionally, that $\Sigma(A;\Omega)$ has no accumulation points in $\Omega$. 
         \end{enumerate}
        \end{enumerate}
\end{theo}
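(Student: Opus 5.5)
The plan is to show that a minimizer $A$ of $h_{\mathcal N}(\Omega)$ is a local volume-constrained perimeter minimizer inside $\Omega$, more precisely an $(\Lambda,r_0)$-minimizer of perimeter. Once this is established, the regularity theory of Tamanini and De Giorgi applies (see \cite{Maggi12}, Chapters 21 and 26–28), yielding $C^{1,\gamma}$ regularity of $\partial^*A$. It also yields the singular set estimates of item 2: the set $\Sigma(A;\Omega)$ is empty for $N\le 7$, has $\dim_{\mathcal H}\le N-8$ for $N\ge 8$, and is discrete when $N=8$. Analyticity and the mean curvature equation then follow from the first variation and elliptic regularity.

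\textbf{Step 1 (almost-minimality).} Fix $B_r(x)\Subset\Omega$ and any $F$ with $F\Delta A\Subset B_r(x)$.

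First suppose $|A|<|\Omega|/2$. For $r$ small we have $|F|\le|\Omega|/2$, so minimality gives $P(F;\Omega)\ge h|F|$, where $h:=h_{\mathcal N}(\Omega)$, while $P(A;\Omega)=h|A|$. Subtracting,
\[
P(A;B_r(x))\le P(F;B_r(x))+h\,\big||F|-|A|\big|\le P(F;B_r(x))+h\,\omega_N r^N .
\]
This is exactly $(\Lambda,r_0)$-minimality with $\Lambda=h$.

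Now suppose $|A|=|\Omega|/2$; this is the one-sided subtlety. Competitors with $|F|>|\Omega|/2$ are not admissible directly. I would handle them by compensating the excess volume. Pick two fixed regular points of $\partial^*A$ far from $x$, and use the standard volume-fixing variations of \cite[Lemma 17.21]{Maggi12}. These remove volume $|F|-|A|$ near a second point at perimeter cost $\le C\big||F|-|A|\big|$, which restores admissibility. The result is again $(\Lambda,r_0)$-minimality, now with $\Lambda=C$.

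I expect this to be the main technical obstacle. One must check that the volume-fixing balls can be chosen uniformly for all small $r$, away from $B_r(x)$ and inside $\Omega$. One also needs $\partial^*A\cap\Omega\neq\emptyset$. This holds because $\Omega$ is connected and $0<|A|<|\Omega|$, so the relative isoperimetric inequality forces $P(A;\Omega)>0$.

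\textbf{Step 2 (regularity).} Tamanini's theorem for $(\Lambda,r_0)$-minimizers gives that $\partial^*A$ is a $C^{1,\gamma}$ hypersurface, relatively open in $\partial^M A\cap\Omega$. Federer's dimension reduction, together with the absence of singular minimal cones in dimensions $\le 7$ (Simons), then gives the bounds on $\Sigma(A;\Omega)$ stated in item 2.

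\textbf{Step 3 (Euler–Lagrange).} Near $x\in\partial^*A$, write $\partial A$ as the graph of a $C^{1,\gamma}$ function $u$ and perturb by $u+t\varphi$ with $\varphi\in C_c^\infty(B^{N-1}_{r_x})$. The ratio $R(t):=P(A_t;\Omega)/|A_t|$ satisfies
\[
R'(0)=\frac{1}{|A|}\Big(\frac{d}{dt}P(A_t)-h\frac{d}{dt}|A_t|\Big).
\]

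If $|A|<|\Omega|/2$, both signs of $t$ are admissible, so $R'(0)=0$. This gives the weak form of \eqref{mce1} with the sign fixed by $\sigma_A(0)=-\vec e_N$.

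If $|A|=|\Omega|/2$, only variations with $\frac{d}{dt}|A_t|\le 0$ are admissible. Taking $\varphi$ of one sign yields the one-sided inequalities, whose combination gives $|H|\le h$ in the weak sense. Having a bounded weak mean curvature upgrades $u$ to $W^{2,p}$ and $C^{1,1}$.

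In the strict case, the equation has constant right-hand side, so Schauder bootstrapping yields $u\in C^\infty$. Analyticity follows from the classical analyticity of solutions to analytic elliptic equations (Morrey).

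In the half-volume case, a further argument by Lagrange multiplier should show that $A$ is in fact stationary for perimeter under fixed volume. This makes $H$ a constant $\mu$ with $|\mu|\le h$, and analyticity again follows. I would state this explicitly as the interpretation of the inequality in item 1.
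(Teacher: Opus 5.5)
Your overall architecture matches the paper's: almost-minimality, then the $(\Lambda,r_0)$-regularity theory and Simons/Federer dimension reduction from \cite{Maggi12}, then a first variation with one-sided variations when $|A|=|\Omega|/2$. Your direct derivation of $(h,r_0)$-minimality when $|A|<|\Omega|/2$ is correct.

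The gap is in the half-volume Euler--Lagrange step. With your orientation ($A$ above the graph), $|A_t|=|A|-t\int\varphi$ and $\frac{d}{dt}P(A_t;\Omega)|_{t=0}=-\int H\varphi$. The admissible, volume-nonincreasing variations therefore give $\int (h-H)\varphi\ge 0$ for $\varphi\ge 0$, i.e.\ only $H\le h$. Changing the sign of $\varphi$ does not help. The pairs $(t,\varphi)$ and $(-t,-\varphi)$ describe the same deformation, and the volume-increasing deformations are exactly the inadmissible ones. The same defect survives your Lagrange-multiplier step: knowing $H\equiv\mu$ is constant, you still only get $\mu\le h$. So ``whose combination gives $|H|\le h$'' is unjustified as written.

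The missing idea is the symmetry at half volume. Since $P(\Omega\setminus A;\Omega)=P(A;\Omega)$ and $|\Omega\setminus A|=|\Omega|/2$, the set $\Omega\setminus A$ is also a minimizer. Its mean curvature, in the convention of item 1, is $-H$, so the one-sided argument applied to it gives $-H\le h$. This is exactly how the paper closes Case II.

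Equivalently, $P(F;\Omega)\ge h\min\{|F|,|\Omega|-|F|\}$ for every $F\subset\Omega$, with equality at $F=A$. Because $m\mapsto\min\{m,|\Omega|-m\}$ is $1$-Lipschitz, this inequality also gives $P(A;B_r(x))\le P(F;B_r(x))+h\,|F\Delta A|$ for all $F\Delta A\Subset B_r(x)\subset\Omega$, in both cases. So the volume-fixing construction you flagged as the main obstacle is not needed. It does work, though: it is \cite[Example 21.3]{Maggi12}, which the paper invokes.

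A minor point: bounded weak mean curvature gives $u\in W^{2,p}$ for all $p<\infty$, not $C^{1,1}$, and the $C^{1,1}$ claim is false in general. You don't need it. Once $H\equiv\mu$ is known, Schauder bootstrapping and Morrey's analyticity go through exactly as in the strict case. The constancy of $H$ comes from \cite[Theorem 17.20]{Maggi12}, since $A$ minimizes $P(\cdot;\Omega)$ among sets of volume $|A|$.
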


\begin{proof}
    If $A$ is a minimizer for \eqref{charact 2}, then certainly $A$ is a perimeter minimizer within the smaller class of sets $B\subset \Omega$ such that $|B|=|A|$.  In particular, it is a minimizer under volume-preserving deformations (see \cite[Section 17.3 and Section 17.5]{Maggi12} for definitions and more details).
    
    Therefore, by \cite[Example 21.3]{Maggi12}, there exist a $0\leq \Lambda<\infty$ and $r_0>0$ such that 
    \begin{align*}
        |D\chi_{A}|(B_r(x)) \le |D\chi_F|(B_r(x)) + \Lambda |B_r(x)|
    \end{align*}
    for every set of finite perimeter $F$ such that $F \Delta A \subset \subset B_r(x)$ and every $B_r(x)\subset \Omega$ with $0<r<r_0$. That is, $A$ is a $(\Lambda, r_0)$-perimeter minimizer (a kind of local almost-minimizer of perimeter, see \cite[(21.2)]{Maggi12}). By \cite[Theorem~21.8]{Maggi12}, $\partial^*A$ is locally the graph of a $C^{1,\gamma}$ function for some $0<\gamma<1$.  By \cite[Theorem~27.4]{Maggi12}, it is the graph of an analytic function.  Moreover, \cite[Theorem~17.20]{Maggi12} gives that $A$ is a set whose boundary has constant distributional mean curvature in the sense that there exists a $\lambda \in \mathbb{R}$ such that
    \begin{align}\label{e:cmc eq}
        \int_{\Omega} \operatorname{div}_{{A}}(\phi)\, d|D\chi_{{A}}| = \lambda \int_{\Omega} \phi \cdot \sigma_{A} \, d|D\chi_{{A}}|
        \qquad \text{ for any $\phi \in C^{\infty}_c(\Omega; \mathbb{R}^N)$,}
    \end{align}
        where $\operatorname{div}_{A}$ is the boundary divergence operator on ${A}$ given by 
    \begin{align*}
\operatorname{div}_{A}(\phi(x)):= \operatorname{div}(\phi(x)) - \sigma_{A}(x)\cdot D\phi(x)\sigma_A(x) \qquad \text{ for all }\phi \in C^{\infty}_c(\Omega;\mathbb R^N)\text{ at any $x\in \partial^*A$,}
    \end{align*}
    where $D \phi$ is the Jacobian of $\phi,$ see \cite[Theorem~17.5]{Maggi12} for further details. 
    
Let $\phi\in C^{\infty}_c(\Omega;\mathbb{R}^N)$ (extended trivially to $\mathbb R^N$) and let $\Phi_t:\mathbb R^N\to \mathbb R^N$ be a local variation associated with $\phi$ (see \cite[Section 17.3]{Maggi12} for definitions); in particular, the Taylor’s expansion $\Phi_t(x)=x+t\phi(x)+O(t^2)$ holds uniformly on $\mathbb R^N$ as $t\to 0$. Furthermore, by \cite[Theorem~17.5 and Proposition 17.8]{Maggi12},
\begin{align}\label{Meq}
\frac{d}{dt}P(\Phi_t(A);\Omega)\Big|_{t=0}=\int_{\mathbb R^N} \operatorname{div}_A(\phi)\, d|D\chi_{A}|\qquad \text{ and }\qquad 
\frac{d}{dt}|\Phi_t(A)|\Big|_{t=0}=\int_{\mathbb R^N} \phi \cdot \sigma_A\, d|D\chi_{A}|.
\end{align}
Now we consider two cases.

\medskip

\underline{Case I:} Assume that $|A|<|\Omega|/2$. Then, $|\Phi_t(A)|<|\Omega|/2$ for all sufficiently small $|t|$ (see \cite[(17.31)]{Maggi12}); hence,
\begin{align*}
0&=\frac{d}{dt}\left(\frac{P(\Phi_t(A);\Omega)}{|\Phi_t(A)|}\right)\Bigg|_{t=0} = \frac{\int_{\mathbb R^N} \operatorname{div}_A(\phi)\, d|D\chi_{A}|}{|A|} - \frac{P(A;\Omega)}{|A|^2}\int_{\mathbb R^N} \phi \cdot \sigma_A \, d|D\chi_{A}|\\
  & = \frac{\lambda\int_{\mathbb R^N} \phi\cdot \sigma_A\, d|D\chi_{A}|}{|A|} - \frac{P(A;\Omega)}{|A|^2}\int_{\mathbb R^N} \phi \cdot \sigma_A\, d|D\chi_{A}|.
\end{align*}
Then $\lambda = \frac{P(A;\Omega)}{|A|}$ and $\partial^*A$ is a constant mean curvature hypersurface with constant $\lambda = \frac{P(A;\Omega)}{|A|}$.

    \medskip
    
\underline{Case II:} Suppose that $|A|=|\Omega|/2$. Then, for any $\phi \in C^{\infty}_c(\Omega;\mathbb{R}^n)$ such that \begin{align}\label{e:volume shrinking condition}
    \int_{\mathbb R^N} \phi\cdot \sigma_Ad|D\chi_A|<0,
\end{align}
the sets $\Phi_t(A)$ are not admissible for $t<0$, only for $t>0$ (see \cite[(17.31)]{Maggi12}).  Thus, taking a one-sided variation, the minimality of $A$ gives, using \eqref{Meq}, the inequality
\begin{align*}
     0 & \le \liminf_{t \rightarrow 0^+}\frac{1}{t}\left(\frac{P(\Phi_t(A);\Omega)}{|\Phi_t(A)|} - \frac{P(A;\Omega)}{|A|}\right)= \frac{\lambda\int_{\mathbb R^N} \phi\cdot \sigma_Ad|D\chi_{A}|}{|A|} - \frac{P(A;\Omega)}{|A|^2}\int_{\mathbb R^N} \phi \cdot \sigma_A d|D\chi_{A}|.
\end{align*}
Recalling \eqref{e:volume shrinking condition} and rearranging, we see $\lambda \le \frac{P(A;\Omega)}{|A|}$.  To obtain the complementary inequality, we simply note that, if $\lambda_A$ is the distributional mean curvature relative to $A$, then $\lambda_A =- \lambda_{A^c \cap \Omega}$, since  $\sigma_A = -\sigma_{A_c \cap \Omega}$. Therefore, since $A^c \cap \Omega$ is also a minimizer of $h_{\mathcal N}(\Omega)$, the previous argument gives $\lambda_{A^c \cap \Omega} \le \frac{P(A^c;\Omega)}{|A^c \cap \Omega|} = \frac{P(A;\Omega)}{|A|}$. 

\medskip

The only remaining part of claim $1$ is to show \eqref{mce1}. Let $x\in \partial^*A$ be given.  By translating $x \mapsto 0$ and rotating so that $\sigma_A(0) = -\vec{e}_N$, we write $\partial^*A \cap B_{r_x}^{N-1}(0) \times (-r_x, r_x)$ as a graph of a function $u:B_{r_x}^{N-1}(0)\rightarrow \mathbb{R}$ with $u(0)=0$ and $|\nabla u(0)|=0$ for some $r_x>0$.  Thus, for any $\varphi \in C^\infty_c(B_{r_x}^{N-1}(0))$, we consider local variations generated by $\phi(x',x_N):= \vec{e}_N\varphi(x')$.  In this case, \eqref{e:cmc eq} becomes 
\begin{align}\label{flcv}
    -\int_{B_{r_x}^{N-1}(0)}\operatorname{div}\left(\frac{\nabla u}{\sqrt{1+|\nabla u|^2}}\right)\varphi \,d\mathcal{H}^{N-1} = \int\operatorname{div}_A(\phi)\, d|D\chi_A|
    = \lambda \int \phi \cdot \sigma_A \,d|D\chi_A| =  \int_{B_{r_x}^{N-1}(0)} -\lambda \varphi \,d\mathcal{H}^{N-1},\qquad 
\end{align}
where we have used that in the graphical case
\begin{align*}
    P(A; B_{r_x}^{N-1}(0) \times (-r_x, r_x)) = \int_{B_{r_x}^{N-1}(0)}\sqrt{1 + |\nabla u|^2}\, d\mathcal{H}^{N-1}
\end{align*}
and \eqref{Meq} to obtain the left hand side (see \cite[Proposition 11.16]{GiaquintaMartinazzi2012_BOOK}  for details in the general case) and 
\begin{align*}
    \qquad \sigma_A = \frac{1}{\sqrt{1 + |\nabla u|^2}}(-\nabla u, 1) \in \mathbb{R}^{N-1} \times \mathbb{R},
\end{align*} and the Area formula (see \cite[Theorem 3.8]{evansgariepy}) to obtain the right hand side. Hence, by \eqref{flcv}, we deduce that $-\operatorname{div}\left(\frac{\nabla u(x')}{\sqrt{1+|\nabla u(x')|^2}}\right)\Big|_{x'=0}=-\lambda$ and claim $1$ follows.

Now that we know that, for any minimizer $A$ of \eqref{charact 2}, $A$ is a volume-constrained perimeter minimizer, the statements in 2. follow from Simon's Theorem~on the non-existence of singular minimizing cones which are smooth away from the origin in $\mathbb{R}^N$ for $N=2,3,4,5,6,7$ and from Federer's dimension reduction principle (see \cite[Theorem~28.1]{Maggi12} for full details).
\end{proof}

We are ready to show Theorem~\ref{regn2}, stated in the introduction, see also \cite{C89} for a different proof.

\begin{proof}[Proof of Theorem~\ref{regn2}]
Let $N=2$ and let $D$ be a minimizer for $h_{\mathcal N}(\Omega)$ such that 
$|D|<|\Omega|/2$; in particular, $0<h_{\mathcal N}(\Omega)< \infty$. The fact that any portion of the boundary $\partial D$ (which coincides with $\partial^* D$ since there is no singular set for $N=2$) is part of a circle with radius $\frac{1}{\lambda_{1,1}}$ follows from Theorem~\ref{thm reg}.

On the other hand, the orthogonality of the free boundary for volume-constrained perimeter minimizers in $C^1$ domains is well-known, but we could not locate a clean reference. Hence, for completeness, we provide a sketch of the proof in the $2$-dimensional case.  

  Let $\Gamma: [0,1] \rightarrow \overline{\Omega}$ be a continuous curve such that $\Gamma((0,1)) \subset \partial D \cap \Omega$ and $\Gamma(0), \Gamma(1) \in \partial \Omega$.  Let $T_{\Gamma(0)}\partial\Omega$ denote the tangent line to $\partial\Omega$ at $\Gamma(0)$ and let $C := \{(x, y): x^2 + y^2 = r^2\}$ for $0<r = h_{\mathcal N}(\Omega)^{-1}<\infty$. 
Without loss of generality (using rotations, reflections, and translations), we may assume that
\begin{enumerate}
\item $T_{\Gamma(0)}\partial \Omega=L := \{(x, t): x\in \mathbb{R}\}$ for some height $t\in[-r,r]$,
\item $\Gamma(0)= r(\cos(\theta_0), \sin(\theta_0))$ for some $\theta_0 \in [-\pi,\pi]$,
\item there is $\theta_1>0$ such that $\Gamma([0,1])=\{r(\cos(\theta), \sin(\theta))\::\:\theta \in (\theta_0,\theta_1)\}$.  
\end{enumerate}

Observe that, under this situation, the minimizer $D$ always lies inside the circle $C$ (otherwise one could construct a better competitor by changing a portion of the boundary $\partial D$ with a chord inside the circle, this shortens the perimeter and increases the area).

Now we show that, if there is a neighborhood $U$ around $\Gamma(0)$ such that $\partial \Omega \cap U$ is $C^2$, then $\Gamma$ meets $\partial \Omega$ orthogonally, or equivalently, that $\theta_0=0$ or $\theta_0=\pi$. For this, let us construct a competitor by modifying a portion of the boundary $\partial D$ with a straight vertical line. For $\theta>0$ small, let
\begin{align*}
    D_{\theta} := \begin{cases}
    D \setminus \left\{(x, y) \in \Omega\cap B_r(0)\::\: x\ge r\cos(\theta_0+\theta)\right\},&\text{ if } \theta_0\geq 0,\\
    D \cup \Big\{(x, y) \in \Omega\cap B_r(0)^c\::\: r\cos(\theta_0)\le x\le r\cos(\theta_0+\theta), &\text{ if } \theta_0<0.
    \end{cases}
\end{align*}
Namely, if $\theta_0\geq 0$, then we \emph{remove} a portion of $D$, whereas, if $\theta_0<0$, we \emph{add} a portion to $D$, see Figure \ref{Dthetafig}.  Note that, since $|D|<|\Omega|/2$, then $|D_\theta|<|\Omega|/2$ for $\theta$ sufficiently small, and therefore $D_\theta$ is a competitor for $h_{\mathcal N}(\Omega)$. 

\begin{figure}[h!]
    \centering
    \begin{subfigure}{0.4\linewidth}
        \centering
        \includegraphics[width=\linewidth]{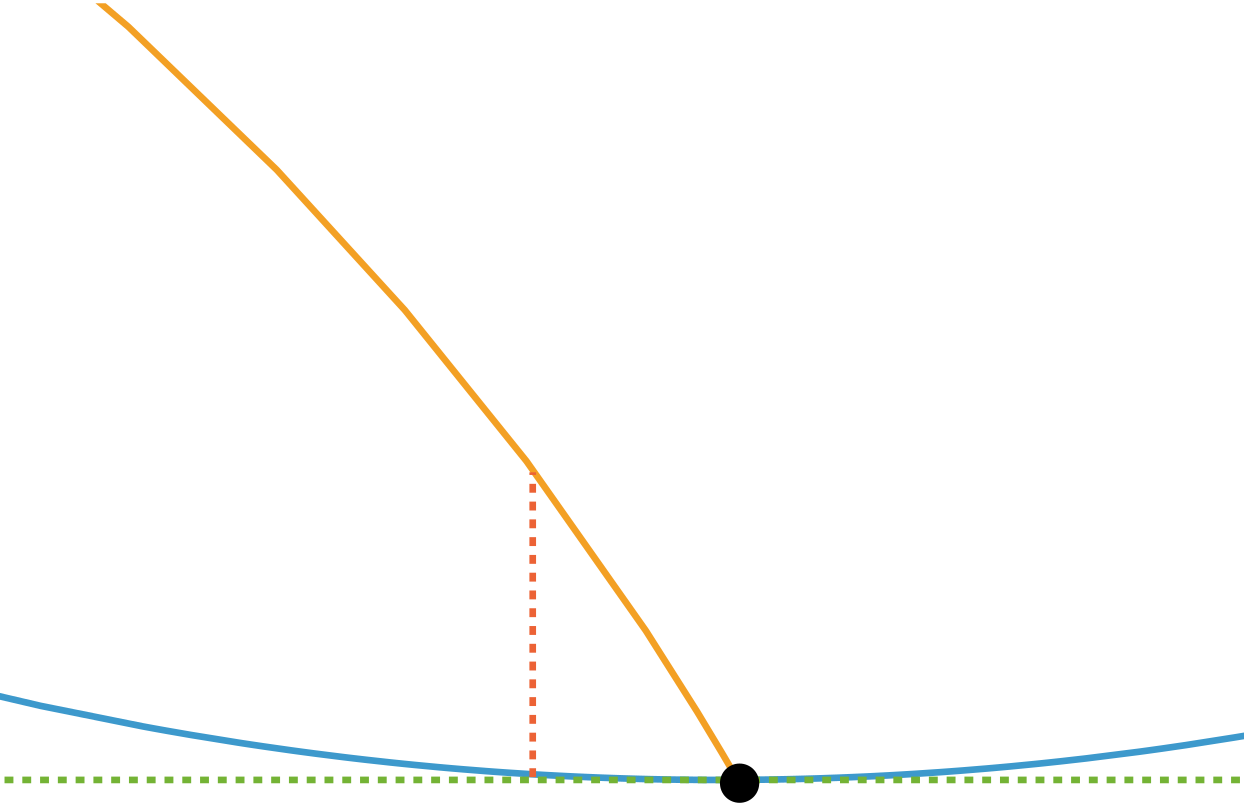}
        \caption{$\theta_0\geq 0$}
    \end{subfigure}
    \hspace{0.5cm}
    \begin{subfigure}{0.4\linewidth}
        \centering
        \includegraphics[width=\linewidth]{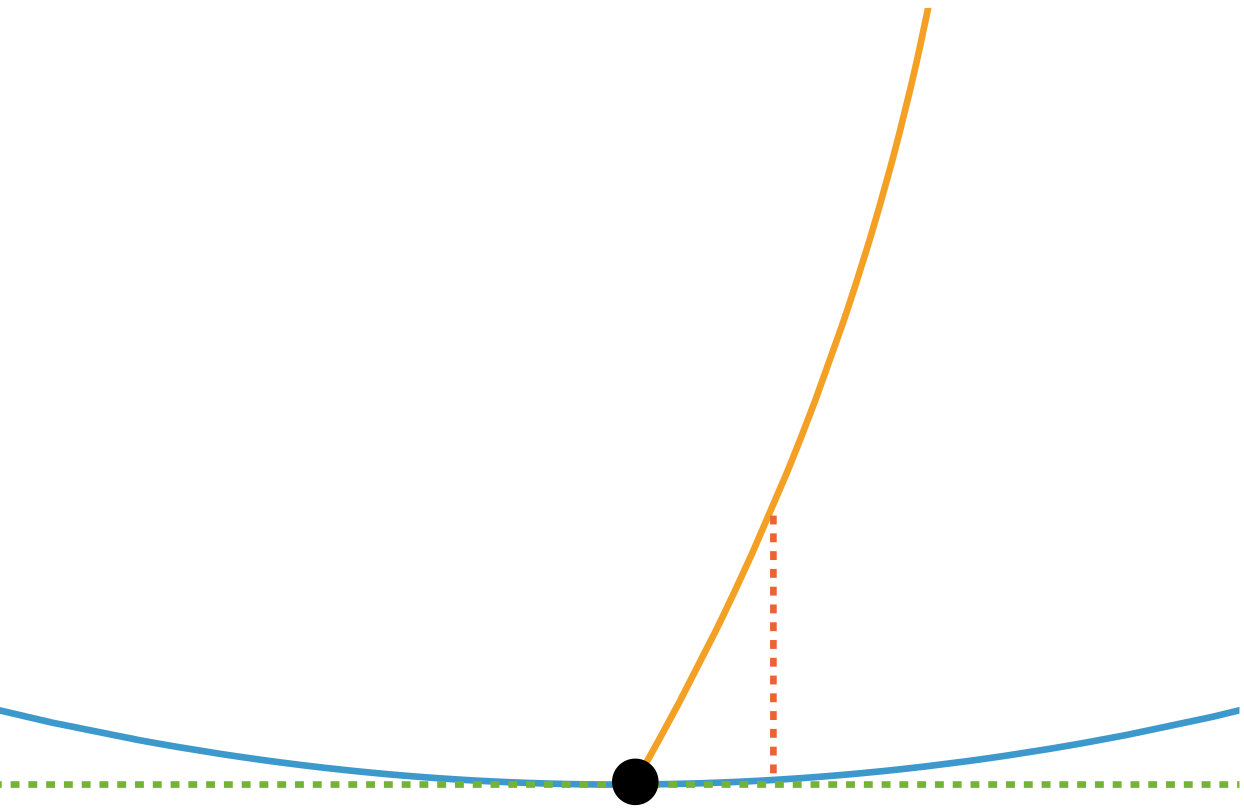}
        \caption{$\theta_0<0$}
    \end{subfigure}
    \caption{The construction of the set $D_\theta$.  The black dot is $\Gamma(0)=r(\cos(\theta_0),\sin(\theta_0))$, the horizontal dotted line is $L$, the horizontal curved line (blue) is $\partial \Omega$, the vertical curved line (orange) is $\Gamma$, and the vertical dotted line marks the boundary of the portion of the domain that is being added (if $\theta_0<0$) or removed (if $\theta_0\geq 0$) to define $D_\theta$.}
    \label{Dthetafig}
\end{figure}

Observe that, since the boundary $\partial \Omega$ is $C^2$ around $\Gamma(0)$, it is almost flat close to $\Gamma(0)$. Hence, using a Taylor expansion,
\begin{align*}
     P(D;\Omega) - P(D_{\theta};\Omega) & \ge r\theta- r|\sin(\theta_0+\theta) - \sin(\theta_0)|
     +o(\theta)\ge \frac{1}{2}r(1 -|\cos(\theta_0)|)\theta\qquad \text{as $\theta\to 0$. }
 \end{align*}
Similarly, we estimate the change in area 
\begin{align*}
    \Big||D| - |D_{\theta}|\Big| & = \frac{r^2}{2}
   \left|\sin(\theta_0 +\theta) - \sin(\theta_0)\right|\left|\cos(\theta_0 +\theta) - \cos(\theta_0)\right| + o(\theta) = o(\theta)\qquad \text{as $\theta\to 0$. }
\end{align*}
Here we are using that the difference in area has, asymptotically, a triangular shape, see Figure \ref{Dthetafig}.

Hence,
\begin{align*}
    \frac{P(D_{\theta};\Omega)}{|D_{\theta}|}
    \le \frac{P(D;\Omega) - \frac{r}{2} (1 -|\cos(\theta_0)|)\theta}{|D|+o(\theta)} 
    = \frac{P(D;\Omega)}{|D|} - \frac{r(1 -|\cos(\theta_0)|)\theta }{2(|D|+o(\theta))} + o(\theta)\qquad \text{as $\theta\to 0$. }
\end{align*}
Since $D$ is a minimizer, this implies that $1 -|\cos(\theta_0)|=0$, namely $\theta_0$ must be equal to $0$ or $\pi$, which implies that $\partial D$ meets $\partial \Omega$ orthogonally.
\end{proof}

\section{Explicit constants and minimizers}\label{sec:minimizers}

In this section we explore the explicit value of $h_{\mathcal N}(\Omega)$ and the shape of minimizers in some particular geometries. 

\subsection{Minimizers in convex domains}\label{sec:convex}

As above, let $\Omega \subset \mathbb{R}^N$ be a bounded, Lipschitz domain.  For any $m \in [0, |\Omega|)$, the \emph{isoperimetric profile} of $\Omega$ at volume $m$ is given by  
\begin{align}\label{isoprof}
\mathcal I(m):=\inf\{ P(E; \Omega): \, E \subset \Omega, \, |E|=m\}. 
\end{align}
In bounded Lipschitz domains, $\mathcal I(m)$ is attained for any $m \in [0, |\Omega|/2]$, see \cite[Theorem~9.3]{Ritore}. 
Let us recall the following result.
\begin{theo}[Theorem~2.8 in \cite{sz}]\label{concave}
Let $\Omega \subset \R^N$ be an open, bounded, strictly convex set with $C^{2, \alpha}$ boundary. Then the function $\mathcal{I}(m)$
is a concave function of $m$ on the interval $0 \le m \le |\Omega|$ and it is symmetric about $|\Omega|/2$. 
\end{theo}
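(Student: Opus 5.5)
The symmetry about $|\Omega|/2$ is immediate. If $E\subset\Omega$ has $|E|=m$, then $\Omega\setminus E$ has volume $|\Omega|-m$. Moreover $P(E;\Omega)=P(\Omega\setminus E;\Omega)$, because $D\chi_{\Omega\setminus E}=-D\chi_E$ in $\Omega$. Hence $\mathcal I(m)=\mathcal I(|\Omega|-m)$. The whole work is therefore in concavity. I would follow the second-variation strategy: at each $m_0\in(0,|\Omega|)$, build a smooth upper barrier $f$ with $f(m_0)=\mathcal I(m_0)$, $f\ge\mathcal I$ near $m_0$ and $f''(m_0)\le0$. Then I would conclude by an elementary lemma, proved by comparison with a parabola as in viscosity-solution theory: a continuous function admitting such barriers at every interior point is concave. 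Continuity of $\mathcal I$ on $[0,|\Omega|]$ follows from existence of minimizers, cited after \eqref{isoprof}, together with standard local deformations; $\mathcal I(0)=0$ handles the endpoints.

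\textbf{Step 1: the minimizer.} Fix $m_0$ and a minimizer $E$ of \eqref{isoprof}. Arguing as in Theorem~\ref{thm reg}, $E$ is a volume-constrained perimeter minimizer. So $\Sigma:=\overline{\partial E\cap\Omega}\setminus\partial^*E$ has $\dim_{\mathcal H}\le N-8$, and the regular part $S:=\partial^*E$ is analytic with constant mean curvature $H$, where $H$ is the Lagrange multiplier. At the free boundary I would invoke the Gr\"uter--Jost regularity theory for the $C^{2,\alpha}$ boundary $\partial\Omega$. This gives that $\overline S$ meets $\partial\Omega$ orthogonally, smoothly up to a boundary singular set of dimension at most $N-9$. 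The two-dimensional version of the orthogonality is the argument in the proof of Theorem~\ref{regn2}.

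\textbf{Step 2: the barrier.} Let $\nu$ be the outer normal to $S$. Extend $\nu$ to a vector field $X$ near $\overline S$ that is tangent to $\partial\Omega$ on $\partial\Omega$; this is possible because of the orthogonality in Step 1. Let $\Phi_t$ be the flow of $\eta X$, where $\eta$ is a cutoff vanishing near the singular set. Put $V(t)=|\Phi_t(E)|$ and $P(t)=P(\Phi_t(E);\Omega)$, so that $V'(0)=\int_S\eta$ and $P'(0)=H\int_S\eta$. The standard second-variation formula for free-boundary hypersurfaces gives
\[
\frac{d^2}{dt^2}\Big(P-HV\Big)\Big|_{t=0}=\int_S\Big(|\nabla\eta|^2-|A_S|^2\eta^2\Big)\,d\mathcal H^{N-1}-\int_{\partial S}\mathrm{II}_{\partial\Omega}(\nu,\nu)\,\eta^2\,d\mathcal H^{N-2},
\]
with the sign convention in which $\mathrm{II}_{\partial\Omega}\ge0$ for convex $\Omega$. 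Because $V'(0)\neq0$, $t\mapsto V(t)$ is locally invertible. I would then set $f(m):=P(t(m))$, so that $f\ge\mathcal I$ near $m_0$ and $f(m_0)=\mathcal I(m_0)$. Differentiating twice in $m$ yields
\[
f''(m_0)=\frac{(P-HV)''(0)}{\big(\int_S\eta\big)^2}.
\]
Letting $\eta\to1$, the numerator tends to $-\int_S|A_S|^2-\int_{\partial S}\mathrm{II}_{\partial\Omega}(\nu,\nu)$, which is $\le0$ by convexity of $\Omega$. Strict convexity would even give strict concavity, but this is not needed.

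\textbf{Main obstacle.} The delicate point is letting $\eta\to1$ in the presence of the interior singular set, and of the boundary singular set where $S$ touches $\partial\Omega$. Since both sets have codimension at least $7$ in $S$, they have zero $2$-capacity. So I would choose log-type cutoffs $\eta_k\to1$ with $\int_S|\nabla\eta_k|^2\to0$. The negative terms in the numerator can only help, so it suffices to bound them by Fatou's lemma, which avoids needing a priori integrability of $|A_S|^2$. The barrier $f_k$ then has $f_k''(m_0)\le\varepsilon_k\to0$. Since the concavity lemma is stable under such approximate barriers (one uses $f''(m_0)\le\varepsilon$ for every $\varepsilon>0$), concavity of $\mathcal I$ follows.
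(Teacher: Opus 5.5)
Your proposal is correct and is essentially the argument of Sternberg--Zumbrun, which the paper quotes without proof. That argument takes the second variation of $P-HV$ along a cut-off normal flow tangent to $\partial\Omega$, where convexity bounds the index form of the almost-constant test function by $\int_S|\nabla\eta_k|^2\to0$, and then uses a barrier (second-difference) argument to show the continuous function $\mathcal I$ is concave. One harmless slip: free-boundary regularity (Gr\"uter) only guarantees a boundary singular set of dimension at most $N-8$, not $N-9$ (half of a reflection-symmetric Simons cone in a half-space already attains $N-8$), but that set still has zero $2$-capacity in $S$, so your cutoff step is unaffected.
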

\begin{remark}\label{strict}
In \cite[Theorem~1.1]{kuwert}  it is shown that $(\mathcal I(m))^{\frac N{N-1}}$ is concave. Thus, in particular, $\mathcal I$ is \emph{strictly} concave.
\end{remark}

The next result shows that $h_{\mathcal N}(\Omega)$ can be obtained in terms of $\mathcal I(|\Omega|/2)$.

\begin{theo}\label{case convex}
   Let $\Omega \subset \R^N$ be an open, bounded, strictly convex set with $C^{2, \alpha}$ boundary.  Then, 
   \begin{equation}\label{charact convex} 
   h_{\mathcal N}(\Omega)=\frac{2}{|\Omega|} \mathcal I(|\Omega|/2). 
   \end{equation}
   Moreover, $E$ is a minimizer for $\mathcal I(|\Omega|/2)$ if and only if $E$ is a minimizer for $h_{\mathcal N}(\Omega)$. 
\end{theo}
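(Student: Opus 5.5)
The plan is to rewrite $h_{\mathcal N}(\Omega)$ through the isoperimetric profile \eqref{isoprof}. Every admissible $\omega$ in \eqref{hdef} satisfies $P(\omega;\Omega)\ge \mathcal I(|\omega|)$, and $\mathcal I(m)$ is attained for $m\in(0,|\Omega|/2]$ (see \cite[Theorem~9.3]{Ritore}). Hence
\[
h_{\mathcal N}(\Omega)=\inf_{0<m\le |\Omega|/2}\frac{\mathcal I(m)}{m}.
\]
So it suffices to show that $m\mapsto \mathcal I(m)/m$ is strictly decreasing on $(0,|\Omega|/2]$, with its minimum attained only at $m=|\Omega|/2$.

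The key step uses Theorem~\ref{concave} together with Remark~\ref{strict}: $\mathcal I$ is strictly concave on $[0,|\Omega|]$, and $\mathcal I(0)=0$, since $P(\emptyset;\Omega)=0$. Fix $0<m<M\le|\Omega|/2$ and write $m=tM+(1-t)\cdot 0$ with $t=m/M\in(0,1)$. Strict concavity gives
\[
\mathcal I(m)>t\,\mathcal I(M)+(1-t)\,\mathcal I(0)=\frac{m}{M}\,\mathcal I(M),
\]
that is, $\mathcal I(m)/m>\mathcal I(M)/M$. Taking $M=|\Omega|/2$ yields \eqref{charact convex}, because the infimum equals $\mathcal I(|\Omega|/2)/(|\Omega|/2)$.

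Two points need care in the strictness argument. First, strict concavity must hold with endpoint $0$ included. Concavity of $\mathcal I^{N/(N-1)}$ together with $\mathcal I>0$ on $(0,|\Omega|)$ gives this: if $\mathcal I$ were affine on some segment, then $\mathcal I^{N/(N-1)}$, a strictly convex power of a positive affine function, would fail to be concave. Positivity of $\mathcal I(m)$ for $0<m<|\Omega|$ follows from the relative isoperimetric (Poincaré-type) inequality in a connected Lipschitz domain. Second, one should check that $\mathcal I$ is continuous at $0$. Since the inequality only needs $\mathcal I(0)=0$ and concavity on the closed interval, both of which Theorem~\ref{concave} provides, this is not an obstruction. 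I expect this justification of strictness, i.e. extracting strict concavity from Remark~\ref{strict}, to be the only delicate step.

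For the characterization of minimizers, suppose first that $E$ minimizes $\mathcal I(|\Omega|/2)$. Then $|E|=|\Omega|/2$ and
\[
\frac{P(E;\Omega)}{|E|}=\frac{2}{|\Omega|}\,\mathcal I(|\Omega|/2)=h_{\mathcal N}(\Omega),
\]
so $E$ attains $h_{\mathcal N}(\Omega)$. Conversely, let $E$ minimize $h_{\mathcal N}(\Omega)$ and set $m=|E|$. Then
\[
h_{\mathcal N}(\Omega)=\frac{P(E;\Omega)}{m}\ge\frac{\mathcal I(m)}{m}.
\]
If $m<|\Omega|/2$, the strict monotonicity gives $\mathcal I(m)/m>h_{\mathcal N}(\Omega)$, a contradiction. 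Hence $m=|\Omega|/2$ and $P(E;\Omega)=\mathcal I(|\Omega|/2)$, so $E$ minimizes $\mathcal I(|\Omega|/2)$.
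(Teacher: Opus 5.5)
Your proposal is correct and follows essentially the same route as the paper: concavity of $\mathcal I$ together with $\mathcal I(0)=0$ makes $s\mapsto \mathcal I(s)/s$ non-increasing on $(0,|\Omega|/2]$, and strictness via Remark~\ref{strict} forces every minimizer of $h_{\mathcal N}(\Omega)$ to have volume $|\Omega|/2$. One small caveat: concavity of $\mathcal I^{N/(N-1)}$ does not exclude \emph{constant} pieces of $\mathcal I$ (the profile \eqref{expl isop cube} of a square is flat near half volume), so it does not yield global strict concavity; however, you only need strictness along chords from $0$, where an affine piece would be $x\mapsto x\,\mathcal I(M)/M$ with $\mathcal I(M)>0$, and there your argument is valid.
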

\begin{proof}
By Theorem~\ref{concave}, $\mathcal I$ is concave. Hence, for $0 < a < b\le |\Omega|/2$ and $t \in (0, 1)$ such that $a=tb$, we have 
\[ 
\mathcal I(a) = \mathcal I(tb) \ge t\mathcal I(b) + (1-t)\mathcal I(0) = t \mathcal I(b)=\frac{a}{b}\mathcal I(b). 
\]
Hence, $\frac{\mathcal I(a)}{a}\ge \frac{\mathcal I(b)}{b},$ i.e., the function $s \mapsto \mathcal I(s)/s$ is non-increasing in $(0, |\Omega|/2]$. 
Thus,
\[ \inf_{0 < s \le |\Omega|/2} \frac{\mathcal I(s)}{s} = \frac{\mathcal I(|\Omega|/2)}{|\Omega|/2} = \frac{2}{|\Omega|} \mathcal I(|\Omega|/2). \]

By \cite[Theorem~9.3]{Ritore}, $\mathcal I (s)$ is attained for any $s$. Thus, there exists a set $E$ such that $\mathcal I(|\Omega|/2)=P(E; \Omega)$ and $|E|=|\Omega|/2$. Therefore,
\[ 
\frac{2}{|\Omega|} \mathcal I(|\Omega|/2)=\frac{P(E; \Omega)}{|E|} \ge h_{\mathcal N}(\Omega) = \inf\left\{ \frac{P(D; \Omega)}{|D|}\::\:D\subset \Omega,\ 0 <|D| \le |\Omega|/2 \right\} \ge \inf_{0 < s \le |\Omega|/2} \frac{\mathcal I(s)}{s} = \frac{2}{|\Omega|} \mathcal I(|\Omega|/2),
\]
and the claim follows, also recalling Remark \ref{strict}.
\end{proof}

Recall the complete characterization of minimizers of $\mathcal I(|\Omega|/2)$ in \cite{A, BS}, see also \cite{Ros},
we have the following symmetry breaking result. It is a direct consequence of Theorems \ref{theo conv intro} and \ref{case convex}.
\begin{coro}\label{coro sphere}
    Let $\Omega=\{ x \in \R^N: \, |x| < L\}$ and $E^+:=\Omega \cap \{ x_1 >0\}$. 
    Then, the only  minimizer (up to rotations) of $h_{\mathcal N}(\Omega)$ is $E^+$. 
      Moreover, 
    \[ h_{\mathcal N}(\Omega)=\frac{2 \omega_{N-1}}{L \omega_N}
    =\frac{2\Gamma \left(\frac{N}{2}+1\right)}{\sqrt{\pi } \Gamma
   \left(\frac{N+1}{2}\right)} L^{-1},\]
 where $\omega_k=\frac{\pi^{\frac{k}{2}}}{\Gamma(\frac{k}{2}+1)}$ denotes the volume of the $k$-dimensional unitary ball. As a consequence, if $u_{p,q}$ is a minimizer of $\lambda_{p,q}(\Omega)$, then $u_{p,q}$ is not radially symmetric for all $(p,q)$ sufficiently close to $(1,1)$.
\end{coro}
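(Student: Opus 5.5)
The ball $\Omega=B_L$ is smooth and strictly convex, so Theorem~\ref{case convex} applies. It gives $h_{\mathcal N}(\Omega)=\frac{2}{|\Omega|}\mathcal I(|\Omega|/2)$, and it says the minimizers of $h_{\mathcal N}(\Omega)$ are exactly the minimizers of the half-volume relative isoperimetric problem $\mathcal I(|\Omega|/2)$. By the characterization in \cite{A, BS} (see also \cite{Ros}), these are, up to rotation, only the half-balls cut by a hyperplane through the origin, so the unique minimizer up to rotations is $E^+$. The constant then follows by direct computation: $P(E^+;\Omega)$ is the $(N-1)$-measure of the equatorial disc, $\omega_{N-1}L^{N-1}$, and $|E^+|=\omega_N L^N/2$. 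Hence
\[
h_{\mathcal N}(\Omega)=\frac{2\omega_{N-1}}{L\,\omega_N}.
\]
Substituting $\omega_k=\pi^{k/2}/\Gamma(\frac k2+1)$ gives the Gamma-function form.

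For the symmetry breaking I argue by contradiction. Suppose there are $(p_n,q_n)\to(1,1)$ and radially symmetric minimizers $u_n$ of $\lambda_{p_n,q_n}$. By Theorem~\ref{theo conv intro}, up to a subsequence $u_n\to u$ in $L^1(\Omega)$, where $u$ is a minimizer of $\lambda_{1,1}$. Radial symmetry passes to $L^1$ limits, so $u$ is radial, and $u\neq 0$ since $\|u\|_1=1$. By Theorem~\ref{thm cheeger}(ii), for almost every $t\in(0,\|u^\pm\|_\infty)$ with $\omega_t^\pm\neq\emptyset$, the level set $\omega_t^\pm=\{u^\pm>t\}$ minimizes $h_{\mathcal N}(\Omega)$. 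At least one of $u^+,u^-$ is nonzero, so such a $t$ exists. That level set is radially symmetric (up to null sets) and has positive measure. But every minimizer is a rotated copy of $E^+$, which is not radially symmetric, so $\chi_{\omega_t^\pm}$ cannot be rotation invariant. This is the contradiction. Hence there is a neighborhood of $(1,1)$ in which no minimizer of $\lambda_{p,q}$ is radial.

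The only delicate point is that the rigidity statement is up to null sets. A radial set equal a.e. to a half-ball is impossible, since a rotation by $\pi$ would map it a.e. onto its complement in $\Omega$. Everything else is a direct invocation of the earlier results.
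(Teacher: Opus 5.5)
Your proposal is correct and follows the paper's own route: Theorem~\ref{case convex} together with the characterization of half-volume isoperimetric sets in the ball \cite{A, BS} gives uniqueness of $E^+$ up to rotations and the value of $h_{\mathcal N}(\Omega)$, and Theorem~\ref{theo conv intro} combined with the level-set statement of Theorem~\ref{thm cheeger}(ii) gives the symmetry breaking. The paper leaves implicit the contradiction argument and the fact that a rotation-invariant set cannot coincide a.e.\ with a half-ball; you make both explicit, and you handle them correctly.
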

The fact that the Neumann-Cheeger constant of balls is achieved at half balls was known, see \cite[Theorem 1]{C892}.

Next, let us now consider the unit cube $C_N=[0, 1]^N$ in dimension $N.$ 
\begin{theo}\label{cube}
    Let $\Omega=C_N$ and $E^+:=C_N \cap \{ x_1 >1/2\}$.
    Then, $h_{\mathcal N}(\Omega)=2$ 
   and $E^+$ is the only minimizer (up to rotations) of $h_{\mathcal N}(\Omega)$.
\end{theo}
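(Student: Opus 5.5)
The plan is to reduce the statement to the relative isoperimetric problem with fixed volume $|\Omega|/2$, as in Theorem~\ref{case convex}. The cube, however, is not strictly convex with $C^{2,\alpha}$ boundary, so Theorem~\ref{concave} cannot be quoted directly. I would replace it by a concavity statement for the isoperimetric profile $\mathcal I$ of $C_N$, defined in \eqref{isoprof}, obtained by approximation. Let $\Omega_k$ be smooth strictly convex domains converging to $C_N$ in Hausdorff distance, for instance rounded cubes slightly bent inward. Each profile $\mathcal I_{\Omega_k}$ is concave by Theorem~\ref{concave}. Pointwise convergence $\mathcal I_{\Omega_k}(m)\to\mathcal I_{C_N}(m)$ would follow from the standard continuity of isoperimetric profiles under convergence of convex bodies: use bi-Lipschitz maps close to the identity, which distort perimeter and volume by factors tending to $1$, together with continuity of $\mathcal I$ in $m$. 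A pointwise limit of concave functions is concave, so $\mathcal I_{C_N}$ is concave on $[0,1]$ with $\mathcal I(0)=0$.

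Given concavity, the argument of Theorem~\ref{case convex} shows that $s\mapsto \mathcal I(s)/s$ is non-increasing. Hence $h_{\mathcal N}(C_N)=2\,\mathcal I(1/2)$.

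Next I would identify $\mathcal I(1/2)=1$ and its minimizers. The upper bound comes from $E^+$, whose relative perimeter is the single face $\{x_1=1/2\}$ of area $1$. For the lower bound and for uniqueness I would reflect evenly across all faces to produce $\mathbb Z^N$-periodic sets, i.e.\ sets in the flat torus $\mathbb T^N=\mathbb R^N/(2\mathbb Z)^N$. A set $E\subset C_N$ of volume $1/2$ reflects to a set of volume $2^{N-1}$, which is half of the torus of volume $2^N$, and its perimeter becomes $2^N P(E;C_N)$. The known isoperimetric inequality on cubic tori at half volume, or the result of \cite{A,BS} cited earlier if it covers cubes, then gives that slabs are optimal. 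Their perimeter is $2\cdot 2^{N-1}=2^N$, so $P(E;C_N)\ge 1$. Equality forces the reflected set to be a slab, and hence $E=E^+$ up to the symmetries of the cube. I would invoke that classification if it is available; otherwise I would argue directly by Steiner symmetrization in each coordinate direction, in the spirit of \cite{Ros}.

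Finally, for uniqueness among minimizers of $h_{\mathcal N}$ I need strict monotonicity of $\mathcal I(s)/s$. This rules out a minimizer $A$ with $|A|<1/2$. I would use the comparison sets $\{x_1<s\}$ directly: they give $\mathcal I(s)\le 1$ for every $s$, so concavity with $\mathcal I(1/2)=1$ gives $\mathcal I(s)\ge 2s$. Suppose $\mathcal I(s)/s=2$ for some $s<1/2$. Concavity on $[0,1/2]$ then forces $\mathcal I$ to be linear, $\mathcal I(t)=2t$, on $[0,1/2]$. This contradicts the small-volume behaviour near a corner, where $\mathcal I(t)\sim c\,t^{(N-1)/N}\gg 2t$ as $t\to0$. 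So any minimizer has volume exactly $1/2$ and is $E^+$ up to rotation. The main obstacle is the half-volume isoperimetric inequality for the cube with its equality case. Approximation only transfers the inequality, not the rigidity, so uniqueness must come from the torus or symmetrization argument.
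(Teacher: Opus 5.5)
Your proposal is correct, but it takes a different route from the paper.

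\textbf{The paper's route.} It never uses concavity of the profile of $C_N$. It quotes from \cite{BB} the explicit bound $\mathcal I_N(t)\ge 4t(1-t)$ for $t\le 1/2$, so that $P(D;C_N)/|D|\ge 4(1-|D|)$. Together with the competitor $E^+$, this gives $h_{\mathcal N}(C_N)=2$ in one line. Since $4(1-s)>2$ for $s<1/2$, it also forces every minimizer to have volume exactly $1/2$. Uniqueness is then outsourced to Hadwiger's half-volume theorem (\cite{Hadwiger}, \cite[Corollary 2]{Ros}).

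\textbf{Your route.} You get volume saturation structurally. You transfer Theorem~\ref{concave} to the cube by approximation and rerun the argument of Theorem~\ref{case convex}. Strictness then comes from the fact that a linear profile on $[0,1/2]$ contradicts the small-volume relative isoperimetric inequality $\mathcal I(t)\ge c\,t^{(N-1)/N}$. You obtain both $\mathcal I(1/2)=1$ and the rigidity from the half-volume theorem on the cubic torus via even reflection, which is essentially the Hadwiger result the paper relies on.

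\textbf{Comparison.} Both proofs rest on Hadwiger for rigidity. The paper trades your concavity and approximation step for an external dimension-free bound, which makes it much shorter. Yours puts the cube on the same footing as the smooth strictly convex case. It would apply to any convex body whose half-volume problem is solved.

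\textbf{A flaw in the continuity justification.} There are no bi-Lipschitz homeomorphisms between smooth convex $\Omega_k$ and $C_N$ with constants tending to $1$. Such a map must send some boundary point $p$ of $\Omega_k$ to a vertex of $C_N$. At small scales, $|B_\delta(p)\cap\Omega_k|\approx\tfrac12\omega_N\delta^N$, while the image lies in $B_{L\delta}(v)\cap C_N$, which has volume $2^{-N}\omega_N L^N\delta^N$. This forces $L^{2N}\ge 2^{N-1}$.

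Use homothetic sandwiching about the center instead, with $(1-\eta)C_N\subset\Omega_k\subset(1+\eta)C_N$. For $E\subset C_N$, the set $(1+\eta)E\cap\Omega_k$ has relative perimeter in $\Omega_k$ at most $(1+\eta)^{N-1}P(E;C_N)$. Its volume is within $|(1+\eta)C_N\setminus(1-\eta)C_N|=O(\eta)$ of $(1+\eta)^N|E|$. The symmetric construction works with the roles of $\Omega_k$ and $C_N$ exchanged. Combine this with the equicontinuity of the uniformly bounded concave functions $\mathcal I_{\Omega_k}$ and the continuity of $\mathcal I_{C_N}$ in $m$ to get pointwise convergence. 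Alternatively, simply cite the known concavity of the isoperimetric profile of arbitrary convex bodies.
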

\begin{proof}
Let $E^+$ as in the statement and recall that $\mathcal I_N(s):=\inf\{ P(E, C_N): \, E \subset C_N, \, |E|=s\}.$ By \cite{BB}, $\mathcal I_N (t) \ge 4t(1-t)$ for any $t \le 1/2$ and $N \ge 2.$ Then
\begin{align*}
\frac{P(E^+; C_N)}{|E^+|} \ge h_{\mathcal N}(C_N) \ge \inf_{0 \le s \le 1/2} \frac{\mathcal I_N(s)}{s} \ge 4 \inf_{0 \le s \le 1/2} (1-s) =2=\frac{P(E^+; C_N)}{|E^+|}.
\end{align*}
This implies that $h_{\mathcal N}(C_N)=2$ and $E^+$ is a minimizer for $h_N(C_N)$. 
By \cite{Hadwiger} (see also \cite[Corollary 2]{Ros}) we have that the sets achieving $\mathcal I_N(1/2)$ are unique. This concludes the proof.
\end{proof}
\begin{remark}
Theorem~\ref{cube} can be extended to more general rectangles $[a, b] \times [c,d ]$, see \cite{BB}. 
\end{remark}

Recalling Theorem~\ref{thm cheeger}, in terms of minimizers of $\lambda_{1,1}$ we have the following 
\begin{coro}\label{coro convex}
    Let $\Omega \subset \R^N$ be an open, bounded, strictly convex set with $C^{2, \alpha}$ boundary or let $\Omega=C_N:=[0,1]^N$. Let $u$ be a minimizer for $\lambda_{1,1}(\Omega)$. Then 
    $u= \alpha \chi_{\omega_1} - \beta \chi_{\omega_2}$ for some minimizers $\omega_1$ and $\omega_2$ of $\mathcal I(|\Omega|/2)$ and for some $\alpha, \beta \ge 0$ with $(\alpha, \beta) \ne (0,0)$. 
In particular, for any sign changing minimizer $u$  of $\lambda_{1,1}(\Omega)$, one has that $\int_\Omega \sgn(u)\,d\mathcal{H}^N=0$ and $u$ solves 
\begin{align}\label{probcoro}
-\operatorname{div} \left( \frac{Du}{|Du|} \right)= \lambda_{1,1}(\Omega) \sgn(u)\quad \text{ in } \Omega, \qquad \partial_\nu u =0 \quad \text{ on }\partial \Omega.   
\end{align} 
    \end{coro}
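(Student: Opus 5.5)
The plan is to combine the level-set description of minimizers in Theorem~\ref{thm cheeger}(ii) with the fact that, in these domains, every minimizer of $h_{\mathcal N}(\Omega)$ has exactly half volume. That fact comes from Theorem~\ref{case convex} for smooth strictly convex $\Omega$, and from Theorem~\ref{cube} for $C_N$. The latter gives uniqueness up to rotations, hence also minimality for $\mathcal I(1/2)$ with volume $1/2$.

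\emph{Structure of a minimizer.} Let $u$ be a minimizer of $\lambda_{1,1}$ and assume $u^+\neq 0$. By Theorem~\ref{thm cheeger}(ii), for a.e.\ $t\in(0,\|u^+\|_\infty)$ the set $\omega_t^+=\{u^+>t\}$ minimizes $h_{\mathcal N}(\Omega)$. Hence $|\omega_t^+|=|\Omega|/2$ for a.e.\ such $t$.

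\emph{Key step: $u^+$ takes only one nonzero value.} The function $t\mapsto|\{u>t\}|$ is nonincreasing and right-continuous. It equals $|\Omega|/2$ for a.e.\ $t\in(0,\|u^+\|_\infty)$, so it equals $|\Omega|/2$ for all $t$ in that interval, and it vanishes for $t\geq \|u^+\|_\infty$. Therefore $|\{0<u<\|u^+\|_\infty\}|=0$, which gives $u^+=\alpha\chi_{\omega_1}$ with $\alpha=\|u^+\|_\infty$ and $\omega_1=\{u>0\}$ (up to null sets). Moreover $|\omega_1|=|\Omega|/2$, and $\omega_1=\omega_t^+$ for every $t<\alpha$, so $\omega_1$ minimizes $h_{\mathcal N}(\Omega)$. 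By the equivalence in Theorem~\ref{case convex} (respectively Theorem~\ref{cube}), $\omega_1$ minimizes $\mathcal I(|\Omega|/2)$. The same argument applied to $u^-$ gives $u^-=\beta\chi_{\omega_2}$, where $\omega_2$ is also a half-volume minimizer. If $u^\pm=0$, we set the corresponding coefficient to $0$ and choose any minimizer as $\omega_i$. The bound $\|u\|_\infty<\infty$ is implicit here: it follows because each level set has measure $|\Omega|/2$ while $u\in L^1$. If $\|u^+\|_\infty=\infty$, then $|\{u>t\}|=|\Omega|/2$ for all $t>0$, which contradicts $\lim_{t\to\infty}|\{u>t\}|=0$.

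\emph{The sign-changing case.} If $\alpha,\beta>0$, then $\omega_1,\omega_2$ are disjoint sets, each of volume $|\Omega|/2$. Hence $|\{u=0\}|=0$ and $\int_\Omega\sgn(u)\,d\mathcal H^N=|\omega_1|-|\omega_2|=0$. Theorem~\ref{main theo 2} then applies with $|\{u=0\}|=0$, so $\alpha^*$ is irrelevant and $s=\sgn(u)$ a.e. This shows that $u$ solves \eqref{probcoro}.

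The main obstacle I expect is the justification in the cube case. There, the equivalence between minimizers of $h_{\mathcal N}$ and of $\mathcal I(1/2)$ must be extracted from the proof of Theorem~\ref{cube}. The chain of inequalities there forces any minimizer $E$ to satisfy $\mathcal I_N(|E|)/|E|=2$. Using $\mathcal I_N(s)\geq 4s(1-s)$, we get $4(1-|E|)\leq 2$, i.e.\ $|E|\geq 1/2$, and hence $|E|=1/2$ with $P(E;C_N)=1=\mathcal I_N(1/2)$.
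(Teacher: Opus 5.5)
Your proposal is correct and follows essentially the same route as the paper. Theorem~\ref{thm cheeger}(ii) shows the level sets minimize $h_{\mathcal N}(\Omega)$, and Theorems~\ref{case convex} and~\ref{cube} then force them to have measure $|\Omega|/2$, so the nested level sets coincide, $u^\pm$ are multiples of characteristic functions, and Theorem~\ref{main theo 2} gives the equation. Your right-continuity argument, your separate handling of $\|u^+\|_\infty$ and $\|u\|_\infty$, and your direct check via $\mathcal I_N(s)\ge 4s(1-s)$ that every minimizer in $C_N$ has volume $1/2$ usefully make explicit steps the paper passes over quickly.
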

\begin{proof}
Let $u$ be a minimizer for $\lambda_{1,1}(\Omega)$.     By Theorem~\ref{thm cheeger}, we know that $(\{ u^+ > t\}, \{ u^- > t\})$ is a minimizer for $\widetilde h_{\mathcal N}(\Omega)$ for a.e. $t \in (0, \norm{u}_\infty)$. By Theorems ~\ref{case convex} (in the case of bounded, strictly convex, $C^{2,\alpha}$ convex sets) and~\ref{cube} (in the case of the cube), we have that 
\begin{align}\label{emptyor}
\text{$\{ u^+ > t\}$ and $\{ u^- > t\}$ are either empty or are minimizers for $\mathcal I(|\Omega|/2)$ for a.e. $t \in (0, \norm{u}_\infty)$.}
\end{align}
Without loss of generality, let us assume that $\{ u^+ > t\}$ is nonempty for a.e. $t \in (0, \norm{u}_\infty)$. Then, by \eqref{emptyor}, $|\{ u^+ > t\}|= |\Omega|/2$ for a.e. $t \in (0, \norm{u}_\infty)$. Since $\{ u^+>t \} \subset \{ u^+> s\}$ for any $t \ge s$, this shows that necessarily 
    \[
    \{ u^+> t\} = \{ u^+> s\} \quad \text{for a.e. $s, t \in (0, \norm{u}_\infty)$, }
    \]
    which yields that $u^+$ is a step function. Similarly, $u^-$ is also a step function or it is identically zero.   Now assume that $u$ changes sign.  Then, $u= \alpha \chi_{\omega_1} - \beta \chi_{\omega_2}$ for some minimizers $\omega_1$ and $\omega_2$ of $\mathcal I(|\Omega|/2)$ and for some $\alpha>0$ and $\beta>0$. By Theorem~\ref{thm cheeger}, $u$ is a minimizer of $\lambda_{1,1}(\Omega)$ and, by \eqref{emptyor}, $\int_\Omega \sgn(u)\,d\mathcal{H}^N=|\{u >0\}|- |\{u <0\}|= |\Omega|/2-|\Omega|/2=0$. Finally, by Theorem~\ref{main theo 2}, $u$ is a solution of \eqref{probcoro}.
\end{proof}
 \begin{remark}\label{rmk:notattained}
Corollary~\ref{coro convex} shows that, in general, $\lambda_{1,1}$ is not attained in $W^{1, 1}(\Omega)$.
 \end{remark}
\begin{remark}\label{rmk:not sign}
Let $\Omega \subset \R^N$ be an open, bounded, strictly convex set with $C^{2, \alpha}$ boundary, $\alpha >0$, and let $\omega$ be a minimizer for $\mathcal I(|\Omega|/2)$; in particular, $|\omega|=|\Omega|/2$.  By Theorems~\ref{thm cheeger} and~\ref{case convex}, we have that  $u = \alpha \chi_\omega$ is a minimizer for $\lambda_{1,1}(\Omega)$. However, $ \int_\Omega \sgn(u)=|\omega|>0.$ Moreover, by Theorem~\ref{main theo 2}, $u$ is a solution of \eqref{selection:prob} with $s:=\chi_{\omega}-\chi_{\Omega\backslash \omega}\neq \sign(u).$
\end{remark}

We are ready to show Theorem~\ref{thm convex intro}.
\begin{proof}[Proof of Theorem~\ref{thm convex intro}]
    The claim follows from Theorems~\ref{case convex} and~\ref{cube}. 
\end{proof}

\subsection{Minimizers in annuli}\label{sec:rad}

Now we turn our attention to annular domains.  Due to Corollary~\ref{coro sphere}, we have that minimizers of $h_{\mathcal N}(\Omega)$ in the ball are not radially symmetric. Next, we give a complete characterization of minimizers in the radially symmetric setting and use this to show that symmetry breaking also occurs in $N$-dimensional annuli. 

Let
\begin{align*}
\Omega := B_L(0) \setminus B_l(0)\qquad \text{ with $0 \le l < L$}
\end{align*}
and
    \begin{align*}
    \lambda_{rad}:= \inf \left\{|Du|(\Omega): \, \norm{u}_1=1, \, u \in \mathfrak{N}(\Omega), \, u \text{ is radially symmetric} \right\}. 
    \end{align*}
Arguing as in Theorem~\ref{thm cheeger} (using functional spaces of radially symmetric functions), we have that $\lambda_{rad}$ is attained and 
\begin{align*} 
\nonumber\lambda_{rad}
=h_{rad}(\Omega):= \inf\Big\{\frac{P(D; \Omega)}{|D|}: \, D\subset \Omega \text{ is radially symmetric and }  0<|D|\leq \frac{|\Omega|}{2}\Big\}.
\end{align*}

Let
\begin{align*}
A_{s,t}:=B_{t}(0) \setminus B_{s}(0) \subset \R^N.     
\end{align*}

\begin{lemma}\label{rad annulus}
Let $\Omega= A_{l,L}$ with $0 \le l < L$ in $\R^N$ and let $r^*:= \left(\frac{L^N+l^N}{2}\right)^{1/N}$. 
If $D$ is a minimizer of $h_{rad}(\Omega)$, then $D=A_{ l, r^*}$ or $D=A_{r^*, L}$. Moreover,
\begin{align}\label{hrad}
h_{rad}(\Omega)= 2^{1/N} N \frac{(L^N + l^N)^{\frac{N-1}{N}}}{L^N-l^N}.
\end{align}
\end{lemma}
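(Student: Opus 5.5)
The plan is to reduce $h_{rad}(\Omega)$ to a one-dimensional problem and then minimize explicitly.

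\emph{Reduction to unions of annuli.} A radially symmetric set of finite perimeter $D\subset\Omega$ is, up to null sets, $D=\{x\in\Omega: |x|\in S\}$ with $S\subset(l,L)$ of finite perimeter in $(l,L)$. Hence $S$ is a finite union of disjoint intervals, since each boundary point of $S$ in $(l,L)$ contributes at least $N\omega_N l^{N-1}>0$, or is at least bounded below on compact subintervals when $l=0$. Each endpoint $\rho\in(l,L)$ of $S$ contributes $N\omega_N\rho^{N-1}$ to $P(D;\Omega)$, while the endpoints $l$ and $L$ contribute nothing. We have $|D|=\omega_N\,|\{\rho^N:\rho\in S\}|$.

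\emph{Comparison with the two extremal annuli.} First consider the sets with a single interior boundary sphere, namely $A_{l,r}$ and $A_{r,L}$. For $A_{l,r}$ with $r\le r^*$ (so that $|A_{l,r}|\le|\Omega|/2$), the ratio is
\[
\frac{N r^{N-1}}{r^N-l^N}.
\]
This is decreasing in $r$, because its logarithmic derivative has the sign of $(N-1)(r^N-l^N)-Nr^N<0$, so the minimum is attained at $r=r^*$. For $A_{r,L}$ with $r\ge r^*$, the ratio is
\[
\frac{N r^{N-1}}{L^N-r^N}.
\]
This is increasing in $r$, so the minimum is again attained at $r=r^*$. Both extremal sets give the same value, since $r^{*N}-l^N=L^N-r^{*N}=(L^N-l^N)/2$. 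Substituting $r^{*N-1}=\big((L^N+l^N)/2\big)^{(N-1)/N}$ yields
\[
\frac{2N r^{*N-1}}{L^N-l^N}=2^{1/N}N\frac{(L^N+l^N)^{\frac{N-1}{N}}}{L^N-l^N},
\]
which is \eqref{hrad}.

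\emph{General unions, the main step.} Let $D$ have at least one interior boundary sphere. Every interior boundary radius $\rho$ of $D$ satisfies $\rho\ge \rho_{\min}$, the smallest one. If $S$ contains $(l,\rho_{\min})$, then $D\supset A_{l,\rho_{\min}}$ and $|D|\le|\Omega|/2$ force $\rho_{\min}\le r^*$. Otherwise $D\subset A_{\rho_{\min},L}$, and $|D|\le\min\{|\Omega|/2,\ \omega_N(L^N-\rho_{\min}^N)\}$.

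In the first case we have $P\ge N\omega_N\rho_{\min}^{N-1}$, with strict inequality if there is more than one boundary sphere. Using the bound $|D|\le|\Omega|/2$, this gives
\[
\frac{P}{|D|}\ge \frac{2N\rho_{\min}^{N-1}}{L^N-l^N}.
\]
This bound is not sharp when $\rho_{\min}$ is small, so the argument must be organized differently. I would use instead: if $D\ne A_{l,\rho}$, then by the monotonicity above $P/|D|$ exceeds the value for the single-sphere set with the same volume. To prove this, I would argue by induction on the number of intervals: removing or merging an interval, which slides one boundary sphere toward another, changes the numerator and the denominator in a way controlled by \eqref{algeb ineq}.

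Concretely, I would split $D$ as a disjoint union of "pieces" $D_j$, each bounded by consecutive interior spheres and using half the perimeter of the shared spheres. Then \eqref{algeb ineq} shows that $P/|D|\ge\min_j$ of the piece ratios. For each piece, the ratio is at least the ratio of the corresponding single-interface annulus of equal volume adjacent to $\partial\Omega$. This last comparison is the step I expect to be most delicate, because of the weights $\rho^{N-1}$ and the volume constraint.

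The uniqueness statement then follows from the strict monotonicity in $r$ and from the strict inequality for sets with two or more interior spheres. Together these show that the only minimizers are $A_{l,r^*}$ and $A_{r^*,L}$.
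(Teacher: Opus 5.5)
Your computation for the single-interface annuli $A_{l,r}$ and $A_{r,L}$ is correct. It only shows, however, that $2^{1/N}N(L^N+l^N)^{\frac{N-1}{N}}/(L^N-l^N)$ is an \emph{upper bound} for $h_{rad}(\Omega)$. The substance of the lemma is that no other radial set does as well, and this is not proved. You abandon the first attempt, and the induction on the number of intervals is only sketched. In the decomposition you explicitly leave open the comparison of each piece with the boundary-adjacent annulus of equal volume. The uniqueness claim rests on a ``strict inequality for sets with two or more interior spheres'' that is never established. Note also that for $l=0$ the set $S$ need not be a finite union of intervals: intervals may accumulate at $0$ as long as $\sum\rho_k^{N-1}<\infty$. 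You only get finiteness on $(\ell,L)$ for each $\ell>0$, so an induction on the number of intervals would not cover that case.

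The missing idea is to look at the \emph{outermost} interface rather than the innermost one $\rho_{\min}$. Let $m=|D|\le|\Omega|/2$ and define $s_1\le r^*$ by $|A_{l,s_1}|=m$. If $b:=\operatorname{ess\,sup}_{x\in D}|x|<L$, then $b$ is an interface, and $D\subset A_{l,b}$ gives $b\ge s_1$. If instead $D$ reaches $\partial B_L(0)$, its last component is $A_{a,L}$ with $a>l$ (otherwise $D=\Omega$), and $|A_{a,L}|\le|\Omega|/2$ gives $a\ge r^*\ge s_1$. In both cases $P(D;\Omega)\ge N\omega_N s_1^{N-1}=P(A_{l,s_1};\Omega)$. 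Equality holds only if $D=A_{l,s_1}$, or if $D=A_{r^*,L}$ and $s_1=r^*$. Your monotonicity of $s\mapsto Ns^{N-1}/(s^N-l^N)$ on $(l,r^*]$ then gives both the value and the uniqueness. This is essentially the paper's argument.

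Your decomposition route can also be completed, and it is then a genuine alternative. Take as pieces the connected components $A_{a_j,b_j}$ of $D$. They are separated by gaps, so no spheres are shared and perimeters simply add. The comparison you call delicate is then immediate. With $s_j^N=l^N+b_j^N-a_j^N$, one has $s_j\le b_j$ when $b_j<L$, and $a_j\ge r^*\ge s_j$ when $b_j=L$. Hence each $P_j/|D_j|\ge Ns_j^{N-1}/(s_j^N-l^N)$, which is at least the value at $r^*$. Since $P/|D|$ is a weighted average of these ratios, the lower bound follows. It is strict as soon as there are two components, because each then has volume $<|\Omega|/2$. A single component with two interfaces also loses strictly, because of the extra sphere. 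This version also handles countably many components when $l=0$.
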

\begin{proof}
    Let $F\subset \Omega$ be a radially symmetric set.  That is, in spherical coordinates $F = E \times \mathbb{S}^{N-1}$ for some $E \subset (l,L)$. Now, assume that $F$ is a set of finite perimeter in $\Omega$.  Since $\Omega = A_{l, L}$ is diffeomorphic to $(l,L) \times \mathbb{S}^{N-1}$, we may use \cite[Proposition 17.1]{Maggi12}  locally and the rotational symmetry of $F$ to see that $\partial^* F = \{x \in \mathbb{R}\,:\, |x| \in \partial^*E \subset (l,L) \}$ and
    \begin{align}\label{e:spherical set perimeter formula}
        P(F;\Omega) = \int_{\partial^*E \cap (l, L)} N\omega_Nx^{N-1}d\mathcal{H}^{0}(x)<\infty.
    \end{align}
In particular, $E$ is a set of finite perimeter in $(l, L)$ if $l>0$, and $E$ is a set of finite  perimeter in $(\ell, L)$ and  for any $\ell>0$ in the case $l=0$.  By the characterization of sets of finite perimeter in $\mathbb{R}$ (see \cite[Proposition 12.13]{Maggi12}), $E \cap A_{\ell,L}$ is $\mathcal{H}^1$-equivalent to a finite union of disjoint intervals $\cup_{i=1}^{N(\ell)}(a_i, b_{i})$ where $a_i \not = b_j$ for any $1\le i,j\le N(\ell)$.  In particular, for any $\ell>l$, $F \cap A_{\ell, L}= A_{\ell, L} \cap \bigcup_{i=1}^{N(\ell)}A_{a_i, b_i}$. Without loss of generality, we may assume that $a_{i+1}<b_{i+1}<a_i<b_i$ for all $i =1, ..., N(\ell)-1$.

Now, without loss of generality, we may assume that $l<a_1<L$ and either $b_{1}\ge L$ or $b_{1}<L$.  If $b_{1}<L$, then if we define $s^* \in (l, L)$ such that $|A_{l,s^*}|= |F|$, we claim that $P(A_{l, s^{*}};\Omega)\le P(F;\Omega)$ with equality if and only if $A_{l, s^*} = F$.  Clearly, if $A_{l, s^*} = F$, then equality holds for the perimeters.  If $F \not = A_{l, s^*}$, then $b_{1}>s^*$ since otherwise $F \cap A_{l, s^*} = F$ and the condition $|A_{l,s^*}|=|F|$ forces $A_{l,s^*}= F$. But, if $b_{1}>s^*$ it must be that $P(A_{l, s^{*}};\Omega)< P(F;\Omega)$ by \eqref{e:spherical set perimeter formula}. 

In the case that $b_1 \ge L$ and $a_1<L$, 
we have that $\text{P}(A_{l, s^*};\Omega) =\text{P}(A_{s^*, L};\Omega)<\text{P}(F;\Omega)$ unless $F = A_{s^*, L}$.  Thus, if $D$ is a minimizer for $h_{rad}(\Omega)$, then $D = A_{l,s_1}$ or $A_{s_2,L}$ where $l<s_1<s_2<L$ are chosen such that $|A_{l,s_1}| = |D| = |A_{s_2,L}|$.  The claim is now trivially verified by \eqref{e:spherical set perimeter formula}.  Namely, if $|D|<\frac{1}{2}|A_{l,L}|$, then $s_1 <s_2$ and $A_{l,s_1}$ is the volume-constrained perimeter-minimizer in the class of radial sets. On the other hand, if $|D|=\frac{1}{2}|A_{l,L}|$, then 
\[ s_1=s_2= r^*=\left(\frac{L^N + l^N}{2}\right)^{1/N} \]
 and the only volume-constrained perimeter minimizers within the class of radially symmetric sets are $A_{l, r^*}$ and $A_{r^*,L}$.  But, it is clear that
\begin{align*}
    \frac{d}{ds}\left(\frac{P(A_{l,s};\Omega)}{|A_{l,s}|}\right) = \frac{N\omega_N}{\omega_N} \frac{d}{ds}\left(\frac{s^{N-1}}{s^N - l^N}\right) = N\frac{s^{N-2}}{s^N-l^N}\left((N-1)-N\frac{s^N}{s^N-l^N}\right)<0 \quad \forall l<s<L.
\end{align*}
Therefore, $A_{l, r^*}$ and $A_{r^*,L}$ are the unique minimizers of $h_{rad}(\Omega)$.
\end{proof}

\begin{prop}\label{symm breaking}
Let $\Omega=B_L(0) \setminus B_l(0)$ with $0 \le l < L$ in $\R^N$. Then $h_{\mathcal N}(\Omega)<h_{rad}(\Omega).$ In particular, the minimizers of $h_{\mathcal N}(\Omega)$ are not radially symmetric. 
\end{prop}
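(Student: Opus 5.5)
The plan is to exhibit an explicit non-radial competitor $D\subset\Omega$ with $|D|=|\Omega|/2$ and
\[
\frac{P(D;\Omega)}{|D|}<h_{rad}(\Omega)=2^{1/N}N\frac{(L^N+l^N)^{\frac{N-1}{N}}}{L^N-l^N},
\]
the last value being \eqref{hrad} from Lemma~\ref{rad annulus}. The natural choice is the half annulus $D:=\Omega\cap\{x_1>0\}$. It has $|D|=|\Omega|/2=\omega_N(L^N-l^N)/2$, and its relative perimeter is the flat piece $\{x_1=0\}\cap\Omega$, an $(N-1)$-dimensional annulus. So
\[
P(D;\Omega)=\omega_{N-1}(L^{N-1}-l^{N-1}),\qquad \frac{P(D;\Omega)}{|D|}=\frac{2\omega_{N-1}(L^{N-1}-l^{N-1})}{\omega_N(L^N-l^N)}.
\]
For $N=1$ the flat cut is a point, and the comparison is handled directly.

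Since both ratios share the denominator $L^N-l^N$, the strict inequality reduces to
\[
2\omega_{N-1}(L^{N-1}-l^{N-1})<2^{1/N}N\omega_N\,(L^N+l^N)^{\frac{N-1}{N}}.
\]
First, I would bound the left side by $2\omega_{N-1}L^{N-1}$ and the right side from below by $2^{1/N}N\omega_N L^{N-1}$, using $L^N+l^N\ge L^N$. It then suffices to prove the dimensional inequality $2\omega_{N-1}<2^{1/N}N\omega_N$. Second, I would use $N\omega_N=\mathcal H^{N-1}(\mathbb S^{N-1})$ and $N\omega_N=2\pi\omega_{N-2}$ together with the standard Wallis-type ratios for $\omega_{N-1}/\omega_N$.

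In fact a cruder bound already works: two flat discs of radius $1$ have total area $2\omega_{N-1}$, and this is less than the area of the unit sphere $N\omega_N$. This holds because the sphere projects onto the equatorial disc two-to-one with Jacobian at most $1$, so each hemisphere has area strictly greater than $\omega_{N-1}$. Hence $2\omega_{N-1}<N\omega_N\le 2^{1/N}N\omega_N$, which gives the strict inequality.

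Combining the two steps gives $h_{\mathcal N}(\Omega)\le P(D;\Omega)/|D|<h_{rad}(\Omega)$. The consequence about minimizers then follows. A radially symmetric minimizer of $h_{\mathcal N}(\Omega)$ would be admissible for $h_{rad}(\Omega)$, which would force $h_{rad}(\Omega)\le h_{\mathcal N}(\Omega)$, a contradiction. Combined with Theorem~\ref{theo conv intro} and Theorem~\ref{thm cheeger}, this also yields symmetry breaking for $\lambda_{p,q}$ near $(1,1)$.

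The only delicate points are the strictness of the hemisphere-versus-disc area comparison and the justification that $P(D;\Omega)$ equals the flat cross-section. The latter holds because $\partial D\cap\partial\Omega$ carries no relative perimeter, which follows from the definition of $P(\cdot;\Omega)$ via test fields compactly supported in $\Omega$. I expect neither step to be a real obstacle.
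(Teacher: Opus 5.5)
Your proposal is correct and follows the paper's proof. It uses the same half-annulus competitor $\Omega\cap\{x_1>0\}$ and the same chain $L^{N-1}-l^{N-1}\le L^{N-1}\le (L^N+l^N)^{\frac{N-1}{N}}$, which reduces everything to the dimensional inequality $2\omega_{N-1}<2^{1/N}N\omega_N$. The only difference is how that constant inequality is obtained: the paper bounds the Gamma-function ratio $2\omega_{N-1}/\omega_N$ with Wendel's inequality, whereas your hemisphere-projection argument gives $2\omega_{N-1}<N\omega_N$, a more elementary and self-contained bound that already suffices.
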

\begin{proof}
Let $E^+:=\Omega \cap \{ x_1 >0\}$ and $E^-:=\Omega \cap \{ x_1 <0\}.$ 
Notice that
\[ L^{N-1} - l^{N-1} \le  L^{N-1} \le (L^N+ l^N)^{\frac{N-1}{N}}. \]
Moreover, by using Wendel's inequality \cite{Wendel}, 
\[ \frac{2 \omega_{N-1}}{\omega_N} = \frac{2 \Gamma(N/2+1)}{\sqrt{\pi}\Gamma((N+1)/2)}  \le  \sqrt{\frac{2(N+1)}{\pi}}  < 2^{1/N} N.   \]
By \eqref{hrad},
\[
h_{\mathcal N}(\Omega)\leq \frac{P(E^\pm; \Omega)}{(|\Omega|/2)} = \frac{2 \omega_{N-1}(L^{N-1} - l^{N-1})}{\omega_N(L^N- l^N)} < 2^{1/N} N \frac{(L^N + l^N)^{\frac{N-1}{N}}}{L^N-l^N}=h_{rad}(\Omega),
\]
as claimed.
\end{proof}

The following is a direct corollary of the previous results for the Neumann eigenfunctions of the $1$-Laplacian in an annulus.
\begin{coro}
Let $\Omega=A_{l, L}$, $0 \le l < L$ in $\R^N$. 
    Any minimizer $u$ of $\lambda_{rad}$ is such that, for almost every $t>0$, the sets $\omega_t^\pm:=\{ x \in \Omega: \, u^\pm (x) >t\}$ for $t \in \R,$ are of the form given in Lemma~\ref{rad annulus}. Moreover, minimizers of $\lambda_{1,1 }$ are not radially symmetric. 
\end{coro}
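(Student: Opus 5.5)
The plan is to combine the coarea characterization of Theorem~\ref{thm cheeger} with the rigidity statement of Lemma~\ref{rad annulus} and the strict inequality of Proposition~\ref{symm breaking}. The radial problem is treated exactly like the full problem, but inside the class of radially symmetric functions and sets.

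For the first claim, let $u$ be a minimizer of $\lambda_{rad}$, normalized so that $\norm{u}_1=1$. If $u$ is radial, then so are $u^\pm$ and their superlevel sets $\omega_t^\pm$. Repeating Step 1 of the proof of Theorem~\ref{thm cheeger} gives
\[
|Du|(\Omega)=\int_0^{\infty}\frac{P(\omega_t^+;\Omega)+P(\omega_t^-;\Omega)}{|\omega_t^+\cup\omega_t^-|}\,|\omega_t^+\cup\omega_t^-|\,dt .
\]
The pair $(\omega_t^+,\omega_t^-)$ satisfies \eqref{Nomega} for every $t$. Let $\widetilde h_{rad}$ denote the radial analogue of \eqref{auxprob}. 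Since $\lambda_{rad}=h_{rad}(\Omega)$ and the radial version of Lemma~\ref{lemma charac2} gives $\widetilde h_{rad}=h_{rad}$, the argument of Step 3 shows that $(\omega_t^+,\omega_t^-)$ minimizes $\widetilde h_{rad}$ for a.e.\ $t$. Lemma~\ref{lemma charac2} is purely algebraic, via \eqref{algeb ineq}, so its radial version holds verbatim. It follows that each nonempty $\omega_t^\pm$ minimizes $h_{rad}(\Omega)$. Lemma~\ref{rad annulus} then forces $\omega_t^\pm\in\{A_{l,r^*},A_{r^*,L}\}$.

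The step I expect to need the most care is this passage to the radial class. Concretely, the competitor sets built from $u$ must stay inside the admissible radial class, and the identity $\lambda_{rad}=h_{rad}$ must hold there. Both follow because superlevel sets of radial functions are radial, and because the minimizing sequences in Step 2 of the proof of Theorem~\ref{thm cheeger} consist of functions $\chi_{D^+}-\chi_{D^-}$ with $D^\pm$ radial. Their $L^1$ limits are again radial, so no new ingredient is needed.

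For the non-symmetry claim, suppose by contradiction that a minimizer $u$ of $\lambda_{1,1}$ is radially symmetric. After normalizing, $u$ is admissible for $\lambda_{rad}$, which gives
\[
\lambda_{rad}\le |Du|(\Omega)=\lambda_{1,1}=h_{\mathcal N}(\Omega),
\]
where the last equality is Theorem~\ref{thm cheeger}. Combined with $h_{\mathcal N}(\Omega)<h_{rad}(\Omega)=\lambda_{rad}$ from Proposition~\ref{symm breaking}, this is a contradiction.

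Alternatively, Theorem~\ref{thm cheeger}(ii) shows that some $\omega_t^\pm$ is a radial minimizer of $h_{\mathcal N}(\Omega)$, which Proposition~\ref{symm breaking} directly excludes.
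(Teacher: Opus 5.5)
Your proposal is correct and follows the route the paper intends. The paper gives no separate proof, calling the corollary a direct consequence of three earlier results: the radial version of Theorem~\ref{thm cheeger} (which yields $\lambda_{rad}=h_{rad}(\Omega)$ and the level-set property), the rigidity in Lemma~\ref{rad annulus}, and the strict inequality $h_{\mathcal N}(\Omega)<h_{rad}(\Omega)$ of Proposition~\ref{symm breaking}; this is exactly how you argue, with the minor bookkeeping point that the level-set conclusion should be read for a.e.\ $t\in(0,\norm{u}_\infty)$ and only for those $\omega_t^\pm$ of positive measure, as in Theorem~\ref{thm cheeger}(ii).
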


\subsection{Minimizers in some asymmetrical domains}\label{counterex}

Let 
\begin{align}\label{Lell}
\eps=\frac{1}{44}\quad \text{ and }\quad  \ell\in[1,2].
\end{align}

For $x_0=(x_1', x_2')\in \mathbb{R}^2$, $a, b \in \mathbb{R}_+$, we define the rectangle
\begin{align*}
    Q(a, b, x_0):= \left\{(x_1, x_2) \in \mathbb{R}^2: |x_1-x_1'|< \frac{a}{2},\ |x_2-x_2'|< \frac{b}{2}\right\}.
\end{align*}
Namely, $Q(a, b, x)$ is a rectangle of sides with lengths $a$ and $b$ centered at $x$.

Let
\begin{align}\label{Omegadef}
\Omega := \underbrace{Q(1,1, (-\tfrac{7}{2},0))}_{\text{the square on the left}} 
\quad\cup \underbrace{Q(4,4,(0,0))}_{\text{the square on the center}}
\cup \quad
\underbrace{Q(\ell,\ell, (3+\tfrac{\ell}{2},0))}_{\text{the square on the right}}
\quad\cup \quad
\underbrace{Q(6, \varepsilon, (0,0))}_{\text{the two bridges}}. 
\end{align}

For simplicity, let
\begin{align*}
S_L:=Q(1,1, (-\tfrac{7}{2},0)),\quad
S_C:=Q(4,4,(0,0)),\quad 
S_R:=Q(\ell,\ell, (3+\tfrac{\ell}{2},0)).
\end{align*}

Recall that $B_r(x)$ is the open ball centered at $x$ of radius $r$ and that $\lambda_{1,1}(\Omega)=h_{\mathcal N}(\Omega)$ (see Theorem~\ref{thm cheeger}). Let 
\begin{align*}
r:=(\lambda_{1,1}(\Omega))^{-1},\quad 
\rho:=\sqrt{r^2-(\frac{1}{88})^2}+2,  \quad 
E_1:=B_{r}((-\rho,0)) \cap \Omega,
\quad 
\text{ and }
\quad 
E_2:=B_{r}((\rho,0)) \cap \Omega.
\end{align*}
The circles $\partial B_{r}((-\rho,0))$ and $\partial B_{r}((\rho,0))$ are constructed in such a way that they have curvature $\lambda_{1,1}$ and they pass through intersection points of the bridge with the square in the center $S_C$, see Figure~\ref{hwe}, where the grey area is $E_2$.

Recall \eqref{isoprof}. By \cite{BB}, the isoperimetric profile of the square $S_C=[-2,2]\times[-2,2]\subset\R^2$ is
 \begin{equation}\label{expl isop cube} \mathcal{I}_{S_C}(s)= \begin{cases}
     (\pi s)^\frac{1}{2}, & \text{ if  $0 \le s \le \frac {16} \pi $},\\
     4, & \text{ if } \frac {16} \pi  \le s \le 8.
 \end{cases}\end{equation}

\begin{theo}\label{head thm}
Let $\Omega$ be as in \eqref{Omegadef} satisfying \eqref{Lell}.  Any minimizer $D$ of $h_{\mathcal N}(\Omega)$ is such that $|D|<|\Omega|/2$. Moreover, if $l> 1$, then the only minimizer  of $h_{\mathcal N}(\Omega)$ is $E_2$. If $l=1$, then the only minimizers  of $h_{\mathcal N}(\Omega)$ are $E_1,$ $E_2$, and $E_1 \cup E_2$. 
\end{theo}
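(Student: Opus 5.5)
A minimizer $D$ exists by Theorem~\ref{thm cheeger}. The plan has three parts: an explicit upper bound for $h_{\mathcal N}(\Omega)$ from the competitor $E_2$, a lower bound showing that no set of volume $|\Omega|/2$ can do as well, and a structural classification of the remaining candidates using Theorem~\ref{regn2}.

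\textbf{Upper bound.} First I record the data: $|\Omega| = 16 + 1 + \ell^2 + 2\varepsilon$, so $|\Omega|/2 \in [9, 10.6]$ roughly. A natural competitor is the right square together with the right bridge, cut across the bridge by a segment of length $\varepsilon$. This gives
\[
h_{\mathcal N}(\Omega)\le \frac{\varepsilon}{\ell^2+\varepsilon}\le \frac{\varepsilon}{1+\varepsilon},
\]
so $r = 1/h_{\mathcal N}(\Omega)\ge 45$. The set $E_2$ is this competitor improved by bulging the cut into an arc of curvature $1/r$ ending at the corners of the bridge mouth on $\partial S_C$. The small circular cap adds area of order $\varepsilon^3/r$, which only lowers the ratio. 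Hence $h_{\mathcal N}(\Omega)\le P(E_2;\Omega)/|E_2|$, and this quantity is of order $\varepsilon/\ell^2$.

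\textbf{Half-volume sets are excluded.} Any $D$ with $|D| = |\Omega|/2$ has either $|D\cap S_C|$ or $|S_C\setminus D|$ at least about $16-10.6$, while also containing, or missing, a substantial part of $S_C$. More precisely, since $|D| \ge 9 > 1+\ell^2+2\varepsilon$ when $\ell<2$, the set $D$ must occupy volume at least $9-5-2\varepsilon$ inside $S_C$. Its complement in $\Omega$ also has volume at least $9$, so it likewise occupies a comparable volume in $S_C$. The profile \eqref{expl isop cube} then gives
\[
P(D\cap S_C;S_C)\ \ge\ \min\bigl\{(\pi s)^{1/2},\,4\bigr\}\ \gtrsim\ 2.
\]
For $\ell$ close to $2$ the volume bookkeeping must be done more carefully, but some $s\gtrsim 3$ is always forced inside $S_C$. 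This yields $P(D;\Omega)/|D|\gtrsim 2/10.6$, far above $\varepsilon/(1+\varepsilon)$. Therefore $|D|<|\Omega|/2$.

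\textbf{Classification of minimizers.} By Theorem~\ref{regn2}, $\partial D\cap\Omega$ consists of arcs of radius $r\ge 45$ meeting smooth parts of $\partial\Omega$ orthogonally; corners of $\partial\Omega$ are handled separately.
\begin{itemize}
\item Any arc that lies inside a square, or crosses a square wall, has length comparable to the side length, at least about $1$. Its ratio is then at least of order $1/|D|\ge 1/10.6\gg\varepsilon$. So every interior boundary component must lie in the bridges. The near-straight option is a cross-cut of length about $\varepsilon$; arcs that would have to run along a bridge are excluded because they cannot meet the straight bridge walls orthogonally with curvature $1/r$.
\item Each cross-cut costs about $\varepsilon$. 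The volumes of the pieces separated by the cuts are $1$, $\ell^2$ and $16$, plus bridge portions. With $|D|<|\Omega|/2$ the set $D$ cannot contain $S_C$. The possible candidates are therefore: the left piece, the right piece, or both, each cut somewhere along its bridge. Using both pieces costs $2\varepsilon$ for volume $1+\ell^2$, which is no better than the larger of the two single pieces by \eqref{algeb ineq}, with equality only when $\ell=1$.
\item The position of the cut is fixed by minimizing. Moving a cut toward $S_C$ adds bridge area without changing the perimeter, so the cut migrates to the mouth of $S_C$. There the mouth corners are non-smooth points, and the arc of radius $r$ through them is exactly $\partial B_r((\pm\rho,0))$; this is how $\rho$ is defined.
\end{itemize}
Comparing the candidates: for $\ell>1$ the unique minimizer is $E_2$, while for $\ell=1$ symmetry gives that $E_1$, $E_2$ and $E_1\cup E_2$ all attain the infimum.

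\textbf{Main obstacle.} The hardest step is the rigorous exclusion of exotic configurations, notably arcs anchored at corners of $\partial\Omega$, where Theorem~\ref{regn2} gives no orthogonality, and sets with several components. I would handle these by a direct perimeter lower bound: any boundary curve not contained in a bridge has length at least $\min(1,\text{distance to the walls})$. I would combine this with the fact that at a reentrant corner a bulging arc can be replaced by a chord, as in the proof of Theorem~\ref{regn2}, which justifies pinning the cut to the two mouth corners.
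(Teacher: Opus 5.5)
Your architecture is the paper's. It uses a square-plus-bridge competitor giving $h_{\mathcal N}(\Omega)<\varepsilon$, the profile \eqref{expl isop cube} of $S_C$ to rule out $|D|=|\Omega|/2$, Theorem~\ref{regn2} to reduce $\partial D\cap\Omega$ to arcs of radius $r\ge 44$, and a final comparison via \eqref{algeb ineq}. Your half-volume step is a valid variant. Since $|\Omega\setminus S_C|\le 5+2\varepsilon$, both $|D\cap S_C|$ and $|S_C\setminus D|$ are at least $4-2\varepsilon$ for every $\ell\in[1,2]$, so no extra care is needed near $\ell=2$, and this gives $P(D;\Omega)\ge 3$. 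The paper instead extracts a priori bounds for every minimizer, $P(D;\Omega)\le h_{\mathcal N}(\Omega)|D|<11\varepsilon=\tfrac14$ and $|D\cap S_C|\le \frac{1}{16\pi}$, and these drive its classification.

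The gap is in the classification, exactly where you place the main obstacle, and your proposed fix would not close it. No direct perimeter lower bound can exclude short arcs at corners. Take an arc from the reentrant corner $(2,\varepsilon/2)$ to a nearby point $(2,\varepsilon/2+\delta)$ of $\partial S_C$, or one wrapping around that corner: it is arbitrarily short and not contained in a bridge, and there $\min(1,\text{distance to the walls})=0$. Your first bullet, taken literally, would also exclude $E_2$ itself, whose boundary arc lies in $S_C$ with length about $\varepsilon$. The chord replacement is beside the point as well, since straight cuts are never admissible (the curvature is $1/r\neq 0$).

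The missing idea, which is the paper's Step~2, is to use orthogonality as rigidity. At a regular endpoint on a straight wall, the center of the circle lies on the line of that wall. Combine this with the bound that each component has length $<\tfrac14$, hence $|a-b|<\tfrac14$ (and no component is a full circle). This excludes two regular endpoints: on the same line they are diametrically opposite, $|a-b|=2r$; on perpendicular lines $|a-b|=\sqrt2\,r$; on parallel lines the center would lie on both. It also excludes a corner paired with a regular point of a wall through that corner, again a diametric pair. Convex corners and distant pairs of corners are excluded by the same facts. What survives is an arc from a reentrant bridge corner $(\pm2,\pm\varepsilon/2)$ or $(\pm3,\pm\varepsilon/2)$ either to the other corner at the same end or to a regular point of the opposite face of that bridge.

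Since $D$ lies on the side of the center, each such cut either puts the bulk of $S_C$ into $D$, which the volume bound excludes, or produces a subset of $E_1$, $E_2$, or $E_1\cup E_2$. The corner-to-opposite-face cut must be among your candidates. It is no shorter than the corner-to-corner arc ($r\arcsin(\varepsilon/r)\ge 2r\arcsin(\varepsilon/(2r))$) and encloses less area. Only after this reduction do your translation comparison and \eqref{algeb ineq} finish the proof.
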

\begin{proof}
Recall that $\eps:=\frac{1}{44}$ and $\ell\in[1,2].$ Let $D$ be a minimizer of $h_{\mathcal N}(\Omega)$. Then,
\begin{align}\label{hypD}
h_{\mathcal N}(\Omega) = \frac{P(D; \Omega)}{|D|}\qquad \text{ and }\qquad |D|\leq \frac{|\Omega|}{2}
\leq 9+\frac{\ell^2}{2}\leq 11.
\end{align}

\medskip

\underline{\textit{Step 1.} (Comparison and its immediate consequences)}  Let $A_1:=\Omega\cap\{x_1<-2\}$, namely, the square and the bridge on the left. Then $A_1$ is a competitor for $h_{\mathcal N}(\Omega)$ (note that 
$|A_1|=1+ \varepsilon < \frac{|\Omega|}{2}$). Thus,
\begin{align}\label{heps}
h_{\mathcal N}(\Omega) \le \frac{P(A_1; \Omega)}{|A_1|}= \frac{ \varepsilon}{1+\varepsilon} < \varepsilon.    
\end{align}

Hence, by \eqref{hypD},
\begin{align}
\label{e:perimeter bound}
P(D;\Omega) & \le 11 \varepsilon=\frac{1}{4},\qquad \text{because $\varepsilon=\frac{1}{44}$}.
\end{align}  

Next we claim that
\begin{align}\label{claim1}
|D|<\frac{|\Omega|}{2}.    
\end{align}
This holds trivially if $|D \cap S_C| = \emptyset$, since
\begin{align*}
    \frac{1}{2}|\Omega|> 8>1+\ell^2 + 2\varepsilon \ge |D\cap (\Omega\backslash S_C)|=|D|.
\end{align*}
Thus, it only remains to consider the case $|D \cap S_C| > 0$.  We claim that, under this assumption, $P(D \cap S_C; S_C) > 0$. Indeed, if not, $D \cap S_C=S_C$, and
    \[ |D| \ge |D\cap S_C| =|S_C|=16>\frac{1}{2}|\Omega|,\]
which would contradict \eqref{hypD}.  Therefore, $P(D \cap S_C; S_C) \ne 0$. 
By \eqref{e:perimeter bound},
\begin{align*}
    \mathcal{I}_{S_C}(|D\cap S_C|) \le P(D \cap S_C; S_C) \le \frac{1}{4}.
\end{align*}
By \eqref{expl isop cube} this implies that $(\pi |D\cap S_C|)^\frac{1}{2}=I_{S_C}(|D\cap S_C|)\leq \frac{1}{4}$, namely 
\begin{align}\label{s}
|D\cap S_C| \le \frac{1}{16\pi}.    
\end{align}
Hence,
\begin{align*}
|D|
=|D \cap S_C| + |D \cap (\Omega\backslash S_C)|
\le \frac{1}{16\pi} + (2 \varepsilon + 1+ \ell^2) < \frac{|\Omega|}{2},
\end{align*}
and \eqref{claim1} follows.

\medskip

\underline{\textit{Step 2.} (Regularity and case analysis)}
By Theorem~\ref{regn2}, 
\begin{align}\label{e:CMC boundary}
    \text{any portion of the boundary $\partial D$ is part of a circle with radius $r:=\frac{1}{\lambda_{1,1}}\ge \frac{1}{\varepsilon}=44$}
\end{align} 
and
\begin{align}\label{ortho}
\text{the boundary $\partial D$  meets orthogonally $\partial \Omega$ in the points of $\partial \Omega$ which are smooth.}    
\end{align}

First, we demonstrate that every component of $\partial D$ must intersect $\partial \Omega$. Indeed, by \eqref{e:CMC boundary}, any component of $\partial D$ which did not intersect $\partial \Omega$ must be a circle with radius $r\geq 44$ completely contained in $\Omega,$ but this is impossible. Hence, all components of $\partial D $ must intersect $\partial \Omega$.

Now, let $\Gamma \subset \partial D$ be a connected component with endpoints $a, b \in \partial \Omega$. By \eqref{e:perimeter bound}, 
\begin{align}\label{length}
\text{the length of $\Gamma$ is less than $1/4$.  In particular, $|a-b|\leq \frac{1}{4}$.}
\end{align}

\begin{figure}[h!]
    \centering
    \begin{subfigure}{0.108\linewidth}
        \centering
        \includegraphics[width=\linewidth]{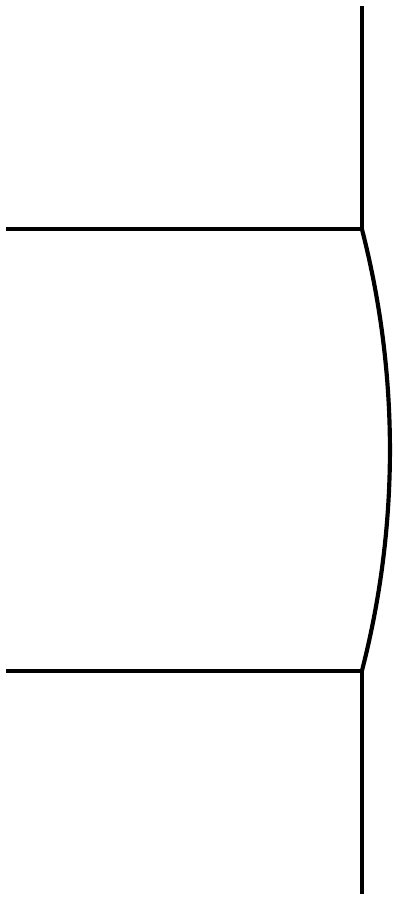}
        \caption{}
    \end{subfigure}
    \hspace{2cm}
    \begin{subfigure}{0.10\linewidth}
        \centering
        \includegraphics[width=\linewidth]{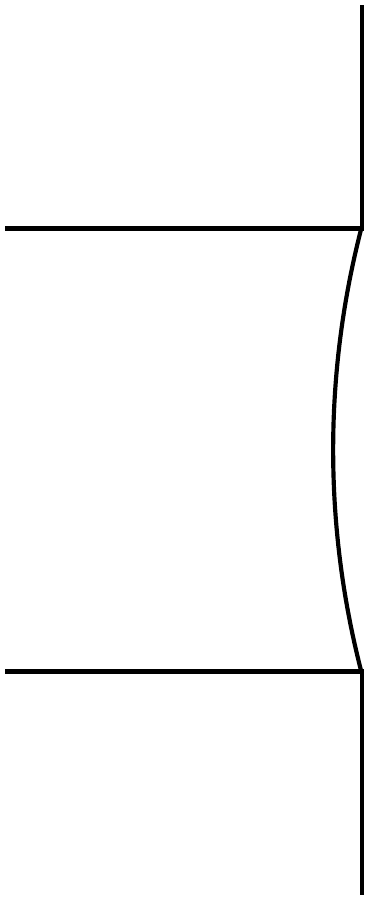}
        \caption{}
    \end{subfigure}
    \hspace{2cm}
    \begin{subfigure}{0.10\linewidth}
        \centering
        \includegraphics[width=\linewidth]{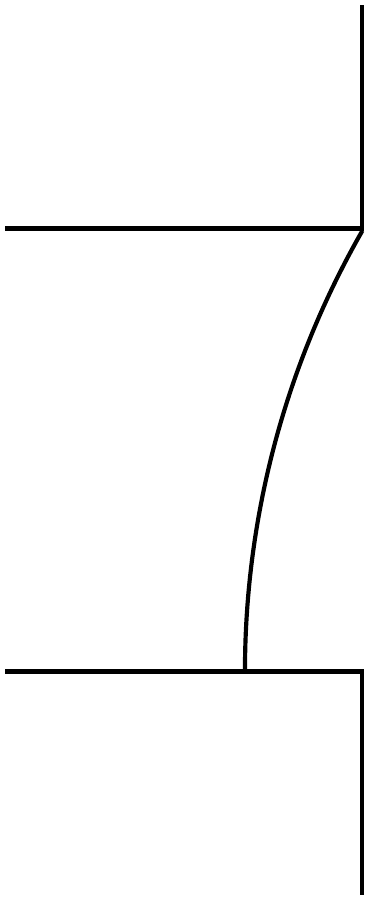}
        \caption{}
    \end{subfigure}
    
    \caption{Possible cuts.}
    \label{cuts}
\end{figure}

Let 
\begin{align*}
T_-:=\partial Q(1, \varepsilon, (-(\tfrac{1}{2}+2), 0))\quad 
\text{ and }\quad 
T_+:=\partial Q(1, \varepsilon, (\tfrac{1}{2}+2, 0))
\end{align*}
 denote the boundaries of the tubes (or bridges) on the left and on the right respectively, see Figure~\ref{hwe}. Now, it is not hard to see that \eqref{e:CMC boundary}, \eqref{ortho}, and \eqref{length} imply that the only possibilities for the endpoints $a$ and $b$ are that $a$ is located at a corner of $T_\pm$ and either
\begin{enumerate}
    \item[$(i)$] $b$ is a regular point on the opposite face of $T_\pm$ (see (c) in Figure~\ref{cuts}).
    \item[$(ii)$] $b$ is also located at a corner of $T_\pm$ (see (a) and (b) in Figure~\ref{cuts}).
\end{enumerate}

For completeness, we next show that $(i)$ and $(ii)$ are indeed the only possibilities.  By \eqref{e:CMC boundary} we know that $\partial D \cap \Omega$ is an arc of a circle with a radius larger than $44$, and, by \eqref{e:perimeter bound}, we know that its length is less than $1/4.$ Hence, if $\Gamma:[0,T]\rightarrow \mathbb{R}^2$ is a unit-speed parametrization of $\partial D \cap \Omega$, we have that
\begin{align}\label{e:angle bound}
    0<\angle(\Gamma'(0), \Gamma'(T)) \le 2\pi\cdot \frac{\frac{1}{4}}{2\pi \cdot \frac{1}{\varepsilon}} =  \frac{\varepsilon}{4} < \frac{\pi}{4},
\end{align}
where $\angle(u,v)$ denotes the angle between two vectors $u,v\in \mathbb R^2$. Now, we do a case analysis.  By \eqref{ortho}, if $a, b$ are in regular points of the left, center, or right squares ($S_L$, $S_C,$ or $S_R$), then $\Gamma$ must turn at least $\pi/2$, since $|a-b| \le \frac{1}{4}$, by \eqref{length}.  But, this contradicts \eqref{e:angle bound}.  Similarly, if $a, b$ are in regular points of the left and right bridges $T_\pm$ or of a bridge $T_\pm$ and one of its adjacent squares, the change in angle must either be at least $\frac{\pi}{4}$ or equal to zero, contradicting \eqref{e:angle bound} or \eqref{e:CMC boundary}, respectively.  This eliminates all possible cases of $a, b$ both being regular points of $\partial \Omega$.

If $a$ or $b$ is located at a corner of one of the squares $S_L,$ $S_C,$ or $S_R$, then by the perimeter length bound \eqref{e:perimeter bound} the other point must be located at a regular point and we arrive at the same contradiction as in the previous paragraph.  A similar argument eliminates the case that $a$ is located at a corner of one of the bridges $T_\pm$ and $b$ is a regular point of an adjacent face.  This proves the claim and $(i)$ and $(ii)$ are the only possibilities. 

\medskip

Now, since $\Gamma \subset \partial D$ was an arbitrary connected component, we see that, by \eqref{s}, $|D\cap S_C| \le \frac{1}{16\pi}$ and therefore the interior of $D$ cannot contain $(0,0)$, since in this case $|D \cap S_C| \ge \Omega \setminus (E_1 \cup E_2) \ge 16 - 2\varepsilon > \frac{1}{2}|\Omega|$. Furthermore, explicit computations show that, in case $(i)$, the perimeter of the cut (c) in Figure~\ref{cuts} is $r \arcsin{\varepsilon/r}$, with $r=1/\lambda_{1,1}$, whereas the cuts (a) and (b) in Figure~\ref{cuts} have perimeter $r \arcsin{\varepsilon/(2r)}$. In particular, this shows that all possible subsets $E\subset \Omega$ with boundaries as described by $(i)$ and $(ii)$ 
and satisfying that their volume is less than $|\Omega|/2$ are subsets of $E_1, E_2$ or $E_1 \cup E_2$, which reduces the options to just these sets.  As a consequence, comparing the area and the perimeter of each possibility, we conclude that, if $\ell\in (1,2]$, then $h_{\mathcal N}(\Omega)$ has a unique minimizer given by $E_2$, whereas if $\ell =1$, then  $h_{\mathcal N}(\Omega)$ only admits $E_1,$ $E_2,$ and $E_1 \cup E_2$ as minimizers, as claimed.
\end{proof}

Theorem~\ref{head thm} and Theorem~\ref{thm cheeger} (see also the proof of Corollary~\ref{coro convex}) yield the following corollary. 
\begin{coro}\label{head cor}
Let $\Omega$ be as in \eqref{Omegadef} and let $u$ be a minimizer of $\lambda_{1,1}(\Omega)$.
\begin{itemize}
\item[(i)]  If $l \ne 1$, then $u=\alpha \chi_{E_2}$, with $\alpha \ne 0$. 
\item[(ii)] If $l=1$, then $u= \alpha \chi_{E_1} + \beta \chi_{E_2}$, with $\alpha, \beta \in \R$, $\alpha+\beta \ne 0$.
\end{itemize}
\end{coro}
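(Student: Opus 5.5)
The plan is to combine the level-set description of minimizers from Theorem~\ref{thm cheeger} with the complete list of minimizers of $h_{\mathcal N}(\Omega)$ given by Theorem~\ref{head thm}, and to argue as in the proof of Corollary~\ref{coro convex}.

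Let $u$ be a minimizer of $\lambda_{1,1}(\Omega)$. By Theorem~\ref{thm cheeger}(ii), for almost every $t\in(0,\|u\|_\infty)$ the pair $(\omega_t^+,\omega_t^-)$ minimizes $\widetilde h_{\mathcal N}(\Omega)$. Moreover, each nonempty $\omega_t^\pm$ minimizes $h_{\mathcal N}(\Omega)$.

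\textbf{Case $\ell>1$.} By Theorem~\ref{head thm}, every nonempty $\omega_t^\pm$ equals $E_2$ up to null sets. The sets $\omega_t^+$ and $\omega_t^-$ are disjoint and $|E_2|>0$, so they cannot both be nonempty for the same $t$. Since the level sets decrease in $t$, the following holds after possibly replacing $u$ by $-u$. There is a $T$ such that $\omega_t^+=E_2$ for a.e.\ $t<T$ and $\omega_t^+=\emptyset$ for $t>T$, while $\omega_t^-=\emptyset$ for every $t>0$. The last claim holds because a nonempty $\omega_s^-$ would force $\omega_t^+$ to be empty for $t<s$. Hence $u^-=0$ and $u^+=\int_0^\infty\chi_{\omega_t^+}\,dt=T\chi_{E_2}$. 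This gives $u=\alpha\chi_{E_2}$ with $\alpha\neq0$, where $\alpha\ne0$ because $\|u\|_1=1$.

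\textbf{Case $\ell=1$.} Now each nonempty $\omega_t^\pm$ lies in $\{E_1,E_2,E_1\cup E_2\}$. The two families $t\mapsto\omega_t^\pm$ are nonincreasing, and at a.e.\ fixed $t$ their members are disjoint. Write $u^+=\int_0^\infty\chi_{\omega_t^+}\,dt$. This is a combination of $\chi_{E_1\cup E_2}$ on an initial range of $t$ followed by $\chi_{E_1}$ or $\chi_{E_2}$, and so $u^+=a\chi_{E_1}+b\chi_{E_2}$ with $a,b\ge0$; similarly for $u^-$. If $\omega_t^+$ contains $E_j$ for some $t$, disjointness forces $\omega_s^-\not\supset E_j$ for small $s$, and monotonicity then excludes it for all $s$. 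So $u^+$ and $u^-$ are supported on different sets $E_j$, and $u=\alpha\chi_{E_1}+\beta\chi_{E_2}$ with arbitrary signs. The condition $(\alpha,\beta)\ne(0,0)$ comes from $\|u\|_1=1$.

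I expect the main obstacle to be the bookkeeping in the case $\ell=1$. There the monotonicity of the level sets and the a.e.\ disjointness have to be used carefully, up to null sets, to rule out mixed configurations.

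One further point needs care. The statement writes $\alpha+\beta\ne0$, but the argument only yields $(\alpha,\beta)\ne(0,0)$. Indeed, by Theorem~\ref{thm cheeger}(iii) the function $\chi_{E_1}-\chi_{E_2}$ is also a minimizer, so I would read the condition as $(\alpha,\beta)\neq(0,0)$.
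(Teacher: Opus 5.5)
Your proof is correct and follows the paper's route: the paper derives the corollary from the level-set statement in Theorem~\ref{thm cheeger}(ii), the list of minimizers in Theorem~\ref{head thm}, and the step-function argument of Corollary~\ref{coro convex}. For $\ell=1$ your disjointness bookkeeping is not even needed, since $u=u^+-u^-$ with $u^\pm$ combinations of $\chi_{E_1},\chi_{E_2}$ already has the required form.

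You are also right that the condition $\alpha+\beta\neq0$ in (ii) is wrong as stated. When $\ell=1$ one has $|E_1|=|E_2|$, so $\chi_{E_1}-\chi_{E_2}\in\mathfrak N(\Omega)$ is a minimizer by Theorem~\ref{thm cheeger}(iii), and the correct condition is $(\alpha,\beta)\neq(0,0)$.
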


We are ready to prove Theorem~\ref{thm:head intro}.

\begin{proof}[Proof of Theorem~\ref{thm:head intro}]
The claim follows from the case $l\ne 1$ in Theorem~\ref{head thm} together with Corollary~\ref{head cor}.     
\end{proof}

\begin{remark}
\begin{enumerate}
\item As a consequence of Theorem~\ref{head thm} and Corollary~\ref{head cor}, given any minimizer $u$ of $\lambda_{1,1}(\Omega)$, one has $|\{u =0 \}| \ne 0$. 
\item If $l \ne 1$, Theorem~\ref{head thm} shows that any minimizer $u$ of $\lambda_{1,1}(\Omega)$ has only one sign, therefore $\int_\Omega \sgn(u) \,d\mathcal{H}^N \ne0$.
\item If $l=1$, then $u$ can be sign-changing. 
\item If $l=1$, then $u= \alpha \chi_{E_1} + \beta \chi_{E_2}$ with $0 < \alpha < \beta$ (or $\alpha< \beta < 0$) is a minimizer.  This contrasts with the convex case studied in Corollary~\ref{coro convex}. 
\end{enumerate}
\end{remark}

\begin{remark}\label{ggremark}
The link between $h_{\mathcal N}(\Omega)$ and eigenvalues of the Neumann 1-Laplacian was studied in \cite{gg,g01}, using ideas from \cite{f60}. In these works, the first eigenvalue is defined via
\begin{align}\label{rmk1}
\widetilde \lambda_{1,1}(\Omega):=\inf \left\{|Du|(\Omega): \, u \in V_1\right \},\qquad 
V_1:=\left\{
u \in BV(\Omega), \, \norm{u}_1=1, \, \int_\Omega \sgn(u)\,d\mathcal{H}^N=0
\right\}.
\end{align}
Observe that $\lambda_{1,1}=\widetilde \lambda_{1,1}$. Indeed, clearly $\lambda_{1,1}\leq \widetilde \lambda_{1,1}.$ Now, let $E$ be a minimizer for $\lambda_{1,1}$, let $\omega\subset \Omega\backslash E$ be such that $|E|=|\omega|,$ and let 
\begin{align*}
w_n:=\beta_n(\chi_{E}-\frac{1}{n}\chi_{\omega})\qquad \text{ with }\beta_n:=\left(\left\|\chi_{E}-\frac{1}{n}\chi_{\omega}\right\|_1\right)^{-1}=\frac{1}{|E|}+o(1)\quad \text{ as }n\to\infty.
\end{align*}
Then $w_n\in BV(\Omega),$ $\norm{w_n}_1=1$, and $\int_\Omega \sgn(w_n)\,d\mathcal{H}^N=0$. Moreover,
\begin{align*}
    \left|D\left(\frac{\chi_{E}}{|E|}-w_n\right)\right|(\Omega)
    =\left|D\left(\frac{\chi_{E}}{|E|}-\left(\frac{1}{|E|}+o(1)\right)(\chi_{E}-\frac{1}{n}\chi_{\omega})\right)\right|(\Omega)
    =o(1)\quad \text{ as }n\to\infty.    
\end{align*}
Namely, $w_n\in V_1$ converges to $\frac{\chi_{E}}{|E|}$ in $BV(\Omega)$, and thus $\lambda_{1,1}\geq \widetilde \lambda_{1,1},$ showing that $\lambda_{1,1}=\widetilde \lambda_{1,1}$. However, in general, $\widetilde \lambda_{1,1}$ is not attained in $V_1$ (as shown in Corollary \ref{head cor}) and this is a natural limitation in \cite{gg,g01} for the analysis of the associated eigenfunctions. In contrast, minimizers of $\lambda_{1,1}$ are achieved, showing that this is a more robust variational framework.  

On the other hand, in \cite[Proposition 6]{BF25}, it is shown in particular that 
\begin{align*}
\inf_{\substack{
u\in W^{1,1}(\Omega)\\
\int_{\Omega} \operatorname{sign}(u)= 0
}}
\frac{\int_{\Omega} |\nabla u|}
{\int_{\Omega} |u|}
=
\min_{\substack{
u\in BV(\Omega;\mathbb{R}^{+})\\
|\{u>0\}|\leq \frac{|\Omega|}{2}
}}
\frac{|Du|(\Omega)}{\int_{\Omega}|u|}
=\min_{\substack{
E\subseteq \Omega\\
|E|\leq \frac{|\Omega|}{2}
}}
\frac{\operatorname{Per}(E;\Omega)}{|E|}.
\end{align*}
Note that the set of competitors $\{u\in BV(\Omega;\mathbb{R}^{+})\::\:|\{u>0\}|\leq \frac{|\Omega|}{2}\}$ only considers nonnegative functions, and therefore it does not include sign-changing eigenfunctions, which are natural limits of minimizers of $\lambda_{p,q}$ as $(p,q)\to (1,1)$; see, for instance, \cite{SdLSdL,DL26}. Furthermore, \cite{BF25} is focused on finding a lower bound for the inverse of the Poincaré constant in $W^{1,1}(\Omega)$, and therefore it does not make any connection with $p$-Laplacian equations for any $p\geq 1.$
\end{remark}

\begin{remark}
    The behavior of the optimal sets for $h_\mathcal{N}(\Omega)$ when $\Omega$ is the domain in Figure \ref{hwe} is similar to what happens in the barbell-type domain within the Cheeger context (see \cite{KS}). In that case, if the two squares have the same area, then the Cheeger set of the first square, the Cheeger set of the second square, and the union of the two, are all Cheeger sets for the domain. If the two squares have different sizes, then the Cheeger set of the domain is the Cheeger set of the largest square. 
\end{remark}

\section*{Disclosure statement}
The authors report there are no competing interests to declare.

\section*{Declaration of generative AI use}
The authors report generative AI was not used in their research or preparation of this manuscript.

\section*{Acknowledgments}
We thank Hugo Tavares for helpful discussions and suggestions. 

Delia Schiera is partially supported by the Portuguese government through FCT Fundação para a Ciência e a Tecnologia and the Recovery and Resilience Plan (PRR) through projects UID/04459/2025 and UID/PRR/04459/2025 (CAMGSD), 2023.17881.ICDT with DOI  10.54499/2023.17881.ICDT (project SHADE), and 2024.14494.PEX with DOI 10.54499/2024.14494.PEX (project ASSO).
She is also supported by GNAMPA (Gruppo Nazionale per l’Analisi, Probabilità e le loro Applicazioni) – INdAM (Istituto
Nazionale di Alta Matematica), through the project ‘Critical and limiting phenomena in nonlinear elliptic
systems’, CUP E5324001950001, and by FCT 
under the Scientific Employment Stimulus - Individual Call (CEEC Individual), with DOI identifier 10.54499/2020.02540.CEECIND/CP1587/CT0008.

This work was developed while Delia Schiera was visiting the Universidad Nacional Autónoma de México: she gratefully acknowledges the financial support of FCT through the FCT Mobility grant FCT/Mobility/1304796440/2024-25, as well as the warm hospitality she received during her stay. 

A. Saldaña is supported by SECIHTI grant CBF2023-2024-116 (Mexico) and by UNAM-DGAPA-PAPIIT grant IN102925 (Mexico).

\bibliographystyle{plain}
\bibliography{refs}

\vspace{1cm}

\noindent\textbf{Delia Schiera}\\
CAMGSD - Centro de An\'alise Matem\'atica, Geometria e Sistemas Din\^amicos\\
Departamento de Matem\'atica do Instituto Superior T\'ecnico\\
Universidade de Lisboa\\
1049-001 Lisboa, Portugal\\
\texttt{delia.schiera@tecnico.ulisboa.pt}

\bigskip

\noindent\textbf{Sean McCurdy} and \textbf{Alberto Saldaña}\\
Instituto de Matemáticas\\
Universidad Nacional Autónoma de México \\
Circuito Exterior, Ciudad Universitaria\\
04510 Coyoacán, Ciudad de México, Mexico\\
\texttt{sean.mccurdy@im.unam.mx} \\
\texttt{alberto.saldana@im.unam.mx}
\medskip
\end{document}